\DeclareMathAlphabet{\mathbbmsl}{U}{bbm}{m}{sl}
\def\bm#1{\mathpalette\bmstyle{#1}}
\def\bmstyle#1#2{\mbox{\boldmath$#1#2$}}
\DeclareMathAlphabet{\mathcalligra}{T1}{calligra}{m}{n}
\newcommand{\grn}{{\mathrm{Gr}}_{\be\le n}^{R}}
\newcommand{\grm}{{\mathrm{Gr}}_{\lbe m}^{\lbe R}}
\newcommand{\grnl}{{\mathrm{Gr}}_{n+1}^{\lbe R}}
\newcommand{\grni}{{\mathrm{Gr}}_{n+i}^{\lbe R}}
\newcommand{\grbe}{{\uwave{\mathrm{Gr}}}}
\newcommand{\pgrn}{{\mathbf{Gr}}_{\be\le n}^{R}}
\newcommand{\gra}{{\mathrm{Gr}}^{\le\mathfrak R}\lbe}
\newcommand{\Fhr}{{\m h}^{\lbe R}}
\newcommand{\hrn}{h_{\lle n}^{\lbe R}\lbe}
\newcommand{\hra}{h^{\lbe \mathfrak R}\be\le}
\newcommand{\sep}{{\rm sep}}
\newcommand{\sh}{\kern -.4em\phantom{a}^{\mathbf{\sim}}}
\newcommand{\qc}{\rm{qc}}
\newcommand{\alg}{\rm{alg}}
\newcommand{\nr}{\le{\rm nr}}
\newcommand{\lra}{\longrightarrow}
\newcommand{\HH}{\mathrm{H}}
\newcommand{\et}{{\rm {\acute et}}}
\newcommand{\fpqc}{{\rm fpqc}}
\newcommand{\fppf}{{\rm fppf}}
\newcommand{\dimn}{{\rm dim}_{\e k}\le}
\newcommand{\m}{\mathfrak }
\newcommand{\red}{{\rm red}}
\def\ogs{\omega_{G\lbe/\lbe S}^{1}}
\def\ogsp{\omega_{G^{\le\prime}\be/\lbe S^{\lle\prime}}^{1}}
\newcommand{\cO}{{\s O}}
\newcommand{\kbar}{\overline{k}}
\def\be{\kern -.1em}
\def\bbe{\kern -.07em}
\def\le{\kern 0.03em}
\def\lle{\kern 0.015em}
\def\lbe{\kern -.025em}
\def\llbe{\kern -.03em}
\def\Re{{\rm Res} }
\newcommand{\ari}{\bar{e}}
\newcommand{\est}{a}
\newcommand{\A}{{\mathbb A}}
\newcommand{\N}{{\mathbb N}}
\newcommand{\G}{{\mathbb G}}
\newcommand{\gr}{\mathrm{Gr}^{R}\lbe}
\newcommand{\pgr}{\mathbf{Gr}^{\lbe R}}
\newcommand{\spec}{\mathrm{ Spec}\,}
\newcommand{\spf}{\mathrm{Spf}\e}
\newcommand{\krn}{\mathrm{Ker}\e}
\newcommand{\img}{\mathrm{Im}\e}
\newcommand{\cok}{\mathrm{Coker}\,}
\newcommand{\Hom}{{\mathrm{Hom}}}
\newcommand{\Rnr}{R^{\le\rm{nr}}}
\newcommand{\Rnnr}{R^{\le\rm{nr}}_{\le n}}
\newcommand{\Snr}{S^{\e\rm{nr}}}
\newcommand{\Snnr}{S^{\e\rm{nr}}_{\lbe n}}
\def\e{\kern 0.08em}
\newcommand{\overbar}[1]{{ \mkern 8mu\overline{\mkern-8mu#1 }}}
\newcommand{\overbarr}[1]{ {\mkern 10mu\overline{\mkern-10mu#1 }}}
\newcommand{\s}{\mathscr }
\newcommand{\mm}{\mathfrak m}
\newcommand{\mr}{\mathrm }
\newcommand{\pf}{{\rm pf}}
\numberwithin{equation}{section}
\newtheorem{lemma}[equation]{Lemma} 
\newtheorem{theorem}[equation]{Theorem}
\newtheorem{proposition-definition}[equation]{Proposition-Definition}
\newtheorem{corollary}[equation]{Corollary}
\newtheorem{proposition}[equation]{Proposition}
\theoremstyle{definition}
\newtheorem{definition}[equation]{Definition}
\theoremstyle{remark}
\newtheorem{remark}[equation]{Remark}
\newtheorem{remarks}[equation]{Remarks}
\newtheorem{example}[equation]{Example}
\newtheorem{examples}[equation]{Examples}
\definecolor{labelkey}{rgb}{1,0,0}
\begin{document}

\input xy     
\xyoption{all}

\title[The  Greenberg functor revisited]{The  Greenberg functor revisited}

\author{Alessandra Bertapelle}
\address{Universit\`a degli Studi di Padova, Dipartimento di Matematica, via Trieste 63, I-35121 Padova}
\email{alessandra.bertapelle@unipd.it}

\author{Cristian D. Gonz\'alez-Avil\'es}
\address{Departamento de Matem\'aticas, Universidad de La Serena, Cisternas 1200, La Serena 1700000, Chile}
\email{cgonzalez@userena.cl}
\thanks{C.\,G.-A. was partially supported by Fondecyt grants 1120003 and 1160004.}
\date{\today}

\subjclass[2010]{Primary 11G25, 14G20}
\keywords{Greenberg realization, formal schemes, Weil restriction}

\topmargin -1cm

\smallskip

\begin{abstract} We extend Greenberg's original construction to arbitrary (in particular, non-reduced) schemes over (certain types of) local artinian rings. We then establish a number of basic properties of the extended functor and determine, for example, its behavior under Weil restriction.  We also discuss a formal analog of the functor.  
\end{abstract}

\maketitle

\tableofcontents

\topmargin -1cm

\section{Introduction}

The Greenberg functor, originally introduced in \cite{gre1}, has played and continues to play an important role in arithmetic and algebraic geometry, most recently in \cite{bt, ns,ns2}. See also \cite{secft,beg,blr,cgp,lip}. Unfortunately, Greenberg's construction is difficult to understand since it uses, in part, a pre-Grothendieck language to describe the key construction of {\it Greenberg algebras}  \cite[\S1]{gre1}. In this paper we revisit Greenberg's construction using a modern scheme-theoretic language and generalize it in various ways, removing in particular certain unnecessary reducedness and finiteness conditions in \cite{gre1,gre2}. Further, we establish a number of refinements of known properties of the (classical) Greenberg functor and establish new results.  We also clarify the relation that exists between the Greenberg algebra associated to a local artinian ring $\mathfrak R$ (of a certain type) and the Greenberg module associated to an ideal $\mathfrak I$ of $\mathfrak R$. See Remark \ref{gr-max}. The new insight  thus gained led us to a better understanding of the change of ring morphisms \eqref{tr0} and therefore also of the change of level morphism \eqref{clm0}.  In addition, we describe the kernel of the change of level morphism \eqref{clm0}  for possibly non-smooth and non-commutative group schemes. See Proposition \ref{vker}. These morphisms (which play a role in certain important limit constructions) seem to have been previously discussed only in the smooth and commutative case. Among the main results of this paper the reader will find the complete determination of the behavior of the Greenberg functor under Weil restriction. See Theorem \ref{wr-gr} below. To our knowledge, only a very specific instance of this result has appeared in print before (within the context of formal geometry), namely \cite[Theorem 4.1]{ns}.

\smallskip

We now describe the contents of the paper.

\smallskip

The extended preliminary Section \ref{pre} consists of eight subsections. Subsections \ref{not}\,--\,\ref{for-sch}\, introduce notation and recall basic material on vector bundles,  Witt vectors, groups of components, the connected-\'etale sequence and formal schemes. In subsection \ref{wres} we discuss the Weil restriction functor $\Re_{S^{\le\prime}\be/S}$ and show in particular that the hypotheses in the basic existence theorem \cite[\S7.6, Theorem 4, p.~194]{blr} can be weakened. We also record here the fundamental fact that the Weil restriction of a scheme along a finite and locally free {\it universal homeomorphism} always exists. Subsection \ref{ftop} (which is relevant for Section \ref{gp-sch}) presents certain results on the fpqc topology which we have been unable to find in the literature in the precise formulation in which we need them. 
Section \ref{ga} contains a general discussion of Greenberg modules/algebras associated to finite $W_{\be m}(k)$-modules/algebras, where $m\geq 1$ and (the field) $k$ is assumed to be perfect and of positive characteristic if $m>1$.  Readers who are familiar with Greenberg's original construction will have noticed that this author encountered a number of technical difficulties that forced him to replace, depending on the situation, a module variety with a homeomorphic one. See Remark \ref{gr-max} for a full discussion of this issue. In this paper we correctly identify the ideal subscheme \eqref{kbar} of the relevant Greenberg algebra that must be chosen in order to circumvent all such technical difficulties.
In Section \ref{truc} we specialize the discussion of Section \ref{ga} to truncated discrete valuation rings, using as our starting point the careful presentation of Nicaise and Sebag in \cite[pp.~1591-94]{ns}. Incidentally, these authors seem to have been the first to notice that a certain formula involving Greenberg algebras which appears in \cite[p.~276, line~-18]{blr} is incorrect (in Remark \ref{rems1}(d) we explain why the indicated error is inconsequential when working with the tower of Greenberg algebras). Section \ref{ram} discusses Greenberg algebras and ramification. In Section \ref{gad} we use the results of Section \ref{truc} and a standard limit process to discuss Greenberg algebras of discrete valuation rings. Section \ref{gr-art} introduces (at long last!) the Greenberg functor in the general setting of this paper. The constructions of Section \ref{gr-art} are then specialized to truncated discrete valuation rings in Section \ref{gberg}. Section \ref{s-cr} discusses the all-important change of rings morphism, which specializes to the change of level morphism \eqref{clm0} of Section \ref{s-clm}. For example, we show that the morphism \eqref{clm0} associated to a scheme $Z$ over a truncated discrete valuation ring $R_{n}$ is surjective if $Z$ is formally smooth over $R_{n}$. When $Z$ is smooth and of finite type over $R_{n}$, this result was obtained by Greenberg \cite[Corollary 2, p.~262]{gre2} using a method which differs from the one used here. Section \ref{bas} presents a number of basic results on the Greenberg functor, some of which do not seem to have been noticed before.
For example, we show that the Greenberg functor preserves quasi-projective schemes (see Proposition \ref{q-proj}). This result is new in the unequal characteristics case (in the equal characteristic case the Greenberg functor of level $n$ coincides with the Weil restriction functor $\Re_{R_{\le n}/k}$ and the corresponding result is a particular case of \cite[Proposition A.5.8]{cgp}).
 In Section \ref{gstr} we extend Greenberg's structure theorem \cite[p.~ 263]{gre2}, showing in particular that (the original version of) the indicated result is unaffected by Greenberg's occasional replacement of certain Greenberg modules by homeomorphic group varieties.
Section \ref{wrbe} contains the already noted description of the behavior of the Greenberg functor under Weil restriction. In Section \ref{gp-sch} we describe the kernel of the change of level morphism introduced in Section \ref{s-clm}. 
 In particular, we show in Example \ref{cex-beg} that \cite[Lemma 4.1.1(2)\le]{beg} is false in general. In spite of the above, the main results of \cite{beg} are fortunately valid, as we explain in Remark \ref{begueri}.  In Section \ref{pfgf}, relying on \cite{bga}, we discuss the perfect Greenberg functor. Section \ref{fgr} contains information on the Greenberg realization of a finite group scheme, which may not itself be finite over $k$ (see Remark \ref{rm.grf}(a)). We now note that Sebag, in his thesis \cite[\S3]{seb} (see also \cite[\S2.3]{ls} and \cite{ns}), defined the Greenberg realization of a separated formal scheme of topologically finite type. In Section \ref{forgr} we extend his construction to the larger category of adic formal schemes and determine the behavior of the new functor under Weil restriction. In particular, we generalize \cite[Theorem 4.1]{ns}. The constructions of Section \ref{forgr} are then applied in Section \ref{grsh} to study the Greenberg realization of an $R$-scheme, where $R$ is a complete discrete valuation ring.
Section \ref{comgr} studies the Greenberg realization of a flat, commutative and separated $R$-group scheme $F$, where $R$ is as above, using a smooth resolution of $F$ when one exists (this is the case if $F$ is finite over $R$). Finally, Section \ref{eqch} presents a generalization of the equal characteristic case discussed previously in the text (where the relevant ring may no longer be a discrete valuation ring).

\begin{center}
Acknowledgements
\end{center}
We thank James Borger and Maurizio Candilera for helpful comments on Witt vectors, Brian Conrad and Angelo Vistoli for enlightening comments on Weil restriction
and the fpqc topology and Johannes Nicaise for helpful comments on Greenberg approximation. We also thank Michel Raynaud for answering some of our questions regarding \cite{beg}.

\section{Preliminaries}\label{pre}
\bigskip

\subsection{Generalities}\label{not}
If $x$ is a real number, $\lfloor x\rfloor$ will denote the largest integer that is less than or equal to $x$ and $\lceil x\rceil$ will denote the smallest integer that is larger than or equal to $x$. Note that $\lceil x\rceil\geq 1$ if $x>0$. 
\smallskip

All rings considered in this paper are commutative and unital (with unity 1). If $A$ is a ring and $f\in A$, $A_{f}$ will denote the localization of $A$ with respect to the multiplicative set $\{f^{\le r}\}_{r\geq\le 0}$, where $f^{\e 0}=1$. 
\smallskip

If $X$ is an object of a category, the identity morphism of $X$ will be denoted by $1_{\lbe X}$.
\smallskip

If $X$ is a scheme, $|X|$ will denote the underlying topological space of $X$.  Further, if $X$ and $Y$ are $S$-schemes, where $S=\spec A$ is an affine scheme, then $X\!\times_{\be S}\be Y$ will be denoted by $X\!\times_{\be A}\be Y$. 

If $k$ is a field and $X\to\spec k$ is a finite morphism of schemes, then $|X|$ is a finite set \cite[II, Corollary 6.1.7]{ega}. Thus, since $X(k)$ may be identified with a subset of $|X|$ \cite[(3.5.5), p.~243]{ega1}, $X(k)$ is also a finite set. 

If $S$ is the spectrum of a local ring with residue field $k$ and $X$ is an $S$-scheme, we will write 
\[
X_{\mr s}=X\times_{S}\spec k
\]
for the special fiber of the structural morphism $X\to S$. If $f\colon X\to Y$ is a morphism of  $S$-schemes,  $f\times_{S}1_{\spec k}$ will be written  $f_{\mr s}\colon X_{\mr s}\to Y_{\mr s}$.
\smallskip

Given a diagram of morphisms of schemes
\[
\xymatrix{T^{\e\prime}\ar[d]& X\ar[d]^(.45){f}\\
T\ar[r]^(.5){g}& S,}
\]
we will write $X\be\times_{f,\le S,\e g}\be T$ for the fiber product of $f$ and $g$. When $f$ and $g$ are not relevant in a particular discussion, we will write  $X\times_{S}T$ for $X\be\times_{f,\e S,\le g}\be T$. We will make the identifications
\[
\begin{array}{rcl}
T\times_{S}S &=& S\times_{S}T=T\\
X\times_{S}T &=& T\times_{S}X\\
(X\times_{S} T\e)\times_{T} T^{\e\prime} &=& X\times_{S} T^{\e\prime}.
\end{array}
\]
Note that, if $f\colon X\to Y$ is an $S$-morphism of schemes, then $f_{\lbe T}=f\times_{S}1_{\lbe T}\colon X_{\lbe T}\to Y_{ T}$ is a $T$-morphism of schemes. If $T=\spec B$ is affine, $X_{\lbe T}$ and $f_{T}$ will be denoted by $X_{\be B}$ and $f_{\be B}$, respectively.

Let $X\to T\to S$ be a pair of morphisms of schemes. If $Y$ is an $S$-scheme and $\mr{pr}_{\lbe Y}\colon Y\be\times_{S}\be T \to Y$ is the first projection
then, by the universal property of the fiber product, the map
\begin{equation}\label{bc}
\Hom_{\e T}(X,Y\be\times_{S}\be T\e)\stackrel{\!\sim}{\to}\Hom_{\le S}(X,Y),\,\, g\mapsto \mr{pr}_{\lbe Y}\circ g,
\end{equation}
is bijective. 

\begin{lemma} \label{sp} Let $k$ be a perfect field of positive characteristic $p$, let $A$ be a $k$-algebra and let $f\in A$.
\begin{enumerate}
\item[(i)] If $A=A^{\le p}$, then $A_{\lbe f}=(A_{\lbe f})^{\le p}$.
\item[(ii)] There exist a $k$-algebra $B$ with $B=B^{\le p}$ and $B_{f}=(B_{f})^{\le p}$ and injective homomorphisms of $k$-algebras $A\hookrightarrow B$ and $A_{f}\hookrightarrow B_{f}$.  
\end{enumerate}
\end{lemma}
\begin{proof} If $A=A^{\le p}$ and $a/f^{\le n}\in A_{f}$, choose $b,g\in A$ such that $b^{\e p}=a$ and $g^{\le p}=f$. Then $(\le g^{\le n(\e p-1)}b/f^{\le n})^{\le p}=f^{n(\le p-1)}a/f^{np}=a/f^{\le n}$, whence (i) follows. Now, by \cite[Lemma 0.1, p.~18]{lip}, there exist a $k$-algebra $B$ satisfying $B=B^{\le p}$ (and therefore also $B_{f}=(B_{f})^{\le p}$, by (i)) and a faithfully flat ring homomorphism $A\to B$. Since the latter map is injective by \cite[(4.C)(i), p.~28]{mat}, to complete the proof of (ii) it remains only to check that $A_{f}\to B_{f}$ is injective. Since
$A_{f}$ is flat over $A$, the map $A\hookrightarrow B$ induces an injection $A_{f}\hookrightarrow B\otimes_{A}\! A_{f}$. Composing the latter map with the isomorphism $B\otimes_{A}A_{f}\overset{\!\sim}{\to} B_{f}$ of \cite[Proposition 3.5, p.~39]{am}, we deduce the injectivity of $A_{f}\to B_{f}$.
\end{proof}

Following \cite[p.~191]{blr}, we will say that a morphism of schemes is {\it finite and locally free} if it is finite, flat and of finite presentation. The class of finite and locally free morphisms is stable under base change. A morphism of schemes $S^{\e\prime}\to S$ is called a {\it universal homeomorphism} if, for every base change $T\to S$, the induced morphism $S^{\e\prime}_{\le T}\to T$ is a homeomorphism. The class of universal homeomorphisms is stable under base change. Further, by \cite[$\text{IV}_{4}$, Corollary 18.12.11]{ega}, a morphism of schemes is a universal homeomorphism if, and only if, it is integral, surjective and radicial. In particular, a universal homeomorphism is affine. If $k^{\e\prime}\be/k$ is a purely inseparable extension of fields, the associated morphism of schemes $\spec k^{\e\prime}\to\spec k$ is a universal homeomorphism. Recall now that a {\it nilimmersion} is a surjective immersion or, equivalently, a closed immersion defined by a nilideal. Such a morphism is a homeomorphism. The class of nilimmersions is stable under base change. Consequently, a nilimmersion is a universal homeomorphism. In particular, a {\it nilpotent immersion}, i.e., a closed immersion defined by a nilpotent ideal, is a universal homeomorphism.
 
\begin{lemma}\label{uh0} Let $k$ be a field and let $B$ be a finite and local $k$-algebra with residue field $k^{\e\prime}$. Assume that the
associated field extension $k^{\e\prime}\be/k$ is purely inseparable. Then $\spec B\to\spec k$ is a finite and locally free universal homeomorphism.
\end{lemma}
\begin{proof} The canonical morphism $\spec B\to\spec k$ is clearly finite and locally free (i.e., finite and flat). Now observe that $B$ is an artinian local ring (whence $\spec B$ is a one-point scheme) and $k^{\e\prime}$ is a finite extension of $k$ by \cite[Corollary 7.10, p.~82, and Exercise 3, p.~92]{am}. Since $k^{\e\prime}\be/k$ is purely inseparable, the composite morphism $\spec k^{\e\prime}\to\spec B\to\spec k$ is a universal homeomorphism. Thus, since $\spec k^{\e\prime}\to\spec B$ is surjective, $\spec B\to\spec k$ is a universal homeomorphism as well by \cite[Proposition 3.8.2(iv), p.~249]{ega1}.
\end{proof}

If $S$ is a scheme, $\Lambda$ is a directed set and $(X_{\lbe\lambda},u_{\lambda,\e\mu}\e ; \lambda,\e\mu\in\Lambda)$ is a projective system of $S$-schemes with affine transition morphisms, then $X=\varprojlim X_{\lbe\lambda}$ exists in the category of $S$-schemes  \cite[$\text{IV}_{3}$, Proposition 8.2.3]{ega}. More precisely, $X$ is isomorphic to the spectrum of the $\s O_{\be X_{0}}$-algebra $\varinjlim_{\lambda\e\geq\e 0} u_{\lambda,\e 0\e *}\s O_{\be X_{\lbe\lambda}}$, where $0$ is any fixed element of $\Lambda$. Thus, for every $S$-scheme $Z$, there exists a canonical bijection 
\begin{equation}\label{plim}
\mathrm{Hom}_{\le S}\big(Z,\varprojlim X_{\lbe\lambda}\big)=\varprojlim\mathrm{Hom}_{\le S}(Z,X_{\lbe\lambda}).
\end{equation}

\begin{lemma} \label{paux} Let $S$ be a scheme and let $(X_{\lbe n},u_{\le m,\e n}\e ; m\geq n\in\N\e)$ be a projective system of $S$-schemes with affine transition morphisms and index set $\N$. Set $X=\varprojlim X_{n}$ and assume that there exist two strictly increasing sequences $\{r_{\be n}\}_{n}$ and $\{s_{n}\}_{n}$ in $\N$ such that $r_{\be n}\geq s_{n}$ for every $n$. Then $\varprojlim u_{\e r_{\be n},\e s_{n}}\colon X\to X$ is the identity morphism of $X$.
\end{lemma}
\begin{proof} For every $n\in\N$, let $p_{n}\colon X\to X_{n}$ be the canonical projection morphism. Now let $h\colon Y\to X$ be an arbitrary morphism of $S$-schemes and write $h_{n}=p_{n}\circ h\colon Y\to X_{n}$. We claim that
$(\varprojlim u_{\e r_{\be n},\e s_{n}})\circ h=h$. Clearly, it suffices to check that $p_{s_{n}}\circ(\varprojlim u_{\e r_{\be n},\e s_{n}})\circ h=h_{s_{n}}$ for every $n$. Now $p_{s_{n}}\circ (\varprojlim u_{\e r_{\be n},\e s_{n}})\circ h= u_{\e r_{\be n},\e s_{n}}\circ p_{\le r_{\be n}}\circ h=u_{\le r_{\be n}, \le s_{n}}\circ h_{r_{\be n}}=h_{s_{n}}$, which completes the proof.  
\end{proof}

\subsection{Vector bundles} \label{vb}
Let $S$ be a scheme, $\s E$ a quasi-coherent $\cO_{\be S}$-module and $\mathbb V(\s E)$ the $S$-vector bundle associated to $\s E$ \cite[II, (1.7.8)]{ega}. By definition, for every $S$-scheme $f\colon X\to S$, there exists a canonical bijection
\begin{equation}\label{vb2}
\mr{Hom}_{S}(X, \mathbb V(\s E))=\mr{Hom}_{\e\cO_{\be S}}(\s E, f_{*}\cO_{\! X}).
\end{equation}
The $S$-scheme $\mathbb O_{S}=\mathbb V(\cO_{\be S})\,$\footnote{We adopt the notation introduced in \cite[I, 4.3.3]{sga3}. In \cite[II, (1.7.13)]{ega}, $\mathbb O_{\lbe S}$ is denoted by $S\le[\le T\le]$.} has a canonical $S$-ring scheme structure and $\mathbb V(\s E\le)$ is canonically endowed with an $S$-module scheme structure over $\mathbb O_{S}$. Note that the $S$-scheme (respectively, $S$-group scheme) underlying $\mathbb O_{S}$ is $\A^{\be 1}_{S}=\spec \cO_{\be S}[\e T\e]$ (respectively, $\G_{a, \e S}$). If $S=\spec A$ is affine, we will write $\mathbb O_{\be A}$ for $\mathbb O_{S}$.

Let $G$ be an $S$-group scheme, $\varepsilon\colon S\to G$ the unit section of $G$ and set $\ogs=\varepsilon^{\le*}\e\Omega_{G/S}^{\le 1}$, which is a quasi-coherent $\cO_{\be S}$-module. For every $S$-scheme $f\colon S^{\e\prime}\to S$, set $G^{\e\prime}=G_{\lbe S^{\lle\prime}}$ and 
$w_{G^{\le\prime}\be/S^{\le\prime}}^{1}=\varGamma\lbe\big(S^{\e\prime}\be,\e\ogsp\big)$.
We have $\mathbb V(\ogs)\!\times_{S}\! S^{\e\prime}=\mathbb V\big(\ogsp\be\big)$ and
\begin{equation}\label{v}
\mathbb V\big(\ogsp\be\big)(S^{\e\prime}\e)=\mathbb V(\ogs)(S^{\e\prime}\e)=\Hom_{\e\cO_{\be S^{\prime}}}\lbe(\ogsp,\cO_{\be S^{\prime}})=\Hom_{\e\cO_{\be
S}}\lbe(\ogs,f_{*}\lbe\cO_{\be S^{\prime}}), 
\end{equation}
by \eqref{vb2} and \cite[Proposition 9.4.11(iv), p.~374]{ega1}.
Further, by \cite[II, 4.11]{sga3}, $\mathbb V(\ogs)$ represents the
functor $\underline{\rm{Lie}}(G/S)$ which assigns to an $S$-scheme $S^{\e\prime}$ the Lie $\Gamma(S^{\e\prime},\cO_{S^{\prime}})$-algebra of right-invariant derivations of $G^{\e\prime}$ over $S^{\e\prime}$.
If $G$ is smooth over $S$, $\ogs$ is locally free of finite type and $\mathbb V(\ogs)$ is also smooth over $S$ by \cite[${\rm{IV}}_{4}$, Propositions 17.2.3(i) and 17.3.8]{ega}.

Now, if  $S^{\e\prime}=\spec B^{\e\prime}$ is affine, we will write $\ogsp=\omega_{G^{\le\prime}\be/B^{\le\prime}}^{1}$ and $w_{G^{\le\prime}\be/S^{\le\prime}}^{1}=w_{G^{\le\prime}\be/B^{\le\prime}}^{1}$. Note that, by \cite[Corollary 1.4.2, p.~ 206]{ega1},  $\omega_{G^{\le\prime}\be/B^{\le\prime}}^{1}=\widetilde{w}
_{G^{\le\prime}\be/B^{\le\prime}}^{\e 1}$ is the $\cO_{\spec B^{\le\prime}}\e$-\e module associated to the $B^{\e\prime}$-module $w_{G^{\le\prime}\be/B^{\le\prime}}^{1}$. If $f\colon \spec B^{\e\prime}\to \spec B$ is the morphism associated to a ring homomorphism $B\to B^{\e\prime}$ then, by \cite[Corollary 1.7.4, p.~213]{ega1}, \eqref{v} is equivalent to the identities
\begin{equation}\label{v2}
\mathbb V(\omega_{G^{\e\prime}\be/B^{\le\prime}}^{1})(B^{\e\prime})=   
\Hom_{\e B^{\le\prime}\text{-mod}}(w_{G^{\e\prime}\be/B^{\le\prime}}^{1},B^{\e\prime}\e)=
\Hom_{\le B\text{-mod}}(w_{G\lbe/\lbe B}^{1},B^{\e\prime}\e).
\end{equation}

Assume now that $S$ is the spectrum of a local ring  with residue field $k$ and that $G$ is locally of finite type over $S$. Then, by \cite[II, (4.11.3)]{sga3}, $\omega_{G_{\mr s}/k}^{1}$ is a free $\cO_{\spec k}$-module of rank 
\begin{equation}\label{d}
d=\dimn\le{\rm Lie}(G_{\mr s}).
\end{equation}
Thus there exists a (non-canonical) isomorphism of $k$-group schemes
\begin{equation}\label{pc}
\hskip -1.3cm\mathbb V\big(\omega_{\le G_{\mr s}\lbe/k}^{1}\big)\simeq\mathbb G_{a,\e k}^{d}.
\end{equation}
Note that $d\geq\dim G_{\mr s}$, with equality if, and only if, $G_{\mr s}$ is smooth over $k$ \cite[II, \S5, 1.3 and Theorem 2.1, pp.~235 and 238]{dg}.

\begin{lemma}\label{pc0} Let $S$ be a local scheme and let $G$ be a smooth $S$-group scheme. Then there exists a (non-canonical) isomorphism of $S$-group schemes
\[
\mathbb V\big(\ogs)\simeq\mathbb G_{\lbe a,\e S}^{d}\,,
\]
where $d=\dim G_{\mr s}$ is the dimension of the special fiber of $G$.
\end{lemma}
\begin{proof} Since $S$ is local and $\ogs$ is locally free over $S$, $\ogs$ is free over $S$ of rank $d$. The lemma is now clear.
\end{proof}

\subsection{Witt vectors} 
Let $p$ be a prime number. If $A$ is a ring, let $W(\lbe A\lbe )$ denote the ring of $p$-typical Witt vectors on $A$. By definition, $W(A)$ is the set $A^{\N_{0}}$ endowed with laws of composition defined by certain polynomials. See \cite[II, \S6]{self} or \cite[\S1]{ill} for more details. The map $V\colon W(A)\to W(A), (a_{0},a_{1},\dots)\mapsto (0,a_{0},a_{1},\dots),$ is an additive operator called the {\it Verschiebung}. For every integer $n\geq 1$,  the $n$-th truncation $W_{\be n}(A)\simeq W(A)/\le V^{n}W(A)$ is the ring of Witt vectors of length $n$. Now, if $n>s\geq 1$ are integers, consider both the injective homomorphism of abelian groups $V_{n-s,\le n}\colon W_{\be n-s}(A)\to W_{\be n}(A), (a_{0},\dots, a_{\le n-s-1})\mapsto (0,\dots,0,a_{0},\dots, a_{\le n-s-1})$ ($s$ zeroes) and the surjective homomorphism of rings $R_{\e n,\le s}\colon W_{\! n}(A)\twoheadrightarrow W_{\!s}\lbe(A), (a_{0},\dots, a_{\le n-1})\mapsto (a_{0},\dots, a_{\le s-1})$.
Clearly, the sequence
\begin{equation}\label{vr}
0\to W_{\!n-s}(A)\overset{V_{n\lbe-\lbe s,n}}{\lra} W_{\!n}(A)\overset{\!\be R_{\le n,s}}{\lra}
 W_{\! s}(A)\to 0
\end{equation}
is exact. To conform to standard notation, we will write
\begin{equation}\label{sta}
V^{s}W_{\!n-s}(A)=V_{n-s,\le n}W_{\!n-s}(A)\subset W_{\!n}(A).
\end{equation}
We now observe that $W_{\lbe 1}(A)=A$ and $W(A)=\varprojlim W_{\!n}(A)$, where the corresponding transition maps are the maps $R_{\e n+1,n}$. Further, if $a\in A=W_{\lbe 1}(A)$, then 
\begin{equation*}
V^{s}\be(a)\overset{{\rm def.}}{=}V_{1,\e s+1}(a)=(0,\dots,0,a)\in W_{\!s+1}(A)
\end{equation*}
($s$ zeroes). Next, assume that $A$ is a ring of characteristic $p$, i.e., an $\mathbb F_{p}$-algebra. Then the Frobenius endomorphism of $W\be(\lbe A)$ defined in \cite[\S1.3]{ill} agrees with the map $F\colon  W\be(\lbe A)\to W\be(\lbe A),(a_{0},a_{1},\dots)\mapsto (a_{0}^{\le p},a_{1}^{\le p},\dots)$, which is surjective if $A=A^{p}$ \cite[(1.3.3) and (1.3.5), p.~507]{ill}. By \cite[(1.3.7) and (1.3.8), p.~507]{ill}, we have 
\begin{equation}\label{ot}
p=VF=FV,
\end{equation}
whence 
\begin{equation}\label{fvp}
p\colon W\be(\lbe A)\to W\be(\lbe A),(a_{0},a_{1},\dots)\mapsto(0,a_{0}^{\le p},a_{1}^{\le p},\dots).
\end{equation}
Further, the following holds: for every pair $m,n\in\N$ and $x,y\in W\be(\lbe A)$,
\begin{equation}\label{vv}
V^{m}x\cdot V^{n}y=V^{\le m+n}(F^{\le n}x\cdot F^{m}y)
\end{equation}
\cite[1.3.12, p.~508]{ill},  and similar identities hold for the various truncations of $W\be(\lbe A)$.  Now, if $n\geq 1$, then the truncation homomorphism $W(A)\to W_{\!n}(A), (a_{0},a_{1},\dots)\mapsto (a_{0},a_{1},\dots,a_{n-1})$, induces a surjective ring homomorphism
\begin{equation}\label{tru}
t_{\le n}^{A}\colon W(A)/(\e p^{\le n})\twoheadrightarrow W_{\!n}(A)
\end{equation}
which is an isomorphism if $A=A^{p}$, i.e., 
\begin{equation}\label{tru1}
W(A)/(\e p^{\le n})=W_{\!n}(A) \quad\text{if $A=A^{p}\e$}.
\end{equation}
In this case, the inverse $(t_{\le n}^{\e A})^{-1}\colon  W_{\!n}(A)\to W(A)/(\e p^{\le n})$ is given by $(a_{0},a_{1},\dots,a_{n-1})\mapsto(a_{0},a_{1},\dots,a_{n-1},0,\dots)+ (\e p^{\le n})$.
We now observe that \eqref{fvp} induces a map
\begin{equation*} 
p\colon W_{\! n+1}(A)\to W_{\! n+1}(A), (a_{0},\dots,a_{\le n})\mapsto (0, a_{0}^{p},\dots,a_{\le n-1}^{p}),
\end{equation*}
such that
\begin{equation*} 
p^{n}(a_{0},\dots,a_{\le n})=\big(0,\dots,0,a_{0}^{\le p^{n}}\big)=V^{\le n}\be\big(a_{0}^{\le p^{n}}\big)
\end{equation*}
($n$ zeroes).  Consequently,
\begin{equation}\label{opr}
p^{\le n}W_{n+1}(A)=V^{\le n}W_{1}(A) \qquad\text{if $A=A^{p}\e$}.
\end{equation}

Now let $k$ be a perfect field of positive characteristic $p$. Then $W(k)$ is an absolutely unramified complete discrete valuation ring with maximal ideal $(\e p)=p\le W(k)$ and residue field $k$.  For every $n\geq 1$, $W_{\!n}\lbe(\lbe k)\simeq W\lbe(\lbe k)/(\e p^{\le n})$ is an artinian local ring with maximal ideal $p\le W_{\!n}\lbe(\lbe k)$ and residue field $k$. Let $A$ be a $k$-algebra and recall the homomorphism $t_{\lbe n}^{\le k}$ \eqref{tru}. Composing $(t_{\lbe n}^{\le k})^{-1}\!\otimes\be 1_{W\lbe(\lbe A)}\colon W_{\!n}(k)\otimes_{\le W(k)}W(A)\overset{\!\sim}{\to} W(k)/(\e p^{\le n})\otimes_{\e W(k)}W(A)$ with the canonical isomorphism $W(k)/(\e p^{\le n})\otimes_{\e W(k)}W(A)\simeq W(A)/(\, p^{\e n})$, we obtain an isomorphism $W_{\!n}(k)\otimes_{\le W(k)}W(A)\simeq W(A)/(\, p^{\e n})$ which, when composed with \eqref{tru}, yields a surjective homomorphism of $W_{\! n}(k)$-algebras
\begin{equation}\label{wepi}
W_{\!n}(k)\otimes_{\e W(k)}\be W(A)\twoheadrightarrow W_{\!n}(A).
\end{equation}
If $A=A^{p}$, the preceding map is an isomorphism, i.e.,
\begin{equation}\label{wepi1}
\hskip 6em W_{\!n}(k)\otimes_{\e W(k)}\be W(A)\simeq W_{\!n}(A)   \quad\text{if $A=A^{p}\e$}.
\end{equation}
Explicitly, \eqref{wepi} is induced by the assignment
\[
(x_{0},x_{1},\dots, x_{n-1})\otimes (a_{0},a_{1},\dots)\mapsto (b_{0},b_{1},\dots, b_{n-1}),
\]
where each $x_{i}\in k$ and $(b_{0},b_{1},\dots)=(x_{0},x_{1},\dots, x_{n-1},0,\dots)\cdot(a_{0},a_{1},\dots)\in W(A)$.

\begin{remark} \label{f-gr} For every integer $n\geq 1$, there exists a canonical isomorphism of $W\lbe(k)/(\e p^{\e n+1})$-modules
\[
W\lbe(k)/(\e p^{\le n})\overset{\!\sim}{\to} p\e (W\lbe(k)/(\e p^{\le n+1})),\e a+(\, p^{\le n})\mapsto p\e a+(\, p^{\le n+1}) \quad (a\in W\lbe(k)\e).
\]
Under the truncation isomorphisms $t_{\lbe n}^{\le k}$ and $t_{\lbe n+1}^{\le k}$\eqref{tru}, the above isomorphism corresponds to the isomorphism of $W_{\!n+1}(k)$-modules $W_{\!n}(k) \overset{\!\sim}{\to} p\e W_{\!n+1}(k)$ which maps  $(a_{0},\dots, a_{n-1})$ to $p\le(a_{0},\dots,a_{n-1},0)=(0,a_{0}^{\le p},\dots,a_{n-1}^{\le p})$.
\end{remark}

\smallskip

If $f\in A$ and $n\geq 1$, we will write $[\e f\e]=(f,0,\dots,0)\in W_{\!n}(A)$. The same notation will be used for $(f,0,\dots)\in W(A)$ when there is no risk of confusion. Let $W_{\be n}(A)_{[\e f\e]}$ denote the localization of $W_{\be n}(A)$ with respect to the multiplicative set $\{[\e f\e]^{r}\}_{r\geq\le 0}$, where $[\e f\e]^{0}=1_{n}=(1,0,\dots,0)\in W_{\!n}(A)$. Then there exists a canonical  isomorphism of $W_{\!n}(A)$-algebras
\begin{equation}\label{wloc}
W_{\! n}(A)_{[\e f\e]}\overset{\!\sim}{\to} W_{\!n}(A_{\lbe f}\lbe)
\end{equation}
which maps $(a_{0}, \dots,a_{n-1})/\le[\e f\e]^{\le r}$ to $(a_{0}, \dots,a_{n-1})\cdot [\e 1/\be f^{\le r}\e]\in W_{\!n}(A_{\lbe f}\lbe)$. See \cite[(1.1.9), p.~505, and (1.5.3), p.~512]{ill}.
 
\begin{lemma}\label{flatw} Let $A$ be a reduced $k$-algebra. Then $W\be(\be A)$ is flat over $W\be(k)$.
\end{lemma}
\begin{proof} By \cite[Corollary 2.14, p.~11]{liu}, it suffices to check that $W(A)$ is $W(k)$-torsion free, i.e., that $p$ is not a zero divisor in $W(A)$. If $(a_{0},a_{1},\dots)\in W(A)$, then $p\le(a_{0},a_{1},\dots)=(0,a_{0}^{\le p},a_{1}^{\le p},\dots)$, which is zero if, and only if, $(a_{0},a_{1},\dots)=0$.
\end{proof}
 
\smallskip

Let $\mathbb W_{\be n}$ (respectively, $\mathbb W\e$) denote the $k$-ring scheme of  Witt vectors of length $n\geq 1$ (respectively, of infinite length) defined in, e.g., \cite[V, \S1]{dg}. The $k$-scheme underlying $\mathbb  W_{\be n}$ is $\A_{k}^{\!n}$. Further, for every $k$-algebra $A$, 
\[
\mathbb  W_{\be n}(\spec A)=\Hom_{k}(\spec A,\mathbb  W_{\be n})=W_{\!n}(A).
\]
Similarly, $\mathbb W(\spec A)=W(A)$.

If $Y$ is a $k$-scheme, we let
$W_{\!n}(Y)=(\e|Y|, W_{\!n}(\cO_{Y}\be))$ be the $W_{\!n}(k)$-scheme defined in \cite[\S 1.5]{ill}. Here 
$W_{\!n}(\cO_{Y}\be)$ is the (Zariski) sheaf $U\mapsto
W_{\!n}(\cO_{Y}(U\le))$ on $Y$. We have $W_{\be 1}(Y)=Y$ and, for every $k$-algebra
$A$, $W_{\!n}(\spec A)=\spec W_{n}(A)$. The infinite-length variant
of this construction will be denoted by $W(Y)$.

\begin{lemma}\label{ww} Let $f=a_{\le r}+\dots+a_{1}T^{\e r-1}+T^{\le r}\in W\lbe(k)[\le T\le]$ be an Eisenstein polynomial, i.e., $p\!\mid\! a_{i}$ for all $i$ and $p^{2}\!\nmid\! a_{\le r}$. Then, for every $k$-algebra $A$ such that $A=A^{p}$, there exists a canonical isomorphism of $W\lbe(\be A)$-algebras
\[
\varprojlim  W\lbe(\lbe A)[\le T\le]/(f, T^{\e n})\simeq W\lbe(\lbe A)[\le T\le]/(f\le).
\]
\end{lemma}
\begin{proof} Since $f-T^{\e r}\in p\le (W(k)[[\le T\le]])^{\times}$, we have $(f,T^{\le n\le r})=(f,p^{\le n})\subseteq W(k)[[\le T\le]]$ for every $n\geq 1$. Thus, by \eqref{tru1},
\[
\begin{array}{rcl}
W\lbe(\lbe A)[\e T\e]/(f, T^{\e n\le r})=
W\lbe(\lbe A)[[ \e T\e]]/(f, T^{\e n\le r})&=& 
W\lbe(\lbe A)[[\e T\e]]/ (f, p^{\e n})\\
&=& W\lbe(\lbe A)[\e T\e]/(f, p^{\e n})\simeq W_{\be n}(A)[\le T\le]/(f\e).
\end{array}
\]
On the other hand, since the maps $f\e W_{\be n+1}(A)[T]\to f\e W_{\be n}(A)[T]$ are surjective, we have
\[ 
\varprojlim  W\lbe(\lbe A)[\le T\le]/(f, T^{\e n})\simeq  W\lbe(\lbe A)\langle T\rangle/(f\le),
\]
where $W\lbe(\lbe A)\langle\e T\e\rangle=\varprojlim  W_{\be n+1}\lbe(\be A)[\e T\e]\subset W\lbe(\lbe A)[[\le T\le]]$ is the algebra of restricted power series over $W\lbe(\be A)$. Now, by \cite[Theorem 11, p.~406]{sal}, the inclusion $W\lbe(\lbe A)[\le T\le]\to W\lbe(\be A)\langle\e T\e\rangle$ induces an isomorphism of $W\lbe(\lbe A)$-algebras $ W\lbe(\lbe A)\langle\e T\e\rangle/(f\le)\simeq W\lbe(\lbe A)[\le T\le]/(f\le)$, whence the lemma follows.
\end{proof}

\subsection{Groups of components} 
If $G$ is a group scheme locally of finite type over an artinian local ring $A$, we will write $G^{\e 0}$ for the identity component of $G$ as defined in \cite[${\rm{VI}}_{\rm A}$, \S2.3]{sga3}. Thus $G^{\e 0}$ is a normal, open and closed subgroup scheme of $G$. Further, if $G$ is flat over $A$, then the quotient fppf sheaf
\begin{equation}\label{pio}
\pi_{0}(G\le)=G^{\e 0}\e\backslash G 
\end{equation}
is represented by an \'etale $A$-group scheme. Further, the canonical morphism 
\begin{equation}\label{pim}
p_{\le G}\colon G\to \pi_{0}(G\le)
\end{equation}
is faithfully flat and locally of finite presentation. In particular, if $A=k$ is a field and $\kbar$ is an algebraic closure of $k$, then
$p_{\le G}\be\big(\e\kbar\e\big)\colon G\big(\e\kbar\e\big)\to \pi_{0}(G\le)\big(\e\kbar\e\big)$ is surjective, i.e., 
$\pi_{0}(G\le)\big(\e\kbar\e\big)=G^{\e 0}\lbe\big(\e\kbar\e\big)\e\backslash G\big(\e\kbar\e\big)$. We also note that, if $G$ is smooth over $A$, then $G^{\e 0}$ is smooth as well and (consequently) \eqref{pim} is a smooth morphism.  See \cite[${\rm{VI}}_{\rm A}$, \S2, and ${\rm{VI}}_{\rm B}$, \S3]{sga3} for more details.

\begin{lemma} 
Let $A\to B$ be a homomorphism of artinian local rings and let $G$ be a flat $A$-group scheme locally of finite type. Then there exists a canonical isomorphism of \'etale $B$-group schemes $\pi_{0}(G_{\be B}\le)\simeq\pi_{0}(G\le)_{B}$.
\end{lemma}
\begin{proof} Note that $\pi_{0}(G_{\be B}\le)$ is an \'etale $B$-group scheme since $G_{\be B}$ is flat and locally of finite type over $B$. Now, if $\varepsilon_{A}\colon \spec A\to \pi_{0}(G\le)$ denotes the unit section of $\pi_{0}(G\le)$, then the unit section of $\pi_{0}(G\le)_{B}$ is $(\varepsilon_{A})_{B}\colon (\spec A)_{B}\to \pi_{0}(G\le)_{B}$. Further, $(p_{\le G})_{\be B}\colon G_{\be B}\to \pi_{0}(G\le)_{B}$ is faithfully flat and locally of finite presentation and its kernel equals $G_{\be B}\times_{\pi_{0}(G\le)_{B}}(\spec A)_{B}=(G^{\e 0})_{\be B}$. On the other hand, there exists a canonical isomorphism of $B$-group schemes $(G^{\e 0})_{\lbe B}=(G_{\be B})^{0}$ by \cite[$\text{VI}_{\text{B}}$, Proposition 3.3]{sga3}. Thus there exists a canonical sequence of $B$-group schemes
\[
1\to (G_{\be B})^{0}\to G_{\be B}\to \pi_{0}(G\le)_{\lbe B}\to 1
\]
which is exact for the fppf topology on $B$. The lemma follows.
\end{proof}

Henceforth, we will make the identification
\[
\pi_{0}(G_{\be B}\le)=\pi_{0}(G\le)_{B}
\]
via the isomorphism of the lemma.

Now let $S$ be {\it any} scheme and let $G$ be an $S$-group scheme. For each $s\in S$, let 
$|G_{\lbe s}\le|^{0}$ denote the identity component of the $k(s)$-group scheme $G_{\lbe s}=G\times_{S}\spec k(s)$ and consider the functor defined by
\begin{equation}\label{idcomp2}
G^{\e 0}(T)=\left\{u\in G(T)\colon \forall s\in S, u_{\le s}(|\le T_{s}\le|)\subseteq |G_{\lbe s}|^{0}\right\},
\end{equation}
where $T$ is an $S$-scheme. If $G$ is smooth, then \eqref{idcomp2} is represented by an open and smooth subgroup scheme $G^{\e 0}$ of $G$ whose fibers are geometrically connected by \cite[${\rm{VI}}_{\rm A}$, Proposition 2.1.1, and ${\rm{VI}}_{\rm B}$, Proposition 3.3 and Theorem 3.10]{sga3}. Note that, in general, $G^{\e 0}$ is not a closed subgroup scheme of $G$ and $G^{\e 0}\e\backslash G $ is not represented by a scheme

\subsection{The connected-\'etale sequence}\label{c-et}
Let $R$ be a henselian local ring with residue field $k$ and let $F$ be a finite and flat $R$-group scheme. Then $F=\spec A$, where $A$ is a finite $R$-algebra. By \cite[I, \S1, Propositions 1 and 3]{ray}, there exists a canonical isomorphism of rings $A=\prod_{\le i=1}^{\le r}A_{\le i}$, where $r\geq 1$ is an integer and each $A_{\le i}$ is a local ring. Consequently, $F=\coprod_{\le i=1}^{\le r}\spec A_{\le i}$. Now, since $\spec R$ is connected, there exists a unique $i_{0}\in\{1,\dots,r\le\}$ such that the unit section $\spec R\to F$ factors as 
$\spec R\to\spec A_{\le i_{0}}\to F$, where the second morphism is induced by the projection $\prod_{\le i=1}^{\le r}A_{\le i}\to A_{\le i_{0}}$. It is shown in \cite[pp.~ 139-140]{ta} that $\spec A_{\le i_{0}}$ is a flat, normal, open and closed $R$-subgroup scheme of $F$. We will use the notation $F^{\e\circ}=\spec A_{\le i_{0}}$ (rather than the standard $F^{\e 0}=\spec A_{\le i_{0}}$) since there do exist examples where $F^{\e\circ}$ has a disconnected generic fiber and therefore does not represent the functor  \eqref{idcomp2} introduced above. Now, by \cite[p.~140]{ta}, the quotient $F^{\e\et}=F/F^{\le\circ}$ is an \'etale $R$-group scheme. The induced sequence of $R$-group schemes 
\begin{equation}\label{c-e-k}
1\to F^{\le\circ}\to F\to F^{\e \et}\to 1
\end{equation}
is exact for both the fppf and fpqc topologies on $(\mr{Sch}/R\le)$ \cite[Lemma 2.3]{bga}. Note that the special fiber of the preceding sequence is the canonical sequence of $k$-group schemes 
\[
1\to F^{\e 0}_{\be k}\to F_{\be k}\to F_{\be k}^{\le \et}\to 1.
\]
See \cite[II, \S5, no. 1, Proposition 1.8, p.~237]{dg}.

\subsection{Formal schemes}\label{for-sch}
In this paper we need to consider (certain types of) non-noetherian formal schemes. Since the standard reference for the theory of formal schemes, namely \cite[\S10]{ega1}, is not entirely satisfactory in a non-noetherian setting, we will instead rely on \cite[Chapter 2]{ab} and \cite[Chapter I]{fk}. Unfortunately, these two equally-useful references attach different meanings to the term ``adic formal scheme". In order to avoid confusion, we will follow exclusively the terminology of \cite[Chapter I]{fk}, which is compatible with that of \cite[\S10]{ega1}. For the convenience of the reader, references to \cite[Chapter 2]{ab} below will be accompanied by a reference to the appropriate entry from the following dictionary

\begin{remarks}\label{term}\indent
\begin{enumerate}
\item[(a)] In \cite[Chapter I, \S1.8]{ab}, the adic rings of \cite[Definition 7.1.9, p.~172]{ega1} are called ``preadic, complete and separated''.
\item[(b)] In \cite[Chapter 2]{ab}, the adic formal schemes of \cite[Definition 10.4.2, p.~407]{ega1} are called ``preadic formal schemes'' \cite[Definition 2.1.16, p.~121]{ab}.
\item[(c)] In \cite[Chapter 2]{ab}, an ``adic formal scheme" is an adic formal scheme in the sense of \cite[Definition 10.4.2, p.~407]{ega1} with the additional property that it has a finitely generated ideal of definition. See \cite[Definition 2.1.24, p.~123]{ab} and (b) above. Thus the ``adic formal schemes" of \cite[Chapter 2]{ab} correspond to the objects we call below {\it adic formal schemes globally of finite ideal type} (see Definition \ref{gfit}).
\end{enumerate}
\end{remarks}

\smallskip

Unadorned limits in this Subsection are indexed by $\N$.

\smallskip

An adic formal scheme \cite[Definition 10.4.2, p.~407]{ega1} $\m X$ is said to be {\it of finite ideal type} if there exists an open covering $\m X=\bigcup_{\le\alpha}{\m U}_{\e\alpha}$ where each $\m U_{\e\alpha}$ is 
isomorphic to a formal spectrum $\spf A_{\le\alpha}$ for some adic ring $A_{\le\alpha}$ which has a finitely generated ideal of definition\,\footnote{See \cite[Definition 1.1.16, p.~260]{fk}. Perhaps it would be more appropriate to call these schemes ``adic formal schemes {\it locally} of finite ideal type" but, as stated above, we will adopt the terminology introduced in \cite{fk}.}.
Clearly, any locally noetherian formal scheme is an adic formal scheme of finite ideal type.

\begin{definition}\label{gfit} An adic formal scheme $\m X$ is said to be {\it globally of finite ideal type} if it has an ideal of definition of finite type $\s I\subseteq\s O_{\m X}$.
\end{definition}

By the discussion following \cite[Proposition 1.1.19, p.~261]{fk} and \cite[Proposition 2.1.11, p.~119]{ab} (see also Remark \ref{term}(a)), any adic formal scheme which is globally of finite ideal type is of finite ideal type. 

By \cite[Corollary 3.7.13, p.~309]{fk}, an adic affine formal scheme $\m X\simeq\spf A$ is of finite ideal type if, and only if, $A$ is an adic ring which has a finitely generated ideal of definition. If this is the case, and if $I$ is a finitely generated ideal of definition of $A$, then $\s I\overset{\rm def.}{=}I^{\Delta}\subseteq\s O_{\m X}$ is an ideal of definition of finite type of $\m X$ \cite[Proposition 2.1.11, p.~119]{ab} (see also Remark \ref{term}(a)). Thus, any adic affine formal scheme of finite ideal type is globally of finite ideal type. More generally, any quasi-compact and quasi-separated adic formal scheme of finite ideal type is globally of finite ideal type\,\footnote{See \cite[Definitions 1.6.1 and 1.6.5, pp.~276-277; Corollary 3.7.12, p.~309, Definition 1.6.6, p.~277, and comment after this definition]{fk} for the fact that any affine formal scheme is quasi-compact and quasi-separated.}. Further, by \cite[Proposition
10.5.4, p.~410]{ega1}, any locally noetherian formal scheme is, in fact, {\it globally} of finite ideal type.

\smallskip

A morphism $u\colon\m X\to \m S$ of adic formal schemes globally of finite ideal type is said to be {\it adic} if there exists an ideal of definition of finite type $\s I$ of $\m S$ such that $u^{*}\lbe(\be\s I\lbe)\s O_{\m X}$ is an ideal of definition (clearly of finite type) of $\m X$ (see \cite[(4.3.5), p.~98]{ega1} for the definition of $u^{*}\lbe(\be\s I\lbe)\s O_{\m X}$). We then say that $\m X$ is an {\it adic formal $\m S$-scheme} or that $\m X$ is {\it adic over $\m S$}. See \cite[Definition 2.2.7, p.~128]{ab} (see also Remark \ref{term}(c)) and \cite[comment after Definition 1.3.1, p.~266]{fk}. If $\m X$ and $\m Y$ are two adic formal $\m S$-schemes, then every $\m S$-morphism $\m X\to\m Y$ is adic (cf. \cite[end of (10.12.1), p.~437]{ega1}). For any $\m S$ as above, we will write $(\text{Ad-For}/\m S)$ for the category of adic formal $\m S$-schemes.

\smallskip

\begin{remark}\label{ns-cat} If $\m S$ is a locally noetherian formal scheme, $(\text{Ad-For}/\m S)$ contains (as a full subcategory) the category of locally noetherian adic formal $\m S$-schemes considered in \cite[10.12]{ega1}. The latter category contains, in turn, the category of formal $\m S$-schemes which are locally of topologically finite type, as follows from \cite[Proposition 7.5.2(ii), p.~181]{ega1}. We conclude that the categories of formal schemes considered in \cite{ns} and \cite{bert} are full subcategories of $(\text{Ad-For}/\m S)$.
\end{remark}

\smallskip

Every adic formal scheme globally of finite ideal type can be represented as an inductive limit of schemes. Indeed, let $\m X$ be an adic formal scheme globally of finite ideal type and let $\s I$ be an ideal of definition of finite type of $\m X$. By \cite[Corollary 1.1.24, p.~263]{fk},  $\{\s I^{n}\}_{n\le\in\le\N}$ is a fundamental system of ideals of definition of finite type of $\m X$. Then, by \cite[Proposition 10.6.2, p.~412]{ega1}, $\m X$ is the inductive limit, in the category of formal schemes, of the schemes $\m X_{n}=(\m X,\s O_{\m X}/\be\s I^{n})$ as $n$ runs over $\N$, where the transition morphisms in the indicated limit are the canonical closed immersions.

\smallskip

Let $\m S$ be an adic formal scheme globally of finite ideal type and let $\s K$ be an ideal of definition of finite type of $\m S$. For every $n\in\N$, set $\m S_{n}=(\m S,\s O_{\m S}/\s K^{n})$, which is a scheme. An inductive system $(X_{n})$ of $\m S_{n}$-schemes is said to be an {\it adic inductive $(\m S_{n})$-system} if the structural morphisms $u_{n}\colon X_{n}\to \m S_{n}$ are such that, for every $m\leq n$, the square
\begin{equation}\label{cart0}
\xymatrix{X_{m}\ar[d]\ar[r]^{u_{m}}& \m S_{m}\ar[d]\\
X_{n}\ar[r]^{u_{n}}& \m S_{n},}
\end{equation}
is cartesian (whence $X_{m}$ can be identified with $X_{n}\!\times_{\m S_{n}}\!\m S_{m}$). The adic inductive $(\m S_{n})$-systems form a category  denoted by $\mathrm{Ad}\e$-$\e\mr{Ind}(\m S)$: a morphism between such systems $(X_{n})\to (Y_{n})$ is an inductive system of $\m S_{n}$-morphisms $f_{n}\colon X_{n}\to Y_{n}$ such that 
$f_{m}=f_{n}\be\times_{\m S_{n}}\be\m S_{m}$ for every $m\leq n$. The latter category is canonically equivalent to the category
$(\text{Ad-For}/\m S)$ of adic formal $\m S$-schemes. The equivalence is obtained as follows. To each object $u\colon\m X\to\m S$ of $(\text{Ad-For}/\m S)$, we associate the inductive system of the $\m S_{n}$-schemes  $\m X_{n}=(\m X,\s O_{\m X}/\!\s J^{\be n})$, where $\s J=u^{*}(\be\s K\lbe)\s O_{\m X}$ and the structural morphism $u_{n}\colon \m X_{n}\to \m S_{n}$ is determined by $u$ via \cite[Proposition 10.5.6(i), p.~410]{ega1}. Note that each of the transition morphisms of the system $(\m X_{n})$ is a nilpotent immersion and therefore a universal homeomorphism. If $v\colon \m Y\to\m S$ is another object of $(\text{Ad-For}/\m S)$, set $\s I=v^{*}(\be\s K\le)\s O_{\m Y}$ and $\m Y_{n}=(\m Y,\s O_{\m Y}/\s I^{n})$ for every $n\in\N$. Then to each $\m S$-morphism $f\colon\m X\to \m Y$ there corresponds a morphism $(\m X_{n})\to (\m Y_{n})$ of adic inductive $(\m S_{n})$-systems. Conversely, given an adic inductive $(\m S_{n})$-system $(X_{n})$ with associated sequence of structural morphisms $(u_{n})$, there exists an adic formal scheme $\m X$ such that $X_{n}=\m X_{n}=(\m X,\s O_{\m X}/\be\s J^{\lbe n})$, where $\!\s J\!$  is an ideal of definition of finite type of $\m X$, and the sequence $(u_{n})$ defines a morphism $u\colon \m X\to \m S$ which satisfies $\s J=u^{*}(\be\s K\le)\s O_{\m X}$ (whence $\m X$ is adic over $\m S$). Further, to a morphism of adic inductive $(\m S_{n})$-systems $(X_{n})\to (Y_{n})$ there corresponds an $\m S$-morphism $\m X\to\m Y$. The preceding equivalence yields, for two adic formal $\m S$-schemes $\m X$ and $\m Y$, a canonical bijection
\begin{equation}\label{for-hom}
\Hom_{\m S}(\m X,\m
Y)\simeq\displaystyle\varprojlim \Hom_{\e
S_{n}}\be(\m X_{n},\m Y_{n}),
\end{equation}
where the transition maps in the projective limit are $v_{n}\mapsto v_{n}\times_{S_{n}}S_{m}$ for $m\leq n$ (cf. \cite[(10.12.3.2), p.~438]{ega1}). See \cite[proof of Proposition 2.2.14, p.~130]{ab} for more details.

\smallskip

\begin{lemma} \label{for-prop} Let $\m S$ be an adic formal scheme globally of finite ideal type, let $\s K$ be an ideal of definition of finite type of $\m S$ and set $\m S_{n}=(\m S,\s O_{\m S}/\s K^{n})$, where $n\in\N$. Let $(f_{n})\colon (\m X_{n})\to (\m Y_{n})$ be a morphism of adic inductive $(\m S_{n})$-systems and let $f\colon \m X\to\m Y$ be the corresponding morphism of adic formal $\m S$-schemes, as above. Consider, for a morphism of formal schemes, the property of being:
\begin{enumerate}
\item[(i)] quasi-compact;
\item[(ii)] quasi-separated;
\item[(iii)] separated;
\item[(iv)] an open immersion;
\item[(v)] a closed immersion;
\item[(vi)] affine.
\end{enumerate}
If $\bm{P}$ denotes one of the above properties, then $f\colon \m X\to\m Y$ has property $\bm{P}$ if, and only if, $f_{n}\colon \m X_{n}\to \m Y_{n}$ has property $\bm{P}$ for every  $n\in\N$.
\end{lemma}
\begin{proof} For properties (i) and (ii), see \cite[Proposition 1.6.9, p.~279]{fk}. For property (iii), see \cite[Proposition 4.6.9, p.~326]{fk}. For properties (iv) and (v), see \cite[Proposition 4.4.2, p.~321]{fk}. For property (vi), see \cite[Proposition 4.1.12, p.~314]{fk}.
\end{proof}

A morphism of formal schemes $f\colon\m X\to\m Y$ is said to be {\it formally \'etale} \cite[Definition 2.4.1, p.~139]{ab} if, for every affine scheme $Y$, nilpotent immersion $i\colon Y_{0}\to Y$ and morphism of formal schemes $Y\to\m Y\le$\,\footnote{ \,Here $Y$ and $Y_{0}$ are being regarded as adic formal schemes with ideal of definition $0$. See \cite[Remark 1.1.15, p.~260]{fk}.}, the map
\[
\Hom_{\e\m Y}(Y,\m X)\to \Hom_{\e\m Y}(Y_{0},\m X),
\]
induced by $i$, is a bijection.

\begin{lemma} \label{for-et} Let $\m S$ be an adic formal scheme globally of finite ideal type, let $\s K$ be an ideal of definition of finite type of $\m S$ and set  $\m S_{n}=(\m S,\s O_{\m S}/\s K^{n})$, where $n\in\N$. Let $(f_{n})\colon (\m X_{n})\to (\m Y_{n}\lbe)$ be a morphism of adic inductive $(\m S_{n})$-systems and let $f\colon \m X\to\m Y$ be the corresponding morphism of adic formal $\m S$-schemes. Then $f$ is formally \'etale if, and only if, $f_{n}\colon \m X_{n}\to \m Y_{n}$ is formally \'etale for every $n\in\N$.
\end{lemma}
\begin{proof} The proof is similar to the proof of \cite[Proposition 2.4.8, p.~140]{ab}.
\end{proof}

\bigskip

Let $S$ be a scheme and let $Z$ be a closed subscheme of $S$ defined by a quasi-coherent ideal of finite type $\s I\subseteq\s O_{S}$. Let $\widehat{S}= S_{/\le Z}$ be the formal completion of $S$ along $Z$. Then $\widehat{S}$ is an adic formal scheme globally of finite ideal type and $\widehat{\s I}=\s I_{/ Z}\subseteq \s O_{\widehat{S}}$  is an ideal of definition of finite type of $\widehat{S}$ \cite[Proposition 2.5.2(i), p.~145]{ab} (see also Remark \ref{term}(c)). 
Now let $f\colon X\to S$ be an $S$-scheme. Then $X\times_{S}Z$ is canonically isomorphic to the inverse image $f^{-1}(Z)$ of $Z$ by $f$. The latter is the closed subscheme of $X$ defined by the quasi-coherent ideal of finite type $f^{*}\lbe(\be\s I)\le\s O_{X}$. Thus $\widehat{X}=X_{/\le f^{-1}(Z)}$ is an adic formal scheme globally of finite ideal type which has $(f^{*}\lbe(\be\s I)\s O_{X})_{/\le f^{-1}(Z)}$ as an ideal of definition of finite type. Further, the morphism $\widehat{f}\colon \widehat{X}\to \widehat{S}$ induced by $f$ is adic. Indeed, $\big(\widehat{f}\,\,\big)^{\be *}\lbe\big(\lbe\widehat{\s I}\le\,\,\big)\s O_{\widehat{X}}=(f^{*}\lbe(\be\s I)\s O_{X})_{/\le f^{-1}(Z)} $ by \cite[2.5.10, p.~149, lines 9-10]{ab}. Consequently, if $\m S=\widehat{S}$, then  $\widehat{X}$ is an object of 
$(\text{Ad-For}/\m S)$. Further, by \cite[Corollary 10.9.9, p.~426]{ega1}, there exists a canonical isomorphism of adic formal $\m S$-schemes
\begin{equation}\label{hat1}
\widehat{X}\,=X\times_{S}\widehat{S}.
\end{equation}

Assume now that $S=\spec A$ is an affine scheme and $Z$ is defined by $\s I=\widetilde{I}$, where $I$ is a finitely generated ideal of $A$. Then $\widehat{S}=\spf\widehat{A}$, where  $\widehat{A}=\varprojlim A/I^{\le n}$ is the $I$-adic completion of $A$ \cite[Proposition 10.8.3, p.~419]{ega1}. Further, by \cite[Proposition 2.5.2(i), p.~145]{ab}, for every $n\in\N$ there exists a canonical isomorphism of schemes $\big(\widehat{S}\,\big)_{\be n}=S_{n}$, where $S_{n}=\spec(A/I^{\le n})$. Consequently, $\widehat{S}$ is canonically isomorphic to $\varinjlim S_{n}$ and \eqref{hat1} yields an isomorphism of adic formal $\m S$-schemes $\widehat{X}\,=\varinjlim\,(X\times_{S}S_{n})$.

\subsection{Weil restriction}\label{wres}

Let $f\colon S^{\e\prime}\to S$ be a morphism of schemes and let $X^{\prime}$ be an $S^{\le\prime}$-scheme. We will say that {\it the Weil restriction of $X^{\prime}$ along $f$ exists} if the contravariant functor
$(\mathrm{Sch}/S)\to(\mathrm{Sets}), T\mapsto\Hom_{
S^{\le\prime}}(T\times_{S}S^{\le\prime},X^{\le\prime}\le)$, is  representable, i.e., if there exists a pair $(\Re_{S^{\le\prime}\be/S}(X^{\prime}\e), q_{\le X^{\prime}\!,\e S^{\le\prime}\be/S})$, where $\Re_{S^{\le\prime}\be/S}(X^{\prime}\e)$ is an $S$-scheme and 
\begin{equation*} 
q_{\le X^{\prime}\be,\e S^{\le\prime}\be/S}\colon \Re_{S^{\le\prime}\be/S}(X^{\prime}\e)_{S^{\le\prime}}\to X^{\prime}
\end{equation*}
is an $S^{\e\prime}$-morphism of schemes, such that the map
\begin{equation}\label{wr}
\Hom_{\le S}\e(T,\Re_{S^{\le\prime}\be/S}(X^{\le\prime}\e))\overset{\!\sim}{\to}\Hom_{
S^{\le\prime}}(\e T\!\times_{S}\!S^{\e\prime},X^{\le\prime}\e), \quad g\mapsto q_{\le X^{\prime}\be,\e S^{\le\prime}\be/S}\circ g_{\le S^{\le\prime}} 
\end{equation}
is a bijection. The pair $(\Re_{S^{\le\prime}\be/S}(X^{\prime}\e), q_{\le X^{\prime}\!,\e S^{\le\prime}\be/S})$ (or, more concisely, the scheme $\Re_{S^{\le\prime}\be/S}(X^{\prime}\e)$) is called the {\it Weil restriction of $X^{\prime}$ along $f$}. If $S^{\le\prime}=\spec B$ and $S=\spec A$ are affine, we will write $(\Re_{B/A}(X^{\prime}\le),q_{\le X^{\prime}\!,\e B\lbe/A})$ for $(\Re_{S^{\le\prime}\be/S}(X^{\prime}\le),q_{\le X^{\prime}\!,\e S^{\le\prime}\lbe/S})$.

It follows from the above definition that $\Re_{S^{\le\prime}\be/S}$ is compatible with fiber products. In particular, if $X^{\prime}$ is an $S^{\e\prime}$-group scheme such that $\Re_{S^{\le\prime}\be/S}(X^{\prime}\le)$ exists, then $\Re_{S^{\le\prime}\be/S}(X^{\prime}\le)$ is an $S$-group scheme. On the other hand, if
$\Re_{S^{\le\prime}\be/S}(X^{\prime}\le)$ exists and $T\to
S$ is a morphism of schemes, then $\Re_{\le S^{\le\prime}_{T}\lbe/ T}\le(\e X^{\prime}\!\times_{S^{\le\prime}}\be S^{\e\prime}_{\le T})$ exists as well and \eqref{bc} and \eqref{wr} yield a canonical isomorphism of $T$-schemes 
\begin{equation}\label{wrbc}
\Re_{S^{\le\prime}\be/S}(X^{\prime}\le)\times_{S}T\overset{\!\sim}{\to}\Re_{\le S^{\le\prime}_{T}\lbe/ T}\le(\e X^{\prime}\!\times_{S^{\prime}}\! S^{\e\prime}_{\le T}).
\end{equation}
Moreover, if $S^{\le\prime\prime}\to S^{\le\prime}\to S$ are morphisms of schemes and  $X^{\prime\prime}$ is an $S^{\le\prime\prime}$-scheme such that $\Re_{\e S^{\prime\prime}\be/S^{\le\prime}\be}\e(X^{\prime\prime}\e)$ exists, then $\Re_{S^{\prime}\be/S}(\Re_{\e S^{\prime\prime}\be/S^{\le\prime}\be}\e(X^{\prime\prime}\e))$ exists if, and only if,  $\Re_{\e S^{\prime\prime}\!/S} (X^{\prime\prime}\e)$ exists. If this is the case, then there exists a canonical isomorphism of $S$-schemes 
\begin{equation}\label{wrcomp}
\Re_{S^{\le\prime}\be/S}(\le\Re_{\e S^{\prime\prime}\be/S^{\le\prime}\be}\e(X^{\prime\prime}\e))\overset{\!\lbe\sim}{\to}\Re_{\e S^{\prime\prime}\be/S} (X^{\prime\prime}\e)
\end{equation}

We now discuss existence results.

Let $f\colon S^{\e\prime}\to S$ be a finite and locally free morphism of schemes. Since $f$ is affine, there exists an isomorphism of $S$-schemes $S^{\e\prime}=\spec\s A\lbe(\lbe S^{\e\prime})$, where the quasi-coherent $\mathcal O_{\lbe S}\le$-algebra $\s A\lbe(\lbe S^{\e\prime}\le)=f_{*}\mathcal O_{\lbe S^{\prime}}$ is a locally free $\mathcal O_{\lbe S}\le$-module of finite rank. For every $s\in S$, let $n(f;s)$ denote the rank of the free $\mathcal O_{\lbe S,s}\e$-\e module $\s A\lbe(\lbe S^{\e\prime})_{s}$ and let $j\colon \spec k(s)\to S$ denote the canonical morphism. By \cite[Corollary 9.1.9, p.~356]{ega1}, if $s\in S$ is such that $f^{-1}(s)\neq\emptyset$ and $A(s)$ is the finite $k(s)$-algebra $j^{*}\be\s A\lbe(\lbe S^{\e\prime})$, then $S^{\e\prime}\times_{S}\spec k(s)=\spec A(s)$. Note that $A(s)$ is an artinian ring \cite[\S8, Exercise 3, p.~92]{am}. As a $k(s)$-module, $A(s)$ is isomorphic to $\s A\lbe(\lbe S^{\e\prime})_{\lbe s}\otimes_{\e\mathcal O_{\lbe S, s}}k(s)$ \cite[(2.5.8), p.~225]{ega1}, whence ${\rm dim}_{\e k(s)}\e A(s)=n(f;s)$. Now let $\overline{k(s)}$ be a fixed algebraic closure of $k(s)$, write $\overline{A(s)}=A(s)\otimes_{\e k(s)}\overline{k(s)}$ and let
\begin{equation}\label{gfs}
\gamma(f;s)=\#\big(S^{\e\prime}\times_{S}\spec \overline{k(s)}\,\big)=\#\big(\spec\overline{A(s)}\,\big).
\end{equation}
Thus $\gamma(f;s)$ is the cardinality of the geometric fiber of $f$ over $s$. By \cite[$\text{IV}_{2}$, Proposition 4.5.1]{ega}, $\gamma(f;s)$ is independent of the choice of $\overline{k(s)}$. Note that, since $\#\big(\le\spec\overline{A(s)}\e\big)=\#\big(\le\spec\overline{A(s)}_{\text{red}}\e\big)$ and
$\overline{A(s)}_{\text{red}}$ is isomorphic to a finite product of copies of $\overline{k(s)}$ (cf. \cite[proof of Theorem 8.7, p.~90]{am}), we have 
$\overline{A(s)}_{\text{red}}\simeq\prod_{\e i=1}^{\e\gamma(f;s)}\,\overline{k(s)}$. Consequently
\begin{equation}\label{gfs2}
\gamma(f;s)={\rm dim}_{\e \overline{k(s)}}\,\overline{A(s)}_{\text{red}}\leq {\rm dim}_{\e \overline{k(s)}}\,\overline{A(s)}=n(f;s).
\end{equation}
Clearly, $\gamma(f;s)=n(f;s)$ if $\overline{A(s)}$ is reduced.

If $S$ is a one-point scheme (e.g., $S=\spec B$, where $B$ is an artinian local ring) we will write $\gamma(f)$ for $\gamma(f;s)$, where $s\in S$ is the unique point of $S$.

\begin{remarks}\label{gpts} Let $f\colon S^{\e\prime}\to S$ be a finite and locally free morphism of schemes.
\begin{enumerate}
\item[(a)] If $s\in S$ and $K$ is an extension of $k(s)$, then $\#\big(S^{\e\prime}\times_{S}\spec K\,\big)\leq \gamma(f;s)$ by \cite[$\text{IV}_{2}$,  Proposition 4.5.1]{ega}.
\item[(b)] If $k$ is a field, $A$ is a finite \'etale $k$-algebra and $f\colon\spec A\to\spec k$ is the corresponding morphism of schemes, then $\gamma(f\e)={\rm dim}_{\e k} A$. This follows from  \eqref{gfs2} using the fact that, if $\kbar$ is an algebraic closure of $k$, then $A\be\otimes_{\e k}\be\kbar$ is reduced by \cite[V, \S6, no.7, Theorem 4, p.~A.V.34]{bou2}. In particular, if $k^{\e\prime}\be/k$ is a finite separable extension of fields and $f\colon\spec k^{\e\prime}\to\spec k$ is the corresponding morphism of schemes, then $\gamma(f\e)=[\le k^{\e\prime}\colon\be k\e]$.
\item[(c)] Let $g\colon T\to S$ be a morphism of schemes and consider the finite and locally free morphism $f\times_{S}T\colon S^{\e\prime}\times_{S}T\to T$. Let $t\in T$ and set $s=g(t)$. Then $\gamma(f\times_{S}T;t)=\#\spec \be\big(\,\overline{A(s)}\otimes_{\e\overline{k(s)}}\overline{k(t)}\,\big)$. Now, by \cite[Theorem 8.7, p.~90]{am}, we may write $\overline{A(s)}\e\otimes_{\e\overline{k(s)}}\,\overline{k(t)}=\prod_{\e i=1}^{\e \gamma(f;s)}\be\big( A_{\le i}\e\otimes_{\e\overline{k(s)}}\e\overline{k(t)}\,\big)$, 
where each $A_{\le i}$ is a local finite $\overline{k(s)}$-algebra. Since each morphism $\spec\be\big(A_{\le i}\e\otimes_{\e\overline{k(s)}}\e\overline{k(t)}\,\big)\to\spec \e\overline{k(t)}$ is a (universal) homeomorphism by Lemma \ref{uh0}, we conclude that
$\gamma(f\times_{S}T;t)=\gamma(f;s)$.
\item[(d)] Let $s\in S$ and let $g\colon T^{\e\prime}\to S^{\e\prime}$ be a universal homeomorphism such that $h=f\circ g\colon T^{\e\prime}\to S$ is finite and locally free. Then $T^{\e\prime}\times_{S}\spec \overline{k(s)}\to S^{\e\prime}\times_{S}\spec \overline{k(s)}$ is a (universal) homeomorphism. Consequently $\gamma(h\e ;s)=\gamma(f;s)$.

\item[(e)] Let $k^{\e\prime}\be/k$ be a finite extension of fields and let $k^{\e\prime}_{\sep}$ denote the separable closure of $k$ inside $k^{\e\prime}$. Note that, since   $k^{\e\prime}/k^{\e\prime}_{\sep}$ is purely inseparable, $g\colon\spec k^{\e\prime}\to\spec k^{\e\prime}_{\sep}$ is a universal homeomorphism. Now $h\colon\spec k^{\e\prime}\to \spec k$ factors as $f\circ g$, where $f\colon \spec k^{\e\prime}_{\sep}\to \spec k$ corresponds to the finite separable extension $k^{\e\prime}_{\sep}\le/\le k$. Thus, by remarks (b) and (d) above, we have $\gamma(h)=[k^{\e\prime}_{\sep}\colon \be k\e]=[k^{\e\prime}\colon\be k\e]_{\le\sep}$.
\end{enumerate}
\end{remarks}

\begin{definition}\label{adm} Let $f\colon S^{\e\prime}\to S$ be a finite and locally free morphism of schemes. An $S^{\le\prime}$-scheme $X^{\prime}$ is called {\it admissible relative to $f\e$} if, for every point $s\in S$, every collection of $\gamma(f;s\le)$ points in $X^{\le\prime}\times_{S}\e\spec k(s)$ is contained in an affine open subscheme of $X^{\le\prime}$, where $\gamma(f;s\le)$ is the integer \eqref{gfs}.
\end{definition}
If $S^{\e\prime}=\spec A$ and $S=\spec B$ are affine, we will also say that $X^{\prime}$ is {\it admissible relative to $B/A$}.

\begin{remarks}\label{rems-adm}\indent
\begin{enumerate}
\item[(a)] By \cite[II, Definition 5.3.1 and Corollary 4.5.4]{ega}, a quasi-projective $S^{\e\prime}$-scheme is admissible relative to an arbitrary finite and locally free morphism $S^{\e\prime}\to S$.
\item[(b)] If $k^{\e\prime}\be/k$ is a finite separable extension of fields and $f\colon\spec k^{\e\prime}\to\spec k$ is the corresponding finite and locally free morphism, then a $k^{\e\prime}$-scheme $X^{\prime}$ is admissible relative to $f$ if, and only if, every collection of $[\le k^{\e\prime}\colon\be k\e]$ points of $X^{\prime}$ is contained in an open affine subscheme. See Remark \ref{gpts}(b).
\item[(c)] If the geometric fibers of $f\colon S^{\e\prime}\to S$ are one-point schemes, then $\gamma(f;s\le)=1$ for every $s\in S$. Consequently, {\it every} $S^{\le\prime}$-scheme is admissible relative to $f$. This is the case, for example, if $f$ is a universal homeomorphism.
\item[(d)] If $X^{\prime}$ is admissible (relative to $f\colon S^{\e\prime}\to S$) and $Y^{\le\prime}\to X^{\prime}$ is an affine morphism of $S^{\e\prime}$-schemes, then $Y^{\le\prime}$ is admissible as well. This is immediate from Definition \ref{adm} using \cite[Proposition 9.1.10]{ega1}.
\item[(e)] If $X^{\prime}$ is an $S^{\e\prime}$-scheme which is admissible relative to $f$ and $g\colon T\to S$ is an {\it affine} morphism of schemes, then the $(S^{\e\prime}\times_{S}T\le)$-scheme $X^{\prime}\times_{S^{\e\prime}}(S^{\e\prime}\be\times_{\be S}
T\le)=X^{\prime}\times_{S}T$ is admissible relative to $f\times_{S}T\colon S^{\e\prime}\times_{S}T\to T$. Indeed, let $t\in T$, set $s=g(t)$ and let $\s C_{t}$ be a collection of $\gamma(\le f\!\times_{S}\!T;t\le)$ points in $(X^{\prime}\!\times_{S}\! T\le)\times_{T}\spec k(t)=(X^{\prime}\be\times_{S}\be\spec k(s))\times_{\spec k(s)}\spec k(t)$. Since $\gamma(f\times_{S}T;t\le)=\gamma(f;s\le)$ by Remark \ref{gpts}(c), $\s C_{t}$ defines a collection $\s C_{s}$ of at most $\gamma(f;s\le)$ points in $X^{\prime}\times_{S}\spec k(s)$. Since $\s C_{s}$ is contained in an affine open subscheme of $X^{\le\prime}$ and $X^{\prime}\times_{S}T\to X^{\prime}$ is affine, the set $\s C_{t}$ is clearly contained in an affine open subscheme of $X^{\prime}\be\times_{S}\be T$.
\item[(f)] If $X^{\prime}$ is an $S^{\le\prime}$-scheme which is admissible relative to $f\colon S^{\e\prime}\to S$ and $g\colon T^{\le\prime}\to S^{\e\prime}$ is a universal homeomorphism such that $h=f\circ g\colon T^{\e\prime}\to S$ is finite and locally free, then the $T^{\e\prime}$-scheme $X^{\prime}\times_{S^{\le\prime}}T^{\e\prime}$ is admissible relative to $h$. In effect, for every $s\in S$, since $X^{\prime}\times_{S^{\le\prime}}T^{\e\prime}\to X^{\prime}$ is a universal homeomorphism, every collection of $\gamma(h;s\le)$ points in  $(X^{\prime}\times_{S^{\prime}}T^{\e\prime})\times_{S}\spec k(s)$ defines a collection of $\gamma(f;s\le)=\gamma(h;s\le)$ points in $X^{\prime}\times_{S}\spec k(s)$ (see Remark \ref{gpts}(d)). Since the latter collection of points is contained in an open affine subscheme of $X^{\prime}$ and $X^{\prime}\times_{S^{\prime}}T^{\e\prime}\to X^{\prime}$ is affine (since a universal homeomorphism is an affine morphism), the claim follows.
\item[(g)] If $B/A$ is a finite and free extension of rings of rank $d$, then 
$\Re_{B/A}(\A^{\be n}_{\lbe B})=\A^{\be nd}_{\be A}$ for every integer $n\geq 0$. See \cite[\S7.6, proof of Theorem 4, pp.~194-195]{blr}. Choosing $n=0$ above, we obtain $\Re_{B/A}(\spec B)=\spec A$. 
\end{enumerate}
\end{remarks}

We can now strengthen \cite[\S7.6, Theorem 4, p.~194]{blr}:

\begin{theorem}\label{wr-rep} Let $f\colon S^{\e\prime}\to S$ be a finite and locally free morphism of schemes and let $X^{\prime}$ be an $S^{\e\prime}$-scheme which is admissible relative to $f$. Then $\Re_{S^{\e\prime}\be/S}(X^{\prime}\le)$ exists.
\end{theorem}
\begin{proof} See \cite[\S7.6, Theorem 4, p.~194]{blr} and note that in the last paragraph of that proof the set of points $\{z_{j}\}$ in $S^{\e\prime}\times_{S}T$ lying over a given point $z\in T$, where $g\colon T\to S$ is an arbitrary $S$-scheme, has cardinality at most $\gamma(f;s\le)$ by Remark \ref{gpts}(a), where $s=g(z)$. Thus the corresponding set of points $\{x_{j}\}\subseteq X^{\prime}$ considered in \cite[p.~195, line -14]{blr} has cardinality at most $\gamma(f;s\le)$, whence it is contained in an open affine subscheme of $X^{\prime}$ by Definition \ref{adm}. This is the condition needed in [loc.cit.] to complete that proof. 
\end{proof}

\begin{corollary}\label{wr-uh} Let $f\colon S^{\e\prime}\to S$ be a finite and locally free morphism of schemes which is a universal homeomorphism and let $X^{\prime}$ be any $S^{\le\prime}$-scheme. Then $\Re_{S^{\e\prime}\be/S}(X^{\prime}\e)$ exists.
\end{corollary}
\begin{proof} This is immediate from the theorem and Remark \ref{rems-adm}(c). 
\end{proof}

\begin{proposition}\label{w-lim}
Let $k^{\e\prime}\be/k$ be a finite field extension and let $(X_{\lambda})_{\lambda\le\in\le\Lambda}$ be a projective system of $k^{\le\prime}$-schemes, where $\Lambda$ is a directed set containing an element $\lambda_{\e 0}$ such that the transition morphisms $X_{\mu}\to X_{\lambda}$ are affine if $\mu\geq\lambda\geq \lambda_{\e 0}$. Assume that $X_{\lambda_{0}}$ is admissible relative to $k^{\le\prime}\be/k$ (see Definition {\rm \ref{adm}}). Then $\Re_{\e k^{\le\prime}\be/k}\lbe
\big(\varprojlim \be X_{\lambda}\big)$ and $\varprojlim \Re_{\e k^{\e\prime}/k}(X_{\lambda})$ exist and
\[
\Re_{\e k^{\le\prime}\be/k}\lbe
\big(\varprojlim \be X_{\lambda}\big)
=\displaystyle\varprojlim \Re_{\e k^{\le\prime}\be/k}(X_{\lambda}).
\]
\end{proposition}
\begin{proof} By \cite[IX, \S 3, dual of Theorem 1]{mac}, we may replace $\Lambda$ with the cofinal subset $\{\lambda\in \Lambda\,|\, \lambda \geq \lambda_{\e 0}\}$ (whence $\lambda_{0}$ is an initial element of $\Lambda$). The stated formula will follow from \eqref{plim} and \eqref{wr} once the existence assertion is established. Set $X=\varprojlim X_{\lambda}$. Since the canonical morphism $X\to X_{\lambda_{\e 0}}$ is affine \cite[Proposition 3.2(iv) and Remark 5.16]{bga}, $X$ is admissible relative to $k^{\e\prime}/k$ by Remark \ref{rems-adm}(d). Thus, by Theorem \ref{wr-rep}, $\Re_{\e k^{\e\prime}/k}\lbe\big(X\big)=\Re_{\e k^{\e\prime}/k}\lbe\big(\varprojlim \be X_{\lambda}\big)$ exists. Similarly, for every $\lambda\geq\lambda_{\e 0}$, $X_{\lambda}$ is admissible relative to $k^{\le\prime}\be/k$ and $\Re_{\e k^{\le\prime}\be/k}(X_{\lambda})$ exists. It remains only to check that the transition morphisms $\Re_{\e k^{\e\prime}\be/k}(X_{\mu})\to\Re_{\e k^{\e\prime}\be/k}(X_{\lambda})$ are affine if $\mu\geq\lambda\geq \lambda_{0}$. Let $U$ be an affine open subscheme of $X_{\lambda}$. Then $X_{\mu}\times_{X_{\lambda}}U$ is affine and therefore so also is
\[
\Re_{\e k^{\e\prime}\be/k}(X_{\mu}\!\be\times_{\lbe X_{\lbe\lambda}}\!\be U\e)=\Re_{\e k^{\e\prime}\be/k}(X_{\mu}) \times_{\Re_{\e k^{\prime}\be/k}(X_{\lambda})}\Re_{\e k^{\e\prime}\be/k}(U\e)
\]
by \cite[Proposition A.5.2, (2) and (3)]{cgp}. Since $\Re_{\e k^{\e\prime}\be/k}(X_{\lambda})$ is covered by affine open subschemes of the form $\Re_{\e k^{\e\prime}\be/k}(U)$ \cite[p.~195]{blr}, the proposition follows.
\end{proof}

 We conclude this Subsection by recalling the definition (to be relevant in Section \ref{forgr}) of the Weil restriction of an adic formal scheme over a discrete valuation ring.

\begin{definition}\label{wr-for} Let $R\to R^{\e\prime}$ be a finite extension of complete discrete valuation rings and let $\m S^{\e\prime}\be\to \m S$ be the corresponding morphism of adic formal schemes. Let $\m X^{\e\prime}$ be an adic formal $\m S^{\prime}$-scheme. We will say that {\it the Weil restriction of $\m X^{\e\prime}$ along $\m S^{\e\prime}\be\to \m S$ exists} if 
the contravariant functor
\[
(\text{Ad-For}/\m S)\to({\rm Sets}),\m T\to \Hom_{\, \m S^{\prime}}(\m T\times_{\m S}\m S^{\e\prime},\m X^{\e\prime}\e),
\]
is represented by an adic formal $\m S$-scheme  $\Re_{\e\m S^{\e\prime}\be/\m S}\big(\m X^{\e\prime}\le\big)$ (which will then be called {\it the Weil restriction of $\m X^{\e\prime}$ along $\m S^{\e\prime}\be\to \m S$}). 
\end{definition}

\subsection{The fpqc topology}\label{ftop}
Recall from \cite[\S2.3.2, pp.~27-28]{v} that a morphism of schemes $f\colon X\to Y$ is said to be an {\it fpqc morphism} if it is faithfully flat and has the following property: if $x$ is a point of $X$, then there exists an open neighborhood $U$ of $x$ in $X$ such that the image $f(U)$ is open in $Y$ and the induced morphism $U\to f(U)$ is quasi-compact. It is immediate that a faithfully flat and quasi-compact morphism is an $\fpqc$ morphism. By \cite[Proposition 2.35(v), p.~28]{v}, the class of fpqc morphisms is stable under base change. An {\it fppf morphism} of schemes is a faithfully flat morphism locally of finite presentation. Every fppf morphism is an fpqc morphism by \cite[Proposition 2.35(iv), p.~28]{v}.
Let $S$ be a scheme and let $\mathcal C$ be a full subcategory of $(\mathrm{Sch}/S\e)$ which contains the final object $1_{\lbe S}$. The fpqc (respectively, fppf) topology on $\mathcal C$ is the topology where the coverings are collections of flat morphisms $\{ X_{\alpha}\to X\}$ in $\mathcal C$ such that the induced morphism $\coprod X_{\alpha}\to X$ is an fpqc (respectively, fppf) morphism.  Clearly, the fpqc topology is finer than the fppf topology.
If $\tau=\fpqc$ or $\fppf$, we will write $\mathcal C_{\tau}$ for the category $\mathcal C$ endowed with the $\tau$ topology. The category of sheaves of sets on $\mathcal C_{\tau}$ will be denoted by $\mathcal C^{\,\sh}_{\tau}$. Both sites mentioned above are subcanonical, i.e., every representable presheaf is a sheaf \cite[Theorem 2.55, p.~34]{v} and the induced functor  
\begin{equation}\label{fufa}
h_{\lbe
S}\colon \mathcal C\to \mathcal C^{\,\sh}_{\tau},Y\mapsto \Hom_{S}(-,Y),
\end{equation}
is fully faithful, whence it identifies $\mathcal C$ with a full subcategory of $\mathcal C^{\,\sh}_{\tau}$. 
A sequence $1\to F\to G\to H\to 1$ of group schemes in $\mathcal C$ will be called {\it exact for the $\tau$ topology on $\mathcal C$} if the sequence of sheaves of groups $1\to h_{\lbe S}(F\le)\to h_{\lbe S}(G)\to h_{\lbe S}(H)\to 1$ is exact. See \cite[\S 2]{bga} for more details.

\begin{lemma}\label{flat1} Let $k$ be a field and let $q\colon G\to H$ be a dominant and quasi-compact morphism of $k$-group schemes, where $H$ is {\rm reduced}. Then $q$ is faithfully flat.
\end{lemma}
\begin{proof} See \cite[Proposition 1.3, p.~19]{per} or \cite[${\rm{VI}}_{\rm A}$, Corollary 6.2]{sga3}.
\end{proof}

\begin{lemma}\label{flat2} Let $k$ be a field and let $q\colon G\to H$ be a morphism of $k$-group schemes locally of finite type.
\begin{enumerate}
\item[(i)] If $q$ is flat, then $q^{\le 0}\colon G^{\e 0}\to H^{\le 0}$ is surjective.
\item[(ii)] If $q^{\le 0}$ is surjective and $H$ is reduced, then $q$ is flat.
\end{enumerate}
\end{lemma}
\begin{proof} See \cite[${\rm{VI}}_{\rm B}$, Proposition 3.11 and its proof]{sga3}.
\end{proof}

\begin{lemma}\label{con-sm} Let $k$ be a field and let $q\colon G\to H$ be a faithfully flat morphism of $k$-group schemes locally of finite type. If $G$ is connected (respectively, smooth), then $H$ is connected (respectively, smooth).
\end{lemma}
\begin{proof} By Lemma \ref{flat2}(i), $q^{\le 0}\colon G^{\e 0}\to H^{\le 0}$ is surjective. Consequently, if $G$ is connected, i.e., $G=G^{\e 0}$, then $H=q(G)=q^{\le 0}(G^{\e 0})=H^{\e 0}$, i.e., $H$ is connected as well. Now, if $G$ is smooth, then $H$ is smooth by \cite[$\text{IV}_{4}$, Proposition 17.7.7]{ega}.
\end{proof}

\begin{lemma} \label{conn} Let $k$ be a field and let $q\colon G\to H$ be a surjective and quasi-compact morphism of $k$-group schemes locally of finite type, where $H$ is smooth. Then the sequence 
\[
1\to \krn q\to G\overset{\!q}\to
H\to 1
\]
is exact for both the fppf and fppf topologies on $(\mr{Sch}/k)$. If $\e\krn q$ and $H$ are connected, then $G$ is connected. If $\krn q$ is smooth, then $G$ is smooth.
\end{lemma}
\begin{proof} By Lemmas \ref{flat1} and \ref{flat2}(i), $q$ is faithfully flat and $q^{\le 0}\colon G^{\le 0}\to H^{\le 0}$ is surjective. In particular $q$ is an fppf morphism (and therefore also an fpqc morphism) by \cite[Proposition 2.4(i)]{bga}. The exactness assertion of the lemma now follows from \cite[Lemma 2.3]{bga}. Assume next that $\krn q$ and $H$ are connected.  Since $q^{0}\colon G^{\e 0}\to H^{\e 0}=H$ is surjective and $\krn q=(\krn q)^{\le 0}\subseteq G^{\e 0}$, we have $G^{\e 0}=G$, i.e., $G$ is connected. Now, if $\krn q$ is smooth, then $q$ is smooth by
\cite[$\text{IV}_{4}$, Proposition 17.5.1]{ega} and \cite[${\rm{VI}}_{\rm B}$, Proposition 1.3]{sga3}. Thus, since the structure morphism of $G$ factors as $G\overset{\!q}{\to}H\to\spec k$, $G$ is smooth over $k$, as claimed.
\end{proof}

\begin{lemma}\label{kerseq} Let $k$ be a field and let $F\overset{f}\to G\overset{g}\to H$ be morphisms of $k$-group schemes locally of finite type.
\begin{enumerate}
\item[(i)] The given pair of morphisms induces a sequence of $k$-group schemes locally of finite type
\[
1\to\krn f\to\krn(\e g\be\circ\! f\e)\to\krn g
\]
which is exact for both the fppf and fpqc topologies on $(\mr{Sch}/k)$.
\item[(ii)] If $f$ is faithfully flat, then the sequence
of $k$-group schemes locally of finite type
\[
1\to\krn f\to\krn(\e g\be\circ\! f\e)\to\krn g\to 1
\]
is exact for both the fppf and fpqc topologies on $(\mr{Sch}/k)$.
\end{enumerate}
\end{lemma}
\begin{proof} Since $F$ and $G$ are locally of finite type, $\krn f,\krn g$ and $\krn(\e g\circ f\e)$ are locally of finite type. We regard $F$ as an $H$-scheme via $g\be\circ\be f$, so that $f\colon F\to G$ is an $H$-morphism. Then the induced morphism $f\times_{H}\spec k\colon F\times_{H}\spec k\to G\times_{H}\spec k$ is a $k$-morphism $\krn(\e g\be\circ \be f\e)\to\krn g$ whose kernel is canonically isomorphic to 
$\krn f$. Assertion (i) is now clear. If $f$ is faithfully flat, then so also is $f\times_{H}\spec k$. Thus, by \cite[Proposition 2.4(i)]{bga}, $f\times_{H}\spec k$ is an fppf morphism and assertion (ii) follows from \cite[Lemma 2.3]{bga}.
\end{proof}

Let $k$ be a field. The category of commutative and quasi-compact $k$-group schemes will be denoted by $\s C_{\qc}$. The full subcategory of $\s C_{\qc}$ whose objects are the $k$-group schemes of finite type will be denoted by $\s C_{\alg}$. By \cite[${\rm{VI}}_{\rm A}$, Theorem 5.4.2 and Corollary 6.8]{sga3}, $\s C_{\alg}$ and $\s C_{\qc}$ are abelian categories. Further, by \cite[${\rm{VI}}_{\rm A}$, 0.3]{sga3} and \cite[Propositions 6.1.5(v), p.~291, and 6.3.8(v), p.~305]{ega1}, every morphism in $\s C_{\qc}$  (respectively, $\s C_{\alg}$) is quasi-compact (respectively, of finite presentation).

\begin{lemma}\label{qc-alg} Let $0\to F\to G\to H\to 0$ be an exact sequence in the abelian category $\s C_{\qc}$ (respectively, $\s C_{\alg}$). Then the given sequence is exact as a sequence of sheaves for the fpqc (respectively, fppf) topology on $({\rm Sch}/k)$.
\end{lemma}
\begin{proof} The morphism $f\colon G\to H$ can be identified with the canonical projection morphism $G\to G/F$, which is faithfully flat by \cite[${\rm{VI}}_{\rm A}$, Proposition 5.4.1 and Corollary 6.7(i)]{sga3}. Consequently, $f$ is an fpqc (respectively, fppf) morphism and the lemma follows from \cite[Lemma 2.3]{bga}. 

\end{proof}

The above lemma shows that an exact sequence of arbitrary finite length in $\s C_{\qc}$ (respectively, $\s C_{\alg}$) is also exact for the fpqc (respectively, fppf) topology on $\s C_{\qc}$ (respectively, $\s C_{\alg}$). Observe now the following partial converse to the previous lemma:

\begin{proposition}\label{ex-ex} Let $0\to F\to G\to H\to 0$ be a sequence in $\s C_{\qc}$ which is exact for the fpqc topology on $({\rm Sch}/k)$. If $H$ is reduced, then the given sequence is exact in the abelian category $\s C_{\qc}$. A similar result holds if above $\s C_{\qc}$ is replaced by $\s C_{\alg}$ and the fpqc topology is replaced by the fppf topology.
\end{proposition}
\begin{proof} If $f\colon G\to H$, then $F\simeq \krn f$ and $f$ is surjective by \cite[Lemma 2.2]{bga}. On the other hand, by \cite[${\rm{VI}}_{\rm A}$, Corollary 6.7(i)]{sga3}, $f$ factors as
\[
G\overset{\!h}{\to} G/\e\krn f\simeq\img f\overset{\!i}{\to}H,
\]
where $h$ is faithfully flat and $i$ is a closed immersion. It follows that $i$ is a surjective closed immersion, i.e., a nilimmersion \cite[(4.5.16), p.~273]{ega1}. Since $H$ is reduced, $i$ is an isomorphism.
\end{proof}

\begin{corollary} \label{cok=0} Let $f\colon G\to H$ be a morphism in  $\s C_{\alg}$, where $H$ is reduced. If $f\big(\e\kbar\e\big)\colon G\lbe\big(\e\kbar\e\big)\to
H\lbe\big(\e\kbar\e\big)$ is surjective, then $\cok f=0$. 
\end{corollary}
\begin{proof} Since $H\big(\e\kbar\e\big)=f\big(\e\kbar\e\big)\big(G\big(\e\kbar\e\big)\big)\subseteq f(\vert G\vert)$ and $H\big(\e\kbar\e\big)$ is dense in $H$ by \cite[Corollary 3.8, p.~71]{per}, $f$ is dominant and therefore faithfully flat by Lemma \ref{flat1}. Now \cite[Corollary 2.5]{bga} shows that the sequence $0\to\krn f\to G\to H\to 0$ is exact for the  fppf  topology on $({\rm Sch}/k)$, whence $\cok f=0$ by Proposition \ref{ex-ex}.
\end{proof}

\begin{lemma}\label{ker-cok} Let $k$ be a field and let $F\overset{\!\be f}\to G\overset{g}\to H$ be morphisms in $\s C_{\qc}$ (respectively, $\s C_{\alg}$). Then there exists an induced sequence 
\[
0\to\krn f\to\krn(\e g\be\circ\! f\e)\to\krn g\to\cok f\to\cok\le(\e g\be\circ\! f\e)\to\cok g\to 0
\]
which is exact in $\s C_{\qc}$ (respectively, $\s C_{\alg}\e$).
\end{lemma}
\begin{proof} This proposition is valid in any abelian category. See \cite[Hilfssatz 5.5.2, p.~45]{bp}.
\end{proof}

If $k$ is a field, $G$ is a commutative $k$-group scheme and $n$ is an integer, let $n_{\le G}\colon G\to G$ denote the morphism which maps $x\in G(T)$ to $x^{n}\in G(T)$ for every $k$-scheme $T$. Since $G$ is commutative, $n_{\le G}$ is a morphism of $k$-group schemes, i.e., a homomorphism.

\begin{lemma}\label{n-div} Let $k$ be a field and let $G$ be a commutative and connected $k$-group scheme of finite type. If $n$ is an integer which is not divisible by $\mr{char}\e k$, then $n_{\le G}\be\big(\e\kbar\e\big)\colon G\big(\e\kbar\e\big)\to G\big(\e\kbar\e\big)$ is surjective.
\end{lemma}
\begin{proof} By \cite[${\rm VII_{A}}$, \S 8.4, Proposition]{sga3}\,\footnote{The reader should be warned that the cited proposition is correct only in the commutative case, as noted by Brian Conrad. Indeed, the proof of \cite[${\rm VII_{A}}$, \S 8.4, Proposition]{sga3} requires that $n_{\le G}$ be a {\it homomorphism} in order to apply \cite[${\rm VI_{B}}$, Proposition 1.3]{sga3}.}, $n_{\le G}$ is \'etale and therefore flat. Thus, by Lemma \ref{flat2}(i), $n_{\le G}=n_{\le G}^{\le 0}$ is surjective. The lemma now follows from \cite[I, \S3, Corollary 6.10, p.~96]{dg}.
\end{proof}

\begin{lemma}\label{flat3} Let $k$ be a field and let $q\colon G\to H$ be a morphism of smooth and commutative $k$-group schemes. Assume that
\begin{enumerate}
\item[(i)] $q\big(\e\kbar\e\big)\colon G\big(\e\kbar\e\big)\to H\big(\e\kbar\e\big)$ is surjective, and
\item[(ii)] $\pi_{0}(G\le)\big(\e\kbar\e\big)$ is a finitely generated abelian group.
\end{enumerate}
Then $q$ is flat.
\end{lemma}
\begin{proof} By Lemma \ref{flat2}(ii), it suffices to check that $q^{\le 0}\colon G^{\e 0}\to H^{\le 0}$ is surjective. Since $G^{\e 0}$ and $H^{\le 0}$ are both of finite type by \cite[${\rm VI_{A}}$, Proposition 2.4(ii)]{sga3}, $q^{\le 0}$ is a morphism in $\s C_{\alg}\e$ that factors as
\[
G^{\e 0}\overset{h}{\to}\img q^{\le 0}\overset{i}{\to} H^{\e 0},
\]
where $h$ is faithfully flat, $\img q^{\le 0}$ is smooth by Lemma \ref{con-sm} and $i$ is a closed immersion (see the proof of Proposition \ref{ex-ex}). Thus it suffices to check that $i$ is an isomorphism, i.e., that $C=\cok i=0$. By Lemma \ref{cok=0} we only need to show, in fact, that $\cok i\big(\e\kbar\e\big)=0$. Since the canonical projection morphism $H^{\e 0}\to C$ is surjective by \cite[${\rm VI_{A}}$, Theorem 3.3.2(ii)]{sga3}, $H^{\le 0}\big(\e\kbar\e\big)\to C\big(\e\kbar\e\big)$ is surjective by \cite[I, \S3, Corollary 6.10, p.~96]{dg}, whence $\cok i\big(\e\kbar\e\big)=C\big(\e\kbar\e\big)$. Further, $C$ is connected by Lemma \ref{con-sm}. Thus Lemma \ref{n-div} implies that $C\big(\e\kbar\e\big)$ is $n$-divisible for every integer $n$ prime to $\mr{char}\e k$. On the other hand, since $\img q^{\le 0}\be\big(\e\kbar\e\big)\subseteq \big(\img q^{\le 0}\big)\be\big(\e\kbar\e\big)$, $C\big(\e\kbar\e\big)=\cok \e i\big(\e\kbar\e\big)$ is a quotient of $\cok q^{\le 0}\big(\e\kbar\e\big)$. Now an application of the snake lemma to the exact and commutative diagram
\[
\xymatrix{
0\ar[r]&G^{\e 0}\be\big(\e\kbar\e\big)\ar[r]\ar[d]^(.45){q^{\le 0}\lbe(\e\kbar\e)}&  G\lbe\big(\e\kbar\e\big)\ar[r]\ar@{->>}[d]^(.45){q(\e\kbar\e)}&\pi_{0}(G\le)\big(\e\kbar\e\big)\ar[r]\ar[d]^(.45){\pi_{0}(q)(\e\kbar\e)}&0\\
0\ar[r]&H^{\e 0}\be\big(\e\kbar\e\big)\ar[r]&  H\be\big(\e\kbar\e\big)\ar[r] &\pi_{0}(H\le)\big(\e\kbar\e\big)\ar[r] &0
}
\](whose middle vertical arrow is surjective by (i)) shows that $\cok\e q^{\e 0}\be\big(\e\kbar\e\big)$ is a quotieisnt of $\krn\e \pi_{0}(q)\big(\e\kbar\e\big)$, which is finitely generated by hypothesis (ii). We conclude that $C\big(\e\kbar\e\big)$  $n$-divisible (for every $n$ as above) and finitely generated, whence $C\big(\e\kbar\e\big)=0$.
\end{proof}

\begin{remark} 
The lemma and its proof show that both $q$ and $q^{\le 0}$ are faithfully flat. Thus, by \cite[Proposition 2.4(i)]{bga}, $q$ and $q^{\le 0}$ are fppf morphisms.
\end{remark}

\begin{lemma}\label{red=1} Let $k$ be a perfect field and let $G$ be a $k$-group scheme locally of finite type. If $G\big(\e\kbar\e\big)=\{1\}$, then $G_{\red}=1$.
\end{lemma}
\begin{proof} Since the projection $G_{\red}\times_{\spec k}\spec \kbar\to G_{\red}$ is faithfully flat and $G_{\red}\times_{\spec k}\spec \kbar=(G\times_{\spec k}\spec \kbar\e)_{\red}$ by \cite[Corollary 4.5.12, p.~271]{ega1} and \cite[I, \S2, no.4, Corollary 4.13, p.~55]{dg}, we may assume that $k=\kbar$. By \cite[${\rm VI_{A}}$, 0.2 and Lemma 0.5.2]{sga3}, $G_{\red}$ is a reduced and closed $k$-subgroup scheme of $G$. Further, the hypothesis implies that $G_{\red}(k)=\{1\}$. Now \cite[II, \S5, no.4, Proposition 4.3, p.~245]{dg} shows that $G_{\red}=1$.
\end{proof}

\begin{remarks}\label{inf}\indent
\begin{enumerate}
\item[(a)]  By definition, an infinitesimal $k$-group scheme is a finite and local $k$-group scheme. By \cite[II, \S 4, lines below 7.1, p.~230]{dg}, such an object is a connected and artinian one-point scheme. Consequently, a $k$-group scheme of finite type is infinitesimal if, and only if, it is a one-point scheme \cite[Exercise 3, p.~92]{am}. We now observe that a quotient $U/V$ of infinitesimal and unipotent $k$-group schemes is unipotent and infinitesimal. Indeed, unipotency is clear and $U/\le V$ is a one-point scheme since the projection $U\to U/\le V$ is faithfully flat. 
\item[(b)] By \cite[${\rm VI_{A}}$, Proposition 5.6.1 and its proof\,]{sga3}, the group $G$ of the lemma is a one-point scheme which is equal to the spectrum of a local $k$-algebra of finite rank with residue field $k$. Thus $G$ is infinitesimal and therefore $\dim G=0$.
\end{enumerate}	
\end{remarks}

\section{Greenberg algebras}\label{ga}
Let $k$ be a perfect field of positive characteristic and let $m\geq 1$ be an integer. In \cite[Appendix A]{lip}, Lipman translated into scheme-theoretic language Greenberg's construction of the $\mathbb W_{\be m}$-module variety associated to a finitely generated $W_{\be m}\lbe(k)$-module \cite[Proposition 3, p.~628]{gre1}. In this Section we extend Lipman's translation to other constructions/statements from \cite{gre1,gre2}, covering also (in Subsection \ref{sec-k})  a case not discussed by Lipman. We note that the references \cite{gre1,gre2} are concerned with {\it varieties}, i.e.,  discuss the  restricted scheme-theoretic setting of reduced and irreducible $k$-schemes of finite type (see, e.g., \cite[comments preceding Lemma 1, p.~257]{gre2}). Consequently, it is a nontrivial problem to translate statements from [loc.cit.] into a general scheme-theoretic setting. For example, it is shown in \cite[proof of Proposition 3(5), p.~629]{gre1} that, if $\mathfrak B\subseteq \mathfrak C$ is an inclusion of finitely generated $W_{\be m}\lbe(k)$-modules  (where $k$ is as above), then the associated morphism of $\mathbb W_{\be m}$-module varieties $\s B\to \s C$ is a closed immersion. This result depends strongly on the fact that the author allows the replacement of $\s B$ with a $\mathbb W_{\be m}$-module scheme $\widetilde{ \s B}$ such that $\widetilde{\s B}_{\red}=\s B$. In the scheme-theoretic setting, the corresponding statement is false since the kernel of the induced morphism of $\mathbb W_{\be m}$-module schemes $\s B\to \s C$ can be a non-reduced scheme. See Example \ref{gr-err} and Remarks \ref{rl-gre} and \ref{gr-max}.

\subsection{Finitely generated modules over arbitrary fields}\label{sec-k} 
In this Subsection, $k$ is an arbitrary field.  Let $\mathfrak M$ be a finitely generated $k$-module of rank $r\geq 1$ and fix a basis $\{m_{1},\dots,m_{r}\}$ of $\mathfrak M$, i.e., a $k$-isomorphism $\mathfrak M\simeq  k^{\le r}, \sum_{\e i} x_{i}m_{i}\mapsto (x_{i})$. The $k$-module structure on $\mathfrak M$ induces an 
$\mathbb O_{k}$-module structure on $\A^{\!r}_{k}$. The {\it Greenberg module} associated to $\mathfrak M$, denoted by $\s M\be$, is the $k$-scheme $\A^{\!r}_{k}$ equipped with the above $\mathbb O_{k}$-module scheme structure. By definition, for every $k$-algebra $A$, there exists an isomorphism of $A$-modules
\begin{equation}\label{wem}
\s M\be(A)\overset{{\rm def.}}{=}\Hom_{\e k}(\spec A,\s M\e)\simeq \mathfrak M\otimes_{\le k}\be A,
\end{equation}
which is explicitly given by $A^{r}\overset{\!\sim}{\to}\oplus_{\le i=1}^{\le r} A\le m_{i},\, (a_{i})\mapsto (a_{i}\le m_{i})$.

Let $\mathfrak R$ be a finite $k$-algebra with associated ring scheme $\mathbb O_{\le \mathfrak R}$. Since $\mathfrak R$ is a finitely generated $k$-module, its associated Greenberg module $\s R$ can be defined as above. Now $\s R\lbe(A)=\mathfrak R\otimes_{k} A$ is naturally endowed with an $\mathfrak R$-algebra structure and the $k$-ring scheme $\s R$ is called the {\it Greenberg algebra associated to $\mathfrak R$}. By \eqref{wr} and \eqref{wem}, we have  
\[
\s R =\Re_{\e\mathfrak R \lbe/ k}(\mathbb O_{\le \mathfrak R}\lbe),
\] 
where $\Re_{\e\mathfrak R \lbe/k}$ is the Weil restriction functor associated to the finite and locally free morphism $ \spec \mathfrak R\to\spec k$. In particular,
\begin{equation}\label{uu}
\hskip .6cm\s R =\mathbb O_{k} \quad\text{if $\mathfrak R=k\e$}.
\end{equation}
Further, for every $k$-algebra $A$, there exists an isomorphism of $\mathfrak R$-$A$-bialgebras
\begin{equation} \label{gr-weil}
\s R(A)=\mathfrak R\otimes_{\e k}\! A.
\end{equation}
Consequently, there exists a (non-canonical) isomorphism of $k$-group schemes 
\begin{equation}\label{uul}
\s R \simeq \G_{a,\le k}^{\ell}\e,
\end{equation}
where $\ell=\dimn\le\mathfrak R \geq 1$. Note that $\s R\lbe(k)=\mathfrak R$. Clearly, if $A$ is finitely generated as a $k$-algebra (respectively, $k$-module), then $\s R\lbe(A)$ is finitely generated as an $\mathfrak R$-algebra (respectively, $\mathfrak R$-module). Further, if $f\in A$, then $\s R\lbe(\lbe A)_{f}=\s R\lbe(\lbe A)\otimes_{A}A_{f}$ by \cite[Proposition 3.5, p.~39]{am}, whence
\begin{equation}\label{eqrf}
\s R\lbe(A)_{f}=\s R\lbe(\lbe A_{f}\lbe).
\end{equation}

Now let $\mathfrak R\to \mathfrak R^{\e\prime}$ be a homomorphism of finite $k$-algebras with kernel $\mathfrak K$ and let $\s R,\s R^{\e\prime}$ and $\s K$ be the Greenberg modules associated to $\mathfrak R, \mathfrak R^{\e\prime}$ and $\mathfrak K$, respectively. By \eqref{wem} and \eqref{gr-weil}, the canonical exact sequence of $k$-modules
\[
0\to\mathfrak K\to \mathfrak R\to \mathfrak R^{\e\prime}
\]
induces, for every $k$-algebra $A$, an exact sequence of $\mathfrak R$-$A$-bimodules
\[
0\to\s K\be(A)\to\s R(A)\to\s R^{\e\prime}(A),
\]
where
\begin{equation}\label{kar}
\s K\be(A)=\mathfrak K\otimes_{k}\! A=\mathfrak K\,\s R(A).
\end{equation}
We conclude that
\begin{equation*} 
\s K=\krn\!\left[\e \s R\to \s R^{\e\prime}\,\right],
\end{equation*}
where the indicated morphism of $k$-group schemes is induced by the given homomorphism $\mathfrak R\to \mathfrak R^{\e\prime}$.  In particular, let $\mathfrak I$ be an ideal of $\mathfrak R$, write $\mathfrak R^{(\mathfrak I\le)}=\mathfrak R/\le\mathfrak I$ and let 
$\s R^{\e(\lbe\s I)}$ denote the Greenberg algebra associated to $\mathfrak R^{(\mathfrak I\le)}$. Then
\begin{equation}\label{eqc}
\s I=\krn\!\lbe\left[\e \s R\to \s R^{\e(\lbe\s I)}\e\right]
\end{equation}

\begin{remarks}\label{resp0} By \eqref{wem} and the exactness of the bifunctor $(-)\lbe\otimes_{\e k}\be(-)$ on the category of $k$-modules, the following holds.
\begin{enumerate}
\item[(a)] If $\mathfrak M$ is a finitely generated $k$-module and 
$A\to B$  is an injective (respectively, surjective) homomorphism of $k$-algebras, then the induced homomorphism of $k$-modules $\s M\lbe(A)\to \s M\lbe(B)$ is injective (respectively, surjective)
\item[(b)]  If $\mathfrak M\to\mathfrak M^{\e\prime}$ is a surjective homomorphism of finitely generated $k$-modules and $A$ is any $k$-algebra, then the induced map
$\s M\lbe(A)\to \s M^{\e\prime}\lbe(A)$ is a surjective homomorphism of $A$-modules. 
\end{enumerate}
\end{remarks}

\medskip

\subsection{Modules over rings of Witt vectors}\label{sec-w}
In this Subsection, $k$ is a perfect field of characteristic $p>0$. Let $\mathfrak M$ be a finitely generated $W_{\be m}(k)$-module, where $m>1$ is an integer.

\begin{remark}\label{excu} Above we have assumed that $m>1$ since $W_{\be 1}(k)$-modules, i.e., $k$-modules, have been discussed in the previous Subsection for arbitrary fields $k$. See also Remark \ref{uch}(a) below.
\end{remark}

Let $\mathbf M$ denote the fpqc sheaf on the category of affine $k$-schemes associated to the presheaf $\spec A\mapsto \mathfrak M\otimes_{\e W\be(k)}\be W\be(A)$, where $A$ is a $k$-algebra.  
By \cite[Proposition A.1]{lip}, there exists  an affine $\mathbb W_{\be m}$-module scheme $\s M$, called the {\it Greenberg module associated} to $\mathfrak M$, which represents $\mathbf M$,  i.e., $\mathbf M(\spec A)=\s M\be(A)$, where
\begin{equation*} 
\s M\be(A)\overset{{\rm def.}}{=} \Hom_{\e k}(\spec A,\s M\e).
\end{equation*}
Therefore $\s M$ is unique up to a unique isomorphism. Further, by \cite[Corollary A.2]{lip}, the canonical map $\mathfrak M\le\otimes_{\e W_{\be m}\lbe(k)}\! W_{\be m}\lbe(A)\to \s M\lbe(A)$ of [\le loc.cit.] is surjective  for every $k$-algebra $A$. By construction, a choice of an isomorphism of $W_{\be m}(k)$-modules $\mathfrak M\simeq\prod_{\e i=0}^{\e r}W_{\! n_{i}}\be(k)$, where $n_{i}\leq m$ for every $i$, induces an isomorphism of $\mathbb W_{\be m}$-module schemes $\s M\simeq \prod_{\le i=0}^{\le r}\mathbb W_{\! n_{i}}$. In particular, the dimension of $\s M$ equals the length of the $W_{\be m}(k)$-module $\mathfrak M$. Further, a homomorphism of finitely generated $W_{\be m}(k)$-modules $\mathfrak M\to \mathfrak M^{\e\prime}$ induces a morphism of associated $\mathbb W_{\be m}$-module schemes $\s M\to \s M^{\e\prime}$ \cite[Proposition A.1, p.~74]{lip}.

\begin{remarks}\label{resp}\indent
\begin{enumerate}
\item[(a)] If $\mathfrak M$ is a finitely generated $W_{\be m}(k)$-module and 
$A\to B$  is an injective (respectively, surjective) homomorphism of $k$-algebras, then the induced homomorphism of $W_{\be m}(k)$-modules $\s M\lbe(A)\to \s M\lbe(B)$ is injective (respectively, surjective). This follows from the fact that there exist isomorphisms of $k$-schemes $\s M\simeq \prod_{\le i=0}^{\le r}\mathbb W_{\! n_{i}}\simeq \A_{k}^{\!N}$, where $N=\sum_{\e i\le =1}^{\e r} n_{i}$.
\item[(b)]  If $\mathfrak M\to\mathfrak M^{\e\prime}$ is a surjective homomorphism of finitely generated $W_{\be m}(k)$-modules and $A$ is a $k$-algebra, then the commutativity of the diagram of $W_{\be m}(A)$-modules
\[
\xymatrix{\mathfrak M\otimes_{\e W_{\be m}(k)}W_{\be m}(A)\ar[r]\ar@{->>}[d]&\s M(\be A)\ar[d]\ar[r]& 0\\ 
\mathfrak M^{\e\prime}\otimes_{\e W_{\be m}(k)}W_{\be m}(A)\ar[r]&\s M^{\e\prime}(\be A)\ar[r]& 0
} 
\]
(whose left-hand vertical map is surjective by the right-exactness of the tensor product functor) shows that the right-hand vertical map above is a surjective homomorphism of $W_{\be m}(A)$-modules. 
\end{enumerate}
\end{remarks}

\smallskip

Let $\mathfrak R$ be a finite $W_{\be m}\lbe(k)$-algebra. The fpqc sheaf on the category of affine $k$-schemes associated to the presheaf $\spec A\mapsto \mathfrak R\e\otimes_{\e W\be(k)}\be W\be\le(\lbe A)$ is represented by a $\mathbb W_{\le m}$-algebra scheme $\s R $ called the {\it Greenberg algebra associated to $\mathfrak R$}. The scheme $\s R$ is defined as follows. There exists an isomorphism of $W_{\le m}(k)$-modules $\mathfrak R\simeq \prod_{\le i=0}^{\le r}W_{\! n_{i}}\be(k)$, and $\s R$ is the $\mathbb W_{\le m}$-module scheme $\prod_{\le i=0}^{\le r}\!\mathbb W_{\! n_{i}}$ endowed with the $k$-ring scheme structure induced by the ring structure on $\mathfrak R$ \cite[Proposition A.1 and Corollary A.2]{lip}. By construction, there exists a (non-canonical) isomorphism of $k$-schemes
\begin{equation}\label{uul2}
\hskip 4cm\s R\simeq \A^{\!\ell}_{k} \quad\text{(\e$\ell\e=\mr{length}_{\e W_{\be m}\lbe(k)}\e\mathfrak R$\e)}
\end{equation}
and we have $\s R\lbe(k)=\mathfrak R$. Further, if $\mathfrak R=W_{\! m}\lbe(k)$, then $\s R=\mathbb W_{\! m}$.

\begin{remarks}\label{uch}\indent
\begin{enumerate}
\item[(a)] The preceding considerations work equally well if $m=1$ and the resulting Greenberg module (respectively, algebra) associated to the finitely generated $W_{\be 1}(k)=k$-module $\mathfrak M$ (respectively, finite $k$-algebra $\mathfrak R$) coincides with that defined in the previous Subsection.
\item[(b)] If $\mathfrak R $ is an artinian local ring with residue field $k$ and  $m>1$ is defined by the equality ${\rm char}\,\mathfrak R =p^{\le m}$, then $\mathfrak R$ has a canonical structure of finite $W_{\!m}(k)$-algebra \cite[Case 2, p.~627]{gre1}. Now, if $s\geq 0$ is an integer, then the canonical projection $W_{\!m+s}(k)\to W_{\!m}\lbe(k)$ induces a $W_{\!m+s}(k)$-algebra structure on $\mathfrak R$ which produces the same isomorphism $\mathfrak R\simeq  \prod_{\le i=0}^{\le r}W_{\! n_{i}}\be(k)$ of $W(k)$-modules as that obtained in the case $s=0$. Consequently, the $k$-ring scheme $\s R$ depends only on the (canonical) $W(k)$-algebra structure of $\mathfrak R$. 
\item[(c)] If $\mathfrak  K$ is an ideal of $\mathfrak R$, then the image of the canonical homomorphism $\s K\be(A)\to \s R(A)$ equals $\mathfrak K\,\s R(A)$, as follows at once from the commutative diagram in Remark \ref{resp}(b) (setting $\mathfrak M=\mathfrak K$ and $\mathfrak M^{\e\prime}=\mathfrak R$ in that diagram).
\end{enumerate}
\end{remarks}

Every finitely generated $\mathfrak R$-module $\mathfrak B$ defines an $\s R$-module scheme $\s B$ and every homomorphism $\mathfrak B\to \mathfrak C$ of finitely generated $\mathfrak R$-modules induces a $k$-morphism $\s B\to \s C$ of associated $\s R$-module schemes.   If $\mathfrak I$ is an ideal of $\mathfrak R$, then the canonical projection $\mathfrak R\to \mathfrak R^{(\mathfrak I\le)}=\mathfrak R/\le\mathfrak I$ induces a $k$-morphism of associated $\s R$-module schemes
\begin{equation}\label{rri}
\s R\to \s R^{\e(\lbe\s I)}.
\end{equation}

\begin{proposition}\label{uprop}  Let $\mathfrak R$ be a finite $W_{\be m}(k)$-algebra $\mathfrak M$ a finitely generated $\mathfrak R$-module and $\s R$ (respectively, $\s M$) the Greenberg algebra (respectively, module) associated to $\mathfrak R$ (respectively, $\mathfrak M$). Then, for every $k$-algebra $A$, there exist a canonical surjective homomorphism of $\e\mathfrak R$-$\e W_{\!m}(A)$-bialgebras 
\[
\mathfrak R\otimes_{\e W_{\be m}(k)}\be W_{\!m}(A)\twoheadrightarrow \s R(A)
\]
and a canonical surjective homomorphism of $\e \mathfrak R$-$\e W_{\!m}(A)$-bimodules
\[
\mathfrak M \otimes_{\e W_{\be m}(k)}\be W_{\!m}(A)\twoheadrightarrow \s M (A).
\] 
If $A=A^{\le p}$, both maps are isomorphisms.
\end{proposition}
\begin{proof} In \cite[Corollary A.2, p.~75]{lip} set $R=W_{\be m}(k)$ and  $M=\mathfrak R$ (respectively, $M=\mathfrak M$) to obtain the first (respectively, second) homomorphism of the statement.
\end{proof}

\begin{remarks}\label{up} 
In the setting of the proposition, if $A=A^{\le p}$, then the  isomorphism \eqref{wepi1} induces an isomorphism $\mathfrak R\otimes_{\e W\lbe(k)}\be W\be(\be A)\simeq \mathfrak R\otimes_{\e W_{\be m}(k)}\be W_{\!m}\lbe(A)$. Composing the preceding map with the first isomorphism of the proposition, we obtain a canonical isomorphism $\mathfrak R\le\otimes_{\e W\lbe(k)}\! W\lbe(\be A)\simeq\s R\le(A)$  of $\e\mathfrak R$-$\e W\be(\be A)$-bialgebras. Similarly, there exists a canonical isomorphism $\mathfrak M \otimes_{\e W(k)}\be W\be(\be A)\simeq\s M\be(A)$ of $\e\mathfrak R$-$\e W(A)$-bimodules. 
\end{remarks}

Together with \ref{eqrf}, the following proposition is the key to establishing the representability of the Greenberg functor \eqref{fun} in a general scheme-theoretic setting.

\begin{proposition}\label{very} Let $\mathfrak R$ be a finite $W_{\be m}(k)$-algebra with associated Greenberg algebra $\s R$ and let $A$ be any $k$-algebra. For every $f\in A$, there exists a canonical isomorphism of $\s R\le(A)$-algebras
\[
\s R\le(A)_{\le[\e f\e]}\overset{\!\sim}{\to} \s R\le(A_{\lbe f})
\]
where $[\e f\e]=(\e f,0,\dots,0)\in W_{\!m}(A)$.
\end{proposition}
\begin{proof} First we observe that, since $\s R\le(A)$ is a $W_{\be m}(A)$-module,  $\s R\le(A)_{[\e f\e]}$ exists for every $f\in A$. Let $\omega\colon \mathfrak R\otimes_{\e W_{\be m}(k)}\be W_{\!m}(A)_{[\e f\e]}\overset{\!\sim}{\to} \mathfrak R\otimes_{\e W_{\be m}(k)}\be W_{\!m}(A_{f})$ be the isomorphism of $(\mathfrak R\otimes_{\e W_{\be m}(k)}\be W_{\!m}(A))$-algebras induced by \eqref{wloc} and let $\psi\colon \mathfrak R\otimes_{\e W_{\be m}(k)}\be W_{\!m}(A)\twoheadrightarrow \s R\le(A)$ (respectively, $\psi_{f}\colon \mathfrak R\otimes_{\e W_{\be m}(k)}\be W_{\!m}(A_{\lbe f})\twoheadrightarrow \s R\le(A_{\lbe f})$) denote the first homomorphism of Proposition \ref{uprop} associated to $A$ (respectively, $A_{\lbe f}$). We will make the identification
\[
(\mathfrak R\otimes_{\e W_{\be m}(k)}\be W_{\!m}(A))\otimes_{\e W_{\be m}(A)}\be W_{\!m}(A)_{[\e f\e]}=\mathfrak R\otimes_{\e W_{\be m}(k)}\be W_{\!m}(A)_{[\e f\e]}.
\]
Now let $\psi_{\e[\e f\e]}\colon \mathfrak R\otimes_{\e W_{\be m}(k)}\be W_{\!m}(A)_{[\e f\e]}\twoheadrightarrow \s R\le(A)_{[\e f\e]}$ be the composition of $\psi\otimes_{\e W_{\be m}(A)}\be W_{\!m}(A)_{[\e f\e]}$ and the canonical isomorphism
$\s R\le(A)\otimes_{\e W_{\be m}(A)}\be W_{\!m}(A)_{[\e f\e]}\simeq \s R\le(A)_{[\e f\e]}$
in \cite[Proposition 3.5, p.~39]{am}. Then the following diagrams (with canonical vertical maps) commute:
\[
\xymatrix{ \mathfrak R\otimes_{\e W_{\be m}(k)}\be W_{\!m}(A)\ar@{->>}[r]^(.6){\psi}\ar[d]& \s R\le(A)\ar[d]\\
\mathfrak R\otimes_{\e W_{\be m}(k)}\be W_{\!m}(A)_{[\e f\e]}\ar[r]^(.63){\psi_{\le[\e f\e]}}& \s R\le(A)_{[\le f\le]}
}
\]
and
\[
\xymatrix{ \mathfrak R\otimes_{\e W_{\be m}(k)}\be W_{\!m}(A)\ar@{->>}[r]^(.6){\psi}\ar[d]& \s R\le(A)\ar[d]\\
\mathfrak R\otimes_{\e W_{\be m}(k)}\be W_{\!m}(A_{f})\ar[r]^(.63){\psi_{\le f}}& \s R\le(A_{f}).
}
\]
We assume first that $A=A^{\le p}$. Then $A_{\lbe f}=(A_{\lbe f})^{\le p}$ by Lemma \ref{sp}(i) and each of $\psi,\psi_{f}$ and $\psi_{\e[\e f\e]}$ above is a ring isomorphism by Proposition \ref{uprop}. Let $\varphi=\varphi_{\be A}\colon \s R\le(A)_{[\e f\e]}\overset{\!\sim}{\to} \s R\le(A_{f})$ be the following composition of ring isomorphisms:
\[
\s R\le(A)_{[\e f\e]}\overset{\psi_{\e[\e f\e]}^{-1}}{\lra} \mathfrak R\otimes_{\e W_{\be m}(k)}\be W_{\!m}(A)_{[\e f\e]}\overset{\omega}{\lra}\mathfrak R\otimes_{\e W_{\be m}(k)}\be W_{\!m}(A_{f})\overset{\psi_{f}}{\lra}\s R\le(A_{f}).
\]
Then the following diagram of rings commutes 
\[
\xymatrix{\mathfrak R\otimes_{\e W_{\be m}(k)}\be W_{\!m}(A)_{[\e f\e]}\ar[rr]^(.55){\psi_{\le[\le f\le]}}_(.55){\simeq}\ar[d]_{\omega}^{\simeq} && \s R\le(A)_{[\e f\e]}\ar[d]_{\varphi}^{\simeq}\\
\mathfrak R\otimes_{\e W_{\be m}(k)}\be W_{\!m}(A_{\lbe f})\ar[rr]^(.55){\psi_{\lbe f}}_(.55){\simeq}&& \s R\le(A_{\lbe f}).
}
\]
Now let $A$ be any $k$-algebra. By Lemma \ref{sp}(ii), there exist injective homomorphisms of $k$-algebras $A\hookrightarrow B$ and $A_{f}\hookrightarrow B_{f}$, where $B=B^{\le p}$ and $B_{f}=(B_{f})^{\le p}$. These maps induce four ring homomorphisms $\alpha\colon \mathfrak R\otimes_{\e W_{\be m}(k)}\be W_{\!m}(A)_{[\e f\e]}\to \mathfrak R\otimes_{\e W_{\be m}(k)}\be W_{\!m}(B)_{[\e f\e]}$, $\beta\colon \mathfrak R\otimes_{\e W_{\be m}(k)}\be W_{\!m}(A_{f})\to \mathfrak R\otimes_{\e W_{\be m}(k)}\be W_{\!m}(B_{f})$, $\gamma\colon \s R\le(A)_{[\e f\e]}\hookrightarrow \s R\le(B)_{[\e f\e]}$ and $\delta\colon \s R\le(A_{f})\hookrightarrow \s R\le(B_{f})$, where the latter two are injective by Remark \ref{resp}(a) and the flatness of $W_{\!m}(A)_{f}$ over $W_{\!m}(A)$. The preceding maps fit into the following diagram of rings
\[
\xymatrix{& \mathfrak R\otimes_{\e W_{\be m}(k)}\be W_{\!m}(A)_{[\e f\e]}\ar@{->>}[rr]^(.55){\psi_{\le[\le f\le]}^{A}}\ar[ddl]_(.5){\alpha}\ar[d]^{\simeq}_{\omega^{\lle A}} &&  \s R\le(A)_{[\e f\e]}\ar@{^{(}->}[ddl]^(.64){\gamma}\ar@{-->}[d]^(.5){\varphi_{\be A}} \\
& \mathfrak R\otimes_{\e W_{\be m}(k)}\be W_{\!m}(A_{\lbe f})\ar@{->>}[rr]^{\psi_{\be f}^{A}}\ar[ddl]_(.43){\beta}   &&   \s R\le(A_{\lbe f})\ar@{^{(}->}[ddl]^(.5){\delta} \\ 
\mathfrak R\otimes_{\e W_{\be m}(k)}\be W_{\!m}(B)_{[\e f\e]}\ar[rr]^(.6){\psi_{\le[\le f\le]}^{B}}_(.6){\simeq}  \ar[d]_{\omega^{\lle B}}^{\simeq} &&  \s R\le(B)_{[\e f\e]}\ar[d]_{\varphi_{\lbe B}}^{\simeq} & \\
\mathfrak R\otimes_{\e W_{\be m}(k)}\be W_{\!m}(B_{f}) \ar[rr]^(.55){\psi_{\be f}^{B}}_(.53){\simeq}   &&  \s R\le(B_{f})  &,} 
\]
where the left-hand vertical, top and bottom rectangles commute. The diagram shows that, if $x$ and $y$ are elements of $\mathfrak R\otimes_{\e W_{\be m}(k)}\be W_{\!m}(A)_{[\e f\e]}$ such that $\psi_{\le[\le f\le]}^{\le A}(x)=\psi_{\le[\le f\le]}^{\le A}(y)$, then $\psi_{\lbe f}^{\le A}(\omega^{\le A}(x))=\psi_{\lbe f}^{\le A}(\omega^{\le A}(y))$. Consequently, there exists a unique isomorphism of rings $\varphi=\varphi_{\be A}\colon \s R\le(A)_{[\e f\e]}\overset{\!\sim}{\to} \s R\le(A_{f})$ (i.e., the broken arrow in the above diagram) so that the full diagram commutes. It remains only to check that $\varphi$ is an isomorphism of $\s R\le(A)$-algebras. To this end, we consider the diagram
\[
\xymatrix{ \mathfrak R\otimes_{\e W_{\be m}(k)}\be W_{\!m}(A) \ar@{->>}[rrrr]^(.6){\psi}\ar[dr] \ar[ddr]& &&&   \s R\le(A) \ar[dl]_{c}\ar[ddl]^{d}\\ 
&\mathfrak R\otimes_{\e W_{\be m}(k)}\be W_{\!m}(A)_{[\e f\e]}\ar@{->>}[rr]^(.55){\psi_{[\e f\e]}}\ar[d]_{\simeq}^{\omega} &&  \s R\le(A)_{[\e f\e]}\ar[d]_{\varphi}^{\simeq}&\\
&\mathfrak R\otimes_{\e W_{\be m}(k)}\be W_{\!m}(A_{\lbe f})\ar@{->>}[rr]^(.55){\psi_{f}}&&   \s R\le(A_{\lbe f}) &,
} 
\]
where all sub-diagrams, except perhaps the right-hand triangle, commute. The diagram shows that $d\circ\psi=\varphi\e\circ\e c\e\circ\e\psi$. Since the top horizontal map $\psi$ is surjective, we conclude that $d=\varphi\e\circ\e c$, i.e., the right-hand triangle commutes as well. This completes the proof.
\end{proof}

As noted at the beginning of this Section, it is shown in \cite[proof of Proposition 3(5), p.~629]{gre1} that, if $\mathfrak B\subseteq \mathfrak C$ is an inclusion of finitely generated $W_{\be m}(k)$-modules, then the  induced morphism of associated $\mathbb W_{\be m}$-module {\it varieties} is a closed immersion. In a general scheme-theoretic setting (in particular, when non-reduced schemes are allowed), the corresponding statement fails, as the following example shows.

\begin{example}\label{gr-err} Let $n\geq 1$ be an integer and let $\mathfrak B\subseteq\mathfrak C$ be an inclusion of finitely generated $W_{\be n+1}(k)$-modules with associated $\mathbb W_{\be n+1}$-module schemes $\s B$ and $\s C$, respectively. Let $\mathfrak B=p\e W_{\!n+1}(k)$ and $\mathfrak C=W_{\!n+1}(k)$. The isomorphism of $W_{\!n+1}(k)$-modules from Remark \ref{f-gr}
\[
W_{\!n}(k)\overset{\!\sim}{\to} p\e W_{\!n+1}(k), (a_{0},\dots, a_{n-1})\mapsto (0,a_{0}^{\e p},\dots, a_{n-1}^{\e p}),
\]
extends to an isomorphism of $\mathbb W_{\be n+1}$-module schemes $\mathbb W_{\!n}\simeq \s B$. Now  $\s B\to \s C$ corresponds to the morphism $\mathbb W_{\!n}\to \mathbb  W_{\!n+1}$ given by
\[
W_{\!n}(A)\to W_{\!n+1}(A),(a_{0},\dots, a_{n-1})\mapsto(0,a_{0}^{\e p},\dots, a_{n-1}^{\e p}),
\]
for every $k$-algebra $A$. Consequently, if $a$ is a nonzero element of $A$ such that $a^{\le p}=0$, then $(a,0,\dots,0)\in W_{\!n}(A)$ is a nontrivial element in the kernel of the preceding map. Thus $\s B(A)\to \s C(A)$ is not injective.  
\end{example}

The behavior pointed out in the above example has the following undesirable consequence.

\begin{remark}\label{rl-gre} Let $m>1$ be an integer and let $\mathfrak R\to\mathfrak R^{\le\prime}$ a homomorphism of finite $W_{\!m}(k)$-algebras with kernel $\mathfrak K$. Let $\s R\to\s R^{\e\prime}$ be the induced morphism of associated $\mathbb W_{\!m}$-module schemes and let $\s K$ be the $\s R$-module scheme which corresponds to $\mathfrak K$. Since the composite map $\mathfrak K\to\mathfrak R\to \mathfrak R^{\le\prime}$ is the zero homomorphism, the composite of induced morphisms $\s K\to \s R\to\s R^{\e\prime}$ is the zero morphism. However, in contrast to \eqref{eqc}, $\s K\be(Y\le)$ may fail to be equal to the kernel of $\s R\le(Y\le)\to\s R^{\e\prime}(Y\le)$ (for certain $k$-schemes $Y$). For example, if $R=W\lbe(k)$, $n$ and $A$ are as in Example \ref{gr-err}, $\mathfrak R\to \mathfrak R^{\le\prime}$ is the canonical homomorphism $W_{\! n+1}(k)\to W_{\be 1}(k)$ (so that $\mathfrak K=p\e W_{\!n+1}(k)$) and $Y=\spec A$, then $\s K\be(Y\le)$ is not equal to the kernel of $\s R(Y)\to\s R^{\e\prime}(Y)$ since $\s K\be(Y)\to \s R(Y)$ is not injective. 
\end{remark}

It follows from the above remark that the obvious scheme-theoretic analog of the following statement from \cite[Lemma 1, p.~257]{gre2} fails:
 \begin{quotation} 
{\it Suppose that $\mathfrak I$ is the kernel of a surjective homomorphism {\rm[of finite and local $W_{\le m}(k)$-algebras]} $\varphi\colon 
\mathfrak R\to\mathfrak R^{\le\prime}$ and $\mathfrak I\e \mathfrak M=0$ {\rm[where $\mathfrak M$ is the maximal ideal of $\mathfrak R$]}. Then, for every pre-scheme $Y$ over $k$, the homomorphism $\varphi(Y)\colon \s R(Y)\to\s R^{\e\prime}(Y)$ is surjective with kernel $\s I(Y)$ and $\s M\be(Y)\s I\be(Y)=0$.}
\end{quotation}
 
In fact, the preceding statement if false {\it even} in the context of \cite{gre2}, as explained in the following remark.

\begin{remark}\label{gr-max}
The quoted statement is false if $\s I$ and $\s M$ are the (maximal) Greenberg module varieties associated to $\mathfrak I$ and $\mathfrak M$. We believe that Greenberg was well aware of this fact, which led him to changing the way in which a module variety is attached to a $W_{\!m}(k)$-module depending on the particular situation being considered. To justify our assertion, we begin by recalling that Greenberg introduced the modules that bear his name in \cite[\S 1]{gre1} using a pre-Grothendieck terminology. At the beginning of the indicated section, the author declares that he intends to use the language of ``algebraic spaces'' [sic], as introduced in Cartier's seminar \cite[Expos\'e 1]{chev} (Cartier actually defines algebraic {\it sets}, not {\it spaces}). In modern terms, Greenberg works with  {\it varieties}, i.e., reduced schemes of finite type over $k$ \cite[lines above Lemma 1, p.~257]{gre2}. Let $\Omega$ be an algebraically closed field extension of $k$, $M$ a finitely generated $W_{\! m}(k)$-module and $M_{\e\Omega}=M\be\otimes_{\e W_{\! m}(k)}\be W_{\!m}(\Omega)$. In \cite[Proposition 3, p.~628]{gre1}, the author shows that there exists a unique structure of module-variety on $M_{\e\Omega}$ over (the variety) $W_{\! m}(\Omega)$ such that $M_{\e\Omega}(k)=M$ and such that the $W_{\! m}(\Omega)$-action induces separable maps. He calls such a structure {\it maximal} and shows that other structures of $W_{\! m}(\Omega)$-module variety on $M_{\e\Omega}$ are obtained as purely inseparable regular images of the maximal one. The maximal structure of module variety on $M_{\Omega}$ is the object that truly corresponds to the (scheme-theoretic) Greenberg module $\s M$ introduced in \cite[Appendix]{lip}, as can be seen by comparing the constructions in \cite[proof of Proposition 3, p.~628, first few lines]{gre1} and \cite[p.~75, lines 1--5]{lip}. As part of the same proposition \cite[Proposition 3, p.~628]{gre1}, Greenberg gives a very succinct proof of the following statement: ``every submodule of $M_{\e\Omega}$ generated by elements of $M$ is a $k$-closed subvariety''. One {\it might} interpret the above statement as saying that the module variety associated to a submodule is a submodule variety when both varieties are equipped with their maximal structures, but this is not the case, as Example \ref{gr-err} shows. Greenberg is evidently aware of this fact when he writes, in the lines following the proof, that ``... the induced structure of module-variety on the submodule need not be its maximal structure''. Further, in \cite[Lemma 1 and lines above it, p.~257]{gre2}, the author is {\it not} working with the maximal structures of module varieties of the ideals $I$ and $M$ (see \cite[p.~257, lines 5--8]{gre2}). Translated into modern terms, the above means that, when stating and proving \cite[Lemma 1, p.~257]{gre2}, the author is {\it not} considering the Greenberg module schemes associated to $\mathfrak I$ and $\mathfrak M$.
\end{remark}

In order to obtain a correct scheme-theoretic version of Greenberg's statement quoted above, we proceed as follows.

Let $\varphi\colon\mathfrak R\to\mathfrak R^{\le\prime}$ be a homomorphism of finite $W_{\!m}(k)$-algebras with kernel $\mathfrak K$ and let $\s R\to\s R^{\e\prime}$ be the induced morphism of associated $\mathbb W_{\!m}$-module schemes. The {\it ideal subscheme of $\s R$ associated to $\varphi$} is, by definition, the $\s R$-module scheme
\begin{equation}\label{kbar}
\overbar{\s K}=\krn\!\lbe\left[\e\s R\to\s R^{\,\prime}\e\right].
\end{equation}
If $\s K$ is the $\s R$-module scheme which corresponds to $\mathfrak K=\krn\le\varphi$ then, as noted in Remark \eqref{rl-gre}, the canonical exact sequence of $W_{\!m}(k)$-modules $0\to\mathfrak K\to \mathfrak R\to\mathfrak R^{\le\prime}$ induces a complex of $\mathbb W_{\!m}$-module schemes $\s K\to \s R\to\s R^{\e\prime}$. Consequently, there exists a canonical morphism of $\s R$-module schemes
\begin{equation}\label{nu}
\Theta_{\varphi}\colon \s K\to  \overbar{\s K}.
\end{equation}
By Remark \ref{uch}(c), we have  
\begin{equation}\label{nua}
\mr{Im}[\e\Theta_{\varphi}\lbe(A)\colon \s K\be(A)\to  \overbar{\s K}\lbe(A)]=\mathfrak K\,\s R(A)
\end{equation}
for every $k$-algebra $A$.
Now, if $\mathfrak R$ is a finite $W_{\!m}(k)$-algebra and $\mathfrak I$ is any ideal of $\mathfrak R$, then the {\it ideal subscheme of $\s R$ associated to $\mathfrak I$}, denoted $\overbar{\s I}$, is the ideal subscheme of $\s R$ associated to the canonical projection $\varphi\colon\mathfrak R\to\mathfrak R^{(\mathfrak I\le)}=\mathfrak R/\le\mathfrak I$, i.e.,  
\begin{equation}\label{ibar}
\overbar{\s I}=\krn\!\lbe\left[\,\s R\to \s R^{\e(\lbe\s I\lle)}\le\right],
\end{equation}
where the indicated map is the morphism \eqref{rri}.  In this case, the map \eqref{nu} will be denoted by
\begin{equation}\label{nu2}
\Theta_{\le\mathfrak I}\colon \s I\to  \overbar{\s I}.
\end{equation}
Clearly, $\overbar{\s I}=0$ if $\mathfrak I=0$. Note that, as indicated in Remark \ref{rl-gre}, \eqref{nu2} is not an isomorphism in general.

\begin{proposition}\label{rnm-bar} Let $\mathfrak R$ be a finite $W_{\!m}(k)$-algebra, where $m>1$, and let $\mathfrak I$ be an ideal of $\mathfrak R$. If $A$ is a $k$-algebra such that $A=A^{p}$, then the homomorphism of $\s R(\lbe A)$-modules 
\[
\Theta_{\le\mathfrak I}(A)\colon \s I\lbe(A) \to \overbar{\s I}\lbe(A)
\]
is surjective. Further, if $A$ is perfect, then the preceding map is an isomorphism.
\end{proposition}
\begin{proof}  Recall $\mathfrak R^{(\mathfrak I\le)}=\mathfrak R/\le\mathfrak I$. There exists a canonical commutative diagram of $W_{\! m}(A)$-modules
\[
\xymatrix{0\ar@{-->}[r] & \mathfrak I\otimes_{\e W(k)} W(A) \ar[r]\ar[d]^\simeq   & \mathfrak R \otimes_{\e W(k)} W(A) \ar[r]\ar[d]^\simeq  & \mathfrak R^{(\mathfrak I\le)}\! \otimes_{\e W\lbe(k)}\be W\lbe(A) \ar[r]\ar[d]^\simeq   & 0\\
0\ar@{-->}[r]&  \s I\be(A)\ar[r]\ar[d]^{\Theta_{\mathfrak I}(A)} &  \s R\le(A)\ar[r]\ar@{=}[d] &  \s R^{\e(\lbe\s I)}(A) \ar@{=}[d] \ar[r] &   0\\
0\ar[r] &    \overbar{\s I}\be(A) \ar[r]&  \s R\le(A)\ar[r]& \s R^{\e(\lbe\s I)}(A)\ar[r]& 0.}
\]
The vertical arrows in the top rectangle are isomorphisms by Remark \ref{up}. Further, the top row of the diagram (excluding the broken arrow) is exact by the right-exactness of the tensor product functor. Thus the middle row (excluding the broken arrow) is exact as well. Since the bottom row of the diagram is exact by  \eqref{ibar} and Remark \ref{resp}(b), the surjectivity of $\Theta_{\mathfrak I}(A)$ follows.

Now assume that $A$ is perfect. Then the broken arrows in the above diagram can be filled in since $W\lbe(A)$ is flat  over $W\lbe(k)$ by Lemma \ref{flatw}. The bijectivity of $\Theta_{\mathfrak I}(A)$ is then immediate.
\end{proof}

\begin{corollary} 
Let $\mathfrak R$ be a finite $W_{\!m}(k)$-algebra, where  $m>1$, and let $\mathfrak I$ be an ideal of $\mathfrak R$. Then the perfection of the map \eqref{nu2}, i.e., $\Theta_{\le\mathfrak I}^{\le\pf}\colon \s I^{\lle\pf}\to\overbar{\s I}^{\e\pf}\!$, is an isomorphism of perfect $k$-schemes.
\end{corollary}
\begin{proof} This follows from the last assertion of the proposition using \cite[Remark 5.18(a)]{bga}.
\end{proof}

\begin{lemma}\label{barp} Let $m>1$ and let $\mathfrak R\to\mathfrak R^{\e\prime}$ and $\mathfrak R\to \mathfrak R^{\le\prime\prime}$ be surjective homomorphisms of finite $W_{\!m}(k)$-algebras with kernels $\mathfrak I$ and $\mathfrak J$ which satisfy $\mathfrak J\e\mathfrak I=0$. Then, for every $k$-scheme $Y\be$, the ring homomorphism $\s R\le(Y\le)\to \s R^{\e\prime}\lbe(Y\le)$ induced by $\mathfrak R\to\mathfrak R^{\le\prime}$ is surjective with
kernel $\overbar{\s I}\be(Y\le)$ and $\overbarr{\s J}\be(Y\le)\le\overbar{\s I}\be(Y\le)=0$.
\end{lemma}
\begin{proof}   The induced isomorphism $\mathfrak R^{(\mathfrak I\le)}=\mathfrak R/\mathfrak I\overset{\!\sim}{\to}\mathfrak R^{\e\prime}$ defines an isomorphism of associated Greenberg algebras $\s R^{\e(\lbe\s I)}\simeq \s R^{\e\prime}$. Consequently, the maps $\s R\le(Y\le)\to \s R^{\e\prime}\lbe(Y\le)$ and $\s R\le(Y\le)\to \s R^{\e(\lbe\s I\le)}\lbe(Y\le)$ have the same kernel, namely $\overbar{\s I}(Y\le)$ \eqref{ibar}. Now, since $\s R^{\e\prime}$ is affine, the morphism $\s R\to \s   R^{\e\prime}$ has a section by Remark \ref{resp}(b), which yields the surjectivity of $\s R\le(Y\le)\to \s R^{\e\prime}\lbe(Y\le)$. In order to check that $\overbarr{\s J}\be(Y\le)\le\overbar{\s I}\be(Y\le)=0$, we may assume that $Y=\spec A$, where $A$ is a $k$-algebra. By Lemma \ref{sp}(ii), there exists an injective homomorphism of $k$-algebras $A\to B$,  where $B^{\le p}=B$. Thus, since $\s  R\le(A)$ injects into $\s R\le(B)$ by Remark \ref{resp}(a), we may assume that $A=A^{p}$. In this case there exist canonical exact and commutative diagrams of $W_{\!m}(A)$-modules
\[
\xymatrix{&\mathfrak I\otimes_{\e W_{\!m}(k)}\be W_{\!m}(A)\ar[r]\ar@{->>}[d]_{\pi_{\lbe\mathfrak I}} & \mathfrak R\otimes_{\e W_{\!m}(k)} \be W_{\!m}(A) \ar[d]^\simeq \ar[r]&  \mathfrak R^{\e\prime}\otimes_{\e W_{\!m}(k)}\be W_{\!m}(A) \ar[d]^\simeq \ar[r]& 0 \\  
0\ar[r]& \overbar{\s I}\be(A) \ar[r]^{\alpha}& \s R\le(A)\ar[r]&  \s R^{\e\prime}(A)\ar[r]& 0 
}
\]
and
\[
\xymatrix{&\mathfrak J\otimes_{\e W_{\!m}(k)}\be W_{\!m}(A)\ar[r]\ar@{->>}[d]_{\pi_{\lbe\mathfrak I}} & \mathfrak R\otimes_{\e W_{\!m}(k)}\be W_{\!m}(A) \ar[d]^\simeq \ar[r]&  \mathfrak R^{\le\prime\prime}\otimes_{\e W_{\!m}(k)}\be W_{\!m}(A) \ar[d]^\simeq \ar[r]& 0 \\  
0\ar[r]& \overbarr{\s J}\be(A) \ar[r]^{\beta}& \s R\le(A)\ar[r]&  \s R^{\e\prime\prime}(A)\ar[r]& 0,
}
\]
where the rows are exact by Remark \ref{resp}(b) together with the right-exactness of the tensor product functor and the middle and right-hand vertical maps (in both diagrams) are the isomorphisms of Proposition \ref{uprop}. In order to show that $\overbarr{\s J}\lbe(A)\le\overbar{\s I}\lbe(A)$  is the zero ideal of $\s R\le(A)$, it suffices to check that
\[
(\e\beta \circ\pi_{\lbe\mathfrak J})(\e\textstyle\sum_{\e j} y_{j}\otimes w_{j}) \cdot (\alpha \circ\pi_{\lbe\mathfrak I})(\e\textstyle\sum_{\e i} x_{i}\otimes z_{i})=0
\]
for all $y_{j}\in \mathfrak J, x_{i}\in \mathfrak I$ and $w_{j},z_{i}\in W_{\!m}(A)$.
By the commutativity of the left-hand squares in the preceding diagrams, the latter is equivalent to the vanishing of the image of $(\sum_{\e j} y_{j}\otimes w_{j})(\e\sum_{\e i} x_{i}\otimes z_{i})$ in $\mathfrak R\le\otimes_{\e W_{\!m}(k)}\be W_{\!m}(A)$. Since the preceding product equals $\sum_{\e i,j}y_{j}x_{i}\otimes w_{j}z_{i}$ and $y_{j}x_{i}\in \mathfrak J\e\mathfrak I=0$ for all $i,j$, the lemma follows.
\end{proof}

\begin{remark} \label{fgt} Proposition \ref{rnm-bar} also holds, rather trivially, in the setting of Subsection \ref{sec-k}. In this case $\overbar{\s I}=\s I$ by \eqref{eqc} and \eqref{ibar}, whence \eqref{nu2} is the identity morphism. Thus, for every $k$-algebra $A$, the map $\Theta_{\mathfrak I}(A)$ in the indicated proposition is the identity map.
Further, Lemma \ref{barp} also holds in the setting of Subsection \ref{sec-k}. The proof is similar to (and, in fact, simpler than) the above proof, using \eqref{wem}, \eqref{eqc} and Remarks \ref{resp0} in place of Remarks \ref{resp}.  
\end{remark}

Now let $\mathfrak R$ be either a finite $W_{\be m}(k)$-algebra, where $k$ is a perfect field of positive characteristic and $m>1$ is an integer, or a finite $k$-algebra over an arbitrary field $k$. In order to discuss both cases simultaneously, we adopt the following convention:

{\it $\mathfrak R$ will denote a finite $W_{\be m}(k)$-algebra, where $m\geq 1$ and $k$ is assumed to be perfect and of positive characteristic if $m>1$.}

Let $\mathfrak I$ be an ideal of $\mathfrak R$, $i\geq 1$ an integer and $A$ a $k$-algebra. We will write ${\s I}^{\lbe i}$ for the $\mathbb W_{\be m}$-module scheme associated to the ideal $\mathfrak I^{\e i}$ (we warn the reader that ${\s I}^{\lbe i}$ should not be confused with the $i$-th power of $\s I$. The latter, in fact, cannot be defined since, in general, $\s I$ is not an ideal subscheme of $\s R\e$).

By Lemma \ref{barp} and Remark \ref{fgt}, the exact sequence of $\mathfrak R$-modules
$0\to \mathfrak I^{\e i}\to \mathfrak R\to \mathfrak R/\mathfrak I^{\e i}\to 0$  associated to the pair $(\mathfrak R, \mathfrak I\le)$ induces an exact exact sequence of $\s R(A)$-modules
\begin{equation}\label{rnis}
0\to \overbar{{\s I}^{\lbe\bbe i}}(\be A)\to \s R(\be A)\to \s R^{\e(\lbe\s I^{\lbe i})}\bbe(\be A)\to 0.
\end{equation}
We note that, although $\overbar{{\s I}}$ is an ideal subscheme of $\s R$ (by definition) and $\overbar{{\s I}}^{\e i}$ is thus defined (compare with the comment above), the latter ideal subscheme is not, in general, equal to $\overbar{{\s I}^{\lbe i}}$. See \eqref{incl} below.

Now let $s\geq i\geq 1$ be integers and consider $(\mathfrak R/\mathfrak I^{\le s},\mathfrak I/\mathfrak I^{\le s})$ in place of $(\mathfrak R, \mathfrak I\le)$ above.  We will make the identifications
\[
\frac{\mathfrak R/\lle\mathfrak I^{\le s}}{(\mathfrak I/\lle\mathfrak I^{\le s})^{\le i}}=\frac{\mathfrak R/\lle\mathfrak I^{\le s}}{\mathfrak I^{\lle i}/\lle\mathfrak I^{\le s}}=\mathfrak R/\lle\mathfrak I^{\le i}.
\]
Thus there exists a canonical exact sequence of $W_{\be m}(k)$-modules
\begin{equation}\label{rnis2}
0\to \mathfrak I^{\le i}\lbe/\lle\mathfrak I^{\le s}\to \mathfrak R/\lle\mathfrak I^{\le s}\to \mathfrak R/\lle\mathfrak I^{\le i}\to 0.
\end{equation}
We will write $\overbar{{\s I}}_{\be\!\!i/\lbe s}$ for the ideal subscheme of $\s R^{\e(\s I^{\lbe s}\lbe)}$  associated to $\mathfrak I^{\e i}\lbe/\lle\mathfrak I^{\le s}$, i.e., the kernel of the morphism of $\mathbb W_{\be m}$-module schemes $\s R^{\e(\s I^{\lbe s})}\to \s R^{\e(\s I^{\lbe i})}$ induced by the map $\mathfrak R/\lle\mathfrak I^{\le s}\to \mathfrak R/\lle\mathfrak I^{\le i}$ in \eqref{rnis2}. Now consider the exact and commutative diagram of $\s R (A)$-modules
\begin{equation*} 
\xymatrix{
0\ar[r]&\overbar{{\s I}^{\lbe\bbe i} }(A)\ar[d] \ar[r]& \s R (A) \ar[r]\ar@{->>}[d]& \s R^{\e(\s I^{\lbe i})} (A)\ar[r]\ar@{=}[d]&0\\
0\ar[r]&\overbar{{\s I}}_{\be\!\!i/\lbe s}(A)\ar[r]& \s R^{\e(\s I^{\be s})}(A)  \ar[r]& \s R^{\e(\s I^{\lbe i})}(A) \ar[r]&0, 
}
\end{equation*}
whose top row is \eqref{rnis} and bottom row is similarly induced by \eqref{rnis2} via Lemma \ref{barp} and Remark \ref{fgt}. The middle vertical map above is part of sequence \eqref{rnis} with ${\s I}^{\lbe i}$ replaced by ${\s I}^{\be s}$. The diagram thus yields an exact sequence of $\s R (A)$-modules
\begin{equation}\label{rnis3} 
0\to\overbar{{\s I}^{\be s}}(\lbe A)\to \overbar{{\s I}^{\lbe i} }(A)\to \overbar{{\s I}}_{\be\!\!i/\lbe s}(A)\to 0.
\end{equation}
Now, if $1\leq j\leq s$ is an integer such that  $i+j\geq s$, then $(\mathfrak I^{\le i}\lbe/\lle\mathfrak I^{\le s})(\mathfrak I^{\e j}\lbe/\lle\mathfrak I^{\le s})=0$ and therefore Lemma \ref{barp} shows that
 \begin{equation}\label{need}
\overbar{{\s I}}_{\be\!\!i/\lbe s}(\be A)\le\overbar{{\s I}}_{\be\!\!j/\lbe s}(\be A)=0 \qquad\text{if $i+j\geq s$}.
\end{equation}
It now follows from \eqref{rnis3}  with $s=i+j$  that
\begin{equation*} 
\overbar{{\s I}^{\lbe\bbe i} }(\be A)\le\overbar{{\s I}^{\be j} }(\be A)\subseteq \overbar{{\s I}^{\lbe\bbe i+j} }(\be A)
\end{equation*}
for every pair of integers $i,j$. In particular,  for every integer $r\geq 1$, 
\begin{equation}\label{incl}
\overbar{{\s I}}\be(\be A)^{r}   \,\subseteq\, \overbar{{\s I}^{\bbe r}}(\be A).
\end{equation}
Consequently, 
\begin{equation}\label{nlp}
\hskip 1.9cm\overbar{{\s I}}\be(\be A)^{n}=0 \qquad\text{if $\mathfrak I^{\le n}=0$}, 
\end{equation}
since ${\s I}^{n}=0$ when $\mathfrak I^{\le n}=0$.

Now, for every $k$-scheme $Y$, we will write $\s R(\lbe\cO_{Y}\be)$ for the Zariski sheaf on $Y$ defined by
\begin{equation}\label{zdef}
\hskip 3.5cm\varGamma(U,\s R(\cO_{Y}))=\Hom_{\e k}(U,\s R\le)\qquad\qquad{\text{($U\subset Y$ open)}}
\end{equation}
If $U=\spec A$ is an affine subscheme of $Y$, then
\begin{equation}\label{use}
\varGamma(U,\s R(\cO_{Y}))=\s R(A).
\end{equation}
We define $\overbar{\s I}\lbe(\be\cO_{Y}\be)$ similarly. Note that, if $V$ is an open subscheme of $Y$, then $\s R(\lbe\cO_{Y}\be)\!\be\mid_{\lle V}=\s R(\lbe\cO_{\le V}\be)$ and $\overbar{\s I}\lbe(\be\cO_{Y}\be)\!\be\mid_{\lle V}=\overbar{\s I}\lbe(\be\cO_{\le V}\be)$.

\begin{lemma}\label{rnm2} Let $\mathfrak R$ be a $W_{\be m}(k)$-algebra, where $m\geq 1$. For every ideal $\mathfrak I$ of $\mathfrak R$ and every $k$-scheme $Y$, there exists a canonical exact sequence of Zariski sheaves on $Y$
\[
0\to\overbar{\s I}\lbe(\be\cO_{Y}\be)\to \s R\le(\be\s
O_{Y}\be)\to \s R^{\e(\lbe\s I)}\bbe(\be\s O_{Y}\be)\to 0.
\]
\end{lemma}
\begin{proof}  This follows directly from Lemma \ref{barp} and Remark \ref{fgt}.
\end{proof}

We will also need the following lemma. By Remarks \ref{resp0}(b) and \ref{resp}(b), if $A$ is a $k$-algebra and $I$ is a proper ideal of $A$, then the canonical surjective homomorphism of $k$-algebras $A\twoheadrightarrow A/I$ induces a surjective homomorphism of $\mathfrak R$-algebras $\s R\le\le(A)\twoheadrightarrow \s R\le(A/I\e)$. We define
\[
\s R\le(I\e)=\krn[\e\s R\le(A\e)\to \s R\le(A/I\e)\e],
\]
so that
\begin{equation}\label{rrri}
0\to \s R\le(I\e)\to \s R\le(A\e)\to \s R\le(A/I\e)\to 0
\end{equation}
is an exact sequence of $\mathfrak R$-modules.

\begin{lemma}\label{r-nilp} Let $A$ be a $k$-algebra and let $I$ and $J$ be
ideals of $A$. Then
\[
\s R\le(I\e)\e\s R\le(J\e)\subseteq \s R\le(IJ\e).
\]
\end{lemma}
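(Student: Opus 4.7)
The plan is to exploit the fact that $\s R_{\le n}$ is isomorphic to $\A^{n+1}_{k}$ as a $k$-scheme, so that for every $k$-algebra $A$ the set $R_{\le n}(A)=\Hom_{k}(\spec A,\s R_{\le n})$ is canonically identified with $A^{n+1}$. Under this identification an ideal $I\subseteq A$ corresponds to the subset $R_{\le n}(I)= I^{n+1}\subseteq R_{\le n}(A)$; this is consistent with the description of $R_{\le n}(\phi)$ as $\phi^{n+1}$ recalled just before the lemma, and equivalently realizes $R_{\le n}(I)$ as the kernel of the epimorphism $R_{\le n}(A)\twoheadrightarrow R_{\le n}(A/I)$. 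Analogously, $R_{\le n}(IJ)=(IJ)^{n+1}$.

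Because $\s R_{\le n}$ is a $k$-ring scheme, its multiplication morphism $\s R_{\le n}\times_{k}\s R_{\le n}\to\s R_{\le n}$ is, under the above identifications, given by a tuple of polynomials $P_{0},\ldots,P_{n}\in k[x_{0},\ldots,x_{n},y_{0},\ldots,y_{n}]$: for $a=(a_{i})$ and $b=(b_{j})$ in $A^{n+1}$ the $s$-th coordinate of $ab$ equals $P_{s}(a,b)$. Since the zero of the ring $R_{\le n}(A)$ is $(0,\ldots,0)$ and is absorbing under multiplication, the identities $P_{s}(0,y)=0$ and $P_{s}(x,0)=0$ hold identically in $k[x,y]$ (it suffices to apply them in the universal case where $a$ or $b$ is the generic tuple). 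Hence every monomial appearing in $P_{s}$ contains at least one $x_{i}$ and at least one $y_{j}$, so $P_{s}$ lies in the product ideal $(x_{0},\ldots,x_{n})\cdot(y_{0},\ldots,y_{n})$ of $k[x,y]$. One may therefore write $P_{s}=\sum_{i,j}x_{i}y_{j}\,Q_{s,i,j}(x,y)$ for suitable $Q_{s,i,j}\in k[x,y]$.

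Given $a\in R_{\le n}(I)$ and $b\in R_{\le n}(J)$, so that $a_{i}\in I$ and $b_{j}\in J$ for all $i,j$, one then computes
\[
(ab)_{s}=\sum_{i,j}Q_{s,i,j}(a,b)\,a_{i}b_{j}\in IJ,
\]
since each product $a_{i}b_{j}$ belongs to $IJ$ and each $Q_{s,i,j}(a,b)$ belongs to $A$. Thus $ab\in(IJ)^{n+1}=R_{\le n}(IJ)$, which is the required inclusion. The argument is uniform in the equal and unequal characteristics cases, as it depends only on the fact that $\s R_{\le n}$ is a $k$-ring scheme whose underlying scheme is affine space; the only conceptual point requiring care is the interpretation of $R_{\le n}(I)$ for the ideal $I$, which constitutes the main (and minor) obstacle and is resolved by the set-theoretic identification employed in the first paragraph.
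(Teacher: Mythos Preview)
Your proof is correct and is precisely the concrete unpacking of the paper's one-line justification ``This follows from the fact that $R_{\le n}(-)$ is representable.'' Representability of $R_{\le n}(-)$ by the affine $k$-ring scheme $\s R_{\le n}\simeq\A^{n+1}_{k}$ is exactly what gives you the polynomial multiplication law $P_{0},\dots,P_{n}\in k[x,y]$ and the identification of $R_{\le n}(I)$ with $I^{n+1}$, so your argument and the paper's are the same in substance; you have simply made explicit the step (that $P_{s}(0,y)=P_{s}(x,0)=0$ forces $P_{s}\in(x_{0},\dots,x_{n})(y_{0},\dots,y_{n})$) which the paper leaves to the reader.
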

\begin{proof} This follows from the fact that the functor $\s R\le(-)$ is representable.
\end{proof}

\section{The Greenberg algebra of a truncated discrete valuation ring} \label{truc}

In this Section we discuss the Greenberg algebras associated to truncated discrete valuation rings, which are the motivating examples of the theory.

Let $R$ be a discrete valuation ring with valuation $v$, field of fractions $K$, maximal ideal $\mm$ and residue field $k=R/\mm$. We will write $\widehat{R}$ for the $\mm$-adic completion of $R$ and $\widehat{K}$ for the field of fractions of $R$. Let $\kbar$ be a fixed algebraic closure of $k$. In the unequal characteristics case, i.e., when ${\rm{char}}\, R=0$ and ${\rm{char}}\e k=p>0$, we assume that $k$ is {\it perfect}. For every  $n\in\N$, set $R_{\le n}=R/\mm^{n}$. Then, by \cite[III, \S4, Proposition 8, p.~205]{bou}, $R_{\le n}=\widehat{R}/\mm^{n}=\widehat{R}_{\le n}$ for every $n\in\N$. Consequently, {\it in all constructions that depend only on the truncations $R_{\le n}$, such as those in this Section, we will assume, without loss of generality, that $R$ is complete}. Now, for each $n\in\N$, set  $M_{\lbe n}=\mm/\mm^{n}$. Clearly, $R_{\le n}$ is an artinian local ring with maximal ideal $M_{n}$. We will write $S=\spec R$, $S_{n}=\spec R_{\le n}$ and $q_{\le n}$ for the canonical map $R\to R_{\le n}$. For every pair of integers $r\geq 1$ and $i\geq 0$, we let $\theta_{\lbe i}^{\e r}\colon S_{\le r}\to S_{\le r+i}$ be the morphism induced by the canonical map $R_{\e r+i}\to R_{\e r}$. 
Note that $\theta_{\lbe i}^{\e r}$ is a nilpotent immersion and thus a universal homeomorphism. Further, every $S_{\le r}$-scheme has a canonical $S_{\le r+i}\e$-scheme structure via $\theta_{\lbe i}^{\e r}$.

Now let $n,s$ be integers such that $n\geq s\geq 1$. Then multiplication by $\pi^{\le s}$ on $R$ induces a surjective homomorphism of $R_{\le n}$-modules $R_{\le n} \to M_{\lbe n}^{s}$ whose kernel is  $M_{n}^{\le n-s}$. Thus we obtain an isomorphism of $R_{\le n}$-modules
\begin{equation}\label{vid}
R_{\e n-s}\overset{\!\sim}{\to} M_{\lbe n}^{\lle s},\,\,r+\mm^{n-s}\mapsto \pi^{\le s}r+\mm^{ n} \quad (r\in R\e).
\end{equation}
Note that, since $M_{\lbe n}^{\le s}\le M_{n}^{\le n-s}=M_{\lbe n}^{\le n}=0$, the preceding map is also an isomorphism of $R_{\e n-s}\le$-modules.

If $R$ is an equal characteristics ring then, by \cite[II, \S4, Theorem 2 and comment that follows, p.~33]{self}, there exists an isomorphism $\xi\colon k[[\e t\e ]]\overset{\!\be\sim}\to R$, where $t$ is an indeterminate. Consequently $\pi=\xi(t\e)$ is a uniformizing element of $R$, i.e, $\mm=(\pi)$. Note that, if we set
$\pi_{n}=q_{n}(\pi)\in R_{\le n}$, then $R_{\le n}$ is a free $k$-module of rank $n$ with basis  $1,\pi_{n},\dots,\pi_{n}^{\le n-1}$ for every $n\geq 1$.  In particular, the ring $R_{\le n}$ is of the type discussed in Subsection \ref{sec-k}.
We now fix the preceding isomorphism and write $\tau_{_{\! R}}\colon k\to R$ for the canonical inclusion.
We will regard $S$ and each $S_{n}$ as a $k$-scheme via $\spec\be(\tau_{_{\! R}})$ and $\spec\be(q_{n}\tau_{_{\! R}})$, respectively. By Subsection \ref{sec-k} above, the Greenberg algebra associated to $R_{\le n}$ is the $k$-ring scheme
\begin{equation}\label{rneq}
\s R_{\lle n}=\Re_{R_{\le n}\lbe/\le k}(\mathbb O_{R_{\le
n}}\lbe),
\end{equation}
where $\Re_{\le R_{\le n}/\le k}$ is the Weil restriction functor associated to the finite and locally free morphism $\spec\be(q_{n}\tau_{_{\! R}})\colon S_{n}\to\spec k$.  See \cite[Example 2.6(2)]{ns2} for an explicit description of the ring structure on $\s R_{\le n}$. Note that $\s R_{1}=\mathbb O_{\le k}$ and $\s R_{\le n}(k)=R_{\le n}$ for every $n\geq 1$. Further, the $k$-scheme (respectively, $k$-group scheme) underlying $\s R_{\le n}$ is $\Re_{R_{\le n}\lbe/ k}(\A^{\be 1}_{R_{\le n}}\lbe)=\A_{\le k}^{\be n}$ (respectively, $\Re_{R_{\le n}\lbe/ k}(\G_{a,\le R_{\le n}}\lbe)=\G_{a,\e k}^{\le n}$). Now, by \eqref{wem} and \eqref{gr-weil}, for every $k$-algebra $A$ we have
\begin{equation}\label{eqrn}
\s R_{\le n}\lbe(A)=R_{\le n}\otimes_{\e k}A  
\end{equation}
and
\begin{equation*} 
\s M_{\le n}\lbe(A)=M_{\le n}\otimes_{\e k}A =\pi_{n}\e \s R_{n}(A)\subseteq \s R_{n}(A).
\end{equation*}

\begin{remark} 
By Lemma \ref{uh0}, $\spec\be(q_{n}\tau_{_{\! R}})\colon S_{\le n}\to \spec k$ is a universal homeomorphism. Thus, by Corollary \ref{wr-uh}, $\Re_{R_{\le n}\lbe/\le k}\big(Z)$ exists for every $R_{\le n}$-scheme $Z$.
\end{remark}

\smallskip

In the unequal characteristics case, we follow the exposition in
\cite[pp.~1591-94]{ns} and \cite[\S2.2]{ns2}. See also \cite[II, \S 5]{self}. Recall that $k$ is assumed to be perfect in this case, of characteristic $p>0$. The integer $\ari=v(p)\geq 1$, which agrees with the ramification index
of $K/\mathbb Q_{\le p}$, is called the {\it absolute ramification index of $R$}. 
When $\ari=1$, $R$ is called {\it absolutely unramified}. There exists a unique $k$-monomorphism of
rings $W(k)\hookrightarrow R$ such that $R/\le W\lbe(k)$ is a totally ramified (possibly trivial) extension of degree $\ari$. In particular, $R$ is absolutely unramified if, and only if, $R=W(k)$. Assume now that $\ari>1$, so that $R$ is totally ramified over
$W(k)$. Then there exists an isomorphism
\begin{equation}\label{eis}
\xi\colon W(k)[\e T\e]/(\le f\e)\overset{\!\be\sim}\to R
\end{equation}
where $f$ is an Eisenstein polynomial of degree $\ari$ over $W(k)$, i.e., $f(T\e)=T^{\e\ari   }+a_{1}T^{\e\ari   -1}+\cdots+a_{\ari}$, where $a_{i}\in W(k)$, $p\!\!\mid\!\! a_{i}$ for all $i$ and $p^{2}\!\!\nmid\!\! a_{\ari}$. 
We now write $W(k)[\e T\e]/(\le f\le)=W(k)[\le t\le]$, where $t$ satisfies the equation 
$t^{\e\ari}+a_{1}t^{\e\ari-1}+\cdots+a_{\ari}=0$, fix the isomorphism $W(k)[\le t\le]\simeq R$ \eqref{eis} and write $\pi=\xi(t\e)$, which is a uniformizing element of $R$. For every integer $n\geq 1$, let $\pi_{n}=q_{n}(\pi)\in R_{\le n}$. 
The artinian local ring $R_{\le n}$ has characteristic $p^{\le m}$, where 
\begin{equation}\label{m}
m=\lceil n/\ari\e \rceil
\end{equation}
is  the smallest integer that is larger than or equal to $n/\ari$.
Consequently, $R_{\le n}$ is canonically an algebra over $W_{\lbe m}(k)\simeq W(k)/(\e p^{\e m})$. Note that, since  $m-1<n/\ari   \leq n$, we have $1\leq m\leq n$ and therefore 
$R_{\le n}$ is also an algebra over $W_{\be n}(k)$. As a $W_{\be m}\lbe(k)$-module, $R_{\le n}$ can be written as an internal direct sum  $W_{\be m}\lbe(k)\oplus W_{\be m}\lbe(k)\cdot\pi_{n}\oplus\cdots\oplus W_{\be m}\lbe(k)\cdot \pi_{n}^{\le r}$, where
\begin{equation}\label{r}
r=\mr{min}\{\e\ari-1,n-1\}.
\end{equation}

\begin{lemma}\label{isom} For each integer $i$ such that $0\leq i\leq r$, where $r$ is given by \eqref{r}, there exists an isomorphism of  $W_{\be m}\lbe(k)$-modules
\[
W_{\be m}\lbe(k)\be\cdot\be\pi_{n}^{\le i}\simeq W_{\be n_{i}}\lbe(k),
\]
where
\begin{equation}\label{ni}
n_{\le i}=\lceil (n-i\le)/\ari\e\rceil
\end{equation}
and $\ari$ is the absolute ramification index of $R$.
\end{lemma}
\begin{proof}  Note that, by \eqref{m} and \eqref{ni}, $n_{i}\leq m$ for every $i$ as above and therefore $W_{\be n_{i}}\lbe(k)$ has a canonical $W_{\lbe m}\lbe(k)$-module structure. Note also that $n_{i}$ is the {\it least} integer $d$ such that  $i+\ari    \e d\geq n$. Now, since  $i+\ari    \e n_{i}\geq n$ and $\pi_{n}^{\le n}=0$, we have $p^{\le n_{i}}\pi^{\le i}_{n}=0$, whence
\[
W_{\lbe m}\lbe(k)\be\cdot\be\pi^{\le i}_{n}=(W_{\lbe m}\lbe(k)/(\e p^{\le n_{i}}))\be\cdot\be\pi^{\le i}_{n}\simeq W_{\lbe n_{i}}\lbe(k)\be\cdot\be\pi^{\le i}_{n}.
\]
It remains only to check that the canonical map $W_{\lbe n_{i}}\lbe(k)\to W_{\lbe n_{i}}\lbe(k)\be\cdot\be\pi^{\le i}_{n}\e,\, a\mapsto a\be\cdot\be\pi^{\le i}_{n},$ is an isomorphism of $W_{\be m}\lbe(k)$-modules. The above map is clearly surjective. To show that it is injective, we argue by contradiction and assume that its kernel contains a nontrivial element. Then, since $p$ is a uniformizing element of $W(k)$, there exists a positive integer $r<n_{i}$ such that $p^{\e r}\!\be\cdot\be \pi^{\le i}_{n}=0$, i.e., $i+r\le \ari   \geq n$, which contradicts the minimality of $n_{i}$.
\end{proof}

The lemma shows that there exists an isomorphism of  $W_{\be m}\lbe(k)$-modules
\begin{equation}\label{dcp}
R_{\le n}\simeq \prod_{i=0}^{r}W_{\be n_{i}}\lbe(k).
\end{equation}
Note that, since  $W_{\be m}\lbe(k)$, $W\lbe(k)$ and $R$ are local rings with the same residue field,
\begin{equation}\label{lgth}
{\rm length}_{\e  W_{\be m}\lbe(k)}(R_{\le n})={\rm length}_{\e  W\lbe(k)}(R_{\le n})={\rm length}_{R}(R_{\le n})= n.
\end{equation}
See \cite[Ch. 7, Lemma 1.36(a), p.~262]{liu}. On the other hand, ${\rm length}_{\e  W_{\be m}\lbe(k)}(W_{\be n_{i}}\lbe(k))=n_{i}$ for every $i$ and \eqref{dcp} shows that  $n=n_{\le 0}+\cdots+n_{r}$,  where $n_{\le 0}=m$ by  \eqref{m} and \eqref{ni}. Further, the underlying set of $R_{\le n}$ can be identified with  $k^{\e n_{\le 0}}\times\cdots\times k^{n_{r}}=k^{\e n}$ in such a way that the ring structure on $R_{\le n}$, which is defined by the rules $f(\pi_{n})=\pi_{n}^{\le n}=0$, corresponds to a ring structure on  $k^{\e n}$ given by polynomial maps.  The resulting $k$-ring scheme $\s R_{\le n}$ agrees with the Greenberg algebra associated to $R_{\le n}$ in Subsection \ref{sec-w}. 
As a  $\mathbb W_{\be m}$-module scheme, $\s R_{\le n}$ is isomorphic to $\prod_{\e i=0}^{\e r}\!\mathbb  W_{\be n_{i}}$ and the $k$-scheme underlying $\s R_{\le n}$ is  $\A^{\be n}_{k}$. Further, $\s R_{\le n}(k)=R_{\le n}$ and  $\s R_{\le 1}\simeq \mathbb O_{k}$. In addition, if $R$ is absolutely unramified, i.e.,  $\ari   =1$ (or, equivalently, $R=W\lbe(k)$), then $r=0$ \eqref{r},  $n_{0}=n$ and $\s R_{\le n}$ is isomorphic to  $\mathbb W_{\be n}$ as a $k$-ring scheme.
\bigskip

\begin{remarks}\label{nim0}\indent
\begin{enumerate}
\item[(a)]  Write $n=q\ari   +\zeta$, where $0\leq \zeta<\ari   $ and $q\geq 0$. Note that $\zeta=0$ if, and only if, $\ari   $ divides $n$. Now the integer $n_{i}$ in \eqref{ni} equals $q+1$ if $i<\zeta$ and $q$ if $i\geq \zeta$. In particular, $m=n_{\le 0}$ equals $q+1$ if $\zeta\neq 0$ and $q$ if $\zeta=0$. Consequently, if $\zeta\neq 0$, i.e., $\ari   $ does not divide $n$,  then $n_{\le i}=m$ for $i<\zeta$ and $n_{\le i}=m-1$ for $i\geq \zeta$. On the other hand, if $\zeta=0$, i.e., $\ari   $ divides $n$, then $n_{i}=m$ for all $i$. 
\item[(b)] If $n\leq \ari$, then $m=1$ \eqref{m} and $R_{\le n}$ is a finitely generated $W_{\lbe 1}(k)=k$-algebra, i.e., a type of ring discussed in Subsection \ref{sec-k}. On the other hand, if $n>\ari   $, then $m>1$ and $R_{\le n}$ is a type of ring discussed in Subsection \ref{sec-w}. Further, in the latter case ${\rm char}\, R_{\le n}=p^{m}\neq {\rm char}\, k$.
\end{enumerate}
\end{remarks}

\smallskip

Let $R$ again be an arbitrary discrete valuation ring and let $n,s$ be integers such that  $n\geq s\geq 1$. Then $R_{\le n}$ and $R_{\le s}$ are finite $W_{\be m}(k)$-algebras, where $m$ is given by \eqref{m} if $R$ is an unequal characteristics ring and is equal to $1$ otherwise. Thus we may apply here the discussion that starts after Remark \ref{fgt} with $(\mathfrak R\le, \mathfrak I\e)=(R_{\le n}, M_{n})$ and $(\mathfrak R/\mathfrak I^{\le s},\mathfrak I/\mathfrak I^{\le s})=(R_{\le n}\le/M_{n}^{s},M_{\le n}\le/ M_{n}^{s}\e)$. Since $R_{\le n}\le/ M_{n}^{s}\simeq R_{\le s}$ and $M_{n}^{\e i}\lle/ M_{n}^{\le s}\simeq M_{s}^{i}$ for every $i\geq 1$, we may make the identifications $\s R^{\e(\s I^{\be\lle s}\lbe)}=\s R_{\le n}^{\e(\s M_{n}^{\lbe s})}=\s R_{\lbe s}$ and $\overbar{{\s I}}_{\be\!\!i/\lbe s}=\overbar{{\s M}}_{\! i/\lbe s}=\overbar{{\s M}_{\be s}^{i}}$. Thus, for every $k$-algebra $A$, \eqref{need} yields 
\begin{equation*} 
\kern 4cm\overbarr{{\s M}_{\be s}^{i}}(A)\le\overbarr{{\s M}_{\lbe s}^{\lle\jmath}}(A)=0\qquad\text{if $i+j\geq s$}.
\end{equation*}
In particular, 
\begin{equation}\label{ned2} 
\kern 4cm\overbarr{{\s M}_{\be n}^{i}}(A)\le\overbarr{{\s M}_{\lbe n}^{\lle\jmath}}(A)=0\qquad\text{if $i+j\geq n$}.
\end{equation}
Further, if $r\geq 1$ is an integer, then \eqref{incl} yields
\begin{equation}\label{mincl}
\overbarr{{\s M}_{\lbe n}}\lbe(A)^{r}\e\subseteq\e \overbar{{\s M}^{r}_{\lbe n}}\lbe(A)
\end{equation}
Thus, since $M_{n}^{n}=0$, we have
\begin{equation*} 
\overbarr{{\s M}_{\lbe n}}(\be A)^{n}=0.
\end{equation*}
Further, \eqref{rnis} with $i=s$ is identified with an exact sequence
of $\s R_{n}\lbe(A)$-modules
\begin{equation}\label{rnms}
0\to\overbarr{{\s M}_{\be n}^{\be\lle s}}(\be A)\to \s R_{n}(\be A)\to \s R_{\lbe s}(\be A)\to 0.
\end{equation}
In other words, there exists a canonical isomorphism of $\s R_{\le n}$-module schemes
\begin{equation*} 
\overbarr{{\s M}_{\lbe\bbe n}^{\be s}}=\krn\!\left[\s R _{\lbe n}\to  \s R_{\lbe s}\right].
\end{equation*}
In particular, $\overbarr{{\s M}_{\lbe n}} =\krn\!\lbe\left[\,\s R_{\lbe n}\to\mathbb O_{k}\e\right]$.

Now observe that, by the exactness of \eqref{rnms}, $\pi_{\lbe n} ^{\le s}\le\s R_{\le n}(A)\subseteq\overbarr{\s M_{\be n}^{\lbe\bbe s}}(A)$. Thus, by \eqref{ned2} with  $i=s$, we have 
\begin{equation}\label{rnms1}
\pi_{\be n} ^{\e s}\lbe\overbarr{\s M_{n}^{\le\jmath}}(A)=0 \qquad\text{if $j\geq n-s$}.
\end{equation}
In other words, $\overbarr{\s M_{n}^{\le\jmath}}(A)$ is a $\pi_{\lbe n}^{\le s}$-torsion $\s R_{n}\lbe(A)$-module for every $j\geq n-s$.

We will write
\begin{equation}\label{ttr}
\Theta_{n,\le s}\colon \s M_{\be n}^{s}\to\overbarr{{\s M}_{\be n}^{\be s}}
\end{equation}
for the canonical map \eqref{nu2}. Recall that, by Remark \ref{fgt}, \eqref{ttr} is the identity morphism in the equal characteristic case.

\begin{remarks}\label{power}\indent
\begin{itemize}
\item[(a)] If $R=W(k)$ in the unequal characteristics case, then $\s R_{n}=\mathbb{W}_{\be n}$ for every $n\in\N$. Further, if $n>s\geq 1$, then \eqref{rnms} can be identified with the sequence
\eqref{vr}. Thus $\overbarr{{\s M}_{\lbe n}^{\lbe\bbe s}}(\be A)=V^{\be s}\le W_{\be n-s}(A)\subset W_{\be n}(A)$ \eqref{sta}.
\item[(b)]  In general, the inclusion $\overbarr{{\s M}_{\lbe n}}\lbe(A)^{r}\e\subseteq\e \overbar{{\s M}^{r}_{\lbe n}}\lbe(A)$ \eqref{mincl} is strict. For example, choose $R=W(k)$ and set $n=3$ and $s=1$ in (a). By \eqref{ot} and \eqref{vv}, we have
\[
\begin{array}{rcl}
V\lbe(a_{0},a_{1})V(b_{0},b_{1})&=&V^{\lle 2}(a_{0}^{\le p}\e b_{0}^{\e p}\le)=(0,0,a_{0}^{\le p}\e b_{0}^{\e p}\e)=FV^{2}(a_{0}\le b_{0})\\
&=& p \le V(a_{0}\le b_{0})\in p\le W_{\!3}(\lbe A), 
\end{array}
\]
whence $\overbarr{{\s M}_{\lbe 3}}(A)^{2}=(\le V\le W_{\!2}(\lbe A))^{2}\subseteq p\e W_{\be 3}(A)$ by (a). On the other hand, if $A\neq  A^{\le p}$ and $c\in A\setminus A^{p}$, then $(0,0,c)$ is an element of $\overbarr{{\s M}^{\e 2}_{\lbe 3}}(A)=V^{\lle 2}\le W_{\!1}(A)$ which is not contained in $p\le W_{\!3}(A)=FV\le W_{\!2}(A)$. Thus $(0,0,c)\notin\overbarr{{\s M}_{\lbe 3}}(A)^{2}$.
\item[(c)]  The containment \eqref{mincl} is an equality in the unequal characteristics case if $A$ is perfect and $n>\ari$, where $\ari$ is the absolute ramification index of $R$ (so that $m>1$ \eqref{m}). Indeed, by Proposition \ref{rnm-bar}, the map $\Theta_{M_{\lbe n}^{\lbe s}}\be(A)\colon \s M_{\lbe n}^{\lbe s}(A)\to \overbarr{\s M_{\be n}^{\lbe\bbe s}}(A)$ is an isomorphism for every $n$ and $s\geq 1$. On the other hand, $\s M_{\lbe n}^{\lbe s}\lbe(A)\simeq  \pi_{\lbe n}^{\le s}\s R_{\le n}(A)\simeq \s M_{\lbe n}\be(A)^{\lbe s}$, as follows from Remark \ref{up} and Lemma \ref{flatw}. 
 
\item[(d)] If $R_{\le n}$ is a $k$-algebra (which holds if $R$ is an equal characteristic ring or if $R$ is an unequal characteristics ring and $n\leq \ari$, as $m=1$ \eqref{m} in the latter case), then  \eqref{mincl} is also an equality. Indeed, in this case $\s M_{\lbe n}^{\lbe s}(A)=\overbarr{\s M_{\be n}^{\lbe\bbe s}}(A)$ for every $A$ by Remark \ref{fgt} and $\s M_{\lbe n}^{\lbe s}\be(A)\simeq  \pi_{\lbe n}^{\le s}\le\s R_{\le n}\be(A)\simeq\s M_{\lbe n}\lbe(A)^{\lbe s}$ by \eqref{wem}.
\end{itemize}
\end{remarks}

Let $n,s$ be integers such that  $n> s\geq 1$. 
The isomorphism of  $R_{\le n}$-modules \eqref{vid} induces an isomorphism of  $\s R_{\le n}\le$-module schemes $\s R_{\le n-s}\!\overset{\!\sim}{\to}\s M_{\lbe n}^{\lbe s}$.
 We will write 
\begin{equation}\label{pis}
\varphi_{n,\le s}\colon \s R_{n-s}\to \overbarr{\s M_{\be n}^{\lbe\bbe s}}
\end{equation}
for the composition 
\[
\s R_{\le n-s}\overset{\!\sim}{\to}\s M_{\lbe n}^{s}\to\overbarr{\s M_{\lbe n}^{\lbe\bbe s}},
\]
where the second map is the morphism of $\s R_{\le n}\le$-module schemes $\Theta_{n,\le s}$ \eqref{ttr}. Note that, by Remark \ref{fgt}, $\Theta_{n,\le s}$ is the identity morphism  in the equal characteristic case, whence \eqref{pis} is an isomorphism. 
Now the canonical homomorphism of $R_{\le n}$-modules $R_{\le n}\to M_{n}^{s}, r\mapsto \pi_{n}^{s}r,$ induces a morphism of $\s R_{\le n}$-module schemes $\s R_{\le n}\to \s M_{n}^{\bbe s}$. We will write
\begin{equation}\label{pis2}
\vartheta_{n,\le s}\colon \s R_{\le n}\to \overbarr{\s M_{\lbe n}^{\lbe\bbe s}}
\end{equation}
for the composition 
\[
\s R_{\le n}\to\s M_{\lbe n}^{s}\to\overbarr{\s M_{\lbe n}^{\lbe\bbe s}}.
\]

\begin{proposition}\label{rnm-2}  Let $n,s$ be integers such that $n>s\geq 1$. Then 
the following diagram of $\s R_{\le n}$-module schemes commutes
\[
\xymatrix{\s R_{\le n}\ar@{->>}[dr]\ar[rr]^{\vartheta_{n,\le s}}&&\overbarr{\s M_{\be n}^{\lbe s}}\,,\\
&\s R_{\le n-s}\ar[ur]_{\varphi_{\le n,\le s}}
}
\]	
where the unlabeled map is the canonical morphism \eqref{rnms} and the maps $\varphi_{n,\le s}$ and $\vartheta_{n,\le s}$ are given by \eqref{pis} and \eqref{pis2}, respectively. If $R$ is an equal characteristic ring, then $\varphi_{n,\le s}$ is an isomorphism. 
If $R$ is a ring of unequal characteristics and $A$ is a $k$-algebra, then $\varphi_{n,\le s}(A)$ is a surjection if $A=A^{\le p}$ and an isomorphism  if either $A$ is perfect or $n\leq \ari$.
\end{proposition}
\begin{proof}  The commutativity of the indicated triangle is immediate from the fact that the composition of canonical homomorphisms $R_{\le n}\to R_{\le n-s}\to M_{n}^{s}$ (where the second map is the isomorphism \eqref{vid}) is the map $R_{\le n}\to M_{n}^{s}, r\mapsto \pi_{n}^{s}r$.  The fact that $\varphi_{n,\le s}$ is an isomorphism in the equal characteristic case was noted above. For the unequal characteristics case, see Proposition \ref{rnm-bar} and note that, by Remark \ref{power}(d), $\varphi_{n,\le s}(A)$ is an isomorphism for every $A$ if $n\leq \ari$. 
\end{proof}

\begin{remarks}\label{twist}\indent  Let $k$ be a perfect field of characteristic $p>0$. 
\begin{enumerate}
\item[(a)] If $R=W(k)$ and  $n>s\geq 1$, then $\overbarr{{\s M}_{\be n}^{\lbe\bbe s}}(\be A)=V^{\lbe s}W_{\be n-s}(A)\subseteq W_{n}(A)$ for every $k$-algebra $A$ by Remark \ref{power}(a). The homomomorphism of $W_{\be n }(A)$-modules \eqref{pis}
\begin{equation}\label{pis3}
\varphi_{n,\le s}(A)\colon W_{\be n-s}(A)\to V^{\lbe s}\le W_{\be n-s}(A)
\end{equation}
is the multiplication by $p^{\le s}=V^{s}F^{s}=F^{s}V^{s}=F^{s}\!\circ\be V_{n-s,\le n}$ map (see \eqref{vr} and \eqref{ot}), i.e.,
\[
\qquad\qquad\varphi_{n,\le s}(A)(a_{0},\dots,a_{\le n-s-1})=(0,\dots,0,a_{0}^{\le p^{\le s}},\dots, a_{\le n-s-1}^{\le p^{\le s}})\qquad\text{($s$ zeroes)}.
\]
In particular, if $n\geq 2$, then $\varphi_{\le n,\e n-1}(A)$ is the map  $A\to V^{n-1}\le W_{\be 1}(A)\subseteq W_{\be n}(A), a\mapsto (0,\dots,0, a^{\le p^{n-1}})$ ($n-1$ zeroes), which is an isomorphism if, and only if, $A$ is perfect.
\item[(b)] Let $R=W(k)$ be as in (a) and let $A$ be a $k$-algebra. By  Remark \ref{power}(a) and (a) above, for every integer $n\geq 1$, $\overbarr{{\s M}_{\lbe n+1}^{\le n}}\lbe(\be A)=V^{\lbe n}W_{\be 1}(A)$ has a canonical structure of $A$-module given by $a\!\cdot\! V^{n}(b)=(a,0,\dots,0)V^{n}(b)=V^{n}\lbe (a^{\le p^{n}}b)$ \eqref{vv}.  Now recall the $A$-algebra ${}^{p^n}\!\!\lbe A$. By definition, ${}^{p^n}\!\!\lbe A$ is the ring $A$ endowed with the $A$-module structure given by $a\cdot b=a^{p^{n}}b$ for $a,b\in A$. Then the map ${}^{p^n}\!\!\lbe A \to V^{\lbe n}W_{\be 1}(\lbe A), b\mapsto V^{n}(b),$ is bijective and $A$-linear (hence, in particular, additive \cite[(1.1.5), p.~505]{ill}). If we identify ${}^{p^n}\!\!\lbe A$ and  $V^{n}W_{1}(A)$ as $A$-modules via the preceding map, then the homomorphism of $A$-modules  $\varphi_{n+1,\le n}(A)\colon W_{1}(A)\to V^{n}W_{\lbe 1}(A)$ \eqref{pis3} is identified with the $A$-linear map $A\to {}^{p^n}\!\!\lbe A, a\mapsto a^{\le p^{n}}$.
\end{enumerate}
\end{remarks}

Let ${}^{p^{n}}\lbe\be\mathbb O_{k}$ denote the $\mathbb O_{k}$-module scheme defined by ${}^{p^{n}}\lbe\be\mathbb O_{k}(\be A)={}^{p^{n}}\!\!\lbe A$ for every $k$-algebra $A$. The following statement should be compared with that in \cite[p.~257, line 10]{gre2}.

\begin{proposition}\label{gwist} Let $k$ be a perfect field of characteristic $p>0$, let $\mathfrak R$ be an artinian local ring with residue field $k$ such that ${\rm char}\, \mathfrak R\neq {\rm char}\e k$ and let $\mathfrak I$ be a minimal ideal of $\mathfrak R$. Then there exists an isomorphism of $\mathbb O_{k}$-module schemes $\overbar{{\s I}}\simeq {}^{p^{\le t}}\be\mathbb O_{k}$,  where $t\geq 0$ is a uniquely defined integer.
\end{proposition}
\begin{proof} If $m>1$ is defined by the equality ${\rm char}\, \mathfrak R=p^{\le m}$, then $\mathfrak R$ has a canonical $W_{\!m}\lbe(k)$-algebra structure by Remark \ref{uch}(b). Thus $\mathfrak R$ is a finitely generated module over the principal ideal domain $W\be(k)$, whence there exist integers $\{n_{1},\dots, n_{r}\}$ with $1\leq n_{1}\leq \dots \leq n_{r}\leq m$ and an isomorphism of $W(k)$-modules $\lambda\colon \mathfrak R\overset{\sim}{\to} \prod_{\e i=1}^{\e r}\!W_{\! n_{i}}\be(k)$. We will construct a $W\be(k)$-automorphism $\delta$ of $\prod_{\e i=1}^{\e r}\!W_{\! n_{i}}\be(k)$ such that the composition $\delta\be\circ\be \lambda \colon \mathfrak R\overset{\sim}{\to} \prod_{\e i=1}^{\e r}\!W_{\! n_{i}}\be(k)$ induces an isomorphism $\mathfrak I\overset{\!\sim}{\to}p^{\le n_{q}-1}W_{\! n_{q}}\be(k)\subset \prod_{\e i=1}^{\e r}\!W_{\! n_{i}}\be(k)$ for some $q\in\{1,\dots,r\}$. Thus, setting
\begin{equation}\label{tnq}
t=n_{q}-1,
\end{equation}
we obtain the existence of an isomorphism of $k$-modules $\mathfrak I\simeq p^{\e t}\e W_{\! s+1}\be(k)=V^{\le t}\e W_{\!1}(k)$ \eqref{opr} which induces, for every $k$-algebra $A$, an isomorphism of $A$-modules $\overbar{\s I}\be(\lbe A)\simeq V^{\le t}W_{\!1}\lbe(\lbe A)$. The proposition then follows from Remark \ref{twist}(b).

Let $\mathfrak M$ be the maximal ideal of $\mathfrak R$. The minimality hypothesis implies that $\mathfrak I$ is principal and $\mathfrak M\e\mathfrak I=0$. Let  $g$ be a fixed generator of $\mathfrak I$ and write $\lambda(\e g\le)=(w_{i})\in \prod_{\e i=1}^{\e r}\!W_{\! n_{i}}\be(k)$. Note that $(w_{i})\neq (0,\dots,0)$. Since $p\le g\in \mathfrak M\le\mathfrak I=0$, we have $w_{i}=p^{\e n_{i}-1}\widetilde{w}_{i}$ for every $i$, for some $\widetilde{w}_{i}\in W_{\! n_{i}}\be(k)$. Clearly $w_{i}\neq 0$ if, and only if, $\widetilde{w}_{i}\in W_{\! n_{i}}\be(k)^{\times}$. Let $n_{q}={\rm min}\{n_{i}\colon w_{i}\neq 0\}$. Then $\lambda(\e g\le)=p^{\le n_{q}-1}(w_{1}^{\e\prime},\dots,w_{r}^{\e\prime})$, where $w_{\lbe q}^{\e\prime}=\widetilde{w}_{q}\in W_{\! n_{q}}\be(k)^{\times}$ and, if $i\neq q$, either $w_{i}^{\e\prime}=0$ when $w_{i}=0$ or $w_{i}^{\e\prime}=p^{\le n_{i}-n_{q}}\widetilde{w}_{i}\in W_{\! n_{i}}\be(k)$ when $w_{i}\neq 0$. Clearly $p^{\le n_{q}}w_{i}^{\e\prime}\in p^{\le n_{i}}W_{\! n_{i}}\be(k)$ for every $i$. Now let $\zeta\colon W(k)^{r}\to \prod_{\e i=1}^{\e r}\!W_{\! n_{i}}\be(k)$ be the canonical projection and write $J$ for its kernel. Thus $J=\oplus_{\e i\lle =\lle 1}^{\e r}\,W(k)\e p^{\le n_{i}}e_{i}\subset W(k)^{r}$, where $\{e_{1},\dots, e_{\le r}\}$ is the canonical basis of $W(k)^{r}$. Since
$w_{\lbe q}^{\e\prime}\in W_{\! n_{q}}\be(k)^{\times}$, we may choose $v=\sum_{\e i\le=1}^{\e r}\alpha_{i}e_{i}\in W(k)^{r}$ with $\alpha_{q}\in W\lbe(k)^{\times}$ such that $\zeta\lle(v)=(w_{1}^{\le\prime},\dots,w_{r}^{\le\prime})$. Note that, since $p^{\le n_{q}}w_{i}^{\e\prime}\in p^{\le n_{i}}W_{\! n_{i}}\be(k)$ for every $i$, we have $p^{\le n_{q}}\alpha_{i}\in p^{\le n_{i}}W\be(k)$ for every $i$, whence $p^{\le n_{q}}v=\sum_{\e i\le=1}^{\e r}p^{\le n_{q}}\alpha_{i}e_{i}\in J$. Now let $T$ be the automorphism of $W(k)^{r}$ defined by $T(e_{i})=e_{i}$ for $i\neq q$ and $T(e_{q})=(1+\alpha_{q}^{-1})e_{q}-\alpha_{q}^{-1}v$. Since $p^{\le n_{q}}v\in J$, we have $T(\e p^{\le n_{i}}e_{i})\in J$ for every $i$, whence $T(\lbe J\le)\subseteq J$. On the other hand, since $T(v)=e_{q}$, we have $p^{\le n_{q}}e_{q}=T(\e p^{\le n_{q}}v)\in T(\lbe J\le)$. Further, $p^{\le n_{i}}e_{i}=T(\e p^{\le n_{i}}e_{i})\in T(\lbe J\le)$ for every $i\neq q$. We conclude that $T(\lbe J\le)=J$. Let $\overline{T}$ be the automorphism of $W(k)^{r}\be/\be J$ induced by $T$ and let $\delta$ be the corresponding automorphism of $\prod_{\e i} W_{\! n_i}(k)$, i.e., $\delta=\overline{\zeta}\circ \overline{T}\circ \overline{\zeta}^{\e -1}$, where
$\overline{\zeta}\colon W(k)^{r}\be/\be J\overset{\!\sim}{\to}\prod_{\e i} W_{\! n_{i}}(k)$ is the  isomorphism induced by $\zeta$. Writing $\overline{v}$ (respectively, $\overline{e_{q}}\e$) for the class of $v$ (respectively, $e_{q}$) in $W(k)^{r}\be/\be J$, we have
\[
\begin{array}{rcl}
\delta(\lambda(\e \mathfrak I\le))&=&\delta(\lambda(\e g\le))\be\displaystyle\prod_{\e i=1}^{r} W_{\! n_i}(k)=p^{\le n_{q}-1}\le\overline{\zeta}\e\big(\e\overline{T}\le(\e\overline{v}\e)\big)\be\displaystyle\prod_{\e i=1}^{r} W_{\! n_i}(k)\\
&=& p^{\le n_{q}-1}\le\overline{\zeta}\e(\overline{e_{q}}\e)\be\displaystyle\prod_{\e i=1}^{r} W_{\! n_{i}}(k)=
p^{\le n_{q}-1}W_{\! n_{q}}(k),
\end{array}
\]
as desired.
\end{proof}

\begin{remark}\label{twist3} Let $R$ be a discrete valuation ring of unequal characteristics and let $n> \ari=v(\e p\lle)\geq 1$ be an integer.
Then $R_{\le n}$ has characteristic $p^{m}$, where $m>1$ is as defined in \eqref{m}, and the pair $(\mathfrak R, \mathfrak I\e)=(R_{\le n}, M_{\lbe n}^{\lle n-1})$ satisfies the conditions of the proposition (see Remark \ref{nim0}(b)). By Remark \ref{nim0}(a) and Lemma \ref{isom}\e, the factor $W_{\be n_{i}}\be(k)$ in \eqref{dcp} corresponds to the $W_{\be m}\be(k)$-submodule  $W_{\be m}\be(k)\cdot \pi_{n}^{i}$ of $R_{\le n}$ if $i<\zeta $ and to $W_{\be m-1}(k)\cdot \pi_{n}^{i}=W_{\be m}\be(k)\cdot \pi_{n}^{i}$ if $i\geq \zeta$. Now observe that, since  $p^{\le m-1}$ divides $\pi_{n}^{n-1}$, we have $W_{\be m-1}\be(k)\cdot \pi_{n}^{n-1}=0$ in $R_{\le n}$. Consequently, multiplication by $\pi_{n}^{n-1}$ annihilates every factor $W_{\be n_{i}}(k)$ if $i\geq \zeta$. We conclude that $n_{q}=m$ and therefore $t=m-1$ in \eqref{tnq}. Thus there exists an isomorphism  of $\mathbb O_{k}$-module schemes $\overbarr{{\s M}_{\lbe n }^{\le n-1}}\simeq {}^{p^{m-1}}\be\mathbb O_{k}$ which generalizes the isomorphism $V^{\le n-1}W_{1}\simeq {}^{p^{\le n-1}}\be\mathbb O_{k}$ of Remark \ref{pis3}(b) (where $\ari=1$ and therefore $m=n$). 
\end{remark}

\section{Greenberg algebras and ramification}\label{ram}
We keep the notation and hypotheses of the previous Section. An {\it extension of discrete valuation rings} is a local and flat homomorphism of discrete valuation rings $R\to R^{\e\prime}$. We will write $\mm^{\le\prime}$,  $k^{\e\prime}$ and $K^{\le\prime}$ for the maximal ideal, residue field and fraction field of $R^{\e\prime}$, respectively. Note that $R\to R^{\e\prime}$ is faithfully flat and therefore injective, whence it induces field extensions $k\hookrightarrow  k^{\e\prime}$ and $K\hookrightarrow K^{\prime}$. 
We will say that the (possibly infinite) extension $R^{\,\prime}\be/R$ is of {\it ramification index $1$}
if $\mm R^{\e\prime}=\mm^{\le\prime}$. If $R^{\,\prime}\be/R$ is finite, i.e., $R^{\,\prime}$ is a finitely generated $R$-module, we will write $e$ for the ramification index of $R^{\,\prime}/R$, i.e., $\mm R^{\,\prime}=(\mm^{\le\prime}\le)^{e}$. Note that, if $R^{\,\prime}\be/R$ is finite, then the associated morphism $S^{\e\prime}\to S$ is finite and locally free.

We now consider extensions $R^{\,\prime}$ of $R$ as above and their corresponding Greenberg algebras, under the assumption that $R$ is {\it complete}. We will discuss first (possibly infinite) extensions of $R$ of ramification index 1. 

\medskip

If $R$ is an equal characteristic ring and $k^{\e\prime}\be/k$ is any subextension of $\e\kbar\lbe/k$, the {\it extension $R^{\,\prime}$ of $R$ of ramification index $1$ which corresponds to $k^{\e\prime}/k\e$} is
\begin{equation*} 
R^{\,\prime}=R\otimes_{k}k^{\e\prime}.
\end{equation*}
Note that $R^{\e\prime}$ is a discrete valuation ring with maximal ideal $\mm^{\e\prime}=\mm R^{\e\prime}$ and residue field $k^{\e\prime}$. Further, $R^{\,\prime}=\bigcup\, (R\otimes_{\e k}\e k^{\e\prime\prime}\le)$, where the union extends over the family of finite subextensions $k^{\e\prime\prime}/k$ of $k^{\e\prime}/k$ and each ring $R\otimes_{k}k^{\e\prime\prime}$ is a complete discrete valuation ring. For every $n\in\N$, we have
\[
R^{\,\prime}_{\le n}=R_{\le n}\otimes_{R}R^{\,\prime}=R_{\le n}\otimes_{\e k}k^{\e\prime}.
\]
Thus, by \eqref{eqrn},
\begin{equation}\label{eq}
R^{\,\prime}_{\le n}=\s R_{\le n}(k^{\e\prime}\le).
\end{equation}

Now assume that $R$ is a ring of unequal characteristics (in particular, $k$ is perfect). For every finite subextension $k^{\e\prime}/k$ of $\e\kbar\lbe/k$, there exists a
uniquely determined finite unramified extension $K^{\le\prime}\be/K$ whose residual extension is (isomorphic to) $k^{\e\prime}/k$ \cite[III, \S5, Theorem 2, p.~54]{self}. 
We will write $R^{\e\prime}$ for the integral closure of $R$ in $K^{\le\prime}$. Then $R^{\,\prime}$ is a complete discrete valuation ring with maximal ideal $\mm\le R^{\,\prime}$ and residue field $k^{\e\prime}$ (see, for example, \cite[Appendix, 1.2, p.~649]{dg}). Further, by \cite[III, \S5, Remark 1, p.~55 ]{self}, $R^{\,\prime}=R\otimes_{\e W(k)}W(k^{\e\prime}\e)$. Consequently, for every  $n\in\N$, we have
\begin{equation}\label{uneqc}
R_{\le n}\otimes_{R}R^{\,\prime}=R_{\le
n}\otimes_{\e W_{\be n}(k)}\be W_{\be n}(k^{\e\prime}\le)
\end{equation}
by \eqref{wepi1}. Now the maximal unramified extension of $K$ is $K^{\nr}=\varinjlim K^{\le\prime}$, where the inductive limit extends over the finite unramified extensions $K^{\le\prime}/K$ as above. If $k^{\e\prime}/k$ is any (i.e., possibly infinite) subextension of $\e\kbar\lbe/k$, then there exists a (possibly infinite) subextension $K^{\le\prime}\be/K$ of $K^{\nr}/K$ which corresponds to $k^{\e\prime}/k$. We define the {\it extension of $R$ of ramification index $1$ which corresponds to $k^{\e\prime}/k\e$} as the integral closure $R^{\,\prime}$ of $R$ in $K^{\le\prime}$. Then $R^{\,\prime}$ is a discrete valuation ring with maximal ideal $\mm\le R^{\,\prime}$ and residue field $k^{\e\prime}$. If $k^{\e\prime}/k$ is finite, then $R^{\,\prime}$ is complete. We have
\begin{equation}\label{rl}
R^{\,\prime}=\varinjlim R_{L},
\end{equation}
where the inductive limit extends over the finite subextensions $L/K$ of $K^{\le\prime}\be/K$  which correspond to the finite subextensions of $k^{\e\prime}\be/k$. 
Now, since the functor  $W_{\lbe n}(-)$ commutes with filtered inductive limits for every $n\in\N$, \eqref{uneqc} and \eqref{rl} show that
\begin{equation*} 
R^{\,\prime}_{\le n}=R_{\le n}\otimes_{R}R^{\e\prime}=R_{\le
n}\otimes_{\e W_{\be n}(k)}\be W_{\be n}(k^{\e\prime}\le).
\end{equation*}
Thus, by \eqref{wepi1},
\begin{equation}\label{uneq}
R^{\,\prime}_{\le n}=R_{\le
n}\otimes_{\e W_{\be n}(k)}W_{\be n}(k^{\e\prime}\le)=R_{\le n}\otimes_{\e W(k)}W(k^{\e\prime}\le)
\end{equation}
for every  $n\in\N$.

\medskip

In both the equal and unequal characteristics cases, if $k^{\e\prime}/k$ is any subextension of $\e\kbar\lbe/k$, we will write $S_{n}^{\e\prime}$ for $\spec R^{\,\prime}_{\le n}$ and
${\s R}^{\e\prime}_{\le n}$ for the $k^{\e\prime}$-ring scheme associated
to $R^{\,\prime}_{\le n}$. If $k^{\e\prime}=\kbar$, we will write $R^{\e\prime}=R^{\e\rm{nr}}$, 
$R_{\le n}^{\e\prime}=R_{\le n}^{\e\rm{nr}}$, ${\s
R}^{\e\prime}_{\lbe n}={\s R}^{\e\rm{nr}}_{\lbe n}$ and
$\Snnr=\spec R^{\e\rm{nr}}_{n}$.

\begin{lemma}\label{rnk} Let $k^{\e\prime}\be/k$ be a subextension of $\e\kbar\lbe/k$ and let $R^{\e\prime}$ be the extension of $R$ of ramification index $1$ that corresponds to $k^{\e\prime}\be/k$. Then, for every  $n\in\N$, there exists a canonical isomorphism of $R_{n}$-algebras
\[
R^{\,\prime}_{\le n}=\s R_{n}(k^{\e\prime}\e).
\]
\end{lemma}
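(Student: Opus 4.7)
The plan is to compare the explicit description of $R_{n}(k^{\e\prime})$ provided by Subsection \ref{ga} with the explicit description of $\hat R^{\e\prime}_{\le n}$ recorded in \eqref{eq} and \eqref{uneq}, treating the two characteristic cases separately. The key input is that $k^{\e\prime}$ is perfect, being a subextension of $\kbar/k$, so that every statement in Subsection \ref{ga} whose hypothesis is $A=A^{\le p}$ is available with $A=k^{\e\prime}$.

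First, I would handle the equal characteristics case. Here formula \eqref{eqrn} gives a canonical isomorphism of $R_{\le n}$-algebras $R_{\le n}(k^{\e\prime})=R_{\le n}\otimes_{k}k^{\e\prime}$, while \eqref{eq} gives $\hat R^{\e\prime}_{\le n}=R_{\le n}\otimes_{k}k^{\e\prime}$. Composing these identifications yields the desired canonical isomorphism of $R_{\le n}$-algebras $\hat R^{\e\prime}_{\le n}=R_{\le n}(k^{\e\prime})$.

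Next I would treat the unequal characteristics case. Since $k^{\e\prime}$ is perfect, the second assertion of Proposition \ref{uprop} yields a canonical isomorphism of $R_{\le n}$-algebras
\[
R_{\le n}\otimes_{\e W(k)}W(k^{\e\prime})\overset{\!\sim}{\to}R_{\le n}(k^{\e\prime}).
\]
On the other hand, \eqref{uneq} identifies $\hat R^{\e\prime}_{\le n}$ with $R_{\le n}\otimes_{\e W(k)}W(k^{\e\prime})$ canonically as $R_{\le n}$-algebras. Composing these two identifications produces the required canonical isomorphism.

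There is no serious obstacle: the statement is essentially a repackaging of Proposition \ref{uprop}, formula \eqref{eqrn}, and the computations \eqref{eq} and \eqref{uneq}. The only point worth being careful about is the canonicity (and $R_{\le n}$-algebra structure) of each intermediate identification, which is why one should explicitly cite the perfectness of $k^{\e\prime}$ when invoking Proposition \ref{uprop}, and why one records the equality rather than merely an abstract isomorphism.
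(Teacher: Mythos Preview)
Your proof is correct and follows exactly the same approach as the paper: combine \eqref{eqrn} with \eqref{eq} in the equal characteristics case, and Proposition \ref{uprop} with \eqref{uneq} in the unequal characteristics case. Your explicit remark that $k^{\e\prime}$ is perfect (so that the second assertion of Proposition \ref{uprop} applies) is a worthwhile clarification that the paper leaves implicit.
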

\begin{proof} The equal characteristics case is \eqref{eq}. In the unequal characteristics case, the lemma follows by combining \eqref{uneq} and Proposition \ref{uprop}.
\end{proof}

By the lemma, we have
\begin{equation}\label{rnr}
R_{\le n}^{\e\rm{nr}}=\s R_{ n}\be\big(\e\kbar\e\big).
\end{equation}
Further, in the lemma the field $k^{\e\prime}$ is being regarded as a $k$-algebra. In general, every $k^{\e\prime}$-algebra $A$, where $k^{\e\prime}\be/k$ is a subextension of  $\e\kbar\lbe/k$, can be regarded as a $k$-algebra. By Lemma \ref{rnk}, the $R_{\le n}=\s R_{n}(k)\le$-algebra $\s R_{n}(A)$ is canonically endowed with an $R^{\,\prime}_{\le n}=\s R_{n}(k^{\e\prime}\e)$-algebra structure.

\begin{lemma}\label{unr1} Let $k^{\e\prime}\be/k$ be a subextension of $\e\kbar\lbe/k$ and let $R^{\e\prime}$ be the extension of $R$ of ramification index $1$ that corresponds to $k^{\e\prime}\be/k$. Then, for every $n\in\N$, there exists a canonical isomorphism of $k^{\e\prime}$-ring schemes
\[
{\s R}^{\e\prime}_{\lbe n}=\s R_{n}\times_{\spec k}\spec k^{\e\prime}.
\]
\end{lemma}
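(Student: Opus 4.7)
I would produce a canonical isomorphism of $k^{\e\prime}$-ring schemes by comparing both sides at the level of functors of points, treating the equal and unequal residue characteristic cases separately. In the equal characteristics case, \eqref{rneq} exhibits $\s R_{\le n}$ as a Weil restriction, and the standard compatibility of the Weil restriction functor with (flat) base change, combined with the identification $\hat{R}^{\e\prime}_{\le n}=R_{\le n}\otimes_{\e k}k^{\e\prime}$ from \eqref{eq}, immediately yields
\[
\s R_{\le n}\times_{\spec k}\spec k^{\e\prime}=\Re_{R_{\le n}\be/k}\big(\A^{\be 1}_{R_{\le n}}\big)\times_{\spec k}\spec k^{\e\prime}=\Re_{\hat{R}^{\e\prime}_{\le n}\be/k^{\e\prime}}\big(\A^{\be 1}_{\hat{R}^{\e\prime}_{\le n}}\big)=\s R^{\e\prime}_{\le n}.
\]

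In the unequal characteristics case, where $\s R_{\le n}$ is not constructed via Weil restriction, I would compare functors of points on perfect algebras and then extend. For any $k^{\e\prime}$-algebra $A$, let $A_{(k)}$ denote $A$ regarded as a $k$-algebra; by base change,
\[
(\s R_{\le n}\times_{\spec k}\spec k^{\e\prime})(\spec A)=R_{\le n}(A_{(k)})\qquad\text{and}\qquad\s R^{\e\prime}_{\le n}(\spec A)=R^{\e\prime}_{\le n}(A).
\]
When $A$ is perfect, applying Proposition \ref{uprop} both to the $k$-algebra $A_{(k)}$ (with respect to $R$) and to the $k^{\e\prime}$-algebra $A$ (with respect to $\hat{R}^{\e\prime}$), and using the identification $\hat{R}^{\e\prime}_{\le n}=R_{\le n}\otimes_{W(k)}W(k^{\e\prime})$ from \eqref{uneq}, one obtains canonical $\hat{R}^{\e\prime}_{\le n}$-algebra isomorphisms
\[
R_{\le n}(A_{(k)})=R_{\le n}\otimes_{W(k)}W(A)=\hat{R}^{\e\prime}_{\le n}\otimes_{W(k^{\e\prime})}W(A)=R^{\e\prime}_{\le n}(A).
\]

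Both $\s R_{\le n}\times_{\spec k}\spec k^{\e\prime}$ and $\s R^{\e\prime}_{\le n}$ are isomorphic as $k^{\e\prime}$-schemes to $\A^{n+1}_{k^{\e\prime}}$, so their ring scheme operations are defined by polynomial maps in finitely many variables over $k^{\e\prime}$, hence are determined by their values on $\kbar$-points. The identification above, taken with $A=\kbar$ (a perfect $k^{\e\prime}$-algebra via the inclusion $k^{\e\prime}\subset\kbar$), shows these values agree, so the two ring scheme structures coincide. The principal obstacle lies in the unequal characteristics case, where the ad hoc description of $\s R_{\le n}$ via the $W(k)$-module decomposition \eqref{decomp} does not directly yield a Weil-restriction-style base change compatibility; the comparison must instead be routed through Proposition \ref{uprop} on perfect algebras and then completed by invoking the fact that polynomial maps on affine space over $k^{\e\prime}$ are determined by their $\kbar$-valued points.
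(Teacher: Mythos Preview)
Your equal characteristics argument is the same as the paper's. In the unequal characteristics case there is a genuine gap.

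Via Proposition~\ref{uprop} you obtain a canonical bijection $R_{n}(A_{(k)})\cong R'_{n}(A)$ only for \emph{perfect} $k'$-algebras $A$. A natural isomorphism between the functors of points restricted to perfect algebras determines only a morphism between the perfections of the two schemes, not between the schemes themselves. Your closing paragraph does not repair this: observing that each side is abstractly isomorphic to $\A^{n+1}_{k'}$ does not single out a canonical scheme identification between them, and the ring bijection on $\kbar$-points produced by your perfect-algebra comparison is not known to be induced by a $k'$-morphism under any particular choice of coordinates. (A Galois-equivariant bijection $\kbar^{\,n+1}\to\kbar^{\,n+1}$ need not arise from a $k'$-scheme morphism; think of $x\mapsto x^{1/p}$.) The principle that ``polynomial maps are determined by their $\kbar$-values'' is only useful once you already have two morphisms of schemes to compare; here you have not yet produced the candidate morphism.

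The paper avoids this by comparing the two sides one step before sheafification. By \cite[Appendix~A]{lip}, the schemes $\s R_{n}\times_{\spec k}\spec k'$ and $\s R'_{n}$ represent the fpqc sheaves associated to the presheaves $A\mapsto R_{n}\otimes_{W(k)}W\!\big(A_{(k)}\big)$ and $A\mapsto \hat R'_{n}\otimes_{W(k')}W(A)$ on \emph{all} $k'$-algebras $A$. Formula~\eqref{uneq} gives $\hat R'_{n}=R_{n}\otimes_{W(k)}W(k')$, so these two presheaves are canonically isomorphic for every $A$, perfect or not, and hence so are the associated sheaves. In other words, rather than invoking Proposition~\ref{uprop} (which compares presheaf and sheaf and is an isomorphism only on perfect inputs), one should compare the two presheaves directly.
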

\begin{proof} In the equal characteristic case, the result follows
from \eqref{wrbc}, \eqref{rneq} and \eqref{eq}.
In the unequal characteristics case, it suffices to check that the fpqc sheaves of sets on the category of $k^{\e\prime}$-algebras which are represented by the $k^{\e\prime}$-schemes ${\s R}^{\e\prime}_{n}$ and $\s R_{ n}\times_{\spec k}\spec k^{\e\prime}$ are isomorphic. Since, by \eqref{bc} and \cite[Appendix A]{lip}, the indicated sheaves are the sheaves associated to the functors on $k^{\e\prime}$-algebras $A\mapsto R^{\,\prime}_{\le n}\otimes_{\e W\lbe(k^{\le\prime})}\be W\lbe(\be A)$ and $A\mapsto R_{\le n}\otimes_{\e W\lbe(k)}\be W\lbe(\be A)$ (respectively), it suffices to check that the canonical map
\[
R_{\le n}\otimes_{\e W\lbe(k)}\be W\lbe(\be A)\longrightarrow R^{\,\prime}_{\le n}\otimes_{\e W\lbe(k^{\le\prime})}\be W\lbe(\be A)
\]
is a bijection  for every $k^{\e\prime}$-algebra $A$. This follows from \eqref{uneq}.
\end{proof}

\begin{lemma}\label{unr2}
Let $k^{\e\prime}\be/k$ be a finite subextension of $\e\kbar\lbe/k$ and let $R^{\e\prime}$ be the extension of $R$ of ramification index $1$ which corresponds to $k^{\e\prime}\be/k$. Then, for every  $n\in\N$ and every $k$-algebra $A$, there exists a canonical isomorphism of $R_{\le n}^{\e\prime}$-algebras
\[
{\s R}_{n}^{\,\prime}\lbe\big(\lbe A\be\otimes_{\le k}\be k^{\e\prime}\e\big)={\s R}_{\le
n}(A)\otimes_{R_{\le n}}\! R_{\le n}^{\,\prime}.
\]
\end{lemma}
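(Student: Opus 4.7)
The plan is to handle the equal and unequal characteristics cases separately, since the Greenberg algebra has a different explicit description in each.

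In the equal characteristics case, the formula \eqref{eqrn} applies both to $R$ and to $R^{\e\prime}$: for any $k$-algebra $A$, $R_{\le n}(A)=R_{\le n}\otimes_{k}A$, and for any $k^{\e\prime}$-algebra $B$, $R_{\le n}^{\e\prime}(B)=R_{\le n}^{\e\prime}\otimes_{k^{\e\prime}}B$. Applying the second formula with $B=A\otimes_{k}k^{\e\prime}$ (viewed as a $k^{\e\prime}$-algebra via the right factor) yields $R_{\le n}^{\e\prime}(A\otimes_{k}k^{\e\prime})=R_{\le n}^{\e\prime}\otimes_{k^{\e\prime}}(A\otimes_{k}k^{\e\prime})=R_{\le n}^{\e\prime}\otimes_{k}A$, while the right-hand side of the lemma simplifies as $R_{\le n}(A)\otimes_{R_{\le n}}R_{\le n}^{\e\prime}=(R_{\le n}\otimes_{k}A)\otimes_{R_{\le n}}R_{\le n}^{\e\prime}=R_{\le n}^{\e\prime}\otimes_{k}A$, settling this case.

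In the unequal characteristics case, use \eqref{uneq} to write $R_{\le n}^{\e\prime}=R_{\le n}\otimes_{W_{\lbe n+1}(k)}W_{\lbe n+1}(k^{\e\prime})$. The structural description of $\s R_{\le n}$ as $\prod_{i=0}^{r}\bm W_{\!\be n_{i}}$ in the category of $\bm W$-module schemes (from the discussion following \eqref{decomp}) gives, for any $k$-algebra $A$, a $W_{\lbe n+1}(k)$-module decomposition $R_{\le n}(A)=\bigoplus_{i=0}^{r}W_{\lbe n_{i}}(A)\cdot\pi^{i}$ in which the $W_{\lbe n+1}(k)$-action on the $i$-th summand factors through $W_{\lbe n_{i}}(k)$. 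Consequently,
\[
R_{\le n}(A)\otimes_{R_{\le n}}R_{\le n}^{\e\prime}=\bigoplus_{i=0}^{r}\bigl(W_{\lbe n_{i}}(A)\otimes_{W_{\lbe n_{i}}(k)}W_{\lbe n_{i}}(k^{\e\prime})\bigr)\cdot\pi^{i}.
\]
On the other hand, Lemma \ref{unr1} together with the adjunction \eqref{bc} yields $R_{\le n}^{\e\prime}(A\otimes_{k}k^{\e\prime})=R_{\le n}(A\otimes_{k}k^{\e\prime})=\bigoplus_{i=0}^{r}W_{\lbe n_{i}}(A\otimes_{k}k^{\e\prime})\cdot\pi^{i}$, and the natural comparison morphism (induced by functoriality of $R_{\le n}(-)$ applied to $A\to A\otimes_{k}k^{\e\prime}$ and $k^{\e\prime}\to A\otimes_{k}k^{\e\prime}$, together with Lemma \ref{rnk}) is compatible with both decompositions. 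The claim thereby reduces to the Witt vector identity $W_{\lbe m}(A)\otimes_{W_{\lbe m}(k)}W_{\lbe m}(k^{\e\prime})=W_{\lbe m}(A\otimes_{k}k^{\e\prime})$ for each $m\geq 1$, functorially in $A$.

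The main obstacle is establishing this Witt vector identity for an arbitrary (possibly non-perfect) $k$-algebra $A$, since \eqref{witt-ten} is only available under the perfectness hypothesis $A=A^{p}$. However, being a subextension of $\kbar/k$ with $k$ perfect, $k^{\e\prime}/k$ is finite étale, so $W_{\lbe m}(k)\to W_{\lbe m}(k^{\e\prime})$ is itself finite étale. The identity then follows from the standard fact that $W_{\lbe m}(-)$ preserves and commutes with finite étale base change over perfect bases; alternatively, one argues by induction on $m$ using the Verschiebung short exact sequence $0\to W_{\lbe m-1}(A)\stackrel{V}{\to}W_{\lbe m}(A)\to A\to 0$ together with the flatness of $W_{\lbe m}(k^{\e\prime})$ over $W_{\lbe m}(k)$, checking compatibility summand by summand.
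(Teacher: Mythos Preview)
Your equal-characteristics argument is identical to the paper's. In the unequal-characteristics case your approach is correct but genuinely different from the paper's. The paper does not pass through the $\bm W$-module decomposition $\s R_{n}\simeq\prod_{i}\bm W_{\! n_{i}}$; instead it invokes a result of Lipman (\cite[Theorem C.5(i)]{lip}) which gives the isomorphism $R_{n}\big((A\otimes_{k}k^{\prime})_{(k)}\big)=R_{n}(A)\otimes_{W_{n+1}(k)}W_{n+1}(k^{\prime})$ directly for any \'etale $k$-algebra $k^{\prime}$, then identifies the two sides with $R_{n}^{\prime}(A\otimes_{k}k^{\prime})$ and $R_{n}(A)\otimes_{R_{n}}R_{n}^{\prime}$ via Lemma~\ref{unr1} and \eqref{uneq} respectively, and finally checks by a functoriality diagram that the resulting ring isomorphism is in fact $R_{n}^{\prime}$-linear. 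Your route is more elementary in that it avoids the external citation and exposes the Witt-vector mechanism, at the cost of two points you pass over quickly: first, one should verify that the comparison map, built as a ring homomorphism, actually respects the $\bm W$-module decompositions on both sides (this follows because the $\bm W$-module structure on $\s R_{n}$ is functorial in the $k$-algebra and the map is $W_{n+1}(k^{\prime})$-linear); second, in the inductive proof of the Witt identity the Verschiebung $V$ is Frobenius-semilinear rather than $W_{m}(k)$-linear, so the short exact sequence must be twisted accordingly before tensoring --- harmless here since $k$ and $k^{\prime}$ are perfect and $F$ commutes with the base change, but worth saying. Either of your two justifications for the Witt identity is acceptable; the ``standard fact'' you quote is essentially what Lipman's theorem specializes to.
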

\begin{proof} In the equal characteristic case, \eqref{eqrn} yields
\[
{\s R}_{n}^{\,\prime}\lbe\big(\lbe A\be\otimes_{\e k}\be k^{\e\prime}\e\big)=\big(A\be\otimes_{\e k}\be k^{\e\prime}\le\big)\otimes_{\e k^{\e\prime}}\! {R}_{\le
n}^{\,\prime}=(A\!\otimes_{\e k}\! R_{\le n})\otimes_{R_{\le n}}\!{R}_{\le
n}^{\,\prime}={\s R}_{\le
n}(A)\otimes_{R_{\le n}}\! {R}_{\le n}^{\e\prime}.
\]
Now assume that $R$ is a ring of unequal characteristics (in particular, $k$ is perfect).
Since $k^{\e\prime}$ is an \'etale
$k$-algebra, by \cite[Theorem C.5(i), p.~84]{lip} there exists a canonical isomorphism of $\s R_{\le n}(A)$-algebras
\[
\s R_{n}\be\big(A\be\otimes_{\le k}\be k^{\e\prime}\e\big)=
\s R_{\le n}(A)\otimes_{\e W_{\be n}\lbe(k\lbe)}\be W_{\be n}\lbe(k^{\e\prime}\le).
\]
Now, by \eqref{bc} and Lemma
\ref{unr1}, there exists a canonical isomorphism of ${R}_{\le n}^{\e\prime}$-algebras
$\s R_{n}\big(A\otimes_{\e k}\e k^{\e\prime}\le\big)={\s R}_{n}^{\e\prime}\big(A\otimes_{k} k^{\e\prime}\le\big)$. On the other hand, by \eqref{uneq}, there exists a ring isomorphism  $\s R_{n}(A)\otimes_{\, W_{\be n}\be(k)}W_{\be n}\lbe(k^{\e\prime})=\s R_{n}(A)\otimes_{R_{\le n}}\! {R}_{\le n}^{\,\prime}$.  Thus there exists a ring isomorphism
\[
\alpha_{\lbe A}\colon \s R_{n}^{\,\prime}\be\big(A\otimes_{k} k^{\e\prime}\le\big)\to\s R_{n}(A)\otimes_{R_{\le n}}\! R_{\le n}^{\,\prime}
\]
which is functorial in $A$. Consequently the diagram
\[
\xymatrix{ R_{\le n}^{\,\prime}= \s R_{n}^{\e\prime}(k\otimes_k k^{\e\prime}) \ar[r]\ar[d]^{\alpha_{k}}& \s R_{n}^{\e\prime}(A \otimes_{k}\!k^{\e\prime}) \ar[d]^{\alpha_{\be A}}\\
R_{\le n}^{\,\prime}= \s R_{n}(k)\otimes_{R_{\le n}} R_{\le n}^{\,\prime}\ar[r] &\s  R_{n}(A)\otimes_{R_{\le n}} R_{\le n}^{\,\prime}
}
\]
commutes, whence $\alpha_{\lbe A}$ is an isomorphism of $R_{\le n}^{\e\prime}$-algebras. This completes the proof.
\end{proof}

\begin{remark}\label{filt} The above lemma remains valid if $k^{\e\prime}\be/k$ is infinite. The proof reduces to the above proof via a limit argument using the fact that the functors $\s R_{\le n}(-)$ and $W_{\lbe n}(-)$ commute with filtered inductive limits. 
\end{remark}

The following lemma applies to possibly ramified {\it finite} extensions of $R$.

\begin{lemma}\label{rne1}
Let $R^{\e\prime}$ be a finite extension of $R$ of ramification index $e$ with associated residue field extension $k^{\e\prime}/k\subseteq \kbar/k$. Then, for every
integer $n\geq 1$ and every $k$-algebra $A$, there exists a
canonical isomorphism of $R_{\le ne}^{\,\prime}$-algebras
\[
\s R_{\le n}(\be A)\otimes_{R_{\le n}}\! R_{\le ne}^{\,\prime}=\s R_{ne}^{\e\prime}\be\big(A\otimes_{k}\lbe k^{\e\prime}\e\big).
\]
\end{lemma}
\begin{proof} In the equal characteristic case, the proof is similar to the
proof of the corresponding case of Lemma \ref{unr2}. If $R$ is an unequal
characteristics ring and $R^{\,\prime}\be/R$ is totally ramified (respectively, of ramification index 1), then the lemma follows from \cite[Lemma 2.7, p.~1593]{ns} (respectively, Lemma \ref{unr2}). The general case follows by combining these two cases in a well-known manner.
\end{proof}

\begin{remark}\label{sp0} In the unequal characteristics case, the ring $R$ is a {\it totally ramified} (possibly trivial) extension of $W\lbe(k)$ of degree $\ari=v(p)\geq 1$. Thus, for every $i\in\N$ and every $k$-algebra $A$, the lemma yields a canonical isomorphism of $R_{\e i\le\ari}\,$-\e algebras
\[
\s R_{\le i\le\ari}(A)=R_{\e i\le\ari}\otimes_{\e W_{\be i}(k)}W_{\be i}(A).
\]
Note that, if $A=A^{\lle p}$, then $\s R_{\le i\e\ari}(A)=R_{\e i\le\ari}\otimes_{\e W(k)}\be W\lbe(\be A)$  by \eqref{wepi1}. Note also that, if $0< j\leq \ari   $ and $A$ is any $k$-algebra, then $\s R_{\le j}(A)=R_{\le j}\otimes_{k}A$ by \cite[Lemma 2.7, p.~1593]{ns}. See also Remark \ref{nim0}(b).  
\end{remark}

\medskip

Let $R^{\e\prime}$ be a finite extension of $R$ with maximal ideal $\mm^{\le\prime}$, residue field $k^{\e\prime}$ and ramification index $e$. Recall $S=\spec R$ and let $S^{\e\prime}=\spec R^{\e\prime}$. For every integer $n\geq 1$, set $S_{n}^{\e\prime}=\spec{R}^{\e\prime}_{\le n}=\spec(R^{\e\prime}/(\mm^{\e\prime}\le)^{\le n}R^{\e\prime}\e)$. Since $\mm=(\mm^{\le\prime}\le)^{e}$, there exists a canonical isomorphism
\begin{equation}\label{ram-ten}
S^{\e\prime}_{\le ne}=S^{\e\prime}\times_{S}S_{\le n}.
\end{equation}
In particular,
\begin{equation}\label{ram-0}
S^{\e\prime}_{\lbe e}=S^{\e\prime}\times_{S}S_{\lle 1}=S^{\e\prime}_{\le ne}\times_{S_{n}}S_{\lle 1}.
\end{equation}
Consequently, if $Z$ is an  $S^{\e\prime}_{\lle ne}$-scheme, then $Z\times_{S^{\e\prime}_{\lbe ne}}\!S^{\e\prime}_{e}=Z\times_{S_{n}}\!S_{\lle 1}$.

Now observe that, since $R$ is noetherian and $R\to R^{\e\prime}$ is finite and flat, $S^{\e\prime}\to S$ is finite and locally free. Therefore  the induced morphism  $f_{n}\colon S^{\e\prime}_{ne}=S^{\e\prime}\times_{S}S_{n}\to S_{ n}$ is finite and locally free as well. Further, since  $S^{\e\prime}_{\lle 1}\to S^{\e\prime}_{e}$  is a universal homeomorphism we have, by \eqref{ram-0} and Remark \ref{gpts}(e),
\[
\begin{array}{rcl}
\gamma(\lle f_{n})&=&\#\big(S^{\e\prime}_{ne}\times_{S_{n}}\spec \kbar\,\big)=\#\big(S^{\e\prime}_{e}\times_{S_{\lle 1}}\spec \kbar\,\big)\\
&=&\#\big(S^{\e\prime}_{\lle 1}\times_{S_{\lle 1}}\spec \kbar\,\big)=[k^{\e\prime}\colon k\e]_{\sep}.
\end{array}
\]
Thus $Z$ is admissible relative to $S^{\e\prime}_{\le ne}\to S_{\le n}$ (see Definition \ref{adm}) if, and only if, every set of $[k^{\e\prime}\colon k\e]_{\sep}$ points in  $Z\times_{S^{\e\prime}_{ne}}\!S^{\e\prime}_{e}(=Z\times_{S_{n}}\!S_{\lle 1})$ is contained in an open affine subscheme of $Z$.

\begin{remark}\label{tram} If
$R^{\,\prime}\be/R$ is {\it totally ramified}, then $k^{\e\prime}=k$ and therefore {\it every} $S_{ne}^{\e\prime}$-scheme is admissible relative to $S^{\e\prime}_{ne}\to S_{\le n}$ (see Remark \ref{gpts}(b)). Consequently, by Theorem \ref{wr-rep}, the Weil restriction $\Re_{S_{ne}^{\e\prime}\lle/S_{n}}(Z\lle)$ exists for every $S_{ne}^{\e\prime}$-scheme $Z$. Note that, in this case, $S^{\e\prime}_{ne}\to S_{\le n}$ is, in fact, a universal homeomorphism. This follows from \cite[Proposition 3.8.2(iv), p.~249]{ega1} and the commutative diagram
\[
\xymatrix{S_{\lle 1}^{\e\prime}\ar[d]\ar@{=}[r]& S_{\lle 1}\ar[d]\\
S^{\e\prime}_{ne}\ar[r]& S_{\le n},
}
\]
whose vertical morphisms are universal homeomorphisms.
\end{remark}
\begin{lemma}\label{adm1}  Let $n\geq 1$ be an integer and let $Z$ be an $S^{\e\prime}_{ne}$-scheme which is admissible relative to $S^{\e\prime}_{ne}\to S_{\le n}$. Then the $k^{\e\prime}$-scheme $Z\times_{ S^{\e\prime}_{ne}}S^{\e\prime}_{\lle 1}$ is admissible relative to $k^{\e\prime}\be/k$.
\end{lemma}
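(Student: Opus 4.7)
The plan is to move points across the chain of nilpotent immersions $S^{\e\prime}_{0}\hookrightarrow S^{\e\prime}_{\le e-1}\hookrightarrow S^{\e\prime}_{\le ne-1}$ after base-changing along $Z\to S^{\e\prime}_{\le ne-1}$.

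First I would unpack both admissibility statements in explicit terms. By Remark \ref{rems-adm}(b), showing admissibility of the $k^{\e\prime}$-scheme $Z_{0}:=Z\times_{S^{\e\prime}_{\le ne-1}}S^{\e\prime}_{0}$ relative to $k^{\e\prime}/k$ amounts to showing that every collection of $[\le k^{\e\prime}\colon k\e]$ points of $Z_{0}$ is contained in an open affine subscheme. On the other hand, combining the computation $\gamma(f_{n})=[\le k^{\e\prime}\colon k\e]$ carried out just before Remark \ref{tram} with the identification $S^{\e\prime}_{\le e-1}=S^{\e\prime}_{\le ne-1}\times_{S_{n-1}}\be S_{0}$ from \eqref{ram-0}, the admissibility hypothesis on $Z$ is exactly the statement that any collection of $[\le k^{\e\prime}\colon k\e]$ points of $Z_{e-1}:=Z\times_{S^{\e\prime}_{\le ne-1}}S^{\e\prime}_{\le e-1}=Z\times_{S_{\le n-1}}S_{0}$ is contained in an open affine subscheme of $Z$.

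Next I would exploit the fact that the closed immersions $Z_{0}\hookrightarrow Z_{e-1}\hookrightarrow Z$, being base changes of the nilpotent immersions $S^{\e\prime}_{0}\hookrightarrow S^{\e\prime}_{\le e-1}\hookrightarrow S^{\e\prime}_{\le ne-1}$, are themselves nilpotent, and therefore induce canonical homeomorphisms $|Z_{0}|=|Z_{e-1}|=|Z|$. Given a collection $\s C$ of $[\le k^{\e\prime}\colon k\e]$ points in $Z_{0}$, transporting $\s C$ along this homeomorphism yields a collection of the same cardinality in $Z_{e-1}$, which by the admissibility hypothesis lies inside some open affine subscheme $U\subseteq Z$. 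The corresponding open subscheme $U\times_{Z}Z_{0}=U\times_{S^{\e\prime}_{\le ne-1}}S^{\e\prime}_{0}$ of $Z_{0}$ then contains $\s C$ and is affine, since it is the spectrum of $\varGamma(U,\cO_{U})\otimes_{R^{\e\prime}_{\le ne-1}}\lbe k^{\e\prime}$.

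The only potential obstacle is the bookkeeping between the three successive base changes $Z_{0}$, $Z_{e-1}$, $Z$ and between the two different descriptions $Z\times_{S^{\e\prime}_{\le ne-1}}S^{\e\prime}_{\le e-1}=Z\times_{S_{\le n-1}}S_{0}$ of the ``intermediate'' scheme; once one recognizes that each map in the chain $Z_{0}\to Z_{e-1}\to Z$ is a homeomorphism on underlying topological spaces, the transfer of a finite set of points and of an affine neighbourhood of it becomes automatic and the proof reduces to the single observation that an open affine of $Z$ pulls back to an open affine of $Z_{0}$.
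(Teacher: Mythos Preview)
Your argument is correct and is essentially the same idea as the paper's proof: both exploit that the closed immersions $Z_{0}\hookrightarrow Z_{e-1}\hookrightarrow Z$ are homeomorphisms on underlying spaces, transport a finite collection of points along them, and pull back an affine open. The only difference is packaging: the paper applies the general Remarks~\ref{rems-adm}(e) (affine base change $S_{0}\to S_{n-1}$) and~\ref{rems-adm}(f) (universal homeomorphism $S^{\e\prime}_{0}\to S^{\e\prime}_{e-1}$) in sequence, whereas you unpack Definition~\ref{adm} directly for this specific situation and carry out the point-transport by hand.
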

\begin{proof}  Since $S_{\lle 1}\to S_{n}$ is affine and $S^{\e\prime}_{ ne}\times_{S_{n}}S_{\lle 1}$ equals $S^{\e\prime}_{e}$ by \eqref{ram-0}, the $S^{\e\prime}_{e}$-scheme $Z\be\times_{S^{\e\prime}_{ne}}\be S^{\e\prime}_{e}$ is admissible relative to $f_{n}\be \times_{S_{n}}\be S_{1}\colon S^{\e\prime}_{e}\to S_{\lle 1}$ by Remark \ref{rems-adm}(e), where $f_{n}\colon S^{\e\prime}_{ne}\to S_{\le n}$. Now, since $S^{\e\prime}_{\lle 1}\to S^{\e\prime}_{e}$ is a universal homeomorphism, Remark \ref{rems-adm}(f) shows that $(Z\be \times_{S^{\e\prime}_{\le ne}}\be S^{\e\prime}_{e}\le)\be \times_{S^{\e\prime}_{e}}\be S^{\e\prime}_{\lle 1}=Z\be \times_{S^{\e\prime}_{\le ne}}\be S^{\e\prime}_{\lle 1}$ is, indeed, admissible relative to $S^{\e\prime}_{1}\to S_{\lle 1}$.
\end{proof}
\begin{lemma}\label{adm2}  Let $X^{\prime}$ be an $S^{\e\prime}$-scheme which is admissible relative to $S^{\e\prime}\to S$ and let $n\geq 1$ be an integer. Then the $S^{\le\prime}_{ne}$-scheme $X^{\prime}\times_{S^{\le\prime}}S^{\e\prime}_{ne}$ is admissible relative to
$S^{\e\prime}_{ne}\to S_{\le n}$.
\end{lemma}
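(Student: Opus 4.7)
The plan is to reduce Lemma \ref{adm2} directly to Remark \ref{rems-adm}(e) by base-changing the original admissible morphism $f\colon S^{\e\prime}\to S$ along the closed immersion $\theta_{n-1}^{\e 0}\colon S_{\le n-1}\to S$. The key geometric input is the identification \eqref{ram-ten}, namely $S^{\e\prime}_{\le ne-1}=S^{\e\prime}\times_{S}S_{\le n-1}$, which allows us to interpret the base change $X^{\e\prime}\times_{S^{\e\prime}}S^{\e\prime}_{\le ne-1}$ as the base change of $X^{\e\prime}$ along $S_{\le n-1}\to S$.

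First I would note that $g=\theta_{n-1}^{\e 0}\colon S_{\le n-1}\to S$ is affine, being a closed immersion. Since $X^{\e\prime}$ is admissible relative to $f\colon S^{\e\prime}\to S$ by hypothesis, Remark \ref{rems-adm}(e) applied to the pair $(f,g)$ yields that the $(S^{\e\prime}\times_{S}S_{\le n-1})$-scheme $X^{\e\prime}\times_{S^{\e\prime}}(S^{\e\prime}\times_{S}S_{\le n-1})=X^{\e\prime}\times_{S}S_{\le n-1}$ is admissible relative to $f\times_{S}S_{\le n-1}\colon S^{\e\prime}\times_{S}S_{\le n-1}\to S_{\le n-1}$. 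Using \eqref{ram-ten}, the total space of this base change is canonically identified with $X^{\e\prime}\times_{S^{\e\prime}}S^{\e\prime}_{\le ne-1}$, and the morphism $f\times_{S}S_{\le n-1}$ is canonically identified with $S^{\e\prime}_{\le ne-1}\to S_{\le n-1}$. This gives exactly the desired conclusion.

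There is no real obstacle here: the statement is essentially a book-keeping consequence of the stability of admissibility under affine base change (Remark \ref{rems-adm}(e)) together with the compatibility \eqref{ram-ten} of the truncation $S^{\e\prime}_{\le ne-1}$ with the fibered product $S^{\e\prime}\times_{S}S_{\le n-1}$. The only point to check carefully is that the various identifications of fibered products and morphisms do commute, but these are all instances of the associativity of fibered products and the definition of $f\times_{S}S_{\le n-1}$.
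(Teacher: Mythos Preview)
Your argument is correct and is exactly the paper's proof, which simply cites Remark~\ref{rems-adm}(e) together with \eqref{ram-ten}. (One minor notational slip: in the paper's conventions $\theta_{i}^{\le m}$ denotes the map $S_{m}\to S_{m+i}$, so the closed immersion $S_{\le n-1}\to S$ you use is not literally $\theta_{n-1}^{\e 0}$; but it is of course affine, so the substance of your argument is unaffected.)
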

\begin{proof} This is immediate from Remark \ref{rems-adm}(e) using \eqref{ram-ten}.
\end{proof}

\section{The Greenberg algebra of a discrete valuation ring}\label{gad}
Let $R$ be a discrete valuation ring. The {\it Greenberg algebra associated to $R$} is the affine $k$-scheme
\begin{equation}\label{hatr}
\widetilde{\s R}=\varprojlim_{n\e\in\e\N}\s R_{\le n},
\end{equation}
where the transition morphisms are induced by the canonical maps $R_{\le n+1}\to R_{\le n}$.
Since, for every $n\in\N$, the underlying scheme of $\s R_{n}$ is isomorphic to  $\A_{k}^{\be n}$ (see Section \ref{truc}), the underlying scheme of the ring scheme  $\widetilde{\s R}$ is isomorphic to  $\A^{\!(\N)}_{\le k}=\spec k\le [\le x_{n}; n\in\N\le]$. In particular, $\widetilde{\s R}$ is not locally of finite type. Now, if $A$ is a $k$-algebra, set
\begin{equation}\label{ra}
\widetilde{\s R}(\be A)=\Hom_{\e k}\be\big(\spec A,\widetilde{\s R}\,\big)=\varprojlim\e(\s R_{n}(A)),
\end{equation}
where the second equality follows from \eqref{hatr} via \eqref{plim} and \eqref{use}.

We will also need to consider, for every $k$-scheme $Y$, the Zariski sheaf on $Y$ defined by
\begin{equation}\label{zsf}
\widetilde{\s R}(\cO_{Y})=\varprojlim \s R_{\le n}(\be\s O_{Y}\be),
\end{equation}
where, for every $n\in\N$,  $\s R_{\le n}(\be\s O_{Y}\be)$ is the Zariski sheaf on $Y$ given by \eqref{zdef}. Then, for every open subset $U\subset Y$, we have
\[
\varGamma(U,\widetilde{\s R}(\cO_{Y}))=\varprojlim \varGamma(U,\s R_{\le n}(\be\s O_{Y}\be)).
\]
In particular, if $U=\spec A$ is an affine subscheme of $Y$, then \eqref{use} and \eqref{ra} yield
\begin{equation}\label{zsf2}
\varGamma(U,\widetilde{\s R}(\cO_{Y}))=\widetilde{\s R}(\be A).
\end{equation}

Note that the underlying set of the ring $\widetilde{\s R}(A)$ is isomorphic to  $A^{(\N)}$.
The functor $\widetilde{\s R}(-)$ thus defined is a covariant and representable functor from the category of $k$-algebras to the category of $R$-algebras (a representing object is the coordinate ring of the affine scheme $\widetilde{\s R}\,$). 
If $\phi\colon B\to A$ is a monomorphism (respectively, epimorphism) of $k$-algebras, then $\widetilde{\s R}(\phi)\colon\widetilde{\s R}(B)\to \widetilde{\s R}(A)$ is a monomorphism (respectively, epimorphism) of $R$-algebras. This follows from the fact that, as a map of sets, $\widetilde{\s R}(\phi)$ is simply the map $\phi^{(\N)}\colon B^{\le(\N)}\to A^{(\N)}$. Note also that, 
since the $k$-algebra that represents $\widetilde{\s R}\e(-)$ is not of finite presentation, the functor $\widetilde{\s R}(-)$ does {\it not} commute with filtered inductive limits. See \cite[$\text{IV}_{3}$, Corollary 8.14.2.2]{ega} and compare with Remark \ref{filt}. In addition, if $k^{\e\prime}\be/k$ is a subextension of $\e\kbar\lbe/k$ and $R^{\,\prime}$ is the extension of $R$ of ramification index 1 which corresponds to $k^{\e\prime}\be/k$, then, by Lemma \ref{rnk},
\begin{equation}\label{rk}
\widetilde{\s R}(k^{\e\prime}\e)=\widehat{R^{\e\prime}}
\end{equation}

\begin{remark}\label{rm-ns} \indent
\begin{enumerate}
\item[(a)] If $R\simeq k\le[[\le t\le]]$ is an equal characteristic ring and $A$ is a $k$-algebra then, by \eqref{eqrn},
\[
\widetilde{\s R}(A)=\varprojlim \left(R_{\le n}\otimes_{k} A\right)\simeq \varprojlim A[t]/(t^{\le n})\simeq  A[[\le t\le]]\simeq R\,\e\widehat{\otimes}_{\le k}\e A,
\]
where the last term is the completion of $R\otimes_{\le k}\lbe A$ relative to the $(t)$-adic topology. Consequently, definition \eqref{ra} coincides with that in \cite[p.~256]{ns2}.
In particular, if $A=L$ is a field extension of $k$, then
\[
\widetilde{\s R}(L)\simeq L\le[[\le t\le]].
\]
\item[(b)] Let $R$ be an unequal characteristics ring and let $A$ be a $k$-algebra such that $A=A^{p}$. By \eqref{eis}, $R\simeq W\be(k)[\le T\le]/(f)$, where $f$ is an Eisenstein polynomial. Thus, by Remark \ref{up}, for every $n\geq 1$ we have
\[
\s R_{n}(A)=R_{\le n}\otimes_{\e W(k)}\be W\lbe(\be A)\simeq W\lbe(\be A)[\le T\le]/(f, T^{\le n}).
\]
Consequently, by Lemma \ref{ww},
\begin{equation}\label{raw}
\widetilde{\s R}(A)\simeq \varprojlim  W\lbe(\be A)[T]/(f, T^{\le n})\simeq W(A)[\le T\le]/(f)\simeq R\otimes_{\le W(k)} \be W\lbe(\be A).
\end{equation}
Note that, since $R$ is a finitely generated $W(k)$-module, $R\otimes_{\e W(k)}W\lbe(\be A)\simeq R\,\widehat{\otimes}_{\e W(k)} \, W\lbe(\be A)$, whence $\widetilde{\s R}(A)\simeq R\,\widehat{\otimes}_{\e W(k)} \,W\lbe(\be A)$. Thus definition \eqref{ra} above generalizes the definition given in \cite[p.~256]{ns2} when $A=A^{\le p}$. In particular, if $A=L$ is a perfect field extension of $k$, then
\begin{equation*} 
\widetilde{\s R}(L)\simeq R\otimes_{\e W(k)} \be W(L).
\end{equation*}
Further, since $R/\le W(k)$ is an extension of complete discrete valuation rings of degree $\ari$, $\widetilde{\s R}(L)/\e W(L)$ is also an extension of complete discrete valuation rings of degree $\ari$. Moreover, the extension of complete discrete valuation rings $\widetilde{\s R}(L)/R$ has ramification index 1.
\item[(c)] We conclude that, if $L/k$ is a field extension (where $L$ is assumed to be perfect in the unequal characteristics case), then $\widetilde{\s R}(L)$ is a reduced noetherian ring.
\end{enumerate}
\end{remark}

\section{The Greenberg functor}\label{gr-art}
The Greenberg realization of a scheme of finite type over an artinian local ring was introduced in \cite{gre1}. In this Section we revisit Greenberg's construction using a scheme-theoretic approach.

Let $\mathfrak R$ be an artinian local ring with maximal ideal $\mathfrak M$ and residue field $k$ which is either a finite $W_{\be m}(k)$-algebra, where $k$ is perfect of positive characteristic and $m>1$, or a finite $k$-algebra, where $k$ is arbitrary. As before, we discuss both cases simultaneously by letting $m\geq 1$ and assuming that $k$ is perfect of positive characteristic if $m>1$.
Note that, if $\mathfrak R=k$, then $m=1$ and we are in the setting of Subsection \ref{sec-k} (see Remark \ref{excu}). Further, $\s R=\mathbb O_{k}$ by \eqref{uu}.

Let $Y$ be a $k$-scheme. By Lemma \ref{rnm2} applied to the pair $(\mathfrak R,\mathfrak M \le)$ and the identification $\s R ^{\le(\lbe\s M  )}=\mathbb O_{k}$ induced by the canonical isomorphism $\mathfrak R/\mathfrak M=k$, there exists a canonical exact sequence of Zariski sheaves on $Y$
\begin{equation}\label{rr}
0\to\overbarr{\s M}\lbe(\be\s O_{Y}\be)\to\s R\lbe(\be\s O_{Y}\be)\to \s O_{Y} \to 0.
\end{equation}
Note that, since $\mathfrak M $ is a nilpotent ideal, $\overbarr{\s M}\be(\lbe A)$ is a nilpotent ideal as well for every $k$-algebra $A$ by \eqref{nlp}. We now consider the locally ringed space over $\mathfrak R$
\begin{equation*} 
\hra(Y)=(\le|Y|, \s R\lbe(\be\cO_{Y}\be)).
\end{equation*}
By \eqref{use}, if $U=\spec A$ is an open affine subset of $|\hra(Y)|=|Y|$, then
\begin{equation}\label{use2}
\varGamma\!\left(U,\s O_{\be\hra(Y)}\right)=\varGamma(U,\s R(\be\cO_{Y}\be))=\s R(A).
\end{equation}
Further, there exists a nilpotent immersion of the special fiber $\hra(Y)_{\mr s}$ of $\hra(Y)$ into $\hra(Y)$ whose ideal sheaf is $\mathfrak M\,\s R\be(\be\cO_{Y}\be)$, where $(\mathfrak M\,\s R\lbe(\be\cO_{Y}\be))(U)=\mathfrak M\,\s R(\be A) \subseteq \overbarr{\s M}\be(\be A\be)$. Now, by
the exactness of \eqref{rr}, there exists a canonical nilpotent immersion of $k$-schemes
\begin{equation}\label{io}
\iota_{\lle Y}\colon Y\to \hra(Y)_{\mr s}
\end{equation}
which is induced by the composition  $\s R\lbe(\be\cO_{Y}\be)\be/\mathfrak M\,\s R\be(\be\cO_{Y}\be)\twoheadrightarrow\s R\lbe(\be\cO_{Y}\be)/\be\overbarr{\s M}\be(\be\s O_{Y}\be)\simeq\s O_{Y}$. 
Note that, in the setting of Subsection \ref{sec-k}, $\iota_{\lle Y}$ is an isomorphism for arbitrary $Y$ (see\eqref{kar} and Remark \ref{fgt}). In the setting of Subsection \ref{sec-w}, $\iota_{\lle Y}$ is an isomorphism for every $Y$ such that the absolute Frobenius endomorphism of $Y$ is a closed immersion (see \eqref{nua} and Proposition \ref{rnm-bar}).

\begin{proposition}\label{exist-r} Let $Y$ be a $k$-scheme. Then $\hra(Y)$ is an $\mathfrak R \le$-scheme which is affine if $\e Y$ is affine. If  $Y^{\prime}$ is a closed (respectively, open) subscheme of $Y$, then $\hra(Y^{\prime}\le)$ is a closed (respectively, open) subscheme of $\hra(Y)$.
\end{proposition}
\begin{proof}
Assume first that $Y=\spec A$ is affine. 
Then $\varGamma(\le|\hra(Y)|,\s O_{\be \hra(Y)})=\s R(A)$ by \eqref{use2}. Let $\sigma^{\mathfrak R}\colon \hra(Y)\to\spec \s R\lbe(A)$ be the morphism of locally ringed spaces which corresponds to the identity map of $\s R(A)$ under the bijection
\[
\Hom_{\e\text{loc}}\!\be\left(\hra(Y),\spec\s R\lbe(\be A)\right)\overset{\!\sim}{\to} \Hom(\s R(A),\s R(A))
\]
of \cite[Proposition 1.6.3, p.~210]{ega1}. If $\mathfrak R=k$, then $\s R=\mathbb O_{k}$ by \eqref{uu}. Further, $h^{\lbe k}\lbe(Y)=Y$ and $\sigma^{k}\colon h^{\lbe k}\lbe(Y)\to \spec A$ is the identity morphism of $Y$. Now, if $\mathfrak R$ is arbitrary, then the identity map of $|Y|$ and the projection in \eqref{rr} define a morphism of locally ringed spaces $\delta \colon Y\to \hra\lbe(Y)$.
On the other hand, by \eqref{use} and Remarks \ref{resp0}(b) and \ref{resp}(b), the sequence \eqref{rr} induces a surjective homomorphism of $W_{\be m}\lbe(k)$-algebras $\s R(A)\to A$  with (nilpotent) kernel $\overbarr{\s M}\lbe(A)$. Thus the morphism $\varsigma \colon\spec A\to \spec\s R \lbe(A)$ induced by $\s R(A)\to A$ is a nilpotent immersion. By the functoriality of the bijection in \cite[Proposition 1.6.3, p.~210]{ega1}, the following diagram commutes:
\begin{equation}\label{dhs}
\xymatrix{h^{\lbe k}\lbe(Y)\ar[d]_(.43){\delta} \ar[rr]^(.4){\sigma^{k}}_(.4){\sim}&& \spec A\ar[d]^{\varsigma}\\
\hra\lbe(Y)\ar[rr]^(.4){\sigma^{\mathfrak R}}&& \spec\s R(A).
}\end{equation} 
Since $\delta$ and $\varsigma$ are homeomorphisms, the diagram shows that $\sigma^{\mathfrak R}$ is a homeomorphism as well. On the other hand, \eqref{dhs} with $Y=D(f\le)=\spec A_{f}$, where $f\in A$, and Proposition \ref{very} together show that $\sigma^{\mathfrak R}$ maps the open locally ringed subspace $\hra\lbe(\lbe D(f))$ of $\hra\lbe(Y)$ onto the open subscheme $\spec\s R \lbe(A)_{[f]}$ of $\spec\s R \lbe(A)$. Further,
\[
\Gamma(|D(f)|, \cO_{\hra\lbe (Y)})=\s R(A_{f})\simeq \s R \lbe(A)_{[f]}= \Gamma\left (\sigma^{\mathfrak R}\lbe(\le|D(f)|), \cO_{\spec\s R(A)}\right).
\]
We conclude that $\sigma^{\mathfrak R}$ is an isomorphism of locally ringed spaces and, consequently, $\hra\lbe(Y)$ is a scheme.

If $Y$ is arbitrary, let $\{Y_{i}\}$ be a covering of $Y$ by open affine subschemes. By definition, the restriction of $\s R(\s O_{Y})$ to $|Y_{i}|$ is $\s R(\s O_{Y_{i}})$. Thus  $\hra(Y)$ is obtained by gluing the affine $\mathfrak R$-schemes $\hra(Y_{i})$, whence $\hra(Y)$ is an $\mathfrak R$-scheme, as claimed. Further, if $Y^{\prime}$ is an open subscheme of $Y$, then $\hra\le(Y^{\prime})$ is an open subscheme of $\hra(Y)$.
Finally, let $Y^{\prime}$ be a closed subscheme of $Y$. In order to show that $\hra(Y^{\prime})$ is a closed subscheme of $\hra\lbe(Y)$, we may assume that $Y$ is affine. In this case the desired conclusion follows from \eqref{rrri}. 
\end{proof}

It follows from the above proof that if $A$ is a $k$-algebra, then
\begin{equation}\label{hrn-aff}
\hra(\spec A)=\spec \s R(A).
\end{equation}
In particular, 
\begin{equation}\label{hrn-aff2}
\hra(\spec k)=\spec \mathfrak R.
\end{equation}
Thus there exists a covariant functor
\begin{equation}\label{hra-f}
\hra\colon (\mathrm{Sch}/k)\to (\mathrm{Sch}/\mathfrak R), \quad Y\mapsto
\hra(Y),
\end{equation}
which respects open, closed and arbitrary immersions. Further, \eqref{hra-f} is {\it local for the Zariski topology}, i.e., if $Y$ is a
$k$-scheme and $\{\iota_{\alpha}\colon U_{\alpha}\to Y\}_{\alpha}$ is a
Zariski covering of $Y$, then $\{\hra(\iota_{\alpha})\colon
\hra(U_{\alpha})\to\hra(Y)\}_{\alpha}$ is a Zariski covering of $\hra(Y)$.

Now, for every $\mathfrak R $-scheme $Z$, consider the contravariant functor
\begin{equation}\label{fun}
(\mathrm{Sch}/k)\to (\mathrm{Sets}), \quad Y\mapsto\Hom_{\e\mathfrak R } \big(h^{\lbe \mathfrak R}(Y),Z\big).
\end{equation}

\begin{proposition-definition}\label{pr-def1}  For every $\mathfrak R$-scheme $Z$, the functor \eqref{fun} is represented by a $k$-scheme which is denoted by ${\mathrm{Gr}} ^{\lbe \mathfrak R}(Z)$ and called the {\rm Greenberg realization of $Z$}. The assignment
\begin{equation}\label{grf}
\gra\colon (\e\mathrm{Sch}/\mathfrak R)\to
(\e\mathrm{Sch}/k), \quad Z\mapsto
\gra(Z),
\end{equation}
is a covariant functor called the {\rm Greenberg functor associated to $\mathfrak R$}, and the bijection
\begin{equation}\label{adj}
\Hom_{\e k}\big(Y, \gra(Z)\big)\simeq\Hom_{\e\mathfrak R}\big(\hra(Y),Z\big)
\end{equation}
is functorial in the variables $Y\in(\e\mathrm{Sch}/k)$ and $Z\in (\e\mathrm{Sch}/\mathfrak R)$. If $Z$ is of finite type (respectively, locally of finite type), then $\gra(Z)$ is of finite type (respectively, locally of finite type).
\end{proposition-definition}
\begin{proof} An argument completely analogous to the proof of \cite[Theorem, p.~643]{gre1}\,\footnote{ Note that in \cite{gre1,gre2} $\hra$ and $\gra$ are denoted by $G$ and $F$, respectively.} shows that, if $Z$ is of finite type over $\mathfrak R$, then $\gra(Z)$ exists, is of finite type over $k$ and the bijection \eqref{adj} is bifunctorial. In \cite{gre1}, $\gra(Z)$ is constructed in a number of steps from the particular case
\begin{equation}\label{gr-a}
\gra\lbe\big(\lbe\A_{\le\mathfrak R}^{\! d}\le\big)=\s R^{\e d}\be,
\end{equation}
where $d\geq 0$ (see \cite[Proposition 3, p.~638]{gre1} for this particular case). The {\it same} construction can be used to define $\gra(Z)$ for any $Z$ starting from the following definition:

Let $\{x_{i}\}_{i\e\in\e I}$ be a (possibly infinite) family of independent indeterminates and set $\A_{\e \mathfrak R}^{\be (I\e)}=\spec \mathfrak R[\{x_{i}\}_{i\e\in\e I}]$. For every finite subset $J$ of $I$ of cardinality $|J\e|$, let $\A_{\e \mathfrak R}^{\be (J\e)}=\spec \mathfrak R[\{x_{i}\}_{i\in J}]\simeq\A_{\e \mathfrak R}^{\be |J\e|}$. Then $\mathfrak R[\{x_{i}\}_{i\in I}]=\varinjlim_{J\subseteq I}\mathfrak R[\{x_{i}\}_{i\in J}]$, where the inductive limit extends over all finite subsets $J$ of $I$ (ordered by inclusion). Thus $\A_{\e \mathfrak R}^{\be (I\e)}=
\varprojlim_{J\subseteq I}\A_{\e \mathfrak R}^{\be( J\e)}$ in the category of $\mathfrak R$-schemes. Now set
\[
\gra\big(\lbe\A_{\e \mathfrak R}^{\be (I\e)}\le\big)=\varprojlim_{J\subseteq I}\,
\gra\big(\lbe\A_{\e \mathfrak R}^{\be (J\e)}\le\big)\simeq\varprojlim_{J\subseteq I}\s R^{\, |J\e|}.
\]
Since $\big(\gra \big(\lbe\A_{\e \mathfrak R}^{\be (J\e)}\le\big)\big)$ is a projective system of affine $k$-schemes, $\gra\big(\lbe\A_{\e \mathfrak R}^{\be (I\e)}\le\big)$ is an affine $k$-scheme
by \cite[$\text{IV}_{3}$, Proposition 8.2.3]{ega}. It remains to check that \eqref{adj} holds for $Z=\A_{\e \mathfrak R}^{\be (I\e)}$. Since \eqref{adj} holds for each $Z=\A_{\e \mathfrak R}^{\be (J\e)}$, we have, for every $k$-scheme $Y$,
\begin{align*}
\Hom_{\e\mathfrak R}\be\big(\hra(Y),\A_{\e \mathfrak R}^{\be (I\e)}\big)&=\varprojlim_{J\subseteq I}\Hom_{\e\mathfrak R}\be\big(\hra(Y),
\A_{\e \mathfrak R}^{\be (J\e)}\big)= \varprojlim_{J\subseteq I}\Hom_{\e k}\big(Y,\gra\big(\A_{\e \mathfrak R}^{\be(J\e)}\big)\big)\\
& =  \Hom_{\le k}\big(Y,\varprojlim_{J\subseteq I} \gra\big(\A_{\lbe \mathfrak R}^{\be(J\e)}\big)\big)=\Hom_{\le k}\big(Y, \gra\big(\A_{\e \mathfrak R}^{\be(I\e)}\big)\big),
\end{align*}
by \eqref{plim}. Finally, the fact that $\gra(Z)$ is locally of finite type over $k$ if $Z$ is locally of finite type over $\mathfrak R$ follows as in \cite[proof of Proposition 7, p.~642]{gre1}, using the fact that \eqref{grf} transforms affine $\mathfrak R$-schemes of finite type into affine $k$-schemes of finite type (cf. \cite[Corollary 1, p.~639]{gre1}).
\end{proof}

\begin{remark} 
It follows from the above proof that the $k$-scheme $\gra(Z)$ agrees with the realization constructed in \cite[Proposition 7, p.~641]{gre1} when $Z$ is of finite type over $\mathfrak R$.
\end{remark}

By \cite[Proposition 3, p.~638]{gre1}, the functor \eqref{grf} satisfies
\begin{equation}\label{grnsn}
\gra(\spec \mathfrak R)=\spec k.
\end{equation}
Further, for every $\mathfrak R$-scheme $Z$ and $k$-algebra $A$, \eqref{hrn-aff} and \eqref{adj} yield the equality
\begin{equation}\label{grzz}
\gra(Z)(A)=Z(\s R(A)).
\end{equation}

\begin{remarks}\label{rems1}\indent
\begin{enumerate}
\item[(a)] Both $h^{k}$ and ${\mr{Gr}}^{k}$ are the identity functors on $(\mr{Sch}/k)$.

\item[(b)] The functor \eqref{grf} transforms affine $\mathfrak R$-schemes into affine $k$-schemes and respects open, closed and arbitrary immersions. Further, if $\{Z_{i}\}$ is
an open covering of an $\mathfrak R$-scheme $Z$, then the open subschemes $\gra\lbe(Z_{i})$ cover $\gra\lbe(Z)$. The proofs of the preceding statements are similar to the proofs of \cite[Corollary 1, p.~639, Corollaries 1 and 3, p.~640, Proposition 8, p.~642, and Corollary 1, p.~642]{gre1}, using the fact that every affine $\mathfrak R$-scheme is isomorphic to a closed subscheme of $\A_{\le \mathfrak R}^{\be (I\e)}$ for some set $I$.

\item[(c)] Assume that $\mathfrak R$ is a (finite) $k$-algebra and let $Z$ be an $\mathfrak R$-scheme. Since
$|Y|=|Y\!\times_{\spec k}\spec \mathfrak R\e|$ for every $k$-scheme $Y$, \eqref{gr-weil} yields
\begin{equation}\label{hrny}
\hra(Y)=Y\!\times_{\spec k}\!\spec \mathfrak R.
\end{equation}
Thus, in this case, \eqref{fun} coincides with the Weil restriction functor of $Z$ relative to the universal homeomorphism $\spec\mathfrak R\to \spec k$. Consequently
\begin{equation}\label{eqq}
\gra=\Re_{\,\mathfrak R\lbe/k}.
\end{equation}
\item[(d)] The functor \eqref{grf} respects fiber products (the proof of this fact is similar to that in \cite[Theorem, p.~643]{gre1}). Consequently, $\gra$ defines a covariant functor from the category of $\mathfrak R$-group schemes to the category of $k$-group schemes.
In particular, there exists a canonical isomorphism of $k$-group schemes $\gra\be\big(\lbe\G_{a,\e \mathfrak R}\le\big)=\s R$.
\item[(e)]  If $G$ is a smooth $\mathfrak R$-group scheme and $d=\dim G_{\lbe\rm s}$ then, by Lemma \ref{pc0}, there exists an isomorphism of $\mathfrak R$-group schemes $\mathbb V(\omega_{G/\le \mathfrak R}^{1}\le)\simeq \G_{\lbe a,\e \mathfrak R}^{d}$. It now follows from \eqref{gr-a} and \eqref{uul2} that, if $\mathfrak R$ is a finite $W_{\be m}\lbe(k)$-algebra, then there exists an isomorphism of $k$-schemes $\mr{Gr}^{\le\mathfrak R}\lbe(\le\mathbb V\lbe(\omega_{\le G/\le\mathfrak R}^{1}))\simeq\s R^{\e d}\simeq \A_{  k}^{\be\ell\le d}$, where $\ell={\rm length}_{\e W_{\be m}\lbe(k)}(\mathfrak R)$. On the other hand, if $\mathfrak R$ is a finite $k$-algebra, then (d) and \eqref{uul} show that there exists an isomorphism of $k$-group schemes $\mr{Gr}^{\le\mathfrak R}\lbe(\le\mathbb V\lbe(\omega_{\le G/\le\mathfrak R}^{1}))\simeq\s R^{\e d}\simeq\G_{a,\e k}^{\ell\le d}$, where $\ell=\dimn\e \mathfrak R$. For example, if $\mathfrak R=W_{2}(k)$, then $\gra(\mathbb G_{a,\e \mathfrak R})=\mathbb W_{2}$, which is isomorphic to $\A_{k}^{2}$ as a $k$-scheme but is not isomorphic to $\mathbb G_{a,\e k}^{\le 2}$ as a $k$-group scheme.
\end{enumerate}
\end{remarks}

\medskip

For every $k$-scheme $Y$ and $\mathfrak R$-scheme $Z$, let  
\begin{equation}\label{vphi}
\varphi_{\le Y,\le Z}^{\mathfrak R} \colon\Hom_{\e k}\lbe\big(Y,
\gra(Z)\big)\overset{\!\sim}{\to} \Hom_{\e\mathfrak R}\lbe\big(\hra(Y),Z\big)
\end{equation}
be the bijection \eqref{adj} and let 
\begin{equation}\label{vpsi}
\psi_{\le Y,\le Z}^{\mathfrak R}\colon \Hom_{\e\mathfrak R} \lbe\big(\hra(Y),Z\big)\overset{\!\sim}{\to}
\Hom_{\e k}\lbe\big(Y, \gra(Z)\big)
\end{equation}
be its inverse. If $Y=\spec A$ and $Z=\spec B$ are affine, we will write 
$\varphi_{\le Y,\le Z}^{\mathfrak R} =\varphi_{\be A,\le B}^{\mathfrak R} $ and similarly for $\psi_{\le Y,\le Z}^{\mathfrak R}$.
By \eqref{hrn-aff2} and \eqref{grnsn}, the morphisms $1_{\spec k}$ and  $1_{\lbe \spec \mathfrak R}$ are elements of $\Hom_{\e k}\big(\spec k, \gra(\spec \mathfrak R)\big)$ and $\Hom_{\e \mathfrak R} \big(\hra(\spec k),\spec \mathfrak R\big)$, respectively, and we have
\begin{equation}\label{1-1}
\varphi_{\lbe k,\e \mathfrak R}^{\mathfrak R} (1_{\spec k})=1_{\lbe \spec \mathfrak R}
\end{equation}
and
\begin{equation}\label{1-2}
\psi_{\lbe k,\e \mathfrak R}^{\mathfrak R} (1_{\lbe \spec \mathfrak R})=1_{\spec k}.
\end{equation}
Further, since \eqref{adj} is bifunctorial, the following identities hold:
\begin{align}\label{fun1}
\varphi_{\le Y\be,\e Z}^{\mathfrak R} (g\circ u)&=\varphi_{Y^{\lbe\prime}\!,\e Z}^{\mathfrak R} (g)\circ\hra(u)\\\label{fun2}
\psi_{\le Y^{\lbe\prime}\!,\e Z}^{\mathfrak R} (v)\circ u&=\psi_{\le Y,\e Z}^{\mathfrak R} (v\circ \hra(u))\\\label{fun3}
f\circ \varphi_{\le Y^{\lbe\prime}\!,\e Z}^{\mathfrak R} (g)&=\varphi_{\le Y^{\lbe\prime}\!,\e  Z^{\prime}}^{\mathfrak R}(\gra(f)\circ g)\\\label{fun4}
\psi_{\le Y^{\lbe\prime}\!,\e Z^{\prime}}^{\mathfrak R} (f\circ v)&=\gra(f)\circ\psi_{\le Y^{\prime}\be,\e  Z}^{\mathfrak R} (v),
\end{align}
where $f\colon Z\to Z^{\e\prime}$ and $v\colon \hra(Y^{\prime})\to Z$ are $\spec \mathfrak R$-morphisms and $u\colon Y\to Y^{\prime}$ and $g\colon Y^{\prime}\to \gra(Z)$ are $k$-morphisms. In particular, \eqref{fun1} shows that
\begin{equation}\label{vphi1}
\varphi_{\le Y^{\prime}\!,\le Z}^{\mathfrak R}(\e g)=\lambda_{Z}^{\mathfrak R}  \circ \hra\lbe(\e g),
\end{equation}
where
\begin{equation}\label{lzn}
\lambda_{Z}^{\lbe\mathfrak R}=\varphi_{\lbe \gra(Z),\e Z}^{\le\mathfrak R} \lbe\big(1_{\le\gra(Z)}\big)\colon \hra(\gra(Z))\to Z.
\end{equation}
Note that, by \eqref{1-1},
\begin{equation}\label{lzi}
\lambda_{\spec \mathfrak R}^{\mathfrak R}=1_{\spec \mathfrak R}.
\end{equation}
Further, \eqref{fun3} yields the identity
\begin{equation}\label{phigr}
\varphi_{\le\gra(Z),\e Z^{\prime}}^{\mathfrak R} (\gra\lbe(\le f\le))=f\circ \lambda_{Z}^{\mathfrak R} .
\end{equation}

The following lemma extends the adjunction formula \eqref{adj}.

\begin{lemma}\label{g-adj} Let $Z^{\prime}$ be an $\mathfrak R$-scheme, $Z$ a $Z^{\prime}$-scheme and $Y$ a $\gra\lbe(Z^{\prime}\le)$-scheme. Then
\[
\Hom_{\e \gra(\lbe Z^{\prime})}\big(Y,
\gra(Z)\big)=\Hom_{\e Z^{\prime}}\big(\hra(Y),Z\big).
\]
\end{lemma}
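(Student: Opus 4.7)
The plan is to derive this relative version of the adjunction \eqref{adj} directly from the absolute version by restricting the bijection \eqref{vphi} to morphisms compatible with the given structure morphisms, and then verify compatibility using the naturality identity \eqref{fun3}.

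First I would fix notation: let $f\colon Z\to Z^{\e\prime}$ be the structural morphism making $Z$ a $Z^{\e\prime}$-scheme, and let $s\colon Y\to \grn(Z^{\e\prime})$ be the structural morphism making $Y$ a $\grn(Z^{\e\prime})$-scheme. The latter determines, via \eqref{vphi}, an $R_{\le n}$-morphism $\varphi^{(n)}_{Y,\le Z^{\prime}}(s)\colon\hrn(Y)\to Z^{\e\prime}$, which is precisely the $Z^{\e\prime}$-structure on $\hrn(Y)$ to be used on the right-hand side of the claimed identity.

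Next I would apply the absolute adjunction \eqref{adj} to obtain the bijection $\varphi^{(n)}_{Y,\le Z}\colon\Hom_{\le k}(Y,\grn(Z))\to\Hom_{R_{n}}(\hrn(Y),Z)$, and show that under it, $\grn(Z^{\e\prime})$-morphisms on the left correspond exactly to $Z^{\e\prime}$-morphisms on the right. A morphism $g\colon Y\to\grn(Z)$ is a $\grn(Z^{\e\prime})$-morphism iff $\grn(f)\circ g=s$, while a morphism $v\colon\hrn(Y)\to Z$ is a $Z^{\e\prime}$-morphism iff $f\circ v=\varphi^{(n)}_{Y,\le Z^{\prime}}(s)$. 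Identity \eqref{fun3} gives
\[
f\circ\varphi^{(n)}_{Y,\le Z}(g)=\varphi^{(n)}_{Y,\le Z^{\prime}}\big(\grn(f)\circ g\big),
\]
so $\grn(f)\circ g=s$ implies $f\circ\varphi^{(n)}_{Y,\le Z}(g)=\varphi^{(n)}_{Y,\le Z^{\prime}}(s)$; conversely, if the latter equality holds, then injectivity of $\varphi^{(n)}_{Y,\le Z^{\prime}}$ forces $\grn(f)\circ g=s$. Thus $\varphi^{(n)}_{Y,\le Z}$ restricts to the desired bijection.

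This argument is essentially purely formal once the key naturality relation \eqref{fun3} is available, so I do not anticipate any genuine obstacle; the only subtle point is being careful about which adjunction index ($Z$ versus $Z^{\e\prime}$) is being used at each step, and noting that the $Z^{\e\prime}$-scheme structure on $\hrn(Y)$ is the canonical one induced by adjunction from the $\grn(Z^{\e\prime})$-structure on $Y$ (so that the statement is well-posed).
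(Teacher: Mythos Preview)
Your proposal is correct and follows essentially the same approach as the paper: both fix the structural morphisms, endow $\hrn(Y)$ with its $Z^{\prime}$-structure via $\varphi^{(n)}_{Y,Z^{\prime}}(s)$, and use the naturality identity \eqref{fun3} to show that the absolute bijection \eqref{vphi} restricts to the desired relative bijection. The only cosmetic difference is that for the converse direction the paper applies \eqref{fun4} to $\psi^{(n)}_{Y,Z}$ directly, whereas you instead invoke the injectivity of $\varphi^{(n)}_{Y,Z^{\prime}}$; these are equivalent.
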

\begin{proof} Let $f\colon Z \to Z^{\prime}$ and
$u^{\le\prime}\colon Y\to \gra(Z^{\e\prime})$ be the given structural
morphisms. Then the morphism $\varphi_{\le Y,\le Z^{\prime}}^{\mathfrak R} \be\big(u^{\le\prime}\e\big)\colon \hra(Y)\to Z^{\e\prime}$ endows $\hra(Y)$ with a $Z^{\e\prime}$-scheme structure.
Let $u\in\Hom_{\e
\gra(\lbe Z^{\prime}\lbe)}\big(Y,
\gra(Z)\big)$, i.e, $\gra(f)\circ u=u^{\le\prime}$.  Then, by \eqref{fun3},
\[
f\circ\varphi_{\le Y,\le Z}^{\mathfrak R}\lbe (u)=\varphi_{\le Y,\le Z^{\prime}}^{\mathfrak R}\lbe(\gra(f)\circ u)=\varphi_{\le Y,\le Z^{\prime}}^{\mathfrak R} \be\big(u^{\le\prime}\e\big),
\]
i.e., $\varphi_{\le Y,\le Z}^{\mathfrak R} (u)\in \Hom_{Z^{\prime}}\big(\hra(Y),Z\big)$. On the other hand, if $v\in \Hom_{Z^{\prime}}(\hra(Y),Z)$, i.e.,
$f\circ v=\varphi_{\le Y,\le Z^{\prime}}^{\mathfrak R} \be\big(u^{\le\prime}\e\big)$, then, by \eqref{fun4},
\[
\gra(f)\circ\psi_{Y,Z}^{\mathfrak R} (v)=\psi_{Y,Z^{\prime}}^{\mathfrak R}\lbe(f\circ v)=\psi_{Y,Z^{\prime}}^{\mathfrak R}\lbe (\varphi_{\le Y,\le Z^{\prime}}^{\mathfrak R} \be\big(u^{\le\prime}\e\big))=u^{\le\prime},
\]
i.e., $\psi_{Y,Z}^{\mathfrak R} (v)\in \Hom_{\e \gra(\lbe Z^{\prime}\lbe)}(Y,
\gra(Z))$.
\end{proof}

\section{The Greenberg functor of a truncated discrete valuation ring}\label{gberg}

The definitions and constructions of the preceding Section apply, in particular, to truncated  discrete valuation rings. We recall the notation introduced in Section \ref{truc}. Thus $R$ is a discrete valuation ring with maximal ideal $\mm$ and residue field $k$ (assumed to be perfect when $R$ has unequal characteristics). In this Section we may assume, without loss of generality, that $R$ is {\it complete}. Let $n\geq 1$ be an integer and let $\s R_{\le n}$ be the Greenberg algebra associated to $R_{\le n}=R/\mm^{n}$. Recall that the covariant functor \eqref{hra-f}
\begin{equation}\label{hrn-f}
\hrn=h^{R_{\lle n}}\colon (\mathrm{Sch}/k)\to (\mathrm{Sch}/R_{\le n}), \quad Y\mapsto (|Y|, \s R_{\le n}(\cO_{Y})),
\end{equation} 
respects open, closed and arbitrary immersions. Further, by \eqref{hrn-aff},
\begin{equation*} 
\hrn(\spec A)=\spec \s R_{n}\be(A),
\end{equation*}
$\hrn$ is local for the Zariski topology and, for every $R_{\le n}$-scheme $Z$, the contravariant functor 
\begin{equation*} 
(\mathrm{Sch}/k)\to (\mathrm{Sets}), \quad Y\mapsto\Hom_{\le R_{
n}}\!\big(\hrn(Y),Z\big)
\end{equation*}
is represented by a $k$-scheme $\grn(Z)=\mr{Gr}^{\le R_{\le n}}\be(Z)$ called the {\rm Greenberg realization} of $Z$. See Proposition \ref{pr-def1}. The assignment
\begin{equation}\label{grf2} 
\grn\colon (\e\mathrm{Sch}/R_{\le n})\to
(\e\mathrm{Sch}/k), \quad Z\mapsto
\grn(Z),
\end{equation}
is a covariant functor called the {\it Greenberg functor of level $n$} (associated to $R\e$), and the bijection
\begin{equation}\label{adj-bis}
\Hom_{\e k}\big(Y, \grn(Z)\big)\simeq\Hom_{\e R_{n}}\!\big(\hrn(Y),Z\big)
\end{equation}
is functorial in the variables $Y\in(\mathrm{Sch}/k)$ and $Z\in (\e\mathrm{Sch}/R_{\le n})$. If $Z$ is of finite type (respectively, locally of finite type) over $R_{\le n}$, then $\grn(Z)$ is of finite type (respectively, locally of finite type) over $k$.   
By \eqref{grnsn}, the functor \eqref{grf2} satisfies
\begin{equation*} 
\grn(S_{n})=\spec k.
\end{equation*}

\begin{lemma}\label{rat-pts} Let $n\geq 0$ be an integer and let $Z$ be an $R_{\le n}$-scheme.
\begin{enumerate}
\item[(i)] If $A$ is a $k$-algebra, then $\grn(Z)(A)=Z(\s R_{\le n}\lbe(A))$.
\item[(ii)] If $k^{\e\prime}\be/k$ is a subextension of $\e\kbar/k$ and $R^{\e\prime}$ is the extension of $R$ of ramification index $1$ which corresponds to $k^{\e\prime}\be/k$, then $\grn(Z)(k^{\e\prime}\e)=Z(R^{\,\prime}_{\le n})$.
\end{enumerate}
\end{lemma}
\begin{proof} Assertion (i) follows from \eqref{grzz}. Assertion (ii) follows from (i) using Lemma \ref{rnk}.
\end{proof}

\begin{remark}\label{rems1-bis}\indent
Assume  that $R$ is a ring of unequal characteristics, let $n\geq 1$ be an integer and recall the integer  $m=\lceil n/\e\ari\,\rceil$ \eqref{m}, where $\ari$ is the absolute ramification index of $R$. Let $Y$ be any $k$-scheme such that the absolute Frobenius morphism of $Y$ is a closed immersion. By Proposition \ref{uprop} and the fact that \eqref{hrn-f} is local for the Zariski topology, we have
\begin{equation}\label{for}
\hrn\be\big(Y \big)=W_{\be m}\lbe\big(Y \big)\be\times_{W_{\lbe m}(k)} S_{n},
\end{equation} 
where $W_{\be m}\be\big(Y \big)$ is the scheme defined in \cite[\S 1.5]{ill}.
In particular, if $m=1$, i.e., $n\leq \ari$, then $\hrn$ coincides with the base change functor $-\times_{\spec k}S_{n}$ on the category of $k$-schemes $Y$ which satisfy the indicated condition. Consequently, by \eqref{wr} and Proposition-Definition \ref{pr-def1}, we have
\begin{equation*} 
\Hom_{\e k}(Y,\grn(Z\le))=\Hom_{\e k}(Y,\Re_{R_{\le n} /k}(Z\le)\le)\qquad(\text{if $1\leq n\leq \ari\,  $})
\end{equation*}
for every $R_{\le n}$-scheme $Z$ and {\it perfect} $k$-scheme $Y$.
We call attention to the fact that \eqref{for} does {\it not} hold for arbitrary $k$-schemes $Y$. In particular, the formula in \cite[p.~276,
line~-18]{blr} is incorrect, as previously noted in \cite[p.~1592]{ns}. Note, however, that \eqref{for} is indeed valid for every $Y$ provided $n=m\le\ari$, as follows from Remark \ref{sp0} with $i=m$.
Note also that, if $R$ is {\it absolutely unramified}, then $\s R_{\le n}=\mathbb W_{\be n}$ and $\hrn\lbe(Y)=W_{n}(Y)$ for every $k$-scheme $Y$ and integer $n\geq 1$.
\end{remark}

\begin{example}\label{ex.alpha} Let $R$ be a complete discrete valuation ring of equal characteristic $p>0$. Fix an isomorphism $R\simeq k[[t]]$, so that $R_{\le n}\simeq k[t]/(t^{\le n })$ for every $n\in\N $ (see Section \ref{ga}). By Remarks \ref{rems1}(c) and \ref{rems-adm}(g), $\grn(\A^{\! 1}_{R_{\le n}}\be)=\Re_{R_{n}/k}(\A^{\!1}_{R_{\le n}}\be)=\A_{\le k}^{\be n }$. On the other hand, $\hrn\lbe(\A_{\le k}^{\be n })=\A^{\be n }_{R_{\le n}}$ by \eqref{hrny}. Thus the canonical morphism \eqref{lzn}
\[
\lambda_{\A^{\be 1}_{\lbe R_{n}}} \colon \hrn(\grn(\A^{\! 1}_{R_{\le n}}))\to \A^{\! 1}_{ R_{\le n}}  
\]
is induced by a ring homomorphism $q^{\le(n)}\colon R_{\le n}[\le x\le]\to  R_{\le n}[\le x_{\le 0},\dots, x_{n-1}\le]$. It follows from \cite[\S 7.6, proof of Theorem 4, p.~195]{blr} that $q^{\le(n)}$ is given by the formula $q^{\le(n)}(x)=\sum_{\le i=0}^{\le n-1}x_{i}\e t^{\le i}$. Since $t^{\le j}=0$ in $R_{\le n}$ for $j\geq n$, we have $q^{\le(n)}(x^{\le p})= \sum_{\le i=0}^{\le \lfloor{(n-1)/p}\rfloor}x_{i}^{\le p}t^{\le ip}$. We conclude that
\begin{equation}\label{alp}
\grn(\spec\be(R_{\le n}[x]/(x^{\le p})))\simeq\spec (k[\le x_{\le 0},\dots, x_{n-1}\le]/(x_{i}^{\e p}, i\leq (n-1)/p)).
\end{equation}
Compare with \cite[\S 7.6, proof of Proposition 2(ii), pp.~193-194]{blr}.
In particular, \eqref{alp} is not a finite $k$-scheme for every $n> 1$.

\end{example}

\section{The change of rings morphism}\label{s-cr}

We return to the setting of Section \ref{gr-art}. Thus $\mathfrak R$ is an artinian local ring with maximal ideal $\mathfrak M$ and residue field $k$ which is either a finite $W_{\be m}(k)$-algebra, where $k$ is perfect of positive characteristic and $m>1$, or a finite $k$-algebra, where $k$ is arbitrary. As before, we discuss both cases simultaneously by letting $m\geq 1$ and assuming that $k$ is perfect of positive characteristic if $m>1$. Let $\mathfrak I$ be an ideal of $\mathfrak R$, write $\mathfrak R^{\le\prime}$ for the artinian local ring $\mathfrak R^{(\mathfrak I\le)}=\mathfrak R/\le\mathfrak I$ and let $\s R^{\e\prime}$ be the corresponding Greenberg algebra $\s R^{\le(\s I\le)}$. Note that, if $\mathfrak I=\mathfrak M$, then $\mathfrak R^{\le\prime}=k$ and $\s R^{\e\prime}=\mathbb O_{k}$ by \eqref{uu}. If $X$ is an $\mathfrak R$-scheme, we will write $X^{\lbe\prime}$ for $X_{\mathfrak R^{\le\prime}}$. Note that the canonical morphism $X^{\prime}\to X$ is a nilpotent immersion and hence a universal homeomorphism. If $f\colon Z\to X$ is a morphism of $\mathfrak R$-schemes, $f_{\mathfrak R^{\prime}}$ will be denoted by $f^{\le\prime}$.

Let $Y$ be a $k$-scheme and recall the schemes $h^{\lbe \mathfrak R}\lbe (Y)$ and $h^{\lbe \mathfrak R^{\prime}}\!(Y)$ introduced in Section \ref{gr-art}. By construction,
the surjective morphism of Zariski sheaves on $Y$ with nilpotent kernel $\s R(\be\s
O_{Y}\be)\to \s R^{\e\prime}\bbe(\be\s O_{Y}\be)$ (see Lemma \ref{rnm2} and \eqref{nlp}) associates to the canonical projection $\mathfrak R\to\mathfrak R^{\le\prime}$ a nilpotent immersion 
\begin{equation}\label{dlt}
\delta_{Y}^{\e\mathfrak R,\mathfrak R^{\prime}}\colon h^{\lbe \mathfrak R^{\le\prime}}\!(Y\le)\to  h^{\lbe \mathfrak R}(Y\le)
\end{equation}
which is functorial in $Y$, i.e., if $u\colon Y\to W$ is a morphism of $k$-schemes, then the 
diagram
\begin{equation}\label{dlt2}
\xymatrix{h^{\mathfrak R^{\prime}}\!(Y)\ar[d]_(.5){\delta_{\lbe Y}^{\le\mathfrak R,\mathfrak R^{\prime}}} \ar[rr]^(.47){h^{\mathfrak R^{\prime}}\!\lbe(\le u\le)}&& h^{\lbe \mathfrak R^{\prime}}\be(W)\ar[d]^(.5){\delta_{W}^{\le\mathfrak R,\mathfrak R^{\prime}}}\\
h^{\lbe \mathfrak R}\lbe(Y)\ar[rr]^(.47){h^{\mathfrak R}\lbe\lbe(\le u\le)}&& h^{\mathfrak R}\lbe(W)
}
\end{equation} 
commutes. If $Y=\spec A$ is affine, we will write $\delta_{\! A}^{\le\mathfrak R,\mathfrak R^{\prime}}$ for $\delta_{Y}^{\le\mathfrak R,\mathfrak R^{\prime}}$. Via \eqref{hrn-aff2},
\begin{equation}\label{dlt3}
\delta_{k}^{\e\mathfrak R,\mathfrak R^{\prime}}\colon  \spec \mathfrak R^{\e\prime}\to \spec \mathfrak R,
\end{equation} 
is the nilpotent immersion defined by the projection $\mathfrak R\to \mathfrak R^{\prime}$.

Now let $X$ be an $\mathfrak R$-scheme and let $u\colon Y\to \gra (X)$ be a $k$-morphism. The image of $u$ under the bijection \eqref{vphi}
\begin{equation*} 
\varphi_{\le Y,\le X}^{\mathfrak R}  \colon\Hom_{\le k}\big(Y,
\gra(X)\big)\,\overset{\!\sim}{\to}\, \Hom_{\mathfrak R}\big(\hra(Y),X\big)
\end{equation*}
is a morphism of  $\mathfrak R$-schemes $\varphi_{\le Y,\le X}^{\le\mathfrak R}\be(u)\colon\hra\le(Y)\to X$.
By \eqref{bc}, there exists a unique morphism of $\mathfrak R^{\le\prime}$-schemes $\widetilde{v}\colon h^{\lbe \mathfrak R^{\prime}}\!(Y)\to X^{\prime}$ such that the following diagram commutes:
\begin{equation}\label{step1}
\xymatrix{h^{\lbe \mathfrak R^{\prime}}\!(Y)\ar[drr]^{v}\ar[d]_(.43){\delta_{Y}^{\le\mathfrak R,\mathfrak R^{\prime}}} \ar@{-->}[rr]^(.5){\widetilde{v}}&& X^{\prime}\ar[d]^(.43){\mr{pr}_{\lbe X}}\\
h^{\lbe \mathfrak R}\be(Y)\ar[rr]_(.5){\varphi_{\le Y,\le X}^{\mathfrak R}(u)}&& X,
}
\end{equation} 
where $\delta_{Y}^{\le\mathfrak R,\mathfrak R^{\prime}}$ is the map \ref{dlt} and we have written $v=\varphi_{\le Y,\le X}^{\mathfrak R}(u)\circ \delta_{Y}^{\le\mathfrak R,\mathfrak R^{\prime}}$. Similarly, there exists a unique morphism of $\mathfrak R^{\le\prime}$-schemes $w_{\lbe X}\colon h^{\lbe \mathfrak R^{\prime}}\!(\mr{Gr}^{\lbe \mathfrak R}(X))\to X^{\prime}$ such that the following diagram commutes:
\begin{equation}\label{step2}
\xymatrix{h^{\lbe \mathfrak R^{\prime}}\!(\mr{Gr}^{\lbe \mathfrak R}\be(X))\ar[d]_(.43){\delta_{\be\mr{Gr}^{\lbe \mathfrak R}\lbe(X)}^{\le\mathfrak R,\mathfrak R^{\prime}}} \ar@{-->}[rr]^(.55){w_{\lbe X}}&& X^{\prime}\ar[d]^(.43){\mr{pr}_{\lbe X}}\\
h^{\lbe \mathfrak R}(\mr{Gr}^{\lbe \mathfrak R}\be(X))\ar[rr]_(.55){\lambda_{\be X}^{\be\mathfrak R} }&& X,
}
\end{equation} 
where $\lambda_{\be X}^{\be\mathfrak R}$ is the map \eqref{lzn}. When $X=\spec \mathfrak R$, we have $\lambda_{\be X}^{\be\mathfrak R}=1_{\le\spec\mathfrak R}$ by \eqref{lzi} and both vertical maps above can be identified with $\delta_{k}^{\mathfrak R,\mathfrak R^{\prime}}\!$ \eqref{dlt3} via \eqref{hrn-aff2} and \eqref{grnsn}, whence
\begin{equation}\label{vr=1k0}
w_{\e\spec \mathfrak R}=1_{\spec\mathfrak R^{\prime}}.
\end{equation}
Now observe that, since  $\varphi_{\le Y,\le X}^{\le\mathfrak R}(u)$ factors as $\lambda_{\lbe X}^{\be\mathfrak R} \circ h^{\lbe \mathfrak R}\lbe(\le u\le)$ \eqref{vphi1}, diagram \eqref{step1} decomposes as
\begin{equation*} 
\xymatrix@C=35pt{
h^{\lbe \mathfrak R^{\prime}}\be(Y)\ar[r]^(.45){h^{\lbe \mathfrak R^{\prime}}\!\!(u)}\ar@/^2.0pc/[rr]|-{~\widetilde{v}~} \ar@{..>}[drr]|-(0.35){~v~}\ar[d]_(.4){\delta_{Y}^{\le\mathfrak R,\mathfrak R^{\prime}}}& h^{\lbe \mathfrak R^{\prime}}\!(\mr{Gr}^{\lbe \mathfrak R}(X))\ar[r]^(0.6){w_{\lbe X}}\ar[d]^(0.4){\delta_{\be\mr{Gr}^{\mathfrak R}\lbe(X)}^{\le\mathfrak R,\mathfrak R^{\prime}}}& X^{\prime}\ar[d]^{{\mr{pr}_{\lbe X}}}\\
h^{\lbe \mathfrak R}(Y)\ar[r]_(.4){h^{\lbe \mathfrak R}\lbe(u)}\ar@/_2.0pc/[rr]|-(0.5){~\varphi^{\mathfrak R}_{\le Y\le,\le X}\be(u)~}& h^{\lbe \mathfrak R}(\mr{Gr}^{\lbe \mathfrak R}(X))\ar[r]_(0.58){\lambda_{\be X}^{\lbe\mathfrak R} } & X,}
\end{equation*} 
where the left-hand commutative square is an instance of \eqref{dlt2} and the right-hand commutative square is \eqref{step2}. We conclude, by uniqueness, that
\begin{equation}\label{til}
\widetilde{v}=w_{\lbe X}\circ h^{\lbe \mathfrak R^{\prime}}\!\lbe(u).
\end{equation}
Thus, we have defined a map
\begin{equation*} 
\Hom_{\le k}\big(Y, \mr{Gr}^{\lbe \mathfrak R}(X)\big)\to \Hom_{\le \mathfrak R^{\le\prime}}\be\big(h^{\lbe \mathfrak R^{\prime}}\!(Y),X^{\prime}\big), \, u\mapsto w_{\lbe X}\circ h^{\lbe \mathfrak R^{\prime}}\!\lbe(u).
\end{equation*}
Composing the above map with the bijection \eqref{vpsi}
\[
\psi_{\le Y,\le X^{\prime}}^{\mathfrak R^{\prime}}\colon \Hom_{\le\mathfrak R^{\prime}} \be\big(h^{\lbe \mathfrak R^{\prime}}\!(Y),X^{\prime}\e\big)\overset{\!\sim}{\to}
\Hom_{\le k}\be\big(Y, \gra\big(\be X^{\prime}\big)\big)
\]
and using the formula \eqref{fun2}
\[
\psi_{\le Y,\le X^{\lbe\prime}}^{\mathfrak R^{\prime}}\lbe (w_{\lbe X}\!\circ\! h^{\lbe \mathfrak R^{\prime}}\!\lbe(u))=\psi^{\mathfrak R^{\prime}}_{\mr{Gr}^{\mathfrak R}\lbe(X)\le,\e X^{\prime}}\be(w_{\be X}\be)\circ u
\]
we obtain a map 
\begin{equation}\label{cp2}
\Hom_{\le k}\big(Y, \mr{Gr}^{ \mathfrak R}(X)\big)\to \Hom_{\le k}\big(Y,\mr{Gr}^{ \mathfrak R^{\prime}}(X^{\prime})\big), \, u\mapsto \psi^{\mathfrak R^{\prime}}_{\mr{Gr}^{\mathfrak R}\lbe(X)\le,\e X^{\prime}}\be(w_{\be X}\be)\!\circ\be u.
\end{equation}
The morphism of $k$-schemes
\begin{equation}\label{tr0} 
\varrho_{\lbe X}^{\le\mathfrak R,\mathfrak R^{\prime}}=\psi^{\mathfrak R^{\prime}}_{\mr{Gr}^{\mathfrak R}(X)\le,\e X^{\lbe\prime}}\be(w_{\be X}\be)\colon \mr{Gr}^{\le\mathfrak R}\lbe(X)\to\mr{Gr}^{\le\mathfrak R^{\prime}}\!(X^{\lbe\prime}\lle),
\end{equation}
is called the {\it change of rings morphism associated to the $\mathfrak R$-scheme $X$}. Then \eqref{cp2} is the map
\begin{equation}\label{cp4}
\Hom_{\le k}\big(Y, \mr{Gr}^{\mathfrak R}(X)\big)\to \Hom_{\le k}\big(Y,\mr{Gr}^{ \mathfrak R^{\prime}}(X^{\prime})\big), \, u\mapsto \varrho_{\lbe X}^{\le\mathfrak R,\mathfrak R^{\prime}}\be\!\circ\be u.
\end{equation}
Observe that, by \eqref{til} and \eqref{fun1},
\[
\widetilde{v}=w_{\lbe X}\circ h^{\lbe \mathfrak R^{\prime}}\!\lbe(u)=\varphi^{\mathfrak R^{\prime}}_{\mr{Gr}^{\mathfrak R}(X)\le,\e X^{\lbe\prime}}\be\big(\varrho_{\lbe X}^{\le\mathfrak R,\mathfrak R^{\prime}}\le\big)\circ h^{\lbe \mathfrak R^{\prime}}\!\lbe(u)=\varphi_{\le Y,\le X^{\lbe\prime}}^{\le\mathfrak R^{\le\prime}}\be\big(\varrho_{\lbe X}^{\le\mathfrak R,\mathfrak R^{\prime}}\!\!\circ\be u\le\big).
\]
Thus, by the definition of $\widetilde{v}$ \eqref{step1}, the following holds.

\begin{proposition}\label{rdad} Let $Y$ be a $k$-scheme, $X$ an $\mathfrak R$-scheme and $u\colon Y\to \gra\le(X)$ a morphism of $k$-schemes. If $\varrho_{\lbe X}^{\le\mathfrak R,\mathfrak R^{\prime}}$ is the change of rings morphism \eqref{tr0}, then $\varrho_{\lbe X}^{\le\mathfrak R,\e\mathfrak R^{\prime}}\!\circ\le u$ is the unique morphism of $k$-schemes\, $a\colon Y\to \mr{Gr}^{\le\mathfrak R^{\prime}}\!(X^{\lbe\prime}\lle)$ such that the diagram
\begin{equation}\label{rdad2}
\xymatrix{h^{\mathfrak R^{\prime}}\be(Y)\ar[d]_(.43){\delta_{\lbe Y}^{\le\mathfrak R,\mathfrak R^{\prime}}} \ar[rrr]^(.53){\varphi_{\lbe Y\!,\le X^{\lbe\prime}}^{\le\mathfrak R^{\le\prime}}\lbe(\le a\le)}&&& X^{\prime}\ar[d]^(.43){\mr{pr}_{\lbe X}}\\
h^{\mathfrak R}\be(Y)\ar[rrr]^(.5){\varphi_{\lbe Y,\le X}^{\mathfrak R}\be(u)}&&& X
}
\end{equation}
commutes. 
\end{proposition}

We will now discuss the functoriality of the assignment $X\mapsto \varrho_{\lbe X}^{\le\mathfrak R,\mathfrak R^{\prime}}$ \eqref{tr0}. Let $f\colon Z\to X$ be a morphism of $\mathfrak R$-schemes with associated morphism of $k$-schemes $\gr(\lle f\lle)\colon \mr{Gr}^{\mathfrak R}\lbe(Z)\to\mr{Gr}^{\mathfrak R}(X)$. Further, let $\varrho^{\e\mathfrak R\le,\e\mathfrak R^{\prime}}\!(\lle f\lle)\colon \mr{Gr}^{\mathfrak R}\lbe(Z)\to \mr{Gr}^{\mathfrak R^{\prime}}\!(X^{\prime})$ be the image of $\gr(\lle f\lle)$ 
under the map \eqref{cp4} for $Y=\mr{Gr}^{\mathfrak R}\be(Z)$, i.e.,
\begin{equation}\label{vrho}
\varrho^{\mathfrak R, \mathfrak R^{\prime}}\!(\lle f\lle)=\varrho_{\lbe X}^{\le\mathfrak R,\mathfrak R^{\prime}}\be\!\circ\be \gr(\lle f\lle).
\end{equation}
By \eqref{phigr} with $Z^{\le\prime}=X$, the commutativity of \eqref{step2} with $X$ replaced by $Z$ and the formula
\begin{equation}\label{fp}
f\circ\mr{pr}_{\lbe  Z}= \mr{pr}_{\lbe  X}\circ f^{\le\prime},
\end{equation} 
the diagram
\[
\xymatrix{h^{\lbe \mathfrak R^{\prime}}\!(\mr{Gr}^{\mathfrak R}\lbe(Z))\ar[d]_(.43){\delta_{\lbe \mr{Gr}^{\mathfrak R}\lbe(Z)}^{\le\mathfrak R,\mathfrak R^{\prime}}} \ar[rrr]^(.53){f^{\le\prime}\circ w_{Z}}&&& X^{\prime}\ar[d]^(.43){\mr{pr}_{\lbe X}}\\
h^{\lbe \mathfrak R}\be(\mr{Gr}^{\mathfrak R}\lbe(Z))\ar[rrr]^(.58){\varphi_{\lbe \mr{Gr}^{\mathfrak R}\lbe(Z),\le X}^{\mathfrak R}\be(\gr(\lle f\lle))}&&& X
}
\]
commutes. Thus, by \eqref{vrho}, \eqref{fun4}, \eqref{tr0} and Proposition \ref{rdad} for $Y=\mr{Gr}^{\mathfrak R}\be(Z)$ and $u=\gr(\lle f\lle)$, we have
\[
\begin{array}{rcl}
\varrho^{\e\mathfrak R, \mathfrak R^{\prime}}\!(\lle f\lle)&=&\varrho_{\lbe X}^{\le\mathfrak R,\mathfrak R^{\prime}}\be\!\circ\be \gr(\lle f\lle)=\psi_{\lbe \mr{Gr}^{\mathfrak R}\lbe(Z),\e X^{\lbe\prime}}^{\le\mathfrak R^{\le\prime}}(\e f^{\e\prime}\circ w_{\lbe Z})\\
&=&\mr{Gr}^{\mathfrak R^{\prime}}\!(\lle f^{\le\prime}\lle)\circ \psi_{\lbe \mr{Gr}^{\mathfrak R}\lbe(Z),\e Z^{\lbe\prime}}^{\le\mathfrak R^{\le\prime}}(w_{\lbe Z})= \mr{Gr}^{\mathfrak R^{\prime}}\!(\lle f^{\le\prime}\lle)\circ \varrho_{\lbe Z}^{\le\mathfrak R,\le\mathfrak R^{\prime}}.
\end{array}
\] 
Thus the following diagram commutes
\begin{equation}\label{funct}
\xymatrix{\mr{Gr}^{ \mathfrak R}\be(Z)\ar[d]_{\mr{Gr}^{\lbe \mathfrak R}(\le f\le)}\ar[drr]^{\e\varrho^{\le\mathfrak R,\mathfrak R^{\prime}}\!\be(\le f\le)}\ar[rr]^(.45){\varrho_{\lbe Z}^{\mathfrak R,\mathfrak R^{\prime}}}&& \mr{Gr}^{ \mathfrak R^{\prime}}\!(\be Z^{\le\prime})\ar[d]^{\mr{Gr}^{\mathfrak R^{\prime}}\!(\le f^{\prime}\lle)}\\
\mr{Gr}^{\lbe \mathfrak R}(X\e)\ar[rr]^(.45){\varrho_{X}^{\mathfrak R,\mathfrak R^{\prime}}}&&
\mr{Gr}^{\lbe \mathfrak R^{\prime}}\!(X^{\le\prime})}.
\end{equation}
In particular, if $Z$ is an $\mathfrak R\le$-group scheme, then the change of rings morphism \eqref{tr0} is a morphism of $k$-group schemes (i.e., a homomorphism).

\begin{remarks}\label{vrk}\indent
\begin{itemize}
\item[(a)] Note that, by \eqref{vr=1k0} and \eqref{1-2}, 
\begin{equation}\label{vr=1k}
\varrho_{\e \spec \mathfrak R}^{\le\mathfrak R,\mathfrak R^{\prime}}=1_{\spec k}.
\end{equation}
Further, $\varrho^{\le\mathfrak R\le,\e \mathfrak R^{\prime}}\!(\lle 1_{\lbe X}\lle)=\varrho_{\lbe X}^{\le\mathfrak R,\mathfrak R^{\prime}}$ \eqref{vrho}. In addition,
if $\mathfrak R^{\le\prime}=k$ (so that $X^{\prime}=X_{\mr s}$ is the special fiber of $X$), then \eqref{tr0} is a morphism of $k$-schemes $\varrho_{X}^{\mathfrak R,\le k}\colon\gra\lbe(X)\to X_{\mr s}$ (see Remark \ref{rems1}(a)).
\item[(b)] By Proposition \ref{rdad}, if $A$ is a $k$-algebra, then \eqref{grzz} identifies $\varrho_{\lbe X}^{\le\mathfrak R,\mathfrak R^{\prime}}\!(A)$ with the map $X(\s R\lbe(A))\to X(\s R^{\e\prime}\lbe(A))$ induced by the canonical homomorphism $\s R\lbe(A)\to \s R^{\e\prime}\be(A)$. In particular, $\varrho_{\lbe X}^{\le\mathfrak R,\le\mathfrak R^{\prime}}\!(k)\colon X(\mathfrak R)\to X(\mathfrak R^{\e\prime})$ is induced by the projection $\mathfrak R\to \mathfrak R^{\e\prime}$.
\item[(c)] If $\mathfrak J$ is an ideal of $\mathfrak R$ which contains $\mathfrak I$ and $\mathfrak R^{\prime\prime}=\mathfrak R/\mathfrak J$, then (b) shows that
$\varrho_{\lbe X}^{\le\mathfrak R,\le\mathfrak R^{\prime\prime}}=
\varrho_{\lbe X^{\prime}}^{\le\mathfrak R^{\prime}\lbe,\le\mathfrak R^{\prime\prime}}\!\circ \varrho_{\lbe X}^{\le\mathfrak R,\le\mathfrak R^{\prime}}$, where $X$ is an $\mathfrak R$-scheme and $X^{\prime}=X\times_{\spec \mathfrak R} \spec \mathfrak R^{\prime}$.

\item[(d)] For every $k$-scheme $Y$, let $\iota_{\e\mr s}\colon h^{\lbe \mathfrak R}(Y)_{\mr s}\to h^{\lbe \mathfrak R}(Y)$ denote the canonical immersion of the special fiber of $h^{\lbe \mathfrak R}(Y)$ into $h^{\lbe \mathfrak R}(Y)$. Then the following diagram of nilpotent immersions
\begin{equation}\label{vrk-d}
\xymatrix{
Y\ar[rr]^{\iota_{Y}}\ar[drr]_(0.4){\delta^{\le\mathfrak R,\le k}_{Y}} &&\ar[d]^{\iota_{\le\mr s}} h^{\lbe \mathfrak R}(Y) _{\mr s}
\\
&&h^{\lbe \mathfrak R}(Y)
}
\end{equation}
commutes, where $\iota_{Y}$ is the morphism \eqref{io} and $\delta^{\le\mathfrak R,\le k}_{Y}$ is the morphism \eqref{dlt} for $\mathfrak R^{\e\prime}=k$. Indeed, $\delta^{\le\mathfrak R,\le k}_{Y}$ (respectively, $\iota_{Y}$) is induced by the morphism of Zariski sheaves  $\s R(\be\s
O_{Y}\be)\to \s R(\be\s O_{Y}\be)/\be\overbarr{\s M}\be(\be\s O_{Y}\be)\simeq\s O_{Y}$ (respectively, $\s R(\be\cO_{Y}\be)/\mathfrak M\,\s R\be(\be\s O_{Y}\be)\to \s R(\be\cO_{Y}\be)/\be\overbarr{\s M}\lbe(\be\s O_{Y}\be)$), whence the indicated commutativity follows.
\end{itemize}
\end{remarks}

\begin{proposition}\label{f-et} Let $f\colon Z\to X$ be a formally \'etale morphism of $\mathfrak R$-schemes. Then the diagram \eqref{funct} is cartesian.  Consequently, there exists a canonical isomorphism of $k$-schemes 
\[
\mr{Gr}^{\le\mathfrak R}\lbe(Z)=\mr{Gr}^{\le\mathfrak R}\lbe(X\e)\!\times_{\e \varrho_{\lbe X}^{\le\mathfrak R,\mathfrak R^{\prime}}\!,\,\mr{Gr}^{\le\mathfrak R^{\le\prime}}\!\lbe(X^{\lbe\prime}\lle),
\, \mr{Gr}^{\le\mathfrak R^{\prime}}\!(\le f^{\e\prime}\e) }\!\mr{Gr}^{\le\mathfrak R^{\le\prime}}\!(Z^{\le\prime}\e).
\]
\end{proposition}
\begin{proof} We need to show that, if $T$ is a scheme and $t_{1}\colon T\to\mr{Gr}^{ \mathfrak R^{\le\prime}}\!(Z^{\le\prime}\e)$ and $t_{2}\colon T\to 
\mr{Gr}^{\lbe \mathfrak R}(X\le)$ are morphisms of schemes such that 
\begin{equation}\label{pin}
\mr{Gr}^{\lbe \mathfrak R^{\prime}}\be(\lle f^{\le\prime}\le)\!\circ\be t_{1}=\varrho_{X}^{\le\mathfrak R,\le\mathfrak R^{\prime}}\!\be\circ\be t_{2},
\end{equation}
then there exists a unique morphism of schemes
$g\colon T\to \mr{Gr}^{\lbe \mathfrak R}(Z)$ such that $t_{1}=\varrho_{Z}^{\le\mathfrak R,\le\mathfrak R^{\prime}}\!\be\circ\be g$ and $t_{2}=\mr{Gr}^{\lbe \mathfrak R}(\lle f\lle)\be\circ\be g\,$. See the following diagram:
\[
\xymatrix{T\ar@{-->}[dr]_(.4){g}\ar@/^.9pc/[drrr]^(.4){t_{1}}\ar@/_1.2pc/[ddr]_(.37){t_{2}}&&\\
&\mr{Gr}^{\mathfrak R}(Z)\ar[d]^{\mr{Gr}^{\mathfrak R}(\le f\lle)}\ar[rr]^(.47){\varrho_{\lbe Z}^{\le\mathfrak R,\le\mathfrak R^{\prime}}}&& \mr{Gr}^{ \mathfrak R^{\le\prime}}\!(Z^{\le\prime}\e)\ar[d]^(.45){\mr{Gr}^{\mathfrak R^{\prime}}\!(\le f^{\le\prime}\le)}\\
&\mr{Gr}^{\mathfrak R}(X\e)\ar[rr]^(.48){\varrho_{\lbe X}^{\le\mathfrak R,\le\mathfrak R^{\prime}}}&&
\mr{Gr}^{\mathfrak R^{\le\prime}}\!(X^{\le\prime}\e).}
\]
By \eqref{pin} and \eqref{fun3}, we have 
\[
\varphi^{\mathfrak R^{\prime}}_{T,\e X^{\prime}}\lbe(\varrho_{X}^{\le\mathfrak R,\le\mathfrak R^{\prime}}\!\be\circ\be t_{2})=\varphi^{\mathfrak R^{\prime}}_{T,\e X^{\prime}}\lbe(\mr{Gr}^{\lbe \mathfrak R^{\prime}}\be(\lle f^{\le\prime}\le)\!\circ\be t_{1})=f^{\le\prime}\be\circ\be \varphi^{\mathfrak R^{\prime}}_{T,\e Z^{\prime}}(t_{1}).
\]
Thus, by \eqref{fp} and the commutativity of \eqref{rdad2} for $Y=T$ and $u=t_{2}$, the following diagram commutes:
\[
\xymatrix{h^{\lbe \mathfrak R^{\prime}}\!(T)\ar[d]_(.43){\delta_{ T}^{\le\mathfrak R,\mathfrak R^{\prime}}} \ar[rrr]^(.53){\varphi_{\lbe T\!,\le Z^{\lbe\prime}}^{\le\mathfrak R^{\le\prime}}\lbe(\le t_{1}\le)}&&& Z^{\prime}\ar[d]^(.43){f\e\circ\e\mr{pr}_{\lbe Z}}\\
h^{\mathfrak R}\lbe(T)\ar[rrr]^(.5){\varphi_{\lbe T,\le X}^{\le\mathfrak R}\be(t_{2})}&&& X.
}
\] 
Consequently, if we regard $h^{\lbe \mathfrak R}\lbe(T)$ and $h^{\mathfrak R^{\prime}}\!(T)$ as $X$-schemes via the maps $\varphi_{\lbe T,\le X}^{\le\mathfrak R}\be(t_{2})$ and $f\!\circ\!\mr{pr}_{Z}\!\circ\!\varphi_{\lbe T\!,\e Z^{\prime}}^{\le\mathfrak R^{\le\prime}}\lbe(\le t_{1}\le)$, respectively, then we obtain a well-defined map
\[
\Hom_{X}(h^{\mathfrak R}\lbe(T), Z\le)\to\Hom_{X}(h^{\mathfrak R^{\prime}}\!(T),Z\le), v\mapsto v\circ \delta^{\le\mathfrak R,\le\mathfrak R^{\prime}}_{T}.
\]
Now, since $\delta^{\le\mathfrak R,\le\mathfrak R^{\prime}}_{T}$ is a nilpotent immersion and $f\colon Z\to X$ is formally \'etale, the preceding map is a bijection by \cite[$\text{IV}_{4}$, Remark 17.1.2(iv)]{ega}. Consequently, there exists a unique morphism of $X$-schemes $v\colon h^{\lbe \mathfrak R}(T)\to Z$, i.e., $f\circ v=\varphi^{\mathfrak R}_{T,\e X}(t_{2})$, such that $v\circ \delta^{\le\mathfrak R,\mathfrak R^{\prime}}_{T}=\mr{pr}_{\lbe Z}\e\circ\e\varphi^{\mathfrak R^{\prime}}_{T,\e Z^{\prime}}(t_{1})\colon h^{\mathfrak R^{\prime}}\!(T)\to Z$. Let $g=\psi^{\mathfrak R}_{T,\e Z}(v)\colon T\to \mr{Gr}^{\lbe \mathfrak R}(Z)$. Then $\mr{pr}_{\lbe Z}\be\circ\be\varphi^{\le\mathfrak R^{\prime}}_{T,\e Z^{\prime}}(t_{1})=v\circ \delta^{\le\mathfrak R,\mathfrak R^{\prime}}_{T}=\varphi^{\mathfrak R}_{T,\e Z}(\e g)\circ \delta^{\le\mathfrak R,\mathfrak R^{\prime}}_{T}$. Thus, by Proposition \ref{rdad} applied to 
$Y=T$, $X=Z$ and $u=g$, we have $t_{1}=\varrho_{\lbe Z}^{\mathfrak R,\mathfrak R^{\prime}}\!\be\circ\be g$. Finally, by \eqref{fun4},
\[
t_{2}=\psi^{\mathfrak R}_{T,\e X}(\varphi^{\mathfrak R}_{T,\e X}(t_{2}))=\psi^{\mathfrak R}_{T,\le X}(\e f\be\circ\be v)=\mr{Gr}^{\lbe \mathfrak R}(f)\circ g.
\] 
\end{proof}

\begin{corollary}\label{ffet}  Let $f\colon Z\to X$ be a formally \'etale morphism of $\mathfrak R$-schemes. Then there exists a canonical isomorphism of $k$-schemes 
\[
\mr{Gr}^{\le\mathfrak R}\lbe(Z)= Z_{\e\rm s}\!\times_{f_{\mr s},\e\lle X_{\lle\rm s},\e \varrho_{\lbe X}^{\le\mathfrak R,\lle k}}\!\mr{Gr}^{\le\mathfrak R}\lbe(X\e).
\]
Consequently, $\gra(\lle f\lle)\colon\gra(Z)\to\gra(X\le)$ can be identified with $f_{\mr s}\!\times_{\be\lle X_{\lle\rm s}}\!\mr{Gr}^{\le\mathfrak R}\lbe(X\e)$. 
\end{corollary}
\begin{proof}  This is immediate from the proposition by setting $\mathfrak R^{\e\prime}=k$ there. See also Remarks \ref{rems1}(a) and \ref{vrk}(a). 
\end{proof}

\begin{corollary}\label{fetR}
Let $Z$ be a formally \'etale $\mathfrak R$-scheme. Then the change of rings morphism $\varrho_{\lbe Z}^{\mathfrak R,\le\mathfrak R^{\prime}}\colon\mr{Gr}^{\mathfrak R}\lbe(Z\le)\to\mr{Gr}^{ \mathfrak R^{\prime}}\!(Z^{\e\prime}\le)$ \eqref{tr0} is an isomorphism.
\end{corollary}
\begin{proof} Let $f\colon Z\to \spec \mathfrak R$ be the structure morphism of $Z$. By \eqref{vr=1k} and the proposition, diagram \eqref{funct} yields a cartesian diagram
\[
\xymatrix{\mr{Gr}^{\lbe \mathfrak R}(Z\le)\ar[d]_{\mr{Gr}^{\lbe \mathfrak R}(\lle f\lle)}\ar[rr]^(.5){\varrho_{\lbe Z}^{\mathfrak R,\le\mathfrak R^{\prime}}}&& \mr{Gr}^{\lbe \mathfrak R^{\prime}}\!(\lbe Z^{\le\prime}\e)\ar[d]^{\mr{Gr}^{\lbe \mathfrak R^{\prime}}\!(\le f^{\lle\prime}\lle)}\\
\spec k\ar[rr]^(.5){1_{\spec k}}&&
\spec k.}
\]
Consequently, $\varrho_{\lbe Z}^{\mathfrak R,\le\mathfrak R^{\prime}}$ is an isomorphism (see Subsection \ref{not}).
\end{proof}

\begin{proposition}\label{aff}
Let $Z$ be an $\mathfrak R$-scheme. Then $\varrho_{\lbe Z}^{\mathfrak R\le,\e\mathfrak R^{\le\prime}} \colon \mr{Gr}^{\lbe \mathfrak R}\be(Z)\to\mr{Gr}^{ \mathfrak R^{\prime}}\!(Z^{\e\prime}\e)$ is affine.
\end{proposition}
\begin{proof} Since $Z^{\e\prime}\to Z$ is a universal homeomorphism, we may choose an open affine covering $\{U_{\lbe j}\}$ of $Z$ such that $\{U_{\be j}^{\e\prime}\}$ is an open affine covering of $Z^{\e\prime}$. The proof of \cite[Proposition 7, p.~641]{gre1}\,\footnote{This proof depends only on \eqref{adj} and is valid independently of the finiteness assumption in [loc.cit.].} shows that $\mr{Gr}^{\mathfrak R^{\prime}}\be(Z^{\le\prime}\le)$ is covered by the open affine subschemes $\mr{Gr}^{\mathfrak R^{\lle\prime}}\be(U_{\be j}^{\le\prime}\e)$. Now, since the canonical injection morphism $U_{j}\to Z$ is formally \'etale for each $j$ by \cite[$\text{IV}_{4}$, Proposition 17.1.3]{ega}, Proposition \ref{f-et} yields a canonical isomorphism
\[
\big(\varrho_{\lbe Z}^{\mathfrak R,\e\mathfrak R^{\prime}}\big)^{\!-1}\be(\mr{Gr}^{\mathfrak R^{\lle\prime}}\be(U_{\be j}^{\le\prime}\e))=\mr{Gr}^{\mathfrak R}(Z\e)\times_{\mr{Gr}^{ \mathfrak R^{\prime}}\be(Z^{\prime}  )}\mr{Gr}^{\mathfrak R^{\prime}}\be(U_{\be j}^{\le\prime}\e)=\mr{Gr}^{\mathfrak R}\lbe (U_{j})
\]
for every $j$. Since $\mr{Gr}^{\mathfrak R}\lbe (U_{j})$ is affine for every $j$ by Remark \ref{rems1}(b), the proof is complete.
\end{proof}

The following is an immediate corollary of the proposition (see \cite[Proposition 9.1.3]{ega1}):

\begin{corollary}\label{rqc}
Let $Z$ be an $\mathfrak R$-scheme. Then $\varrho_{\lbe Z}^{\mathfrak R\le,\e\mathfrak R^{\le\prime}} \colon \mr{Gr}^{\lbe \mathfrak R}\be(Z)\to\mr{Gr}^{ \mathfrak R^{\prime}}\!(Z^{\e\prime}\e)$ is quasi-compact and separated. \qed
\end{corollary}

\begin{proposition}\label{sm-surj} Let $Z$ be a formally smooth $\mathfrak R$-scheme. Then the change of rings morphism $\varrho_{\lbe Z}^{\mathfrak R\le,\e\mathfrak R^{\le\prime}} \colon \mr{Gr}^{\lbe \mathfrak R}\be(Z)\to\mr{Gr}^{ \mathfrak R^{\prime}}\!(Z^{\e\prime}\e)$ is surjective.
\end{proposition}
\begin{proof} By \cite[Proposition 3.6.2, p.~244]{ega1}, we need only check that the canonical map
\[
\Hom_{\e k}(\spec K, \mr{Gr}^{\mathfrak R}\lbe(Z)\le)\to\Hom_{\e k}(\spec K, \mr{Gr}^{ \mathfrak R^{\prime}}\!(Z^{\e\prime}\e)\le),\,g\mapsto \varrho_{\lbe Z}^{\mathfrak R\le,\e\mathfrak R^{\e\prime}}\!\! \circ\be g,
\]
is surjective for every field extension $K\be/\le k$. Let $t\colon\spec K\to \mr{Gr}^{ \mathfrak R^{\prime}}\be(Z^{\e\prime}\e)$ be a $k$-morphism. Since $Z$ is formally smooth over $\mathfrak R$ and $\delta_{\spec K}^{\le\mathfrak R\le,\e\mathfrak R^{\prime}} \colon h^{ \mathfrak R^{\le\prime}}\!(\spec K\lle)\to h^{\mathfrak R}\lbe(\spec K\lle)$ \eqref{dlt} is a nilpotent immersion of affine $\mathfrak R$-schemes \eqref{hrn-aff}, the canonical map 
\[
\Hom_{ \mathfrak R}(h^{\mathfrak R}\lbe(\spec K\lle),Z\le)\to\Hom_{\le\mathfrak R}(h^{ \mathfrak R^{\prime}}\!(\spec K\lle), Z\le),\, v\mapsto v\be\circ\be \delta^{\e\mathfrak R,\mathfrak R^{\prime}}_{\spec K},
\]
is surjective. Thus, since $\mr{pr}_{\lbe Z}\circ\varphi^{\mathfrak R^{\le\prime}}_{\le K,\le Z^{\le\prime}}\be(t)\in \Hom_{\e\mathfrak R}(h^{ \mathfrak R^{\prime}}\!(\spec K\lle), Z\le)$, there exists an $\mathfrak R$-morphism $v\in \Hom_{ \e\mathfrak R}(h^{\mathfrak R}\lbe(\spec K\lle),Z\le)$ such that
$v\be\circ\be \delta^{\e\mathfrak R\le,\e\mathfrak R^{\prime}}_{\spec K}=\mr{pr}_{\lbe Z}\circ\varphi^{\mathfrak R^{\le\prime}}_{\le K,\le Z^{\le\prime}}\be(t)$. Let $g=\psi^{\mathfrak R}_{\le K,\le Z}(v)\in \Hom_{\le k}(\spec K, \mr{Gr}^{\mathfrak R}\be(Z)\le)$. Then the same argument used in the latter part of the proof of Proposition \ref{f-et} shows that $t=\varrho^{\le\mathfrak R\le,\e\mathfrak R^{\le\prime}}_{\lbe Z}\!\!\circ\be g$, which completes the proof.
\end{proof}

\section{The change of level morphism}\label{s-clm}

We keep the notation of the previous Section. In particular, $\mathfrak M$ denotes the maximal ideal of $\mathfrak R$. Set
\begin{equation}\label{nig}
N=\text{min}\{n\in\N\colon \mathfrak M^{\le n}=0\}.
\end{equation}
For every integer $n\geq 1$, set $\mathfrak R_{\le n}=\mathfrak R/\mathfrak M^{\le n}\,$ and $\mathfrak M_{\le n}=\mathfrak M/\mathfrak M^{\le n}\,$. Note that $\mathfrak R=\mathfrak R_{\le n}$ and $\mathfrak M_{n}=\mathfrak M$ for every $n\geq N$, where $N$ is given by \eqref{nig}. Thus the set of rings $\{\mathfrak R_{\le n}\colon n\in\N\e \}$ equals the finite set $\{\mathfrak R_{\le n}\colon n\leq N\}$.
For example, if $\mathfrak R=R_{\le s}=R/\mm^{s}$ is the truncation of order $s$ $(\geq 1)$ of a discrete valution ring $R$, as in Section \ref{truc}, then $\mathfrak M=\mm/\mm^{s}$ satisfies $\mathfrak M^{s}=0$ whence $\mathfrak R_{\le n}\simeq R_{\le n}$ for every $n\leq s$ and the sets $\{\mathfrak R_{\le n}\colon n\in\N\e \}$ and $\{R_{\le n}\colon n\leq s\e \}$ can be identified.  
As in Section \ref{gberg}, we will write $h_{n}^{\mathfrak R}$ and $\mr{Gr}_{n}^{\mathfrak R}$ for $h^{\le\mathfrak R_{\lle n}}$ and $\mr{Gr}^{\le\mathfrak R_{\lle n}}$, respectively. Further, for every pair of integers $n\geq 1$ and $j\geq 0$, we will write
\begin{equation}\label{tht}
\theta_{\lbe j}^{\e  n }\colon \spec \mathfrak R_{\le n}\to \spec \mathfrak R_{\e n+j}
\end{equation}
for the morphism induced by the canonical surjective map $\mathfrak R_{\e n+j}\to \mathfrak R_{\le n}$. Note that $\theta_{\lbe j}^{\e  n}$ is the identity morphism of $\spec \mathfrak R$ if $n\geq N$. Further, $\mathfrak R_{\e n+j}\to \mathfrak R_{\e n}$ is a map of the form $\mathfrak R\to \mathfrak R^{\le\prime}=\mathfrak R/\mathfrak I$, with $\mathfrak R=\mathfrak R_{\le n+j}$ and $\mathfrak I=\mathfrak M^{\e n}\!/\le\mathfrak M^{\e n+j}$, which were discussed in the previous Section. Thus, for every $\mathfrak R_{\le n+j}$-scheme $Z$, the change of rings map $\varrho_{\le Z}^{\,\mathfrak R_{\le n+j},\e \mathfrak R_{\le n}}\!\!$ \eqref{tr0} is defined. The latter map will be denoted by $\varrho_{\le n,\le Z}^{\, j}$ and called the {\it change of level morphism associated to the $\mathfrak R_{\le n+j}$-scheme $Z$}. If $\mr{Gr}_{n}^{\le\mathfrak R}(Z\le)$ denotes $\mr{Gr}_{n}^{\le\mathfrak R}(Z\!\be\times_{\mathfrak R_{\le n+j}}\!\spec \mathfrak R_{\le n})$, then $\varrho_{\le n,Z}^{\, j}$ is a map
\begin{equation}\label{clm0}
\varrho_{\le n,Z}^{\, j} \colon \mr{Gr}_{n+j}^{\le\mathfrak R}(Z\le)\to\mr{Gr}_{n}^{\le\mathfrak R}(Z\le).
\end{equation}

\begin{example} 
Let $V=\spec\! \big(W(k)[x]/(\e p\lle x)\big)$ be the vector bundle associated to the $W(k)$-module $W(k)/(\e p)=k$, and let $A$ be any $k$-algebra $A$. Then $V_{\lbe\rm s}\lle(A)=\mr{Hom}_{\e k-{\rm alg}}(k[x],A )\simeq A$ and
\[
\mr{Gr}_{2}^{\lbe R}(V)(\lbe A)=\mr{Hom}_{\e W_{2}(k)-{\rm alg}}(W_{2}(k)[x]/(\e p\lle x), W_{2}(A))\simeq \{(a_{0},a_{1})|\, a_{0}^{\le p}=0\}\subseteq W_{2}(A).
\]
By Remark \ref{vrk}(b), the change of level morphism $\varrho_{\e 1,\e V}^{\e 1}(\lbe A)\colon \mr{Gr}_{2}^{\lbe R}(V)(\lbe A)\to V_{\lbe\rm s}\lle(A)$ \eqref{clm0} maps $(a_{0},a_{1})$ to $a_{0}$, which is a $p$-nilpotent element of $V_{\lbe\rm s}\lle(A)=A$. Setting $A=k$ above, we conclude that $\varrho_{\e 1,\le V}^{\e 1}(k)$ is the zero map. Note, however, that $\varrho_{\le 1,\le V}^{\e 1}\neq 0$. 
\end{example}

Now, for every $k$-scheme $Y$, let
\begin{equation}\label{dnj}
\delta_{Y}^{\e n,\le j}= \delta_{Y}^{\e\mathfrak R_{\le n+j},\e\mathfrak R_{\le n}}\colon  h_{n}^{\mathfrak R}\lbe(Y\le)\to  h_{n+j}^{\mathfrak R}\lbe(Y\le)
\end{equation}
be the nilpotent immersion \eqref{dlt} which corresponds to the projection $\mathfrak R_{\le n+j}\to \mathfrak R_{\le n}$. We will write $\s R_{n}$ for the Greenberg algebra $\s R^{\e(\s M^{\lbe n})}$ associated to $\mathfrak R_{\le n}$, as described in Subsections \ref{sec-k} and \ref{sec-w}. Further, note that $\s R_{n}=\s R$ if $n\geq N$.

\begin{remark}
For every morphism of $k$-schemes $u\colon Y\to W$ and every pair of integers $n\geq 1$ and $j\geq 0$, \eqref{dlt2} provides a commutative diagram
\begin{equation}\label{nim}
\xymatrix{h_{n}^{\mathfrak R}\lbe(Y)\ar[d]_(.5){\delta_{Y}^{\e n,\le j}} \ar[rr]^(.47){h_{n}^{\mathfrak R}\lbe(\le u\le)}&& h_{n}^{\mathfrak R}\lbe(W)\ar[d]^(.5){\delta_{W}^{\e n,\le j}}\\
h_{n+j}^{\mathfrak R}\lbe(Y)\ar[rr]^(.47){h^{\mathfrak R}_{n+j}(\le u\le)}&& h_{n+j}^{\mathfrak R}\lbe(W)
}
\end{equation}
where $\delta_{Y}^{\e n,\le j}$ is the map \eqref{dnj}
In particular, if $W=\spec k$ and $u\colon Y\to\spec k$ is the structural morphism then, by \eqref{hrn-aff2} and \eqref{dlt3}, \eqref{nim} is a diagram
\[
\xymatrix{h_{n}^{\mathfrak R}\lbe(Y)\ar[d]_(.5){\delta_{Y}^{\e n,\le j}} \ar[rr]^(.47){h_{n}^{\mathfrak R}\lbe(\le u\le)}&& \spec \mathfrak R_{\le n}\ar[d]^(.5){\theta^{\e n}_{\lbe j}}\\
h_{n+j}^{\mathfrak R}\lbe(Y)\ar[rr]^(.47){h^{\mathfrak R}_{n+j}(\le u\le)}&& \spec \mathfrak R_{\le n+j},
}
\]
where the right-hand vertical map is \eqref{tht}. We conclude that $\delta_{Y}^{\e n,\le j}$ defines a morphism of $\mathfrak R_{\le n+j}$-schemes $h_{n}^{\mathfrak R}\lbe(Y)\to h_{n+j}^{\mathfrak R}\lbe(Y)$ when $h_{n}^{\mathfrak R}\lbe(Y)$ is regarded as an $\mathfrak R_{\le n+j}$-scheme via the composition $\theta^{\e n}_{\lbe j}\circ h_{n}^{\mathfrak R}\lbe(\le u\le)$. 
\end{remark}

\section{Basic properties of the Greenberg functor}\label{bas}
We keep the notation introduced in Section \ref{gr-art}.

\begin{proposition}\label{q-proj}
Let $Z$ be a quasi-projective $\mathfrak R$-scheme. Then $\gra(Z)$ is a quasi-projective $k$-scheme.
\end{proposition}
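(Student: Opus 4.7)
The plan is to argue by induction on the level $n$. For the base case $n=0$, Remark \ref{rems1}(a) gives $\mathrm{Gr}_0^R(Z)=Z$, which is quasi-projective over $S_0=\mathrm{Spec}\,k$ by hypothesis.

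For the inductive step, suppose the proposition has been established at level $n-1$. Since quasi-projectivity is preserved under base change, the $S_{n-1}$-scheme $Z^{\le\prime}:=Z\times_{S_n}S_{n-1}$ is quasi-projective, and hence by the inductive hypothesis the $k$-scheme $Y:=\mathrm{Gr}_{n-1}^R(Z^{\le\prime})$ is quasi-projective. Now consider the change-of-level morphism $\varrho:=\varrho_{n-1,Z}^{\,1}\colon\mathrm{Gr}_n^R(Z)\to Y$ from Section \ref{clm}. By Proposition \ref{aff}, $\varrho$ is affine, and by Proposition-Definition \ref{pr-def1}, both $\mathrm{Gr}_n^R(Z)$ and $Y$ are of finite type over $k$; in particular $\varrho$ is affine and of finite type, and its source is quasi-compact.

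To conclude, I would pick an ample invertible sheaf $\mathcal{L}$ on $Y$ (furnished by the inductive hypothesis) and verify that $\varrho^{\ast}\mathcal{L}$ is ample on $\mathrm{Gr}_n^R(Z)$: this is the standard behaviour of ample sheaves under quasi-affine morphisms with quasi-compact source (cf.\ \cite[II, 5.1.12]{ega2}). Since $\mathrm{Gr}_n^R(Z)$ is of finite type over $k$ and admits an ample invertible sheaf, it is quasi-projective over $k$. A more concrete variant of the same argument: the quasi-projectivity of $Y$ provides a locally closed immersion $Y\hookrightarrow\mathbb{P}^M_k$, while the affine, finite-type morphism $\varrho$ (together with the quasi-compactness of $Y$) gives a closed immersion $\mathrm{Gr}_n^R(Z)\hookrightarrow\mathbb{A}^N_Y$ for some $N$; composing with $\mathbb{A}^N_Y\hookrightarrow\mathbb{A}^N_k\times_k\mathbb{P}^M_k$ and the Segre embedding exhibits $\mathrm{Gr}_n^R(Z)$ as locally closed in a projective space over $k$.

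The substantive ingredient is Proposition \ref{aff}, which is already at hand, so I expect no essential obstacle. The only point I would check carefully is the pullback of ampleness along $\varrho$ (or, equivalently, the global presentation of a finite-type $\mathcal{O}_Y$-algebra on the quasi-compact scheme $Y$), since this is where the finiteness of $Y$ forced by its quasi-projectivity genuinely enters. I note that in the equal characteristics case the result also follows from Remark \ref{rems1}(c) and \cite[Proposition A.5.8]{cgp}, but the inductive strategy above has the advantage of treating both characteristic cases uniformly.
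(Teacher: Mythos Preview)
Your proposal is correct and follows essentially the same strategy as the paper: induction on $n$, with the inductive step using Proposition~\ref{aff} to see that the change-of-level morphism $\varrho_{n-1,Z}^{1}$ is affine, together with the finite-type assertion of Proposition-Definition~\ref{pr-def1}. The only difference is cosmetic: where you unwind the conclusion via ample pullback or an explicit Segre embedding, the paper simply invokes \cite[Proposition~5.3.4(i)--(ii)]{ega2} (an affine morphism of finite type is quasi-projective, and a composite of quasi-projective morphisms is quasi-projective).
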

\begin{proof}  Since $Z\to\spec \mathfrak R$ is of finite type, $\gra\lbe(Z)\to\spec k=\gra\lbe(\spec \mathfrak R\le)$ is a morphism of finite type which factors as $\gra\lbe(Z)\to Z_{\le\rm s}\to\spec k$, where the first map is the change of rings morphism $\varrho_{\lbe Z}^{\e\mathfrak R,\e k}$ \eqref{tr0} and the second morphism is quasi-projective. Now, by Proposition \ref{aff} and \cite[Proposition 6.3.4(v), p.~305]{ega1}, $\varrho_{\lbe Z}^{\e\mathfrak R,\le k}$ is an affine morphism of finite type. Thus $\varrho_{\lbe Z}^{\e\mathfrak R,\le k}$ is also quasi-projective whence the proposition follows (see \cite[II, Proposition 5.3.4, (i) and (ii)]{ega}).
\end{proof}

\begin{remark} When $R$ is an equal characteristic discrete valuation ring (in which case the Greenberg functor agrees with the Weil restriction functor by Remark \ref{rems1}(c)), and $\mathfrak R$ is a truncation of $R$, then the preceding proposition also follows from \cite[Proposition A.5.8]{cgp}.
\end{remark}

\begin{proposition}\label{gr-prop}  Consider, for a morphism of schemes, the property of being:
\begin{enumerate}
\item[(i)] quasi-compact;
\item[(ii)] quasi-separated;
\item[(iii)] separated;
\item[(iv)] locally of finite type;
\item[(v)]  of finite type;
\item[(vi)] affine.
\end{enumerate}
If $\bm{P}$ denotes one of the above properties and the $\mathfrak R $-morphism $f\colon X\to Y$ has property $\bm{P}$, then the $k$-morphism $\gra(\lle f\lle)\colon \gra(X)\to \gra(Y)$ has property $\bm{P}$ as well.
\end{proposition}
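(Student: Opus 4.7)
The plan is to prove each property by reducing to local analysis on an affine open cover of the target $Y$, exploiting three structural facts established earlier: (a) $\grn$ preserves fiber products (Remark \ref{rems1}(e)); (b) $\grn$ carries affine $R_{n}$-schemes to affine $k$-schemes and respects open, closed and arbitrary immersions (Remark \ref{rems1}(b)); and (c) $\grn$ sends Zariski open coverings of an $R_{n}$-scheme $Z$ to Zariski open coverings of $\grn(Z)$, which is recorded inside the proof of Proposition \ref{aff} together with the footnote confirming that no finiteness hypothesis is needed.

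The first step is to dispose of (vi), (i) and (v). Choose an affine open cover $\{V_{j}\}$ of $Y$; by (c) and (b), $\{\grn(V_{j})\}$ is an affine open cover of $\grn(Y)$, and by (a),
\[
\grn(f)^{-1}(\grn(V_{j}))=\grn(X)\times_{\grn(Y)}\grn(V_{j})=\grn(f^{-1}(V_{j})).
\]
If $f$ is affine, then $f^{-1}(V_{j})$ is affine and hence so is $\grn(f^{-1}(V_{j}))$ by (b), yielding (vi). If $f$ is quasi-compact, then $f^{-1}(V_{j})$ admits a finite affine open cover $\{U_{ji}\}_{i}$; the finite family $\{\grn(U_{ji})\}_{i}$ is then an affine open cover of $\grn(f^{-1}(V_{j}))$, yielding (i). Property (v) follows from (i) combined with (iv) below.

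For (iv), the same local reduction leaves the task of proving: if $U=\spec B\to V=\spec A$ with $B$ a finitely generated $A$-algebra, then $\grn(U)\to\grn(V)$ is of finite type. Writing $B=A[x_{1},\dots,x_{m}]/I$ presents $U\to V$ as a closed immersion $U\hookrightarrow\A^{m}_{V}=\A^{m}_{R_{n}}\times_{S_{n}}V$ followed by the projection to $V$. Applying $\grn$ and invoking (a), (b), the identity $\grn(S_{n})=\spec k$ from \eqref{grnsn}, and the isomorphism $\grn(\A^{m}_{R_{n}})\simeq\s R_{n}^{\,m}$ (as in the proof of Proposition-Definition \ref{pr-def1}), we obtain $\grn(U)$ as a closed subscheme of $\s R_{n}^{\,m}\times_{\spec k}\grn(V)$. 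Since the latter is of finite type over $\grn(V)$, so is $\grn(U)\to\grn(V)$.

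Finally, (iii) and (ii) rest on the observation that $\grn$ commutes with diagonals: under the canonical isomorphism $\grn(X\times_{Y}X)=\grn(X)\times_{\grn(Y)}\grn(X)$ supplied by (a), the map $\grn(\Delta_{f})$ coincides with $\Delta_{\grn(f)}$, since composition with either projection yields $\grn(\id_{X})=\id_{\grn(X)}$ by functoriality. Thus (iii) follows by applying (b) to the closed immersion $\Delta_{f}$, while (ii) follows by applying (i) to the quasi-compact morphism $\Delta_{f}$. The only genuinely delicate point is (c), which is, however, already handled inside the proof of Proposition \ref{aff}; the remainder of the argument is systematic bookkeeping with the adjunction \eqref{adj} and the preservation properties in (a) and (b).
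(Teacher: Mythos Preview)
Your proof is correct and essentially parallels the paper for properties (ii)--(vi); the one substantive difference is in the handling of (i). The paper proves quasi-compactness by induction on $n$ via the change-of-level square
\[
\xymatrix{\grn(X)\ar[d]\ar[r]& \grn(Y)\ar[d]\\
{\mathrm{Gr}}_{n-1}^{\lbe R}(X\times_{S_{n}}S_{n-1})\ar[r]&
{\mathrm{Gr}}_{n-1}^{\lbe R}(Y\times_{S_{n}}S_{n-1}),}
\]
using that the vertical arrows are affine (Proposition~\ref{aff}) together with the general cancellation/composition properties of quasi-compact morphisms. You instead argue directly from the Zariski-cover fact (c), showing that each $\grn(f)^{-1}(\grn(V_{j}))=\grn(f^{-1}(V_{j}))$ carries a \emph{finite} affine open cover $\{\grn(U_{ji})\}_{i}$. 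Your route is more elementary and more uniform, since it treats (i) and (vi) by the same mechanism and never invokes the induction on level; the paper's route, by contrast, illustrates a further application of the change-of-level machinery but is logically heavier for this particular property.
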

\begin{proof} Recall diagram \eqref{funct} associated to the canonical projection $\mathfrak R\to k\e$:
\[
\xymatrix{\gra(X)\ar[rr]^(0.5){\varrho_{X}^{\mathfrak R,\le k}}\ar[d]_{\gra(\lle f\lle)}&&  X_{\le\rm s}\ar[d]^{ f_{\le\rm s}}
\\
\gra(Y) \ar[rr]^(0.5){\varrho_{Y}^{\mathfrak R,\le k}} && Y_{\lbe\rm s}. }
\]
By Proposition \ref{aff}, the horizontal morphisms in the above diagram are affine and hence separated and quasi-compact. Thus, if $f$ is quasi-compact (whence $f_{\le\rm s}$ is quasi-compact as well), then the quasi-compactness of $\gra(\lle f\lle)$ follows from the diagram using \cite[Propositions 6.1.4 and 6.1.5(v), p.~291]{ega1}.
To prove the proposition for properties (ii) and (iii), assume that $f$ is quasi-separated (respectively, separated), i.e., the diagonal morphism $\Delta_{f}\colon X\to X\!\times_{Y}\!X$ is quasi-compact (respectively, a closed immersion). Then, by Remarks \ref{rems1}, (b) and (d), and the first part of the proof,
\[
\gra\lbe(\Delta_{f})=\Delta_{\e\gra\lbe(\lle f\lle)}\colon \gra(X)\to \gra(X)\!\times_{\be\gra(Y)}\!\gra(X)
\]
is quasi-compact (respectively, a closed immersion), i.e., $\gra(f)$ is quasi-separated (respectively, separated). To prove the proposition for property (iv), we 
may assume that $X=\spec A$ and $Y=\spec B$, where $B$ is an $\mathfrak R$-algebra and $A$ is a quotient of the polynomial $B$-algebra $B[x_{1},\dots,x_{d}]$ for some $d\geq 0$. 
Since $\spec A\to\spec B$ factors as $\spec A\hookrightarrow\A_{B}^{\be d}\to\spec B$, where the first morphism is a closed immersion (and therefore of finite type), and $\mr{Gr}^{\mathfrak R}$ respects closed immersions by Remark \ref{rems1}(b), we may, in fact, assume that $X=\A_{B}^{\be d}$. In this case $f$ is the map $\A_{\mathfrak R}^{\be d}\times_{\mathfrak R}\spec B\to\spec B$, whence (by Remark \ref{rems1}(d)) $\gra(\lle f\lle)$ is the base extension along $\gra(\spec B)\to \spec k$ of the canonical morphism $\s R^{\e d}\to\spec k$, which is clearly a morphism of finite type. This completes the proof for property (iv). The proposition holds in the case of property (v) since it holds for properties (i) and (iv). Finally, by \cite[proof of Proposition 7, p.~642]{gre1}, $\gra(Y)$ is covered by affine open subschemes of the form $\gra(U)$, where $U$ is an affine open  subscheme of $Y$. Since $\gra(X)\!\times_{\be\gra(Y)}\!\gra(U)=\gra(X\!\times_{\lbe Y}\!U)$ is affine, the proof is complete.
\end{proof}

\begin{corollary}\label{aff-cor} Let  $X$ be an affine scheme of finite type over $\mathfrak R$. Then $\gra\lbe(X)$ is an affine scheme of finite type over $k$. 
\end{corollary}
\begin{proof}  Since $X$ and $\spec \mathfrak R$ are affine schemes, the structural morphism 
$X\to\spec \mathfrak R$ is an affine morphism of finite type. Thus, by \eqref{grnsn} and parts (v) and (vi) of the proposition, $\gra\lbe(X)\to \spec k$ is an affine morphism of finite type. The corollary is now clear. 
\end{proof}

\begin{proposition} 
Let $f\colon Z\to Z^{\prime}$ be a formally smooth (respectively, formally unramified, formally \'etale) $\mathfrak R$-morphism of schemes. Then the induced $k$-morphism $\gra\lbe(\lle f\lle)\colon $ $\gra(Z)\to \gra(Z^{\prime})$ is formally smooth (respectively, formally unramified, formally \'etale).
\end{proposition}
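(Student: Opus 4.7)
The plan is to verify the relevant infinitesimal lifting property for $\grn(f)$ by translating it, through the bifunctorial adjunction \eqref{adj}, into the corresponding lifting property for $f$. Recall that a morphism of schemes is formally smooth (respectively, formally unramified, formally étale) if and only if, for every nilpotent closed immersion $j\colon T_{0}\hookrightarrow T$ in the relevant category, every commutative square with $j$ on the left and the given morphism on the right admits at least one (respectively, at most one, exactly one) diagonal. Given such a square for $\grn(f)$ with top edge $T_{0}\to\grn(Z)$ and bottom edge $T\to\grn(Z^{\e\prime})$, the naturality identities \eqref{fun1}--\eqref{fun4} will produce, via $\varphi^{(n)}$ and $\psi^{(n)}$, a bijection between its set of diagonals and the set of diagonals of the translated square in $(\mathrm{Sch}/S_{n})$ whose right edge is $f$ and whose left edge is $\hrn(j)\colon\hrn(T_{0})\to\hrn(T)$. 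Hence it will suffice to prove that $\hrn(j)$ is itself a nilpotent closed immersion of $S_{n}$-schemes; the assumption on $f$ then produces the required diagonal downstairs, and the adjunction transports it back upstairs.

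The key step is therefore to establish the nilpotency of $\hrn(j)$. That $\hrn(j)$ is a closed immersion follows at once from the fact, recalled just after \eqref{hrn-fun}, that $\hrn$ respects closed immersions. To see that the defining ideal is nilpotent, I would work locally on an affine open $\spec A\subseteq T$ on which $j$ is cut out by an ideal $I\subseteq A$ with $I^{N}=0$. By \eqref{hrn-aff} the restriction of $\hrn(j)$ identifies with $\spec R_{\le n}(A/I)\to\spec R_{\le n}(A)$, whose ideal is $K=\ker\bigl(R_{\le n}(A)\to R_{\le n}(A/I)\bigr)$. Specialising Lemma \ref{rnm2} to $m=0$ yields the exact sequence $0\to M_{\le n}(A)\to R_{\le n}(A)\to A\to 0$ together with the vanishing $M_{\le n}(A)^{n+1}=0$. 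Denoting by $\widetilde{I}\subseteq R_{\le n}(A)$ the preimage of $I$ under the augmentation $R_{\le n}(A)\twoheadrightarrow A$, one has $K\subseteq\widetilde{I}$, and the relation $\widetilde{I}/M_{\le n}(A)=I$ combined with $I^{N}=0$ gives $\widetilde{I}^{\e N}\subseteq M_{\le n}(A)$, whence $\widetilde{I}^{\e N(n+1)}=0$. In particular $K$ is nilpotent, as desired.

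The main obstacle is precisely this key step: one has to check that the Greenberg functor, although it enlarges the structure sheaf considerably, nevertheless preserves the nilpotency of the ideal defining a nilpotent closed immersion. Once that is in place, the remainder is formal: invoking the chosen property of $f$ against $\hrn(j)$ and transporting the resulting diagonal back through \eqref{adj} gives the required lift and concludes the argument for all three cases simultaneously.
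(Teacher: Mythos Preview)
Your proof is correct and follows essentially the same strategy as the paper: translate the infinitesimal lifting problem for $\grn(f)$ through the adjunction \eqref{adj} to a lifting problem for $f$, after checking that $\hrn$ sends a nilpotent closed immersion of affine $k$-schemes to a nilpotent closed immersion of affine $S_{n}$-schemes. The only cosmetic differences are that the paper packages the bijection of diagonals via Lemma~\ref{g-adj} rather than unwinding \eqref{fun1}--\eqref{fun4} by hand, and for the nilpotency of $\hrn(j)$ the paper invokes Lemma~\ref{r-nilp} (which gives $R_{n}(I)^{N}\subseteq R_{n}(I^{N})=0$ directly) instead of your filtration argument via $M_{n}(A)$ and Lemma~\ref{rnm2}; your route yields the weaker bound $K^{N(n+1)}=0$, but of course that suffices. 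One small point: the EGA definition of formal smoothness only tests against \emph{affine} $T$, so you should state that restriction up front rather than only ``working locally'' later---this also ensures $\hrn(T)$ is affine when you invoke the hypothesis on $f$.
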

\begin{proof} We need to show that, if $Y$ is an affine scheme, $\iota\colon Y_{*}\to Y$ is a nilpotent immersion and $Y\to \gra(Z^{\prime})$ is an arbitrary morphism of schemes, then the map induced by $\iota$
\[
\Hom_{\gra\lbe(Z^{\prime})}(Y,  \gra(Z)))\to \Hom_{\gra\lbe(Z^{\prime})}(Y_{*},\gra(Z))
\]
is surjective (respectively, injective, bijective). By \eqref{adj},  the morphism $Y\to\gra(Z^{\prime})$ corresponds to an $\mathfrak R$-morphism $\hra(Y)\to Z^{\prime}$. Further, since $\hra$ respects closed immersions by Proposition \ref{exist-r}, formula   \eqref{hrn-aff} and Lemma \ref{r-nilp} show that $\hra(Y)$ is an affine $\mathfrak R$-scheme and $\hra(\iota)\colon \hra(Y_{*})\to\hra(Y)$ is a nilpotent immersion. Thus, since $f$ is formally smooth (respectively, formally unramified, formally \'etale), the bottom horizontal map in the following diagram (whose vertical maps are the canonical isomorphisms of Lemma \ref{g-adj})
\[
\xymatrix{\Hom_{\e \gra(Z^{\prime})}(\e Y,\gra(Z))\ar@{=}[d]\ar[r]& \Hom_{\e \gra(Z^{\prime})}(\e Y_{*},
\gra(Z))\ar@{=}[d]\\
\Hom_{Z^{\prime}}\be\big(\hra(Y),Z\big)\ar[r]&
\Hom_{Z^{\prime}}\be\big(\hra(Y_{*}\e),Z\big),}
\]
is surjective (respectively, injective, bijective). Thus the top horizontal map has the same property.
\end{proof}

\begin{corollary}\label{gr-sm} Let $f\colon Z\to Z^{\prime}$ be a smooth (respectively, unramified, \'etale) $\mathfrak R$-morphism.
Then $\gra(f)\colon \gra\lbe(Z)\to \gra\lbe(Z^{\le\prime}\le)$ is a
smooth (respectively, unramified, \'etale) $k$-morphism.
\end{corollary}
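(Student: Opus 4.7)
The plan is to combine the preceding Proposition with Proposition \ref{gr-prop}(iv) and the standard characterization \cite[Definition 17.3.1]{ega4.4}: a morphism of schemes is smooth (respectively, \'etale) if and only if it is formally smooth (respectively, formally \'etale) and locally of finite presentation, while it is unramified if and only if it is formally unramified and locally of finite type (or, equivalently when applicable, of finite presentation).

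First, I would invoke the preceding Proposition to conclude that $\grn(f)$ is formally smooth (respectively, formally unramified, formally \'etale), since $f$ is. Second, since in each case $f$ is in particular locally of finite type, Proposition \ref{gr-prop}(iv) yields that $\grn(f)$ is locally of finite type. For the unramified case this already suffices. For the smooth and \'etale cases one must further verify that $\grn(f)$ is locally of finite presentation.

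The remaining step is a light upgrade of the proof of Proposition \ref{gr-prop}(iv). Working locally on $Z^{\e\prime}$ and on $Z$, write $f$ as $\spec A\to\spec B$ with $A=B[x_{1},\dots,x_{d}]/(g_{1},\dots,g_{r})$ a \emph{finitely presented} $B$-algebra (this is where smoothness/\'etaleness of $f$ is used). Using Remark \ref{rems1}(e), the identity \eqref{grnsn}, and the fact that $\s R_{\le n}\simeq\A^{\be n+1}_{k}$ as a $k$-scheme, one obtains
\[
\grn\big(\A^{\be d}_{B}\big)=\grn\big(\A^{\be d}_{R_{\le n}}\big)\times_{\spec k}\grn(\spec B)\simeq\A^{\le d(n+1)}_{\grn(\spec B)},
\]
which is of finite presentation over $\grn(\spec B)$. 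By Remark \ref{rems1}(b), the closed immersion $\spec A\hookrightarrow\A^{\be d}_{B}$ induces a closed immersion $\grn(\spec A)\hookrightarrow\A^{\le d(n+1)}_{\grn(\spec B)}$, and since it is cut out by the finitely many polynomial equations obtained by Greenberg transforming $g_{1},\dots,g_{r}$ coordinatewise, it is of finite presentation. Composition then gives that $\grn(f)$ is locally of finite presentation.

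Combining the three ingredients yields the corollary. The only real obstacle is the third step, but it is essentially the same argument as Proposition \ref{gr-prop}(iv), merely keeping track of the finiteness of the defining ideal; no new ideas are required.
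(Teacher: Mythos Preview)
Your approach matches the paper's: invoke the preceding Proposition for the formal property, Proposition \ref{gr-prop}(iv) for ``locally of finite type'', and assemble via \cite[Definition 17.3.1]{ega4.4}. The paper handles the passage from ``locally of finite type'' to ``locally of finite presentation'' with a bare citation to \cite[(6.2.1.2)]{ega1}, whereas you spell it out by tracking how finitely many relations over $B$ yield finitely many relations over $\grn(\spec B)$; this extra care is warranted since $\grn(Z^{\prime})$ need not be locally noetherian for arbitrary $Z^{\prime}$. A tidy way to make your last step precise is via the fiber-product property (Remark \ref{rems1}(e)): writing $\spec A=\A_{B}^{\le d}\times_{\A_{R_{\le n}}^{\le r}}\! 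S_{n}$ (the map to $\A_{R_{\le n}}^{\le r}$ given by $(g_{1},\dots,g_{r})$, against the zero section), one obtains $\grn(\spec A)=\grn(\A_{B}^{\le d})\times_{\s R_{n}^{\, r}}\spec k$, visibly the zero locus of $r(n{+}1)$ functions.
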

\begin{proof} This follows by combining the proposition and Proposition \ref{gr-prop}(iv).
\end{proof}

\begin{corollary}\label{gr-sm2} If $Z$ is a smooth (respectively, unramified, \'etale) $\mathfrak R$-scheme, then $\gra\lbe(Z)$ is a smooth (respectively, unramified, \'etale) $k$-scheme.
\end{corollary}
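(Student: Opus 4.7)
The plan is to deduce this immediately from Corollary \ref{gr-sm} applied to the structure morphism. Specifically, let $f\colon Z\to S_{n}$ denote the structural morphism of $Z$ as an $S_{n}$-scheme. By hypothesis, $f$ is smooth (respectively, unramified, \'etale). Then Corollary \ref{gr-sm} tells us that $\grn(f)\colon\grn(Z)\to\grn(S_{n})$ is smooth (respectively, unramified, \'etale) as a morphism of $k$-schemes.

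Next, I would invoke the identity \eqref{grnsn}, namely $\grn(S_{n})=\spec k$, which shows that $\grn(f)$ is precisely the structural morphism of $\grn(Z)$ as a $k$-scheme. Thus $\grn(Z)$ is a smooth (respectively, unramified, \'etale) $k$-scheme, as desired.

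There is no real obstacle here; the statement is a formal consequence of Corollary \ref{gr-sm} together with the normalization \eqref{grnsn}. The only point worth double-checking is that \eqref{grnsn} genuinely identifies $\grn(f)$ with the structure morphism to $\spec k$, which is clear from the functoriality of $\grn$ and the uniqueness of morphisms to $\spec k$.
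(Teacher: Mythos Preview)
Your proposal is correct and follows exactly the same approach as the paper: apply Corollary~\ref{gr-sm} to the structural morphism $Z\to S_n$ and use \eqref{grnsn} to identify $\grn(S_n)$ with $\spec k$.
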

\begin{proof} This is immediate from the previous corollary using \eqref{grnsn}.
\end{proof}

\begin{corollary}\label{fflet}  Let $Z$ be a smooth $\mathfrak R$-scheme and let $\mathfrak R\to \mathfrak R^{\e\prime}$ be a surjective homomorphism of artinian local rings. Then the change of rings morphism \eqref{tr0}
\[
\varrho_{\lbe Z}^{\mathfrak R,\e\mathfrak R^{\prime}}\colon\mr{Gr}^{\mathfrak R}\lbe(Z\le)\to\mr{Gr}^{ \mathfrak R^{\prime}}\!(Z^{\e\prime}\le)
\]
is faithfully flat.
\end{corollary}
\begin{proof}\footnote{The proof in the previous version  was incomplete. We thank I. Vanni for calling our attention to this inaccuracy.} By Corollary \ref{rqc}, Proposition \ref{sm-surj} and Corollary \ref{gr-sm2}, $\varrho_{\lbe Z}^{\mathfrak R,\e\mathfrak R^{\prime}}$
is a quasi-compact and surjective morphism of smooth $k$-schemes. For proving flatness, we may work locally on $Z$ and assume that $Z$ is \'etale over an affine space $\A^{\be n}_{\mathfrak R}$. By Proposition \ref{f-et}, it suffices to consider the case $Z=\A^{\be n}_{\mathfrak R}$. The latter scheme can be endowed with the usual additive group scheme structure, whence the change of rings morphism is a morphism of $k$-group schemes (see the lines below diagram \eqref{funct}). We can now apply Lemma \ref{flat1} to complete the proof. 
\end{proof}

\begin{proposition}\label{gr-projlim} Let $\Lambda$ be a directed set and let $(Z_{\lambda})_{\lambda\e\in\e\Lambda}$ be a projective system of $\mathfrak R$-schemes with affine transition morphisms. Then $(\gra(Z_{\lambda}))_{\lambda\in\Lambda}$ is a projective system of $k$-schemes with affine transition morphisms and there exists a canonical isomorphism of $k$-schemes
\[
\gra\be\big(\varprojlim Z_{\lambda}\big)=\varprojlim\gra\lbe(Z_{\lambda}).
\]
\end{proposition}
\begin{proof} By Subsection \ref{not}, $\varprojlim Z_{\lambda}$ exists in $(\e\mathrm{Sch}/\mathfrak R)$ and therefore $\gra\be\big(\varprojlim Z_{\lambda}\big)$ is defined. The first assertion is clear from Proposition \ref{gr-prop}(vi), and the second follows from \eqref{adj}, \eqref{plim} and Yoneda's lemma, as in the proof of Proposition-Definition \ref{pr-def1}. 
\end{proof}

\begin{examples}\label{non-prop2} The functor $\gra$ fails to preserve some properties of morphisms such as those described below.  
\begin{enumerate}
\item[(a)] If $f$ is a proper morphism of $\mathfrak R$-schemes, then $\gra(f\le)$ may fail to be a proper morphism of $k$-schemes (if $n>1$). See \cite[Example A.5.6]{cgp} for an equal characteristic example with $\mathfrak R=k[\lle t\lle]/(t^{n})$ (in which case $\mr{Gr}^{\mathfrak R}$ is the Weil restriction funtor by Remark \ref{rems1}(c)).
\item[(b)] Flat morphisms are not, in general, preserved by $\gra$. Indeed, let $k$ be a field of positive characteristic $p\neq 2$ and let $\mathfrak R=k[\lle t\lle]/(t^{\le 2})$. Then $\mr{Gr}_{2}^{R}=\Re_{\e\mathfrak R/k}$ by  \eqref{eqq}. Now consider the free $\mathfrak R[x]$-module $\mathfrak R[x,y]/(y^{2}-t\le x)$ and let $\varphi\colon \mathfrak R[x]\to \mathfrak R[x,y]/(y^{2}-t\le x) $ be the canonical inclusion. Then $f=\spec(\varphi)$ is a flat morphism of affine $\mathfrak R$-schemes. On the other hand, the morphism of $k$-schemes $\mr{Gr}_{2}^{R}(\lle f\le)=\Re_{\e\mathfrak R/k}(\lle f\le)$  is the morphism associated to the homomorphism of $k$-algebras
\[
A=k[x_{0},x_{1}]\to B=k[x_{0},x_{1},y_{0},y_{1}]/(y_{0}^{2}, x_{0}-2y_{0}y_{1})
\]
which is induced by the inclusion $k[x_{0},x_{1}]\to k[x_{0},x_{1},y_{0},y_{1}]$, whence $\Re_{\e\mathfrak R/k}(\lle f\le)$ is not flat. In effect, since $x_{0}y_{0}=2y_{0}^{2}y_{1}=0$ in $B$, the $A$-regular element $x_{0}\in A$ is a zero divisor in $B$ and therefore is not $B$-regular.  See \cite[(1.0), p.~12, and (3.F), p.~21]{mat}.
\item[(c)] If $f$ is a finite morphism of $\mathfrak R$-schemes, then $\gra(f\le)$ may fail to be a finite morphism of $k$-schemes. See  \eqref{alp}.
\end{enumerate}
\end{examples}

\section{Greenberg's structure theorem}\label{gstr}
In this Section we  establish a generalized version of the main theorem of \cite{gre2}, showing in particular that the original result in \cite{gre2} is unaffected by the various changes allowed by the author to the structural sheaves of the schemes that intervene in his arguments (see Remark \ref{gr-max}). We keep the notation of  Section \ref{s-cr}. In particular, $\mathfrak R$ is an artinian local ring with maximal ideal $\mathfrak M$ and residue field $k$ which is either a finite $k$-algebra, where $k$ is arbitrary, or a finite $W_{\be m}(k)$-algebra, where $k$ is perfect of  characteristic $p>0$ and $m>1$.

\smallskip

We will consider the following cases:
\begin{itemize}
\item[(i)] $\mathfrak R$ is a  $k$-algebra and $\mathfrak I$ is an ideal of $\mathfrak R$ such that $\mathfrak I\le\mathfrak M=0$, or
\item[(ii)] $\mathfrak R$ is a finite $W(k)$-algebra of characteristic $p^{\le m}$, where $m>1$, and $\mathfrak I$ is a minimal ideal of $\mathfrak R$.
Note that $\mathfrak R$ is also a finite $W_{\be m}(k)$-algebra.
\end{itemize}

Note that $\mathfrak I\e\mathfrak M=0$ in either case. In particular, since $\mathfrak I\subset\mathfrak M$, we have $\mathfrak I^{\le 2}=0$. As before, we will write $\mathfrak R^{\le\prime}=\mathfrak R\le/\le\mathfrak I$. In case (i), let $t$ denote ${\rm dim}_{\le k}\le\mathfrak I$. In case (ii), let $t$ be the unique non-negative integer such that $\overbar{{\s I}}\simeq {}^{p^{\le t}}\be\mathbb O_{k}$ as $\mathbb O_{k}$-module schemes (see Proposition {\rm \ref{gwist}}). For every $\mathfrak R$-scheme $X$, consider the quasi-coherent $\s O_{\! X_{\rm s}}$-module
\begin{equation}\label{ex}
\s E_{\lbe X_{\lbe\rm s}\lbe/\lle k}=
\begin{cases}
\displaystyle\bigoplus_{i=1}^{t}\Omega_{X_{\mr s}/k}^{1} \qquad\text{\hskip .04cm in case {\rm (i)}}\\
\left(\! F^{\e p^{\lle t}}_{\!\be X_{\mr s}}\lbe\right)^{\!\be *}\! \Omega_{X_{\mr s}/k}^{1}\quad\text{in case {\rm (ii)},}
\end{cases}
\end{equation}
where, in case (ii), $F_{\! X_{\mr s}}$ denotes the absolute Frobenius endomorphism of $X_{\mr s}$.  Note that, if $X_{\rm s}=\spec A$ is an affine $k$-scheme (where $A$ is a $k$-algebra), then the sheaf $\left(\! F^{\e p^{\lle t}}_{\!\be X_{\mr s}}\lbe\right)^{\!\be *}\! \Omega_{X_{\mr s}/k}^{1}$ in case (ii) corresponds to the $A$-module $(\Omega_{A/k}^{1})^{\le( ^{\le nt}{\!\be A})}$ discussed in \cite[\S 4]{bga}.

\begin{remark}\label{val}  Let $R$ be a discrete valuation ring with residue field $k$ and let $n>0$ be an integer. Consider the following cases: (a) $R$ is an equal characteristic ring, (b) $R$ has unequal characteristics $(0,p)$ and $n\leq \ari=v(\e p\lle)$ and (c) $R$ has unequal characteristics and $n> \ari$. Then $(\mathfrak R, \mathfrak I\e)=(R_{\le n},M_{\lbe n}^{n-1})$ is a valid choice in case (i) if either (a) or (b) holds, and in case (ii) if (c) holds. Note that, in cases (a) and (b) (respectively, case (c)) we have $t=1$ by \eqref{vid} (respectively, $t=m-1$, where $m$ is given by \eqref{m},
as noted in Remark \ref{twist3}). Therefore \eqref{ex} is given by
\begin{equation*} 
\s E_{\lbe X_{\lbe\rm s}\lbe/\lle k}=
\begin{cases}
\displaystyle \hskip .2cm\Omega_{X_{\mr s}/k}^{1} \qquad\text{\hskip 1cm if  ${\rm char} \e R={\rm char} \e k$}\\
\hskip .2cm\Omega_{X_{\mr s}/k}^{1} \qquad\text{\hskip 1cm if  ${\rm char} \e R\neq p= {\rm char} \e k$ and $n\leq \ari=v(p)$}\\
\left(\! F^{\e p^{\le m-1}}_{\!\be X_{\mr s}}\lbe\right)^{\!\be *}\! \Omega_{X_{\mr s}/k}^{1}\quad\text{if ${\rm char}\e R\neq p= {\rm char} \e k$ and $n> \ari=v(p)$.}
\end{cases}
\end{equation*}
\end{remark}

\medskip

Now let the following data be given: a $k$-scheme $Y$, an $\mathfrak R$-scheme $X$ and a $k$-morphism $u^{\e\prime}\lbe\colon\lbe Y\!\lbe\to\be \mr{Gr}^{\mathfrak R^{\le\prime}}\!(X^{\lle\prime}\le)$, where $X^{\prime}=X\be\times_{\mathfrak R}\,\spec \mathfrak R^{\le\prime}$. Note that $Y$ is an $X_{\mr s}$-scheme via the $k$-morphism $\est\colon Y\to X_{\mr s}$ which is defined by the commutativity of the diagram
\begin{equation}\label{dfe}
\xymatrix{
Y\ar[rr]^(0.43){u^{\le\prime}}\ar[drr]_(.45){\est}  && \mr{Gr}^{\le \mathfrak R^{\le\prime}}\!(X^{\prime}\le) \ar[d]^(.4){\varrho_{\lbe X^{\lbe\prime}}^{\le\mathfrak R^{\prime}\!\be,\le k}}\\
&& X_{\mr s},
 }
\end{equation} 
where $\varrho_{\lbe X^{\lbe\prime}}^{\le\mathfrak R^{\prime}\!\lbe,\e k}$ is the change of rings morphism \eqref{tr0}. Next, consider the Zariski sheaf of abelian groups on $Y$
\begin{equation}\label{hh}
\s H_{\est}={\s H\!om}_{\cO_{\lbe Y}}\!\big(\est^{\lbe *}\lle\Omega_{X_{\mr s}\lbe/k}^{1}\le, \overbar{\s I}\be(\be\cO_{Y}\be)\big).
\end{equation} 
Then, by \cite[4.4.7.1 p. 102]{ega1}, for every open subset $U$ of $Y$ we have
\begin{equation*} 
\s H_{\est}(U)=\mr{Hom}_{\e\cO_{\lbe U}}\be\lbe\big((\est\!\be\mid_{\le U})^{\lbe *}\lle\Omega_{X_{\mr s}\lbe/k}^{1}\le, \overbar{\s I}\be(\be\cO_{\lbe U}\be)\big)=\mr{Hom}_{\e\s O_{\be X_{\lbe\rm s}}}\!\big(\Omega_{X_{\mr s}\lbe/k}^{1}\le, (\est\!\be\mid_{\le U})_{*}\lle\overbar{\s I}\be(\be\cO_{\lbe U}\be)\big).
\end{equation*} 

\begin{proposition}\label{sth} Let $\mathfrak R$ be as in {\rm (i)} or {\rm (ii)} above, let $X$ be an $\mathfrak R$-scheme and let $Y$ be a $\mr{Gr}^{\le\mathfrak R^{\le\prime}}\!(\lbe X^{\prime}\le)$-scheme. Then there exists an isomorphism of Zariski sheaves of abelian groups on $Y$
\[
\s H_{\est}\simeq  \mathbb V\!\be\left(\lbe\s E_{\lbe X_{\lbe\rm s}\lbe/\lle k}\right)\times_{\sigma,\, X_{\mr s}\e,\,\varrho_{\lbe X^{\lbe\prime}}^{\le\mathfrak R^{\prime}\!\be,\le k} }\mr{Gr}^{\le\mathfrak R^{\le\prime}}\!(\lbe X^{\prime}\le),
\]
where  $\s H_{\est}$ and $\s E_{\lbe X_{\lbe\rm s}\lbe/\lle k}$ are given by \eqref{hh} and \eqref{ex}, respectively,  and $\sigma\colon \mathbb V\!\be\left(\lbe\s E_{\lbe X_{\lbe\rm s}\lbe/\lle k}\right)\to X_{\mr s}$ is the canonical structural morphism.
\end{proposition}
\begin{proof}  Note that every open subscheme $U$ of $Y$ is a $\mr{Gr}^{\le\mathfrak R^{\le\prime}}\!(\lbe X^{\prime}\le)$-scheme and $\mathbb V\!\be\left(\lbe\s E_{\lbe X_{\lbe\rm s}\lbe/\lle k}\right)\times_{ X_{\mr s} }\mr{Gr}^{\le\mathfrak R^{\le\prime}}\!(\lbe X^{\prime}\le)$ is the Zariski sheaf on $Y$ whose sections on $U$ are the $\mr{Gr}^{\le\mathfrak R^{\le\prime}}\!(\lbe X^{\prime}\le)$-morphisms $U\to \mathbb V\!\be\left(\lbe\s E_{\lbe X_{\lbe\rm s}\lbe/\lle k}\right)\times_{  X_{\mr s} }\mr{Gr}^{\le\mathfrak R^{\le\prime}}\!(\lbe X^{\prime}\le)$.  In case (i), i.e., $\mathfrak R$ is a $k$-algebra, $\mathfrak I$ is an ideal of $\mathfrak R$ such that $\mathfrak I\le\mathfrak M=0$ and $t={\rm dim}_{\le k}\le\mathfrak I$, the choice of an isomorphism of $k$-modules $\mathfrak I\simeq k^{\e t}$ determines an isomorphism of $\mathbb O_{k}$-module schemes $\s I=\overbar{\s I}\simeq \mathbb O_{k}^{\e t}$ so that $\overbar{\s I}\be(\be\cO_{\lbe Y}\be)\simeq \oplus_{\e i=1}^{\e t}\cO_{\lbe Y}$. Thus, by \eqref{hh}and \eqref{vb2}, for every open subset $U$ of $Y$ we have
\[
\begin{array}{rcl}
\s H_{\est}(U)&\simeq&\mr{Hom}_{\e\s O_{\be X_{\lbe\rm s}}}\!\lbe\big(\Omega_{X_{\mr s}\lbe/k}^{1}\le, (\est \!\be\mid_{\le U}\lbe)_{*}\lle\cO_{\lbe U}\lbe\big)^{\be t}\simeq
\mr{Hom}_{\e X_{\lbe\rm s}} \lbe\big(U, \mathbb{V}( \textstyle\bigoplus_{i=1}^{t}\!\Omega_{X_{\mr s}\lbe/k}^{1})\big) \\
&=&\mr{Hom}_{\e X_{\lbe\rm s}} \lbe\big(U, \mathbb V\lbe(\s E_{\lbe X_{\lbe\rm s}\lbe/\lle k}\le) )\simeq \mr{Hom}_{\e\mr{Gr}^{\le\mathfrak R^{\le\prime}}\!(\lbe X^{\prime}\le)} \lbe\big(U, \mathbb V\lbe(\s E_{\lbe X_{\lbe\rm s}\lbe/\lle k}\le)\times_{X_{\mr s}}\mr{Gr}^{\le\mathfrak R^{\le\prime}}\!(\lbe X^{\prime}\le)).
\end{array}
\]
In case (ii), the isomorphism of $\mathbb O_{k}$-modules $\overbar{\s I}\simeq {}^{p^{\le t}}\lbe\lbe\mathbb O_k$ of Proposition \ref{gwist} yields an isomorphism of Zariski sheaves $\overbar{\s I}\be(\be\cO_{\lbe U}\be)\simeq {}^{p^{\lle t}}\!\be\cO_{\lbe U}$ for every open subset $U$ of $Y$. Thus, by \cite[(4.12) and Caveat 4.14]{bga}, we have  
\begin{eqnarray*}
\s H_{\est}(U)&\simeq&\mr{Hom}_{\le\le\cO_{\lbe U}}\be\lbe\big((\est\!\be\mid_{\le U})^{\lbe *}\Omega_{X_{\mr s}\lbe/k}^{1}\le, {}^{p^{\lle t}}\!\be\cO_{\lbe U}\big)\simeq\mr{Hom}_{\le\cO_{\lbe U}}\!\!\left(\left(\! F^{\e p^{\lle t}}_{\be U}\lbe\right)^{\!\be *}\!\be (\est \!\be\mid_{\le U}\lbe)^{\lbe *}\le\Omega_{X_{\mr s}\lbe/k}^{1}\le, \cO_{\lbe U}\be\right)\\
&\simeq&\mr{Hom}_{\le\le\cO_{\lbe U}}\!\!\left((\est \!\be\mid_{\le U}\lbe)^{\lbe *}\!\!\left(\! F^{\e p^{\lle t}}_{\!\be X_{\mr s}}\lbe\right)^{\!\be *}\!\Omega_{X_{\mr s}\lbe/k}^{1}\le, \cO_{\lbe U}\be\right)=
\mr{Hom}_{\e\cO_{\be X_{\mr s}}}\!\!\left(\s E_{\lbe X_{\lbe\rm s}\lbe/\lle k}, (\est\!\be\mid_{\le U}\lbe)_{*}\cO_{\lbe U}\be\right)\\
&\simeq &\mr{Hom}_{\e X_{\lbe\rm s}} \lbe\big(U, \mathbb V\lbe(\s E_{\lbe X_{\lbe\rm s}\lbe/\lle k}\le) )  \simeq \mr{Hom}_{\mr{Gr}^{\le\mathfrak R^{\le\prime}}\!(\lbe X^{\prime}\le)} \lbe\big(U, \mathbb V\lbe(\s E_{\lbe X_{\lbe\rm s}\lbe/\lle k}\le)\times_{X_{\mr s}}\mr{Gr}^{\le\mathfrak R^{\le\prime}}\!(\lbe X^{\prime}\le)). 
\end{eqnarray*}
\end{proof}

\begin{corollary}\label{sth1}  Let $R$ be a discrete valuation ring with residue field $k$ and let $n\geq 1$ be an integer. If $X$ is an $R_{\le n }$-scheme and $Y$ is a $\mr{Gr}_{\be\le n-1}^{R} (X)$-scheme, let $\s H_{a}={\s H\!om}_{\cO_{\lbe Y}}\be\!\left(\lbe\est^{\lbe *}\lle\Omega_{X_{\mr s}\lbe/k}^{1}\le, \overbar{\s M_{\lbe n}^{\lle n-1}}(\be\cO_{Y}\be)\right)$, where $a$ is defined by the commutativity of diagram \eqref{dfe}. Then there exists a canonical isomorphism of Zariski sheaves of abelian groups on $Y$
\[
\s H_{a}\simeq 
\mathbb V\lbe \left( \s E_{\lbe X_{\lbe\rm s}\lbe/\lle k}\right)\times_{X_{\mr s} }\mr{Gr}_{\be\le n-1}^{R} (X) ,
\]
where $\s E_{\lbe X_{\lbe\rm s}\lbe/\lle k}=\Omega_{X_{\mr s}/k}^{1}$ if ${\rm char} \e R={\rm char} \e k$ or ${\rm char}\e R=0\neq p={\rm char}\e k$ and $n\leq \ari=v(p)$, and $\s E_{\lbe X_{\lbe\rm s}\lbe/\lle k}=\left(F^{\e p^{\lle m-1 }}_{\!\be X_{\mr s}}\right)^{\!\be *}  \Omega_{X_{\mr s}/k}^{1}$ if ${\rm char}\e R=0\neq p={\rm char}\e k$ and $n>\ari$ with $m=\lceil n/\ari\e\rceil$.
\end{corollary}
\begin{proof} This is immediate from the proposition and Remark \ref{val}.
\end{proof}

We now consider the following extension problem: find a morphism $u\colon Y\to \gra\lbe(X)$ such that the following diagram commutes
\begin{equation}\label{4d}
\xymatrix{
Y\ar@{-->}[rr]^(0.45){u}\ar[drr]_(.4){u^{\le\prime}}  && \mr{Gr}^{\le \mathfrak R}\lbe(X) \ar[d]^(.4){\varrho_{X}^{\le\mathfrak R,\mathfrak R^{\prime}}} \\
&& \mr{Gr}^{\le\mathfrak R^{\le\prime}}\!(X^{\prime}),
 }
\end{equation}
where $\varrho_{X}^{\le\mathfrak R,\le\mathfrak R^{\prime}}$ is the change of rings morphism \eqref{tr0}. By Proposition \ref{rdad}, the preceding problem is equivalent to that of finding an $\mathfrak R$-morphism $f\colon h^{\lbe \mathfrak R}\lbe(Y)\to X$ such that the following diagram commutes
\begin{equation}\label{dlft}
\xymatrix{h^{\lbe \mathfrak R}\lbe(Y)\ar[rrr]^(.5){f} &&& X\\
h^{\mathfrak R^{\prime}}\!(Y)\ar[u]^(.48){\delta_{\lbe Y}^{\le\mathfrak R,\mathfrak R^{\prime}}}\ar[rrr]^(.53){\varphi_{Y,\le X^{\lbe\prime}}^{\le\mathfrak R^{\prime}}\lbe(u^{\le\prime}\le)}&&& X^{\prime}\ar[u]_(.45){\mr{pr}_{\lbe X}}.
}
\end{equation}
Indeed, if such an $f$ exists, then $u=\psi_{Y,X}^{\e\mathfrak R}\lbe(\le f\le)$ solves the original problem, where $\psi_{Y,X}^{\e\mathfrak R}$ is the bijection \eqref{vpsi}. Note that both vertical maps in \eqref{dlft} are nilpotent immersions. Further, $\delta^{\le\mathfrak R,\mathfrak R^{\prime}}_{Y}$ has a square-zero ideal of definition.

\begin{remark} \label{warn} If $u$ in \eqref{4d}, and therefore $f$ in \eqref{dlft}, exist, then the following holds.
\begin{enumerate}
\item[(a)] By Remark \ref{vrk}(c) and the commutativity of diagrams \eqref{dfe} and \eqref{4d}, the diagram
\[
\xymatrix{
Y\ar[rr]^(0.43){u}\ar[drr]_(.45){\est}  && \mr{Gr}^{\le \mathfrak R}\be(X\le) \ar[d]^(.4){\varrho_{X}^{\le\mathfrak R,\le k}}\\
&& X_{\mr s}\,
 }
\]
commutes. In other words, $u$ is a lifting of $\est$ to $\mr{Gr}^{\le \mathfrak R}\be(X\le)$. 
\item[(b)]  We claim that, if $\iota_{\le Y}$ is the map \eqref{io} and $\est$ is defined by the commutativity of diagram \eqref{dfe}, then the following diagram commutes
\[
\xymatrix{Y\ar[rr]^(0.43){\iota_{\le Y}}\ar[drr]_(.45){\est}  && \hra(Y)_{\mr s} \ar[d]^(.4){f_{\mr s}}\\
&& X_{\mr s}.
}
\] 
Indeed, for every $\mathfrak R$-scheme $Z$, let $\iota_{\e\mr s,\e Z}$ denote the nilpotent immersion $Z_{\mr s}\to Z$. Then, by (a), the commutativity of diagram \eqref{dlft} for $\mathfrak R^{\prime}=k$ and the commutativity of \eqref{vrk-d}, we have the following equalities of morphisms $Y\to X$
\[
\iota_{\e\mr s,\e X}\circ \est= f\circ \delta^{\e\mathfrak R,k}_{Y}=f\circ \iota_{\e\mr s,\e \hra(Y)}\circ \iota_{Y}=\iota_{\e\mr  s,\e X}\circ f_{\e\mr s}\circ   \iota_{Y}.
\] 
Since $\iota_{\mr s,X}$ is a nilpotent immersion, we conclude from the above that $\est=f_{\e\mr s}\circ\iota_{\e Y}$, as claimed.
\end{enumerate}
\end{remark}

Now let $\s P(\lbe u^{\le\prime}\le)$ be the following Zariski sheaf of sets on $Y$: for every open subset $U\subseteq Y$, let $\s P(u^{\le\prime})(U)$ be the set of $k$-morphisms
$v\colon U\to\gra\lbe(X)$ (if any exist) such that the diagram
\[
\xymatrix{
U\ar@{-->}[rr]^(0.45){v}\ar[drr]_(.4){u^{\le\prime}\mid_{\le U}}  && \mr{Gr}^{\le \mathfrak R}\lbe(X) \ar[d]^(.4){\varrho_{X}^{\le\mathfrak R,\mathfrak R^{\prime}}} \\
&& \mr{Gr}^{\le\mathfrak R^{\le\prime}}\!(X^{\prime})
}
\]
commutes. Then, by \eqref{adj} and Proposition \ref{rdad}, $\s P(u^{\le\prime})(U)$ is in bijection with the set of $\mathfrak R$-morphisms $f_{U}\colon \hra\lle(U)\to X$ (if any exist) such that the following diagram commutes
\[
\xymatrix{h^{\lbe \mathfrak R}\lbe(U)\ar[rrr]^(.5){f_{U}} &&& X\\
h^{\mathfrak R^{\prime}}\!(U)\ar[u]^(.48){\delta_{\lbe U}^{\le\mathfrak R,\mathfrak R^{\prime}}} \ar[rrr]^(.53){ \varphi_{Y,\le X^{\lbe\prime}}^{\le\mathfrak R^{\prime}}\lbe(u^{\le\prime}\le)\mid_{\e h^{\lbe 
\text{\scalebox{0.7}{$\mathfrak{R}^{\prime}$}} }\!\be(U)} }&&& X^{\prime}\ar[u]_(.45){\mr{pr}_{\lbe X}}.
}
\]
Clearly, the existence of diagrams \eqref{4d} and \eqref{dlft} is equivalent to the non-emptyness of $\s P\lbe(\lbe u^{\le\prime}\le)(Y\le)\,$.

\begin{lemma} 
For every (respectively, every smooth) $\mathfrak R$-scheme $X$, the Zariski sheaf $\s P\lbe(\lbe u^{\le\prime}\le)$ defined above is a formally principal homogeneous (respectively, principal homogeneous) sheaf on $|Y|$ under the abelian sheaf $\s H_{\est}={\s H\!om}_{\cO_{Y}}\!\big(\est^{\lbe *}\lle\Omega_{X_{\mr s}\lbe/k}^{1}\le, \overbar{\s I}\be(\be\cO_{Y}\be)\big)$, where $a$ is defined by the commutativity of diagram \eqref{dfe}.
\end{lemma}
\begin{proof}  From \cite[III, Proposition 5.1]{sga1} with $S=\spec \mathfrak R$ and $g_{0}=\mr{pr}_{X}\circ \varphi_{Y,\le X^{\lbe\prime}}^{\le\mathfrak R^{\prime}}\lbe(u^{\le\prime}\le)$, i.e., the following diagram commutes  
\[
\xymatrix{h^{\lbe \mathfrak R}\lbe(Y)\ar[rrr]^(.5){f} &&& X\\
h^{\mathfrak R^{\prime}}\!(Y)\ar[urrr]^{g_{\lle 0}}\ar[u]^(.48){\delta_{\lbe Y}^{\le\mathfrak R,\mathfrak R^{\prime}}}\ar[rrr]_(.53){\varphi_{Y,\le X^{\lbe\prime}}^{\le\mathfrak R^{\prime}}\lbe(u^{\le\prime}\le)}&&& X^{\prime}\ar[u]_(.45){\mr{pr}_{\lbe X}}
}
\]
\eqref{dlft}, we conclude that $\s P\lbe(\lbe u^{\le\prime}\le)$ is a formally principal homogeneous sheaf under the sheaf
${\s H\!om}_{\e\s R^{\prime}(\lbe\cO_{\lbe Y}\lbe)}\be \big(\e g_{0}^{\lbe *}\le\Omega_{X\lbe/\mathfrak R}^{1}\le, \overbar{\s I}\be(\be\cO_{\lbe Y}\be)\big)$.
Thus it suffices to check that the latter sheaf equals $\s H_{\est}={\s H\!om}_{\cO_{\lbe Y}}\!\big(\est^{\lbe *}\lle\Omega_{X_{\mr s}\lbe/k}^{1}\le, \overbar{\s I}\be(\be\cO_{Y}\be)\big)$. By definition of $g_{0}$, we have $g_{0}^{\lbe *}\e\Omega_{X\lbe/\mathfrak R}^{1}=(\varphi_{Y,\le X^{\lbe\prime}}^{\le\mathfrak R^{\prime}}\lbe(u^{\le\prime}\le) )^{*}\e\Omega_{X^{\prime}\be/\mathfrak R^{\prime}}^{1}$. Now, since $\mathfrak M\e\mathfrak I=0$, the $\s R^{\e\prime}(\cO_{\lbe Y})$-module structure on $\s I(\cO_{\lbe Y})$ induces an $\s R^{\e\prime}(\cO_{\lbe Y})/\overbarr{\s M^{\lle\prime}}\be(\be\s O_{Y}\be) =\cO_{\lbe Y}$-module structure on $\s I(\cO_{\lbe Y})$, where $\mathfrak M^{\prime}=\mathfrak M/\mathfrak I$ is the maximal ideal of $\mathfrak R^{\prime}$. Therefore
\[
{\s H\!om}_{\e\s R^{\le\prime}\lbe(\cO_{\lbe Y})}\be\big(\e g_{0}^{\lbe *}\e\Omega_{X\lbe/\mathfrak R}^{1}\le, \overbar{\s I}\be(\be\cO_{\lbe Y}\be)\big)=
{\s H\!om}_{\le\cO_{\lbe Y}}\!\big( 
(\varphi_{Y,\le X^{\lbe\prime}}^{\le\mathfrak R^{\prime}}\lbe(u^{\le\prime}\le) )^{*}\e\Omega_{X^{\prime}\be/\mathfrak R^{\prime}}^{1}\otimes_{\e\s R^{\le\prime}(\cO_{\lbe Y})}\cO_{\lbe Y} , \overbar{\s I}\be(\be\cO_{Y}\be)\big).
\] 
It remains to check that $(\varphi_{Y,\le X^{\lbe\prime}}^{\le\mathfrak R^{\prime}}\lbe(u^{\le\prime}\le) )^{*}\e\Omega_{X^{\prime}\be/\mathfrak R^{\prime}}^{1}\otimes_{\e\s R^{\le\prime}(\cO_{\lbe Y})}\cO_{\lbe Y}= \est^{*}\e\Omega_{X_{\mr s}\lbe/k}^{1}$.
This is immediate from Remark \ref{warn}(b) setting $\mathfrak R=\mathfrak R^{\prime}$ and $f=\varphi_{Y,\le X^{\lbe\prime}}^{\le\mathfrak R^{\prime}}\lbe(u^{\le\prime}\le)$ there, using the fact that $\s L\otimes_{\s R^{\prime}\lbe(\cO_{\lbe Y})}\cO_{\lbe Y}=(\iota_{\le\mr s,\le h^{\mathfrak R^{\prime}}(Y)}\circ \iota_{Y})^{*}\lbe\s L$ for every sheaf of $\s R^{\e\prime}(\cO_{\lbe Y})$-modules $\s L$ together with the equality $f^{\e\prime}\circ  \iota_{\le\mr s,\le h^{\mathfrak R^{\prime}}\be(Y)}=\iota_{\le\mr s,\le X}\circ f_{\mr s}^{\e\prime}$ for arbitrary morphisms $f^{\e\prime}\colon h^{\mathfrak R^{\prime}}(Y)\to X^{\prime}$.

Finally, assume that $ X$ is smooth over $\mathfrak R$. If $Y$ is affine, then $\s P\lbe(\lbe u^{\le\prime}\le)$ has global sections by the lifting property \cite[\S 2.2, Proposition 6, p. 37]{blr}. In general, $\s P\lbe(\lbe u^{\le\prime}\le)$ has non-empty fibers and is therefore a principal homogeneous sheaf under $\s H_{\est}$.
\end{proof}

\begin{remarks} 
\indent
\begin{enumerate}
\item[(a)]  In the terminology of \cite[III,  1.1.5 p.~107 and 1.4.1, p.~117]{gi}, the previous lemma states that $\s P\lbe(\lbe u^{\le\prime}\le)$ is a pseudo-torsor (respectively, torsor) under $\s H_{\est}$ on the Zariski topos, i.e., the category of  sheaves of sets on the small Zariski site of $Y$. Note also that the smoothness of $X$ guarantees the nonemptiness of the fibers of $\s P\lbe(\lbe u^{\le\prime}\le)$, whence condition \cite[III, 1.4.1(a)]{gi} does hold by \cite[III, 1.4.1.1]{gi}.
\item[(b)] By \eqref{bc}, the global sections of $\s P\lbe(\lbe u^{\le\prime}\le)$ over $Y$ correspond to the set-theoretic sections of the projection  $\mr{pr}_{Y}\colon Y\times_{u^{\prime}\lbe,\e \mr{Gr}^{\mathfrak R^{\le\prime}}\!(X^{\prime}),\e \varrho_{X}^{\le\mathfrak R,\le\mathfrak R^{\prime}} }\gra(X)\to Y$.
\item[(c)]  By definition of the term {\it formally principal homogeneous sheaf} (= pseudo-torsor for the Zariski topos), there exists an isomorphism of Zariski sheaves on $Y$
\[
\s H_{\est} \times \s P\lbe(\lbe u^{\le\prime}\le) \overset{\!\sim }{\to} \s P\lbe(\lbe u^{\le\prime}\le) \times  \s P\lbe(\lbe u^{\le\prime}\le).
\]
Now global sections of the sheaf $\s P\lbe(\lbe u^{\le\prime}\le) \times  \s P\lbe(\lbe u^{\le\prime}\le)$ are pairs $(u_{1},u_{2})$ of morphisms $Y\to \gra(X)$ which lift $u^{\le\prime}$. On the other hand, by Proposition \ref{sth}, the global sections of the sheaf $\s H_{\est} \times \s P\lbe(\lbe u^{\le\prime}\le)$ correspond to pairs $(x,u)$ where $u\colon Y\to \gra(X)$ is a lifting of $u^{\le\prime}$ (and thus of $a$) and  $x \colon Y\to\mathbb V\lbe(\s E_{\lbe X_{\lbe\rm s}\lbe/\lle k})$ is a morphism whose composition with the canonical morphism  $\mathbb V\lbe(\s E_{\lbe X_{\lbe\rm s}\lbe/\lle k})\to X_{\lbe\rm s}$ is $a$. Thus we obtain a bijection of fiber products of sets
\begin{equation*}
\mathbb V\lbe(\s E_{\lbe X_{\lbe\rm s}\lbe/\lle k})(Y)\times_{\{ \est\}}\! \gra(X)(Y) \overset{\!\sim }{\to}  \gra(X)(Y)\times_{\{u^{\le\prime}\}}\!\gra(X)(Y).
\end{equation*}
When $Y$ and $u^{\le\prime}$ vary, the latter bijections induce an isomorphism of $k$-schemes
\begin{equation}\label{qtrs}
\mathbb V\lbe(\s E_{\lbe X_{\lbe\rm s}\lbe/\lle k}) \times_{X_{\rm s}}\! \gra(X) \overset{\!\sim }{\to} \gra(X) \times_{ \mr{Gr}^{\mathfrak R^{\le\prime}}\be\!(X^{\prime}) }\!\gra(X).
\end{equation}
Consequently, if $y$ is a $k$-rational point of $\gra(X)$, then the fiber of $\varrho_{X}^{\le\mathfrak R,\le\mathfrak R^{\prime}}$ at $\varrho_{X}^{\le\mathfrak R,\le\mathfrak R^{\prime}}\!(\le y)$ is isomorphic to the fiber of $\sigma\colon \mathbb V\lbe(\s E_{\lbe X_{\lbe\rm s}\lbe/\lle k})\to X_{\rm s}$ at $\varrho_{X}^{\le\mathfrak R,\le k}(y)$. In effect, the base change of \eqref{qtrs} along $y\colon \spec k\to \gra(X)$ is an isomorphism from $\mathbb V\lbe(\s E_{\lbe X_{\lbe\rm s}\lbe/\lle k})\times_{X_{\rm s}}\spec k=\mathbb V\lbe(\s E_{\lbe X_{\lbe\rm s}\lbe/\lle k})_{\varrho_{X}^{\le\mathfrak R,\le k}\!(\le y)}$ to
$\gra(X)\times_{ \mr{Gr}^{\mathfrak R^{\le\prime}}\!(X^{\prime}) }\!\spec k=\gra(X)_{\varrho_{X}^{\le\mathfrak R,\le\mathfrak R^{\prime}}\!(\le y)}$.
\end{enumerate} 
\end{remarks}

\begin{theorem} 
Let $X$ be an arbitrary (respectively,  smooth) $\mathfrak R$-scheme. Then the $\mr{Gr}^{\le\mathfrak R^{\le\prime}}\!(X^{\prime})$-scheme $\mr{Gr}^{\le \mathfrak R}\lbe(X)$ with structural morphism $\varrho_{X}^{\le\mathfrak R,\le\mathfrak R^{\prime}}$ is a pseudo-torsor (respectively, torsor) under 
$\mathbb V\lbe(\s E_{\lbe X_{\lbe\rm s}\lbe/\lle k})\times_{\be X_{\mr s}}\! \mr{Gr}^{\mathfrak R^{\le\prime}}\!(X^{\prime})$ in the category of fppf sheaves of sets on $(\mr{Sch}/\mr{Gr}^{\mathfrak R^{\le\prime}}\!\lbe(X^{\prime}))$. 
\end{theorem}
\begin{proof}
(Compare \cite[Proposition 2, p.~262]{gre2}).  By \eqref{qtrs} and the identification
\[
\mathbb V\lbe(\s E_{\lbe X_{\lbe\rm s}\lbe/\lle k}) \times_{\be X_{\mr s} }\!  \gra(X)= (\mathbb V\lbe(\s E_{\lbe X_{\lbe\rm s}\lbe/\lle k})\times_{\be X_{\mr s} }\!  \mr{Gr}^{\mathfrak R^{\le\prime}}\!(X^{\prime}))\times_{ \mr{Gr}^{\mathfrak R^{\le\prime}}\!(X^{\prime}) } \gra(X),
\]  
$\gra(X)$ is, indeed, a pseudo-torsor \cite[III, Definition 1.1.5, p.~107]{gi}.
Assume now that $X$ is a smooth $\mathfrak R$-scheme. By definition of the term {\it fppf torsor} (cf. \cite[\S 6.4, p. 153]{blr}), it remains only to check that $\varrho_{X}^{\le\mathfrak R,\le\mathfrak R^{\prime}}\colon 
\gra(X) \to \mr{Gr}^{\mathfrak R^{\le\prime}}\!(X^{\prime})$ is an fppf morphism. This follows from Corollaries \eqref{gr-sm2} and \ref{fflet} together with \cite[Proposition 6.2.3(v), p.~298]{ega1}.
\end{proof}

\begin{example}\label{k2} Let $R$ be a discrete valuation ring and let $X$ be a smooth $R_{\le n}$-scheme. If $R$ is a ring of unequal characteristics $(0,\le p\lle)$ and  $n>\ari=v(\e p\lle)$, then the $\mr{Gr}_{\lbe n-1}^{R}(X)$-scheme $\grn(X)$ is an fppf torsor under $\mathbb V\!\lbe\lbe\left(\!\left(\! F^{\e p^{\lle m-1\lbe}}_{\!\be X_{\mr s}}\be\right)^{\!\be *}\! \Omega_{X_{\mr s}/k}^{1}\be\right)\times_{\be X_{\mr s}}\! \grn(X)$, where $m=\lceil n/\ari\e \rceil$. If $R$ has unequal characteristics and $n\leq \ari$, or if $R$ is an equal characteristic ring, then the $\mr{Gr}_{\lbe n-1}^{R}(X)$-scheme $\grn(X)$ is an fppf torsor under $\mathbb V\!\lbe\lbe\left( \Omega_{X_{\mr s}/k}^{1}\be\right)\times_{\be X_{\mr s}}\! \grn(X)$. See Corollary \ref{sth1}. 
\end{example}

\section{Weil restriction and the Greenberg functor}\label{wrbe}

In this Section we determine the behavior under Weil restriction of the Greenberg functor of truncated discrete valuation rings discussed in Section \ref{gberg}. See Theorem \ref{wr-gr} below.

Let $R$ be a complete discrete valuation ring with maximal ideal $\mm$ and residue field $k$ (assumed to be perfect when $R$ has unequal characteristics). We fix an integer $n\geq 1$ and recall $R_{\le n}=R/\mm^{n}$ and $S_{n}=\spec R_{\le n}$.

\begin{lemma}\label{hne1}
Let $R^{\e\prime}$ be a finite extension of $R$ of ramification index $e$ with associated residue field extension $k^{\e\prime}\be/k\subseteq \kbar/k$. Then, for every $k$-scheme $Y$, we have
\[
\hrn(Y)\times_{S_{n}}\!\be S^{\e\prime}_{\lbe ne}=h_{n e}^{R^{\le\prime}}\be\big(Y\!\be\times_{k}\be\spec
k^{\e\prime}\e\big)
\]

\end{lemma}
\begin{proof} Since $\hrn$ is local for the Zariski topology, we may assume that $Y=\spec A$ is affine, where $A$ is a $k$-algebra. By Lemma \ref{rne1},
\[
\begin{array}{rcl}
\hrn(Y)\times_{S_{n}}\!\be S^{\e\prime}_{\lbe ne}&=&\spec \s R_{n}(A)\times_{R_{\le n}}\! \spec  R_{\e ne}^{\e\prime}\\
&=&\spec\! \big(\s R_{\le
n}(A)\otimes_{R_{\le n}}\! R_{\le ne}^{\e\prime}\big) =\spec\!\big(\s R_{
ne}^{\e\prime}\be\big(A\otimes_{k} k^{\e\prime}\big)\big)\\
&=&
h_{n e}^{R^{\le\prime}}\be\big(Y\!\be\times_{k}\be\spec
k^{\e\prime}\e\big).
\end{array}
\]
\end{proof}

For the meaning of the term ``admissible" in the next two statements, see Definition \ref{adm}.

\begin{lemma} Let $R^{\e\prime}$ be a finite extension of $R$ of ramification index $e$ with associated residue field extension $k^{\e\prime}\be/k\subseteq \kbar/k$. If $Z$ is an $S_{\lbe ne}^{\e\prime}$-scheme which is admissible relative to $S_{\lbe ne}^{\e\prime}\to S_{n}$, then the $k^{\e\prime}$-scheme
$\mr{Gr}_{\lbe ne}^{R^{\le\prime}}\be(Z\le)$  is admissible 
relative to $k^{\e\prime}\be/k$.
\end{lemma}
\begin{proof} By Lemma \ref{adm1}, $Z\!\times_{S_{n\lbe e}^{\e\prime}}\! S_{1}^{\e\prime}$ is admissible relative to $k^{\e\prime}\be /k$. Thus, since
\[
\mr{Gr}_{\lbe ne}^{R^{\le\prime}}\be(Z\le)\to \mr{Gr}_{1}^{R^{\le\prime}}\!(Z\!\times_{S_{n\lbe e}^{\e\prime}}\! S_{1}^{\e\prime}\e)=Z\!\times_{S_{n\lbe e}^{\e\prime}}\! S_{1}^{\e\prime}
\]
is an affine morphism of $k^{\e\prime}$-schemes by Proposition \ref{aff}, $\mr{Gr}_{ne}^{R^{\le\prime}}(Z)$ is admissible relative to $k^{\e\prime}\be/k$ by Remark \ref{rems-adm}(d).
\end{proof}

We can now prove the main result of this section.

\begin{theorem}\label{wr-gr}
Let $R^{\e\prime}$ be a finite extension of $R$ of ramification index $e$ with associated residue field extension $k^{\e\prime}\be/k\subseteq \kbar/k$. If $Z$ is an $S_{\lbe ne}^{\e\prime}$-scheme which is admissible relative to $S_{\lbe ne}^{\e\prime}\to S_{n}$, then $\Re_{\e k^{\prime}\be/k}\big(\mr{Gr}_{ne}^{R^{\le\prime}}(Z\le)\big)$
and $\Re_{\le S_{\lbe ne}^{\le\prime}/S_{\le n}}\be(Z\le)$
exist and
\[
\Re_{\e k^{\prime}\be/k}\big(\mr{Gr}_{ne}^{R^{\le\prime}}(Z\le)\big)=\mr{Gr}_{n}^{R}\be\big(
\Re_{\le S_{\lbe ne}^{\le\prime}/S_{\le n}}\be(Z\le)\big).
\]
\end{theorem}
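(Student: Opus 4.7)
\medskip

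\noindent\textbf{Proof proposal.} The plan is to verify existence of both Weil restrictions first and then identify the two schemes by showing they represent the same functor on $(\mathrm{Sch}/k)$ via Yoneda's lemma.

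For existence: since $Z$ is admissible relative to the finite and locally free morphism $S_{ne-1}^{\le\prime}\to S_{\le n-1}$, Theorem \ref{wr-rep} yields the representability of $\Re_{S_{ne-1}^{\le\prime}/S_{\le n-1}}(Z)$. For the other Weil restriction, the preceding lemma shows that the $k^{\e\prime}$-scheme $\mr{Gr}_{ne-1}^{R^{\le\prime}}(Z)$ is admissible relative to $k^{\e\prime}/k$ (which is finite and locally free, cf.\ Remark \ref{gpts}(b)), so Theorem \ref{wr-rep} again yields the existence of $\Re_{k^{\e\prime}/k}\big(\mr{Gr}_{ne-1}^{R^{\le\prime}}(Z)\big)$.

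For the identification, I would compute $\Hom_{\le k}\!\big(Y,\Re_{k^{\e\prime}/k}(\mr{Gr}_{ne-1}^{R^{\le\prime}}(Z))\big)$ for an arbitrary $k$-scheme $Y$ by a chain of natural bijections: first apply the defining adjunction \eqref{wr} of Weil restriction along $\spec k^{\e\prime}\to\spec k$ to land in $\Hom_{k^{\le\prime}}\!\big(Y\times_{\spec k}\spec k^{\e\prime},\mr{Gr}_{ne-1}^{R^{\le\prime}}(Z)\big)$; then apply the Greenberg adjunction \eqref{adj} over $R^{\e\prime}$ to land in $\Hom_{S_{ne-1}^{\le\prime}}\!\big(h_{ne-1}^{R^{\le\prime}}(Y\times_{\spec k}\spec k^{\e\prime}),Z\big)$; then use Lemma \ref{hne1} to rewrite $h_{ne-1}^{R^{\le\prime}}(Y\times_{\spec k}\spec k^{\e\prime})=h_{n-1}^{R}(Y)\times_{S_{n-1}}S_{ne-1}^{\le\prime}$; then apply the defining adjunction \eqref{wr} of Weil restriction along $S_{ne-1}^{\le\prime}\to S_{n-1}$ to land in $\Hom_{S_{n-1}}\!\big(h_{n-1}^{R}(Y),\Re_{S_{ne-1}^{\le\prime}/S_{n-1}}(Z)\big)$; finally apply the Greenberg adjunction \eqref{adj} over $R$ to arrive at $\Hom_{\le k}\!\big(Y,\mr{Gr}_{n-1}^{R}(\Re_{S_{ne-1}^{\le\prime}/S_{n-1}}(Z))\big)$. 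Each of these bijections is functorial in $Y$, so the composite is a natural isomorphism of functors on $(\mathrm{Sch}/k)$, and Yoneda's lemma gives the claimed identification of schemes.

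The argument is essentially a formal concatenation of adjunctions, so the only real content lies in having the two existence results at hand and in the compatibility encoded by Lemma \ref{hne1}; the expected obstacle, or at least the subtle point, is confirming that one may legitimately invoke \eqref{wr} in the third step, which requires $\Re_{S_{ne-1}^{\le\prime}/S_{n-1}}(Z)$ to already be known to exist (hence the importance of establishing existence at the outset via the admissibility hypothesis on $Z$). All other steps are routine naturality checks.
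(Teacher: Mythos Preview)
Your proposal is correct and follows essentially the same approach as the paper: existence via Theorem~\ref{wr-rep} together with the preceding lemma, and the identification via the chain of adjunctions \eqref{wr} and \eqref{adj} combined with Lemma~\ref{hne1} and Yoneda. Your write-up is in fact more explicit than the paper's, which compresses the same steps into two sentences.
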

\begin{proof} The existence assertions follow from Theorem \ref{wr-rep} using the previous lemma. The formula of the theorem now follows from Lemma \ref{hne1} using the adjunction formula \eqref{adj}, the definition of the Weil restriction functor \eqref{wr} and Yoneda's lemma.\end{proof}

\begin{remark}\label{wgeq} The theorem is new only in the unequal characteristics case. For in the equal characteristic case the formula of the theorem is the well-known identity
\[
\Re_{\e k^{\prime}\be/k}\big(\Re_{R^{\le\prime}_{ne}/k^{\prime}}(Z\le)\big)=\Re_{R_{\le n}/k}\lbe\big(\Re_{R_{\le ne}^{\le\prime}/R_{\le n}}\be(Z\le)\big)
\]
which follows at once from \eqref{wrcomp}.
\end{remark}

\begin{corollary}\label{wr-gr1}
Let $Z$ be a quasi-projective  $R_{ne}^{\e\prime}$-scheme. Then $\Re_{\e k^{\prime}\be/k}\be\big(\mr{Gr}_{ne}^{R^{\le\prime}}\be(Z\le)\big)$
and $\Re_{\le R_{\lbe ne}^{\le\prime}/\lbe R_{\le n}}\be(Z\le)$ exist and
\[
\Re_{\e k^{\prime}\be/k}\be\big(\mr{Gr}_{ne}^{R^{\le\prime}}\be(Z\le)\big)=\mr{Gr}_{n}^{R}\be\big(\Re_{\le R_{\lbe ne}^{\le\prime}/\lbe R_{\le n}}\be(Z\le)\big).
\] 
\end{corollary}
\begin{proof} This follows from Proposition \ref{q-proj}, Remark \ref{rems-adm}(a) and the theorem.
\end{proof}

\begin{remark}
An application of the above corollary can be found in \cite[\S14]{cr}.
\end{remark}

\begin{proposition}\label{tot-gr}
Let $R^{\e\prime}$ be a finite and totally ramified extension of $R$ of degree $e$ and let $Z$ be an arbitrary $S_{\lbe ne}^{\e\prime}$-scheme. Then $\Re_{\le S_{ne}^{\le\prime}/S_{n}}\be(Z\le)$ exists and
\[
\mr{Gr}_{n}^{R}\be\big(
\Re_{\le S_{ne}^{\le\prime}/S_{n}}\be(Z\le)\big)=\mr{Gr}_{ne}^{R^{\le\prime}}\be(Z\le).
\]  
\end{proposition}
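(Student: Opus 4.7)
The plan is to deduce the proposition as a direct specialization of Theorem \ref{wr-gr}, where the admissibility hypothesis becomes automatic because of the totally ramified assumption.

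First, I observe that in the totally ramified case $R^{\e\prime}/R$ of degree $e$, Remark \ref{tram} tells us that the morphism $S_{ne-1}^{\e\prime}\to S_{n-1}$ is a finite, locally free universal homeomorphism. Consequently, by Remark \ref{rems-adm}(c) (or simply by the explicit observation in Remark \ref{tram}), every $S_{ne-1}^{\e\prime}$-scheme is admissible relative to $S_{ne-1}^{\e\prime}\to S_{n-1}$. In particular, the given $Z$ is admissible, so by Theorem \ref{wr-rep} (or equivalently Corollary \ref{wr-uh} applied to the universal homeomorphism $S_{ne-1}^{\e\prime}\to S_{n-1}$), the Weil restriction $\Re_{S_{ne-1}^{\le\prime}/S_{n-1}}(Z)$ exists.

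Second, since $R^{\e\prime}/R$ is totally ramified, the residue field extension is trivial, i.e., $k^{\e\prime}=k$. Thus the finite and locally free morphism $\spec k^{\e\prime}\to \spec k$ is the identity, and the associated Weil restriction functor $\Re_{k^{\e\prime}/k}$ is the identity endofunctor of $(\mathrm{Sch}/k)$. In particular $\Re_{k^{\e\prime}/k}\big(\mr{Gr}_{ne-1}^{R^{\le\prime}}(Z)\big)=\mr{Gr}_{ne-1}^{R^{\le\prime}}(Z)$.

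Combining these two observations, Theorem \ref{wr-gr} applies to $Z$ and yields
\[
\mr{Gr}_{ne-1}^{R^{\le\prime}}(Z) \;=\; \Re_{k^{\e\prime}/k}\big(\mr{Gr}_{ne-1}^{R^{\le\prime}}(Z)\big) \;=\; \mr{Gr}_{n-1}^{R}\be\big(\Re_{S_{ne-1}^{\le\prime}/S_{n-1}}\be(Z)\big),
\]
which is the required formula. There is no essential obstacle: the point of the proposition is precisely that the admissibility hypothesis present in Theorem \ref{wr-gr} is vacuous in the totally ramified setting, so the general formula holds for arbitrary $Z$. The only step that could conceivably require more care is the identification $\Re_{k^{\e\prime}/k}=\mathrm{Id}$, which however is immediate from the defining bijection \eqref{wr} once one notes that $T\times_{\spec k}\spec k^{\e\prime}=T$ for any $k$-scheme $T$ when $k^{\e\prime}=k$.
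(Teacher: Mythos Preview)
Your proof is correct. The paper's own proof is a minor variant: rather than invoking Theorem \ref{wr-gr} as a black box and specializing to $k^{\e\prime}=k$, it cites Remark \ref{tram} for existence and then repeats the short Yoneda argument directly from Lemma \ref{hne1}, \eqref{adj} and \eqref{wr} (exactly as in the proof of Theorem \ref{wr-gr}). Your route is equivalent and arguably tidier, since Theorem \ref{wr-gr} already packages that computation.
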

\begin{proof} The existence assertion is Remark \ref{tram}. The formula now follows from Lemma \ref{hne1} using \eqref{adj}, \eqref{wr} and Yoneda's lemma, as in the previous proof.
\end{proof}

The behavior of the Greenberg realization functor \eqref{grf2} under finite extensions of $R$ was discussed in \cite[Theorem 3.1]{ns} for $R_{n}$-schemes of finite type. Below we extend the indicated theorem to arbitrary $R_{n}$-schemes. We begin with the case of ramification index 1, where infinite extensions of $R$ are allowed.

\begin{proposition}\label{unr3}
Let $k^{\e\prime}\be/k$ be a subextension of $\e\kbar/k$ and let $R^{\e\prime}$ be the extension of $R$ of ramification index $1$ which corresponds to $k^{\e\prime}\be/k$. Then, for every $S_{n}$-scheme $Z$, there exists a canonical isomorphism of $k^{\e\prime}$-schemes
\[
\grn(Z\le)\be\times_{k}\lbe\spec
k^{\e\prime}=\mathrm{Gr}_{\lbe n}^{R^{\le\prime}}\!\big(Z\!\times_{S_{n}}\!
{S}^{\e\prime}_{n}\le\big).
\]
\end{proposition}
\begin{proof} Let $g_{n}\colon {S}^{\e\prime}_{\le n}\to S_{n}$ be the morphism induced by the canonical map $R_{\le n}\to R^{\,\prime}_{\le n}$. Note that $g_{0}$ is the morphism $\spec k^{\e\prime}\to \spec k$. For every $k^{\e\prime}$-scheme $T$, \eqref{bc}  and Lemma \ref{unr1} yield a canonical isomorphism of $S_{n}$-schemes
$h_{n}^{R^{\e\prime}}\!(T\e) =\hrn(T)$. The proposition
now follows from \eqref{bc} and \eqref{adj}.
\end{proof}

\begin{proposition}\label{b-c}
Let $R^{\e\prime}$ be a finite extension of $R$ of ramification index $e$ with associated residue field extension $k^{\e\prime}\be/k\subseteq \kbar/k$. Then, for every $S_{n}$-scheme $Z$, there exists a canonical closed immersion of $k^{\e\prime}$-schemes
\[
\mathrm{Gr}_{\lbe n}^{R}\be(Z\le)\times_{k}\spec
k^{\e\prime}\hookrightarrow \mathrm{Gr}_{\lbe ne}^{R^{\le\prime}}\be\big(Z\!\times_{S_{n}}\!S^{\e\prime}_{\le ne}\big)
\]
which is an isomorphism if $e=1$.
\end{proposition}
\begin{proof} The indicated map is an isomorphism if $e=1$ by Proposition \ref{unr3}. 
If $Z$ is of finite type over $S_{n}$, the proposition was established in \cite[Theorem 3.1]{ns}. The method used in [loc.cit.] easily extends to arbitrary $S_{n}$-schemes $Z$ provided the finite-dimensional affine space $\A^{\be N}_{R_{\le n}}$ considered in \cite[proof of Lemma 3.5, p.~1598]{ns} is replaced by the affine space $\A_{R_{\le n}}^{(I\e)}$ introduced in the proof of Proposition-Definition \ref{pr-def1}.
\end{proof}

\section{The kernel of the change of level morphism}\label{gp-sch}

Let $R$ be a complete discrete valuation ring with perfect residue field in the unequal characteristics case. In this Section we describe the kernel of the change of level morphism \eqref{clm0} when $Z$ is a smooth group scheme over $\mathfrak R=R_{\le n+j }$.
Recall that $\Rnr$ denotes the extension of $R$ of ramification index 1 which corresponds to $\kbar/k$ and $\Snnr=\spec \Rnnr$ for $n\geq 1$.

\begin{lemma}\label{coh=0} Let $n\geq 1$ be an integer and let $G$ be a smooth $\Snnr$-group scheme. Then $\HH^{1}_{\fppf}(\Rnnr,G\e)$ is a one-point set.
\end{lemma}
\begin{proof} Since $\Rnnr$ is a henselian local ring with residue field $\kbar$ and $G$ is smooth over $\Rnnr$, \cite[Theorem 11.7(2), p.~181, and Remark 11.8.3, p.~182]{dix} show that there exists a canonical bijection of pointed sets
\[
\HH^{1}_{\fppf}(\Rnnr,G)=\HH^{1}_{\et}\big(\e\kbar,G\be
\times_{S_{n}^{\nr}}\be\spec\kbar\,\big).
\]
Thus, since $\HH^{1}_{\et} \big(\e\kbar,G\be\times_{S_{n}^{\nr}}\be\spec\kbar\,\big)$ is clearly a one-point set, the lemma follows.
\end{proof}

\begin{proposition}\label{ex-green}
Let $n\geq 1$ be an integer and let
\begin{equation}\label{eta}
1\to F\to G\stackrel{\!q}{\to} H \to 1
\end{equation}
be a sequence of $R_{\le n}$-group schemes locally of finite type. Assume that $q$ is {\rm smooth} and that the above sequence is exact for the fpqc topology on $(\mr{Sch}/R_{\le n})$. Then the induced sequence of $k$-group schemes locally of finite type
\begin{equation}\label{greta}
1\to \grn(F)\to\grn(G)\to\grn(H)\to 1
\end{equation}
is exact for both the fppf and fpqc topologies on $(\mr{Sch}/k)$.
\end{proposition}
\begin{proof} By \cite[Lemma 2.2]{bga}, $q$ is surjective and the map $F\to G$ in \eqref{eta} identifies $F$ with the kernel of $q$. Further, since a smooth morphism is flat and locally of finite presentation, $q$ is an fppf morphism. Thus \cite[Lemma 2.3]{bga} shows that \eqref{eta} is also exact for the $\fppf$ topology on $(\mr{Sch}/R_{\le n})$. Now the sequence \eqref{greta} is left-exact since $\grn$ has a left-adjoint functor. On the other hand, by Corollary \ref{gr-sm}, $\grn(q)\colon\grn(G)\to\grn(H)$ is smooth. Thus, by \cite[Corollary 2.5]{bga} and Lemma \ref{rat-pts}(i), it suffices to check that $\grn(q)\big(\e\kbar\,\big)=q(\Rnnr)$ is surjective. Note that, since $q\times_{S_{n}}\!\Snnr$ is an fppf morphism,  the base extension of \eqref{eta} along $\Snnr\to S_{n}$ is exact for the fppf topology on $\big(\mr{Sch}/\Rnnr\big)$ \cite[Lemma 2.3]{bga}. Thus, by \cite[Proposition III.3.3.1(i), p.~162]{gi}, we have reduced the proof to checking that $\HH^{1}_{\fppf}(\Rnnr,F\!\times_{\be S_{n}}\!\be\Snnr)$ is a one-point set. Since $F\be\times_{S_{n}}\be\Snnr=G\be\times_{H}\be\Snnr$ is smooth over $\Snnr$, the latter follows from the previous lemma.
\end{proof}

The change of level morphism \eqref{clm0} for group schemes has been  discussed before (in a particular case) in \cite[pp.~37-40]{beg}. We now discuss this morphism in the more general setting of this paper.

Let $Z$ be an $S$-scheme. For every integer $n\geq 1$, set
\begin{equation*} 
\grn(Z)=\grn(Z\!\times_{S}\!S_{n}).
\end{equation*}
Further, if $G$ is a flat $S$-group scheme locally of finite type, we define
\begin{equation}\label{grpi}
\grn\lbe(\pi_{0}(G\le))=\grn(\pi_{0}(G\!\times_{S}\!S_{n})),
\end{equation}
where $\pi_{0}(G\!\times_{S}\!S_{n})$ is the \'etale $S_{\le n}$-group scheme \eqref{pio}. Note that, for every $S$-scheme $Z$, $\mathrm{Gr}_{0}^{R}(Z)=Z\!\times_{\be S}\be\spec k=Z_{\mr s}$ by Remark \ref{rems1}(a). For every pair of integers $r\geq 1, i\geq 0$, we have $(Z\!\times_{\be S}\be S_{r+i})\!\times_{\be S_{r+i}}\be S_{r}=Z\!\times_{\be S}\! S_{r}$ and \eqref{clm0} is a morphism 
\[
\varrho_{r,\le Z}^{\e i}\colon \mathrm{Gr}_{r+i}^{R}(Z)\to\mathrm{Gr}_{r}^{R}(Z).
\]
Now, if $G$ is an $R$-group scheme locally of finite type then, by Proposition \ref{gr-prop}(iv), 
\begin{equation}\label{nm-hom}
\varrho_{r,\le G}^{\e i}\colon \mathrm{Gr}_{r+i}^{R}(G)\to\mathrm{Gr}_{r}^{R}(G)
\end{equation}
is a morphism of $k$-group schemes locally of finite type. Further, by Remark \ref{vrk}(c),
\begin{equation}\label{comp}
\varrho_{r,\e G}^{\e i+1}=\varrho_{r,\e G}^{\le 1}\circ \varrho_{r+1,\e G}^{\e i}.
\end{equation}

We will now describe the kernel of \eqref{nm-hom}.
To this end, recall $\omega_{\le G/R}^{1}=\varepsilon^{\le*}\e\Omega_{G/R}^{1}$, where $\varepsilon\colon \spec R\to G$ is the unit section of $G$. We begin with the case where $i=1$ and $G$ is smooth over $R$. 

\begin{proposition}\label{uc} Let $R$ be a discrete valuation ring and let $G$ be a smooth $R$-group scheme. Then there exist canonical isomorphisms of smooth, connected and unipotent $k$-group schemes
\[
\krn\varrho_{r,\e G}^{1}=
\begin{cases}
\displaystyle \mathbb V\be \big(\omega_{G_{\mr s}/k}^{1} \big) &  \text{\hskip .7cm if  ${\rm char}\e R={\rm char} \e k$}\\
\mathbb V\be \big(\omega_{G_{\mr s}/k}^{1} \big) &  \text{\hskip .7cm if  ${\rm char} \e R\neq p= {\rm char} \e k$ and $r<\ari=v(p)$}\\
\mathbb V\be\big(\omega_{\le G_{\mr s}/k}^{1}\big)^{\!(\e p^{\lle m-1}\lle)} &  \text{\hskip .7cm if ${\rm char}\e R\neq p= {\rm char} \e k$ and $r\geq \ari=v(p)$}
\end{cases} 
\]
 where $m=\lceil (r+1)/\ari\e \rceil$ if ${\rm char}\e R\neq p= {\rm char} \e k$ and $r\geq \ari=v(p)$.
\end{proposition}
\begin{proof} Assume first that ${\rm char}\e R=0$ and $r\geq\ari=v(p)$ and let $m$ be as in the statement. By \cite[II, 1.7.11(iv)]{ega} and Example \ref{k2}, we have  
\[
\krn\varrho_{r,\e G}^{1}=\mathbb V\be\big(\big(F_{G_{\rm s}}^{\e p^{\le m-1}}\big)^{\be*}\e\Omega_{\le G_{\mr s}/k}^{1}\big)\times_{G_{\rm s}} \spec k\simeq
\mathbb V\be\big(\varepsilon_{\rm s}^{*}\big(F_{G_{\rm s}}^{\e p^{\le m-1}}\big)^{\be*}\e\Omega_{\le G_{\mr s}/k}^{1}\big),
\]
where $\varepsilon_{\rm s}\colon \spec k\to G_{\rm s}$ is the unit section of $G_{\rm s}$. Now 
\[
\varepsilon_{\rm s}^{*}\big(F_{X_{\rm s}}^{\e p^{\le m-1}}\big)^{\be*}\e\Omega_{\le G_{\mr s}/k}^{1}\simeq 
\big(F_{k}^{\e p^{\le m-1}}\big)^{\be *}\varepsilon_{\rm s}^{*}\e\Omega_{\le G_{\mr s}/k}^{1}=
\big(F_{k}^{\e p^{\le m-1}}\big)^{\be *}\e\omega_{\le G_{\mr s}/k}^{1}.
\]
Consequently 
\[
\krn\varrho_{r,\e G}^{1}\simeq  
\mathbb V\be\big( 
\big(F_{k}^{\e p^{\le m-1}}\big)^*\omega_{\le G_{\mr s}/k}^{1}\big)
\simeq \mathbb V\be\big( 
\omega_{\le G_{\mr s}/k}^{1} \big)\times_{f,\e\spec k,\e F_{k}^{\e p^{\le m-1}}} \spec(k)\overset{{\rm def.}}{=}	\mathbb V\be\big(\omega_{\le G_{\mr s}/k}^{1}\big)^{\!(\e p^{\le m-1}\lle)} ,
\]
where $f$ is the structure morphism of $\mathbb V\be\big( 
\omega_{\le G_{\mr s}/k}^{1} \big)$ and the second isomorphism again follows from \cite[II, 1.7.11(iv)]{ega}. The proof in the remaining cases is rather immediate. In effect,
\[
\krn\varrho_{r,\e G}^{1}=\mathbb V\be\big( \Omega_{\le G_{\mr s}/k}^{1}\big)\times_{X_{\rm s}} \spec k\simeq
\mathbb V\be\big(\varepsilon_{\rm s}^{\le *}\Omega_{\le G_{\mr s}/k}^{1}\big)=\mathbb V\be\big(\omega_{\le G_{\mr s}/k}^{1}\big).
\]
\end{proof}

\begin{remark} The proposition should be compared with \cite[A.6.1]{cgp}, where the case $r=\ari=1$ is discussed for $k$ algebraically closed. Note that, in [loc.cit.], $\mathbb V(\omega_{\le G_{\mr s}/k}^{1})$ has been identified with the functor $\underline{\rm{Lie}}\e(G_{\mr s}/k)$.
\end{remark}

\begin{proposition}\label{sm-vker} Let $G$ be a smooth $R$-group scheme and let $r, \e i$ be positive integers. Then $\varrho_{r,\le G}^{\e i}\colon \mr{Gr}_{r+i}^{R}(G)\to\mr{Gr}_{r}^{R}(G)$ \eqref{nm-hom} is a smooth and surjective morphism of $k$-group schemes and $\krn \varrho_{r,\le G}^{\e i}$ is smooth, connected and unipotent.
\end{proposition} 
\begin{proof} By Proposition \ref{sm-surj} and Corollaries \ref{rqc} and \ref{gr-sm2},  $\varrho_{r,\le G}^{\e i}$ is a quasi-compact and surjective morphism of smooth $k$-group schemes.  Thus, by Lemma \ref{conn}, the sequence
\begin{equation}\label{rho-sm}
1\lra\krn\varrho_{r,\le G}^{\e i}\lra\mr{Gr}_{r+i}^{R}(G)\overset{\varrho_{r,\le G}^{\e i}}{\lra}\mr{Gr}_{r}^{R}(G)\lra 1
\end{equation}
is exact for both the fppf and fpqc topologies on $(\mr{Sch}/k)$. Further, by Lemma \ref{flat1}, $\varrho_{r,\le G}^{\e i}$ is faithfully flat. 
Now Proposition  \ref{uc} shows that $\varrho_{r,\le G}^{\le 1}$ is smooth, and the smoothness of $\varrho_{r,\le G}^{\e i}$ for arbitrary $i$ follows by induction from \eqref{comp}. It remains to check (by induction) that $U_{\lbe r}^{\le i}=\krn\varrho_{\le r,\le G}^{\e i}$ is connected and unipotent. By Proposition \ref{uc}, the induction hypothesis holds if $(i,r)=(1,r)$ and $r$ is any positive integer. Now, by \eqref{comp}, the faithful flatness of $\varrho_{r,\le G}^{\e i}$ and Lemma \ref{kerseq}(ii) (and its proof), there exists a sequence of $k$-group schemes locally of finite type
\begin{equation}\label{3u}
\xymatrix{
1\ar[r]& U_{r+1}^{\le i}\ar[r]& U_{r}^{\e i+1}\ar[r]^u&   U_{r}^{\le 1}\ar[r]&1, }
\end{equation}
where $u=\varrho_{r+1,\le G}^{\e i}\times_{\mr{Gr}_{r}^{R}(G)}\spec k$, which is exact for the fppf topology on $(\mr{Sch}/k)$. The conclusion then follows from Lemma \ref{conn} and \cite[XVII, Proposition 2.2(iii)]{sga3}.
\end{proof}

\begin{remark} Note that $\krn \varrho_{r,\le G}^{\e i}$ is a $k$-group scheme of finite type since every connected $k$-group scheme locally of finite type is, in fact, of finite type. See \cite[${\rm VI_{A}}$, Proposition 2.4(ii)]{sga3}.
\end{remark}

Let $A$ be any $k$-algebra and assume that $1\leq i\leq r$. Further, set $B=\s R_{\le r+i}\le(A)$ and $J=\overbar{\s M_{r+i}^{\lle r}}(\be A)$. By \eqref{rnms}, $\s R_{\e r}(A)$ is isomorphic to $B\lbe/\lbe J$ as a $B$-algebra. Thus, by \eqref{bc} and Lemma \ref{rat-pts}(i), $\varrho_{r,\le G}^{\e i}(A)$ can be identified with the canonical map $G_{\be B}(B)\to G_{\be B}(B\lbe/\be J\e)$. Now, since $2r\geq r+i$, we have $J^{\le 2}=0$ by \eqref{ned2} and \cite[II, 3.2, p.~206, and Theorem 3.5, p.~208]{dg} show that the homomorphism $\Hom_{B\text{-mod}}\le(w_{G_{\lbe\lbe B}\lbe/\lbe B}^{1}, J\e)\to G_B(B)$ is functorial in $G$ and maps $\Hom_{B\text{-mod}}\le(w_{G_{\lbe\lbe B}\lbe/\lbe B}^{1}, J\e)$ bijectively onto the kernel of $\varrho_{r,\le G}^{\e i}(A)$. Thus there exists a  canonical isomorphism of groups
\begin{equation}\label{wj-0}
\Hom_{B\text{-mod}}\le(w_{G_{\lbe\lbe B}\lbe/\lbe B}^{1}, J\e)\overset{\!\sim}{\to}\krn\varrho_{r,\e G}^{\le i}(A),
\end{equation} 
where $w_{G_{\lbe\lbe B}\lbe/\lbe B}^{1}=\varGamma\big(\spec B,\omega_{G_{\lbe\lbe B}\lbe/\lbe B}^{1}\le\big)$.

Consider the $B$-algebra $C=\s R_{\le i}(A)$. By \eqref{v2}, we may make the identifications 
\[
\mr{Gr}_{i}^{R}(\e\mathbb
V(\omega_{G/R}^{1}))(A)=V(\omega_{G_{\lbe C}\lbe/C}^{1})(C\le)=\Hom_{\e C\text{-mod}}(w_{G_{\lbe C}\lbe/C}^{1}, C\e)=\Hom_{B\text{-mod}}(w_{G_{\be B}\lbe/\lbe B}^{1}, C\e).
\]
Now recall the homomorphism of $B$-modules $\varphi_{r+i,\le r}(A)\colon C\to J$ \eqref{pis}. Under the above identifications, $\Hom_{B\text{-mod}}(w_{G_{\be B}\lbe/\lbe B}^{1}, \varphi_{r+i,\le r}(A))$ can be identified with a map
\begin{equation}\label{wj}
\mr{Gr}_{i}^{R}(\e\mathbb V\lbe(\omega_{G/R}^{1}))(A)\to \Hom_{B\text{-mod}}\le(w_{G_{\lbe\lbe B}\lbe/\lbe B}^{1}, J\e).
\end{equation}
Composing the preceding map with the isomorphism \eqref{wj-0} and letting $A$ vary, we obtain a canonical morphism of $k$-group schemes
\begin{equation}\label{wj-1}
\Phi_{\be r,\e G}^{\le i}\colon \mr{Gr}_{i}^{R}\lbe(\le\mathbb V\lbe(\omega_{\le G/R}^{1})) \to \krn\varrho_{r,\e G}^{\le i}. 
\end{equation}
Now, if $G$ is {\it smooth} over $R$, then Remark \ref{rems1}(e) yields a (non-canonical) isomorphism of $k$-schemes
\begin{equation}\label{obv0}
\mr{Gr}_{i}^{R}\lbe(\le\mathbb V\lbe(\omega_{\le G/R}^{1}))\overset{\!\sim}{\to}
\A_{\le k}^{\!i\le d},
\end{equation}
where $d=\dim G_{\lbe \rm s}$. Further, if $R_{\le i}$ is a finite $k$-algebra (i.e, $i\leq \ari=v(p)$ when $R$ has unequal characteristics $(0,p)$), then the indicated remark also yields a (non-canonical) isomorphism of $k$-group schemes
\begin{equation}\label{obv}
\mr{Gr}_{i}^{R}\lbe(\le\mathbb V\lbe(\omega_{\le G/R}^{1}))\overset{\!\sim}{\to}\G_{\lbe a,\le k}^{i\le d} \quad \text{(if either $i\leq \ari=v(p)$ or ${\rm char}\e R={\rm char}\e k$)}.
\end{equation}

\begin{proposition}\label{vker1} Assume that $R$ is an equal characteristic ring and let $G$ be an $R$-group scheme locally of finite type. Then, for every pair of integers $r$ and $i$ such that $1\leq i\leq r$, the canonical map $\Phi_{\be r,\e G}^{\le\lle  i}\colon \mr{Gr}_{i}^{R}\lbe\lbe(\le\mathbb V\lbe(\omega_{\le G/R}^{1})) \to \krn\varrho_{r,\e G}^{\le\lle  i}$ \eqref{wj-1} is an isomorphism of $k$-group schemes. Further, if $G$ is smooth over $R$, then $\krn\varrho_{r,\e G}^{\le\lle  i}$ is (non-canonically) isomorphic to $\G_{\lbe a,\e k}^{i\le d}$, where $d=\dim G_{\lbe \rm s}$.
\end{proposition}
\begin{proof}  If $A$ is any $k$-algebra, $\varphi_{r+i,\le r}(A)$ is an isomorphism by Proposition \ref{rnm-2}. Consequently, the map \eqref{wj} is an isomorphism as well. Since $\Phi_{\be r,\e G}^{\le i}(A)$ is the composition of \eqref{wj} and the isomorphism \eqref{wj-0} and $A$ is arbitrary, the first assertion of the proposition follows. Now, if $G$ is smooth over $R$, then  the composition of the inverse of the isomorphism $\Phi_{\be r,\e G}^{\le i}$ and \eqref{obv} is an isomorphism as well.
\end{proof}

\begin{proposition}\label{vker} Let $R$ be a ring of unequal characteristics $(0,p)$ and let $G$ be a smooth $R$-group scheme. Then, for every pair of integers $r$ and $i$ such that $1\leq i\leq r$, the map $\Phi_{\be r,\e G}^{\le\lle i}\colon \mr{Gr}_{i}^{R}\lbe\lbe(\le\mathbb V\lbe(\omega_{\le G/R}^{1})) \to \krn\varrho_{r,\e G}^{\le\lle i}$ \eqref{wj-1} is an isogeny  of smooth, connected and unipotent $k$-group schemes. Its kernel is an infinitesimal $k$-group scheme which is trivial if $r+i\leq \ari=v(p)$.	
Further, if $i\leq\ari$, then $\krn\varrho_{r,\e G}^{\le\lle  i}$ is (non-canonically) isomorphic to $\G_{\lbe a,\le k}^{i\le d}$, where $d=\dim G_{\lbe \rm s}$.
\end{proposition}
\begin{proof} By \eqref{obv0} and Proposition \ref{sm-vker}, $\mr{Gr}_{i}^{R}\lbe\lbe(\le\mathbb V\lbe(\omega_{\le G/R}^{1}))$ and $\krn\varrho_{r,\e G}^{\le\lle i}$ are smooth, connected and unipotent $k$-group schemes. On the other hand, by Proposition \ref{rnm-2}, $\varphi_{r+i,\le r}(A)$ is an isomorphism of abelian groups if $r+i\leq \ari$ and $A$ is any $k$-algebra or if $r+i>\ari$ and $A$ is perfect. Thus $\Phi_{\be r,\e G}^{\le\lle i}$ is an isomorphism if $r+i\leq \ari$. When $r+i>\ari$, the maps \eqref{wj} and $\Phi_{\be r,\e G}^{\le i}(A)$ \eqref{wj-1} are isomorphisms of abelian groups for every perfect $k$-algebra $A$. Consequently $(\le\krn\e \Phi_{\be r,\e G}^{\le\lle i})\!\left(\e\lle\kbar\e\lle\right)=\krn\!\be\left(\le\Phi_{\be r,\e G}^{\le\lle i}(\e\lle\kbar\e\lle)\right)=\{1\}$ and $\Phi_{\be r,\e G}^{\le\lle i}(\e\lle\kbar\e\lle)$ is surjective. Thus $\krn\e \Phi_{\be r,\e G}^{\le\lle i}$ is an infinitesimal $k$-group scheme by Lemma \ref{red=1} and Remark \ref{inf}(b) and, furthermore, $\Phi_{\be r,\e G}^{\le\lle i}$ is faithfully flat by \cite[I, \S3, Corollary 6.10, p.~96]{dg} and Lemma \ref{flat2}(ii).
The last assertion of the proposition follows from \eqref{obv} and \cite[[IV, \S 3, Corollary 6.8 p. 523]{dg}.
\end{proof}

\begin{corollary} \label{idim}Let $G$ be a smooth $R$-group scheme and let $i$ and $r$ be integers such that $1\leq i\leq r$. Then $\,\dim \krn\varrho_{r,\e G}^{\le\lle i}=i\dim G_{\mr s}$. 
\end{corollary}
\begin{proof} This is immediate from \eqref{obv0} and Propositions \ref{vker1} and \ref{vker} using \cite[${\rm{VI}}_{\rm B}$, Proposition 1.2]{sga3} and the fact that infinitesimal $k$-group schemes have dimension 0.
\end{proof}

\begin{remark}\label{cex-beg}
As noted in the statement of Proposition \ref{vker}, the infinitesimal $k$-group scheme $\krn\e\Phi_{\be r,\e G}^{\le\lle i}$ can be nontrivial for appropriate choices of $R, r,i$ and (smooth) $R$-group scheme $G$ (in particular, \cite[Lemma 4.1.1(2), p.~37\le]{beg} is false. See also Remark \ref{begueri} below). Indeed, let $R=W\be(k)$ and $G=\G_{a,\le R}$. For every $k$-algebra $A$, $\Phi_{1,\e G}^{\le\lle 1}(A)$ may be identified with the map
\[
\Hom_{\e W_{2}(\lbe A)\text{-mod}}(W_{2}(\lbe A), A\e)\to  \Hom_{\e W_{2}(\lbe A)\text{-mod}}(W_{2}(\lbe A), V\lle W_{2}(\lbe A)\e)
\]
induced by $\varphi_{\le 2,1}(A)\colon A\to VW_2(A), a\mapsto (0,a^{\le p})$ (see Remark \ref{twist}(a)). It follows that, as a homomorphism of groups, $\Phi_{1,\e G}^{\le\lle 1}(A)$ can be identified with $\varphi_{\le 2,1}(A)$ itself. Thus $\krn\e\Phi_{1,\e G}^{\le\lle 1}(A)$ is isomorphic (functorially in $A$) to the subgroup of $A$ of $p\,$-nilpotent elements, whence $\krn\e\Phi_{1,\e G}^{\le\lle 1}$ is isomorphic to the (nontrivial) infinitesimal $k$-group scheme $\alpha_{p}$.
\end{remark}

\begin{corollary}\label{id-comp} Let $n\geq 1$ be an integer and let $G$ be a smooth $R$-group scheme. Then
\begin{enumerate}
\item[(i)] $\dim \grn(G)=n\dim G_{\lbe \rm s}$. 
\item[(ii)] $\grn(G)$ is connected if, and only if, $G_{\lbe \rm s}$
is connected.
\item[(iii)] $\grn(G^{\e 0})=\grn(G)^{\le 0}$.
\item[(iv)] $\grn(\pi_{0}(G\le))=\pi_{0}(\grn(G))$.
\end{enumerate}
\end{corollary}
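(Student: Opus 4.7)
The overall plan is to deduce all three assertions from Proposition~\ref{sm-vker}, with (i) handled by induction on $n$ and (ii),(iii) reduced to a single application of the exact sequence of smooth $k$-group schemes it provides.

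For (i), I would induct on $n$, the base case $n=0$ being trivial since $\mr{Gr}_{0}^R(G) = G_s$ by Remark~\ref{rems1}(a). For $n \geq 1$, specializing Proposition~\ref{sm-vker}(i) to $m = n-1$, $i = 1$ produces a short exact sequence
\[
1 \longrightarrow \mr{Gr}_0^R(\mathbb V(\omega_{G/R})) \longrightarrow \mr{Gr}_n^R(G) \longrightarrow \mr{Gr}_{n-1}^R(G) \longrightarrow 1
\]
of smooth $k$-group schemes, exact for the fpqc topology on $(\mr{Sch}/k)$. By \eqref{pc} the kernel is $k$-isomorphic to $\mathbb G_{a,k}^{\e d}$ with $d = \dimn\mathrm{Lie}(G_s)$, and since $G_s$ is smooth (as $G$ is smooth over $R$) the paragraph following \eqref{viso} gives $d = \dimn G_s$. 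The additivity of dimension in such an exact sequence (\cite[${\rm{VI}}_{\rm B}$, Proposition~1.2]{sga3}) together with the inductive hypothesis then yields $\dimn \mr{Gr}_n^R(G) = \dimn G_s + n\,\dimn G_s = (n+1)\dimn G_s$.

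For (ii), specialize Proposition~\ref{sm-vker}(ii) to $m=0$, $i=n$ (for $n \geq 1$; the case $n=0$ is trivial), obtaining an exact sequence $1 \to U \to \mr{Gr}_n^R(G) \overset{\varrho_{0,G}^{n}}{\to} G_s \to 1$ with $U$ smooth and connected. If $G_s$ is connected, Lemma~\ref{conn} forces $\mr{Gr}_n^R(G)$ to be connected; conversely, since $\varrho_{0,G}^n$ is surjective and the continuous image of a connected space is connected, $G_s$ is connected whenever $\mr{Gr}_n^R(G)$ is.

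For (iii), note that $G^{\le 0} \hookrightarrow G$ is an open immersion, so by Remark~\ref{rems1}(b) $\mr{Gr}_n^R(G^{\le 0})$ is realized as an open subgroup scheme of $\mr{Gr}_n^R(G)$. Any open subgroup of a group scheme is automatically closed (its complement is a union of open cosets) and hence contains the identity component, giving $\mr{Gr}_n^R(G)^{\le 0} \subseteq \mr{Gr}_n^R(G^{\le 0})$. For the reverse inclusion, recall from \cite[${\rm{VI}}_{\rm B}$, \S3]{sga3} that for $G$ smooth over $R$ the formation of $G^{\le 0}$ commutes with pullback to fibers, so $(G^{\le 0})_{\lbe s} = (G_{\lbe s})^{\le 0}$, which is connected. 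Part~(ii), applied to $G^{\le 0}$, then yields that $\mr{Gr}_n^R(G^{\le 0})$ is connected; being open in $\mr{Gr}_n^R(G)$ and containing the identity section, it must be contained in $\mr{Gr}_n^R(G)^{\le 0}$. I do not anticipate any serious obstacle; the only subtlety is the fibral identity $(G^{\le 0})_{\lbe s} = (G_{\lbe s})^{\le 0}$ invoked in (iii), which is however built into the SGA\,3 definition of the identity component of a smooth group scheme.
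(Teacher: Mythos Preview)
Your argument is correct and follows essentially the same route as the paper's own proof: induction via the one-step change-of-level exact sequence for (i), the morphism $\varrho_{0,G}^{\e n}$ with connected kernel for (ii), and openness plus connectedness of $\grn(G^{\le 0})$ for (iii) (the paper packages the last step via \cite[${\rm VI}_{\rm B}$, Lemma~3.10.1]{sga3} rather than the explicit two-way inclusion you give). The only imprecision is that in (ii) you attribute the full exact sequence to Proposition~\ref{sm-vker}(ii), but that statement supplies only the properties of the kernel and the smoothness of $\varrho_{0,G}^{\e n}$; surjectivity and quasi-compactness (needed for Lemma~\ref{conn} and for \cite[${\rm VI}_{\rm B}$, Proposition~1.2]{sga3} in (i)) come separately from Proposition~\ref{sm-surj} and Corollary~\ref{rqc}, as the paper makes explicit.
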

\begin{proof} By the exactness of \eqref{rho-sm}, \cite[${\rm{VI}}_{\rm B}$, Proposition 1.2]{sga3} and Corollary \ref{idim} for $i=1$, we have $\dim \mr{Gr}_{n+1}^{R}(G)=\dim\mr{Gr}_{n}^{R}(G)+\dim G_{\lbe \rm s}$. Assertion (i) now follows by induction.
By Proposition \ref{sm-vker}, $\varrho_{\e 1,G}^{\le n-1}\colon \grn(G)\to G_{\mr s}$ is a surjective morphism of smooth $k$-group schemes with connected kernel for every integer $n\geq 2$. Thus, by Lemma \ref{conn}, the connectedness of $G_{\mr s}$ implies that of $\grn(G)$. Conversely, if $\grn(G)$ is connected, then $\varrho_{\e 1,\e G}^{\le n-1}$ maps $\grn(G)=\grn(G)^{\le 0}$ into $G_{\mr s}^{\le 0}$, which implies that
$G_{\mr s}^{\e 0}=G_{\mr s}$. Assertion (ii) is now proved. Since $G^{\le 0}$ is an open subgroup scheme of $G$, 
$\grn(G^{\le 0})$ is open in $\grn(G)$ by Remark \ref{rems1}(b). Further, since $G^{\e 0}_{\mr s}$ is connected, $\grn(G^{\e 0})$ is connected by (ii). Thus 
$\grn(G^{\e 0})=\grn(G)^{\e 0}$ by \cite[$\text{VI}_{\mr{B}}$, Lemma 3.10.1]{sga3}, i.e., (iii) holds. To prove (iv), we apply Proposition \ref{ex-green} to the smooth morphism $G\be\times_{\be S}\be S_{n}\to\pi_{0}(G\be\times_{\be S}\be S_{n}\lbe)$. By definition of $\grn(\pi_{0}(G\le))$ \eqref{grpi}, we have an exact and commutative diagram of sheaves of groups on $(\mr{Sch}/k)_{\fppf}$:
\[
\xymatrix{
1\ar[r]&\grn(G^{\e 0}\le)\ar[r]\ar@{=}[d]_{\mr{(iii)}}& \grn(G\le)\,\,\ar[r]\ar@{=}[d]&\,\,\grn(\pi_{0}(G))\ar[r]\ar[d]&1\\
1\ar[r]&\grn(G)^{\le 0}\ar[r]& \grn(G\le)\,\ar[r]&\,\pi_{0}(\grn(G\le))\ar[r] &1.
}
\]
Assertion (iv) is now clear.
\end{proof}

\begin{remarks}\indent
\begin{enumerate}
\item[(a)] If $n$ is a positive integer and $G$ is an $R_{\le n}$-group scheme which is not necessarily of the form $H\be\times_{\be S}\be S_{n}$ for some group scheme $H$ over $R$, then statements analogous to those of Propositions/Corollary \ref{vker1}, \ref{vker}, \ref{sm-vker} and \ref{id-comp} are valid for $G$, provided the integers $r,i$ appearing in the first two of these statements satisfy the condition $r+i\leq n$. The proofs are essentially the same.
\item[(b)]  Let $n\geq 1$ be an integer, let $G$ be a smooth and commutative $R_{\le n}$-group scheme and set  $H=\grn(G)$. Then $F^{\e i}\lbe H=\krn\varrho_{\le i,\le G}^{\e n-i}$, where $1\leq i\leq n$, defines a filtration of $H$ of length $n$:
\begin{equation}\label{fil}
H\supseteq F^{\e 1}\lbe H\supseteq\cdots \supseteq F^{\e n}\lbe H=0.
\end{equation}
Note that $H/F^{\e 1}\be H=G_{\mr s}$. Further, in the notation of the proof of Proposition \ref{sm-vker} (see remark (a) above), $F^{\e i}H=U_{i}^{\e n-i}$. Thus, by the indicated proposition, the exactness of \eqref{3u} and  Corollary \ref{idim}, $F^{\e i}\lbe H/F^{\e i+1}\lbe H\simeq U_{\le i}^{1}$ is a smooth, connected and unipotent $k$-group scheme of dimension $\dim \lbe G_{\mr s}$ for $1\leq i\leq n-1$.
\item[(c)] A particular case of the filtration \eqref{fil} appeared in \cite[\S5.1]{ed}. In effect, let $D$ be a discrete valuation ring with maximal ideal $\mm$, residue field $k$ and fraction field $K$ and let $K^{\le\prime}\be/K$ be a separable field extension of degree $n$. Let $D^{\e\prime}$ be the integral closure of $D$ in $K^{\prime}$ and assume that $D^{\e\prime}$ is a discrete valuation ring with maximal ideal $\mm^{\prime}$ such that $(\mm^{\prime})^{n}=\mm$. Assume, furthermore, that $D$ contains the $n$-th roots of unity. Now let $A^{\prime}$ be an abelian variety over $K^{\prime}$ and let $\mathcal A^{\prime}$ denote its N\'eron model over $D^{\e\prime}$. By \eqref{wrbc}, $\Re_{D^{\e\prime}\be/D}(\mathcal A^{\prime})_{\mr s}=\Re_{B/k}(\mathcal A^{\prime}_{\le B})$, where $B=D^{\e\prime}\!\otimes_{D}\lbe k$. It is shown in \cite[p.~297, line $-3$]{ed} that $B\simeq R_{\le n}$, where $R=k[[t]]$. Thus
$\Re_{D^{\e\prime}\be/D}(\mathcal A^{\prime})_{\mr s}=\grn(G)
$, where $\grn=\Re_{R_{n}/k}$ and $G=\mathcal A^{\prime}_{B}$. Consequently, the filtration considered in \cite[\S5.1]{ed} is a particular case of the equal characteristic case of the filtration \eqref{fil}.
\end{enumerate}
\end{remarks}

\section{The perfect Greenberg functor}\label{pfgf}

Let $R$ be a discrete valuation ring with perfect residue field $k$ of positive characteristic $p$. We will write $(\mathrm{Perf}/\le k)$ for the category of perfect $k$-schemes. Recall that a $k$-scheme $Y$ is said to be {\it perfect} if the absolute Frobenius endomorphism $F_{\le Y}$ of $Y$ is an isomorphism. The inclusion functor $(\mathrm{Perf}/\le k)\to (\mathrm{Sch}/\le k)$ has a right-adjoint functor
\begin{equation}\label{rapf}
(\mr{Sch}/\lle k)\to(\mr{Perf}/\lle k), Y\mapsto Y^{\lle\pf},
\end{equation}
where $Y^{\lle \pf}$ is the (inverse) perfection of the given $k$-scheme $Y$. The perfect $k$-scheme $Y^{\lle \pf}$ is equipped with a morphism of $k$-schemes $\phi_{\le\le Y}\colon Y^{\lle\pf}\to Y$ such that, for every perfect $k$-scheme $Z$, there exists a canonical bijection 
\begin{equation}\label{pf}
\Hom_{\le\mr{Sch}/\lle k}(Z,Y)\overset{\!\sim}{\to}\Hom_{\e
\mr{Perf}/\lle k}\be\big(Z,Y^{\lle\pf}\le\big), \, \psi\mapsto\psi^{\le\pf},
\end{equation}
where $\psi^{\le\pf}\circ \phi_{\le\le Y}=\psi$. See \cite[\S 5]{bga} for more details.
	
If  $n\geq 1$ is an integer, the composition of the perfection functor \eqref{rapf} and the Greenberg functor of level $n$ \eqref{grf2} is a functor
\begin{equation}\label{pf-grf}
\pgrn\colon (\e\mathrm{Sch}/R_{\le n})\to
(\mathrm{Perf}/k), \quad Z\mapsto \grn(Z)^{\pf},
\end{equation}
which is called the {\it perfect Greenberg functor of level $n$\,} (associated to $R\e$). If $Z$ is an 
$R_{\le n}$-scheme, the perfect $k$-scheme $\pgrn(Z)$ is called the {\it perfect Greenberg realization} of $Z$.

\begin{proposition} 
Let $n\geq 1 $ be an integer and let $(Z_{\lambda})_{\lambda\le\in\le\Lambda}$ be a projective system of $R_{n}$-schemes with affine transition morphisms, where $\Lambda$ is a directed set. Then $(\pgrn(Z_{\lambda}))$ is a projective system of perfect $k$-schemes with affine transition morphisms and
\[
\pgrn\big(\varprojlim Z_{\lambda}\big)=\varprojlim\pgrn(Z_{\lambda})
\]
in the category of perfect $k$-schemes.
\end{proposition}
\begin{proof} This follows from \cite[Proposition 5.21]{bga} together with Propositions \ref{gr-projlim} and \ref{aff}.
\end{proof}

Let $k^{\e\prime}\be/k$ be a finite field extension and let $X^{\prime}$ be a perfect $k^{\e\prime}$-scheme. We will say that $\Re_{\le k^{\prime}\!/k}^{\e\pf}(X^{\prime})$ exists if the contravariant functor
\begin{equation*} 
(\mr{Perf}/k)\to(\mathrm{Sets}),\quad T\mapsto\Hom_{\e\mr{Perf}/k^{\le\prime}}\big(T\times_{k}\spec k^{\le\prime},X^{\prime}\e\big),
\end{equation*}
is represented by a perfect $k$-scheme $\Re_{\le k^{\le\prime}\!/k}^{\e\pf}(X^{\prime})$. 

\begin{proposition}\label{p-wrgr} Let $R^{\e\prime}$ be a finite extension of $R$ of ramification index $e$ with associated residue field extension $k^{\e\prime}/k\subseteq \kbar/k$. Let $n\geq 1$ be an integer and let $Z$ be an $S_{ne}^{\e\prime}$-scheme.
\begin{enumerate}
\item[(i)] If $Z$ is admissible relative to $S_{ne}^{\e\prime}\to S_{n}$ (see Definition {\rm \ref{adm}}), then both $\Re_{\le k^{\le\prime}\be/k}^{\le\pf}\big(\mathbf{Gr}_{\lbe ne}^{R^{\le\prime}}(Z\le)\big)$
and $\Re_{\le S_{ne}^{\le\prime}/S_{\le n}}(Z\le)$
exist and
\[
\Re_{\le k^{\e\prime}\be/k}^{\le\pf}\big(\mathbf{Gr}_{\lbe ne}^{R^{\le\prime}}(Z\le)\big)=\mathbf{Gr}_{\lbe n}^{R}\be\big(
\Re_{S_{ne}^{\e\prime}/S_{n}}\be(Z\le)\big).
\]
\item[(ii)] If $R^{\e\prime}/R$ is totally ramified and $Z$ is arbitrary, then
$\Re_{\le S_{ne}^{\le\prime}/S_{n}}(Z\le)$ exists and
\[
\mathbf{Gr}_{\lbe ne}^{R^{\le\prime}}(Z\le)=\mathbf{Gr}_{\lbe n}^{R}\lbe\lbe\big(
\Re_{\le S_{ne}^{\lle\prime}/S_{n}}\be(Z\le)\big).
\]
\end{enumerate}
\end{proposition}
\begin{proof} Assertion (ii) is immediate from Proposition \ref{tot-gr}. In (i), $\Re_{S_{ne}^{\le\prime}/S_{\le n}}(Z)$ exists by Theorem \ref{wr-rep}. Now, since $\Re_{\le k^{\prime}\be/k}\big(\mr{Gr}_{\lbe ne}^{R^{\le\prime}}(Z)\big)$ exists by Theorem \ref{wr-gr}, the perfect Weil restriction $\Re_{\le k^{\prime}\be/k}^{\le\pf}\big(\mathbf{Gr}_{\lbe ne}^{R^{\le\prime}}(Z)\big)$ exists as well and it equals $\Re_{\le k^{\prime}\be/k}\big(\mr{Gr}_{\lbe ne}^{R^{\le\prime}}(Z)\big)^{\pf}$ by \cite[Lemma 5.24]{bga}. Thus, since $\Re_{\le k^{\prime}\be/k}\big(\mr{Gr}_{\lbe ne}^{R^{\le\prime}}(Z)\big)^{\pf}=\mathbf{Gr}_{\lbe n}^{R}\be\big(
\Re_{S_{ne}^{\e\prime}/S_{n}}\be(Z)\big)$ by Theorem \ref{wr-gr}, the formula in (i) follows.
\end{proof}
		
\begin{proposition} 
Let $k^{\e\prime}/k$ be a subextension of $\e\kbar\lbe\lle/k$ and let
$R^{\e\prime}$ be the extension of $R$ of ramification index 1 which corresponds to $k^{\e\prime}\be/k$. Then, for every integer $n\geq 1$ and every $S_{n}$-scheme $Z$,
there exists a canonical isomorphism of perfect $k^{\e\prime}$-schemes
\[
\pgrn(Z\le)\times_{\spec k}\spec
k^{\e\prime}=\mathbf{Gr}_{\lbe n}^{R^{\e\prime}}\!\big(Z\be\times_{S_{n}}\!
{S}^{\e\prime}_{n}\e\big).
\]
\end{proposition}
\begin{proof} This is immediate from Proposition \ref{unr3} using \cite[Remark 5.18(d)]{bga}.
\end{proof}
		
\begin{proposition} \label{pf-bc}
Let $R^{\e\prime}$ be a finite extension of $R$ of ramification index $e$ with associated residue field extension $k^{\e\prime}/k\subseteq\kbar\lbe/k$. For every integer
$n\geq 1$ and every $S_{n}$-scheme $Z$, there exists a canonical
closed immersion of perfect $k^{\e\prime}$-schemes
\[\mathbf{Gr}_{\lbe n}^{R}(Z)\times_{\spec k}\spec
k^{\e\prime}\hookrightarrow \mathbf{Gr}_{\lbe n\lbe
e}^{R^{\le\prime}}\be\big(Z\be\times_{\be S_{n}}\be S^{\e\prime}_{\le
ne}\big). \]
If $e=1$, the preceding map is an isomorphism.
\end{proposition}
\begin{proof} This follows by applying the perfection functor to the closed immersion of Proposition \ref{b-c} using \cite[Remark 5.18(d) and Proposition 5.17(iv)]{bga}.
\end{proof}

\begin{proposition} 
Let $n\geq 1$ be an integer and let $0\to F\overset{f}{\to} G\to H\to 0$ be a complex of commutative $R_{\le n}$-group schemes, where $G$ and $H$ are smooth. Assume that
\begin{enumerate}
\item[(i)] $f$ is quasi-compact,
\item[(ii)]  $\pi_{0}(G)\lbe(R_{\le n}^{\e\rm{nr}})$ is a finitely generated abelian group, and
\item[(iii)] the induced sequence of abelian groups
\[
0\to F \lbe(R_{\le n}^{\e\rm{nr}})\to G\lbe(R_{\le n}^{\e\rm{nr}})\to H\lbe(R_{\le n}^{\e\rm{nr}})\to 0
\]
is exact.
\end{enumerate}
Then the induced complex of perfect and commutative $k$-group schemes
\[
0\to \pgrn(F)\to \pgrn(G)\to  \pgrn(H)\to 0
\]
is exact for the the fpqc topology on $(\le\mathrm{Perf}/k)$.
\end{proposition}
\begin{proof} By (iii), Lemma \ref{rat-pts}(ii) and Corollary \ref{gr-sm2}, the sequence
\[
0\to \grn(F)\to \grn(G)\to  \grn(H)\to 0
\]
is a complex of commutative $k$-group schemes such that the sequence
\[
0\to \grn(F)\big(\e\kbar\e\big)\to \grn(G)\big(\e\kbar\e\big)\to  \grn(H)\big(\e\kbar\e\big)\to 0
\]
is exact. Thus the proposition will follow
from \cite[Proposition 6.3]{bga} once we check that the following additional conditions hold: (a) $\grn(f)\colon \grn(F)\to \grn(G)$ is quasi-compact, and (b) $\grn(G)\to\grn(H)$ is flat. Condition (a) follows at once from (i) and Proposition \ref{gr-prop}. On the other hand, by Corollary \ref{id-comp}(iv), Lemma \ref{rat-pts}(ii) and \eqref{rnr}, we have
\[
\pi_{0}(\grn(G))\big(\e\kbar\e\big)=\grn(\pi_{0}(G\le))\big(\e\kbar\e\big)=\pi_{0}(G)\lbe(R_{\le n}^{\e\rm{nr}}),
\]
which is finitely generated by (ii). Thus, since $\grn(G)\lbe(\e\kbar\e)\to\grn(H)\lbe(\e\kbar\e)$ is surjective, we conclude from Lemma \ref{flat3} that $\grn(G)\to\grn(H)$ is flat, i.e., (b) holds. The proof is now complete.
\end{proof}

\begin{remark}\label{begueri} Since the perfection of an infinitesimal $k$-group scheme is the trivial $k$-group scheme (see \cite[Lemma 5.20]{bga}), Propositions \ref{vker1} and \ref{vker} show that the perfection of the canonical morphism of $k$-group schemes $\Phi_{\be r,\e G}^{\le\lle i}\colon  \mr{Gr}_{i}^{R}\lbe\lbe(\le\mathbb V\lbe(\omega_{\le G/R}^{1}))\to \krn\varrho_{\le r,\e G}^{\le\lle i}$ \eqref{wj-1} is an isomorphism for every smooth $R$-group scheme $G$. It follows from the above that, despite the fact that infinitesimal $k$-group schemes are ignored in \cite{beg} (see Remark \ref{cex-beg}), the indicated oversight fortunately had no consequences for the validity of the main results of \cite{beg}.
\end{remark}

\section{Finite group schemes}\label{fgr}
Let $R$ be a complete discrete valuation ring with maximal ideal $\mm$ and residue field $k$ (assumed to be perfect in the unequal characteristics case). Let $K$ denote the fraction field of $R$ and recall $S=\spec R$.

In this Section $F$ is an arbitrary {\it finite} and {\it flat} $R$-group scheme. 
In particular $F$ is affine (hence separated) and of finite type over $R$. Consequently, the canonical map $F(R)\to F(K)$ is injective by \cite[(5.5.4.1), p.~288]{ega1}. Since $F(K)=F_{\lbe K}\lbe(K)$ is a finite group (see Subsection \ref{not}), $F(R\le)$ is a finite group as well.  Now, if $n\geq 1$ is an integer, then $F\!\times_{\be S}\!S_{n}$ is affine and of finite type over $R_{\le n}$. Thus Corollary \ref{aff-cor} shows that
\begin{equation}\label{def}
\grn(F\le)=\grn(F\!\times_{\be S}\!S_{n})
\end{equation}
is an affine $k$-group scheme of finite type. Note, however, that $\grn(F\le)$ may fail to be finite over $k$, as Example \ref{ex.alpha} showed.

Now recall that an arbitrary $R$-group scheme $F$ is called {\it generically smooth} (respectively, {\it generically \'etale}) if $F_{K}=F\!\times_{S}\lbe\spec K$ is smooth (respectively, \'etale) over $K$. Note that, if $R$ is a ring of unequal characteristics, so that ${\rm char}\e K=0$, then {\it every} $R$-group scheme locally of finite type is generically smooth \cite[$\text{VI}_{\text{B}}$, Corollary 1.6.1]{sga3}. Further, if $R$ is an {\it equal characteristic zero ring}, then every flat $R$-group scheme locally of finite type is, in fact, smooth  \cite[${\rm IV}_{\be 4}$, Proposition 17.8.2]{ega}. Now recall $\omega_{F\be/\lbe R}^{1}=\varepsilon^{*}\Omega_{F\be/\lbe R}^{1}$, where $\varepsilon\colon S\to F$ is the unit section of $F$. If $F$ is generically smooth and of finite type, we define the {\it defect of smoothness of $F$} by
\begin{equation}\label{df}
\delta(F\le)=\mr{length}_{\lbe R }(\omega_{\lbe F\be/\lbe R}^{1})_{\mr{tors}}.
\end{equation}

\begin{remarks}\label{rm.df}\indent
\begin{enumerate}
\item[(a)] The defect of smoothness of $F$ does not change under unramified extensions of $R$. Indeed, by the structure theorem for finitely generated modules over a principal ideal domain, there exists an isomorphism of $R$-modules $\omega_{F/R}^{1}\simeq  R^{\le f}\oplus (\bigoplus_{\le j=1}^{\le r}R/(\pi^{m_{j}}))$ for appropriate non-negative integers $f,r$ and $m_{j}$. Consequently, $\delta(F\le)=\sum_{\e j=1}^{\e r} \lbe m_{j}$ by \cite[Example 1.22 and Lemma 1.23, p.~258]{liu}.  Since $\omega_{F_{\lbe R^{\le\prime}}\be/R^{\le\prime}}^{1}\simeq \omega_{F/R}^{1} \otimes_{R} R^{\e\prime}\simeq (R^{\e\prime}\le)^{\le f}\oplus (\bigoplus_{\le j=1}^{\le r}R^{\e\prime}/(\pi^{m_{j}}))$ for every unramified extension $R^{\e\prime}\lbe/R$ of discrete valuation rings, our claim follows. We conclude that \eqref{df} coincides with the defect of smoothness of $F$ at the unit section $\varepsilon$, as defined in \cite[\S3.3, p.~65]{blr}. See also  \cite[\S3.6, p.~79]{blr}.

\item[(b)]  Recall the extension $\Rnr\be/ R$ of ramification index $1$ which corresponds to $\kbar\lbe/k$.  Let $a\colon \Snr\to F$ be an $\Rnr$-valued point of $F$ and write $a$ also for the section $\Snr\to F_{\be \Snr}$ which corresponds to $a$ via \eqref{bc}. In \cite[p.~65]{blr}, the defect of smoothness of $F$ at $a$ is defined to be $\mr{length}_{\lbe R  }(a^{*}\le\Omega_{\lbe F \be/S}^{1})_{\mr{tors}} =\mr{length}_{\lbe \Rnr }(a^{*}\le\Omega_{\lbe F_{S^{\rm nr}}\be/{\Snr}}^{1}\le)_{\mr{tors}}$. Then, by (a), the defect of smoothness of $F$ at the unit section $a=\varepsilon$ agrees with $\delta(F)$ \eqref{df}. Further, we claim that $\delta(F)$ coincides with the defect of smoothness of $F$ at {\it every} $\Rnr$-valued point of $F$. In effect, the left translation $\tau_{\le a}\colon F_{S^{\rm nr}}\to F_{S^{\rm nr}}$ is an isomorphism of $\Snr$-schemes, whence $\tau_{a}^{*}\e\Omega_{F_{S^{\rm nr}}/S^{\rm nr}}^{1}\simeq \Omega_{F_{S^{\rm nr}}/S^{\rm nr}}^{1}$. Since $a=\tau_{a}\circ \varepsilon$, our claim follows.  
\item[(c)]  If $F$ is finite, flat and generically smooth (i.e., generically \'etale) and $A$ denotes the Hopf algebra of $F$, then  the $A$-modules $\Omega_{F/R}^{1}$ and $\omega_{\lbe F/R}^{1}$ are annihilated by some power of $\pi$. Indeed, the $\cO_{F_{\lbe K}}$-module $\Omega_{F/R}^{1}\otimes_R K=\Omega_{F_{\lbe K}\lbe/K}^{1}$ has trivial fibers by \cite[\S 2.2, Proposition 2, p. 34]{blr}. Consequently, the $K$-vector space $\Omega_{F/R}^{1}\otimes_{R}K$ is trivial and therefore every element of $\Omega_{F/R}^{1}$ is annihilated by some power of $\pi$. Since $\Omega_{F/R}^{1}$ is finitely generated, our claim follows. We conclude that $\delta(F\le)=\mr{length}_{\lbe R}(\omega_{F/R}^{1}\le)$. Further, we claim that, if $F^{\e\circ}$ is the open and closed subgroup scheme of $F$ defined in Subsection \ref{c-et}, then $\omega_{\lbe F\be/\lbe R}^{1}=\omega_{F^{\lle\circ}\!\lbe/\lbe R}^{1}$  In effect, if $\iota\colon F^{\lle\circ}\to F$ is the canonical open immersion, then $\Omega^{1}_{F^{\lle\circ}/R}=\iota^{*}\e\Omega_{F/R}$. Further, the unit section of $F$ factors through $F^{\lle\circ}$ and we conclude that $\omega^{1}_{F^{\lle\circ}\be/R}=\omega_{F/R}$, as claimed. Consequently, $\delta(F\le)=\delta(F^{\lle\lle\circ})$.
\item[(d)] Assume that $\textrm{char}\, k=p>0$ and $F\neq 1$ is a finite, flat, connected and generically \'etale $R$-group scheme. By \cite[Lemma 6.1, p.~220]{mr}, the affine $R$-algebra of $F$ has the form
\[
A=R\le[X_{1},\dots X_{n}]/(\Phi_{1},\dots,\Phi_{n}\lbe),
\]
where $n$ is a positive integer, $(\Phi_{1},\dots,\Phi_{n}\lbe)$ is a regular sequence in $R\le[X_{1},\dots X_{n}]$ and
\[
\Phi_{j}\equiv X_{j}^{\e p^{\lambda_{j}}}\!\!\!\!\!\!\pmod{\mm}
\]
for some $\lambda_{j}\in\N$, where $1\leq j\leq n$. By \cite[\S 3.3, Lemma 2, p.~66]{blr} (and the fact that $\dim F_{\mr s}=0$), the ideal of $R$ generated by the constant term of  $\textrm{det}(\partial\le \Phi_{\lbe j}/\partial\le X_{i})$ equals
$\mm^{\delta(F)}$, where $\delta(F)$ is the defect of smoothness of $F$ \eqref{df}. The ideal $\mm^{\delta(F)}\subset R$ is called the {\it absolute different} of $F$.	\end{enumerate}   
\end{remarks}

\smallskip

Let $r\geq 1$ and $i\geq 0$ be integers and recall the change of level morphism $\varrho_{r}^{\e i}=\varrho_{r,\le F}^{\e i}\colon \mathrm {Gr}_{\lbe r+i}^{R}(\lbe F\le)\to\mathrm {Gr}_{\lbe r}^{R}(\lbe F\le)$ \eqref{nm-hom}. Since $\varrho_{r}^{\e i}$ is quasi-compact and separated by Corollary \ref{rqc}, the schematic image $H_{r}^{\le i}$ of $\varrho_{r}^{\e i}$ exists by \cite[Propositions 6.1.4, p.~291, and 6.10.5, p.~325]{ega1}. Now, by \cite[${\rm{VI}}_{\rm A}$, Proposition 6.4 and Corollary 6.6(i)]{sga3}, $\varrho_{r}^{\e i}$ factors as
\begin{equation}\label{hmi2}
\xymatrix{\mathrm {Gr}_{\lbe r+i}^{R}(\lbe F\le)\ar[d]\ar[r]^(.5){\varrho_{r}^{\e i}}& \mathrm {Gr}_{\lbe r}^{R}(\lbe F\le).\\
H_{r}^{\le i}\e\ar@{^{(}->}[ur]&
}
\end{equation}
In the above diagram (of $k$-group schemes of finite type), the vertical (respectively, oblique) morphism is faithfully flat (respectively, a closed immersion). Further, since $\mathrm {Gr}_{\lbe r}^{R}(F\le)$ is affine, $H_{r}^{\le i}$ is affine as well by \cite[II, Proposition 1.6.2, (i) and (ii)]{ega}.

\begin{lemma}\label{f/k} Let $r\geq 1$ and $l\geq 0$ be integers. If $H_{r}^{\le l}$ is finite over $k$, then $H_{r}^{\le i}$ is finite over $k$ for every integer $i\geq l$.
\end{lemma}
\begin{proof} For every integer $i\geq l$, there exists a canonical commutative diagram of $k$-group schemes of finite type
\[
\xymatrix{\mathrm {Gr}_{\lbe r+i}^{R}(\lbe F\le)\ar@/_1.5pc/[dd]_(.45){\varrho_{r}^{\e i}}\ar[d]\ar[rr]^(.5){\varrho_{r+l}^{\e i-l}}&& \mathrm {Gr}_{\lbe r+l}^{R}(\lbe F\le)\ar[d]\ar@/^1.5pc/[dd]^(.45){\varrho_{r}^{\e l}}\\
H_{r}^{\le i}\,\ar@{^{(}->}[d]\ar@{^{(}->}[rr]&& H_{r}^{\le l}\,\ar@{^{(}->}[d]\\
\mathrm {Gr}_{\lbe r}^{R}(\lbe F\le)\ar@{=}[rr]&&\mathrm {Gr}_{\lbe r}^{R}(\lbe F\le),
}
\]
where the middle horizontal arrow is a closed immersion which identifies 
$H_{r}^{\le i}$ with the schematic image of the restriction of $\varrho_{r}^{\e l}$ to the schematic image of $\varrho_{r+l}^{\e i-l}$ \cite[Proposition 6.10.3, p.~324]{ega1}. Consequently, if $H_{r}^{\le l}$ is finite over $k$, then $H_{r}^{\le i}$ is finite over $k$ as well by \cite[II, Proposition 6.1.5, (i) and (ii)]{ega}.
\end{proof}

Now observe that, by Lemma \ref{rat-pts}(ii) and Remark \ref{vrk}(b), diagram \eqref{hmi2} induces a commutative diagram of groups
\[
\xymatrix{F(R_{\le r+i})\ar[d]\ar[r]& F(R_{\e r}),\\
H_{r}^{\le i}(k)\ar@{^{(}->}[ur]&
}
\]
where the horizontal arrow is induced by the canonical projection $R_{\le r+i}\to R_{\e r}$. If $k$ is algebraically closed, the vertical map in the preceding diagram is surjective by \cite[I, \S3, Corollary 6.10, p.~96]{dg}, whence
\begin{equation}\label{imf}
H_{r}^{\le i}(k)=\mr{Im}\e[F(R_{\le r+i})\to F(R_{\e r})] \qquad \text{if } k=\kbar\,.
\end{equation}
Note also that the canonical projection $R\to R_{\e r}$ induces a group homomorphism
$F(R\le)\to F(R_{\e r})$.

\begin{lemma}\label{gr-appr} There exist integers $c\geq 1$, $d\geq 0$ and $M\geq 0$ such that, if $r\geq M$, then
\[
\mr{Im}[\le F(\lbe R_{\e c\le r+d})\to F(R_{\e r}\lbe)]=\img\le[\le F(R)\to F(R_{\e r}\lbe)].
\] 
\end{lemma}
\begin{proof} By \cite[Corollary 1, p.~59]{gre3}, there exist integers $N\geq 1,c\geq 1$ and $s\geq 0$ such that, for every integer $\zeta\geq N$,
\begin{equation}\label{gr-2}
\img[\le F(\lbe R_{\e \zeta }\lbe)\to F(R_{\le \lfloor{\zeta/c}\rfloor-s}\lbe)]=\img\le[\le F(R)\to F(R_{\le \lfloor{\zeta/c}\rfloor-s}\lbe)].
\end{equation}
Set $d=sc$ and $M=\mr{max}\{\lceil (N-d\,)/c\rceil,0\}$. If $r\geq M$, then $\zeta=c\le r+d\geq N$ and $\lfloor{\zeta/c}\rfloor=r+s$.  The  assertion of the lemma is now immediate from \eqref{gr-2}. 
\end{proof}

\begin{proposition}\label{fdim} Let $c\geq 1$, $d\geq 0$ and $M\geq 0$ be as in Lemma {\rm \ref{gr-appr}}. If $r\geq M$ and $i\geq (c-1)r+d$, then $H_{r}^{\le i}$ is finite over $k$.
\end{proposition}
\begin{proof} By Proposition \ref{unr3} and faithfully flat and quasi-compact descent \cite[${\rm{IV}}_{2}$, Proposition 2.7.1(xv)]{ega}, we may assume that $k$ is algebraically closed. By \eqref{imf} and Lemma \ref{gr-appr}, there exist integers $c\geq 1,d\geq 0$ and $M\geq 0$ such that, if $r\geq M$, then $H_{r}^{\le (c-1)r+d}(k)=\img[F(R)\to F(R_{\e r})]$. Thus, since $F(R)$ is finite, $H_{r}^{\le (c-1)r+d}(k)$ is finite as well. It follows that the topological space $\big|H_{r}^{\le (c-1)\le r+d}\big|$ has only finitely many closed points, whence $H_{r}^{\le (c-1)\lle r+d}$ is finite over $k$ by \cite[Corollary 6.5.3, Proposition 6.5.4 and (6.5.6)]{ega1}. The proposition is now immediate from Lemma \ref{f/k}.
\end{proof}

\begin{remarks}\label{gr-3} \indent
\begin{enumerate}
\item[(a)] Let $c\geq 1$, $d\geq 0$ and $M\geq 0$ be as in the proposition and let $t\geq 0$ be any integer. If $r\geq \max\{M/c^{\e t},d/c^{\e t}\}$ is an integer, then $rc^{\e t}\geq M$ and
\[
r\le c^{\e t+1}=(c-1)rc^{\e t}+rc^{\e t}\geq (c-1)rc^{\e t}+d.
\]
Consequently, $H_{\lbe rc^{\le t}}^{rc^{\e t+1}}$ is a finite $k$-subgroup scheme of $\mr{Gr}^{ R}_{\lbe rc^{\le t}}(\lbe F)$.
\item[(b)]  If $F$ is {\it \'etale} over $R$, then Corollary \ref{fetR} shows that $\varrho_{r}^{\e i}\colon \mathrm{Gr}_{\lbe r+i}^{R}(\lbe F\le)\to \mathrm{Gr}_{\lbe r}^{R}(\lbe F\le)$ is an isomorphism of $k$-group schemes for every $r\geq 1$ and $i\geq 0$. Consequently, for every integer $r\geq 1$, $\mathrm{Gr}_{\lbe r}^{R}(\lbe F\le)\simeq \mathrm{Gr}_{1}^{R}(\lbe F\le)=F_{\mr s}$. In particular, $H^{\le i}_{r}\simeq F_{\mr s}$ is finite over $k$ for every pair of integers $r\geq 1$ and $i\geq 0$.
\end{enumerate}
\end{remarks}

\begin{lemma}\label{gr-3c} If $F$ is generically \'etale, then $H_{\lbe\lbe r}^{\lle r}$ is  finite over $k$ for every integer $r\geq\delta(F\le)+2$, where $\delta(F\le)$ is the defect of smoothness of $F$ \eqref{df}.
\end{lemma}
\begin{proof}
Recall that $H_{\lbe\lbe r}^{\lle r}$ is the schematic image of 
$\varrho_{r}^{\e r}\colon \mathrm{Gr}_{\lbe 2\lle r}^{R}(\lbe F\le)\to \mathrm{Gr}_{\lbe r}^{R}(\lbe F\le)$. If $F$ is \'etale over $R$ (which is the case if $\textrm{char}\, k=0$), then the lemma is trivially true by Remark \ref{gr-3}(b). Assume now that $\textrm{char}\, k=p>0$ and recall the canonical sequence of $R$-group schemes \eqref{c-e-k}
\[
1\to F^{\le\circ}\to F\to F^{\e \et}\to 1.
\]
The preceding sequence induces the following commutative diagram of $k$-group schemes of finite type which is exact for both the fppf and fpqc topologies on $(\mr{Sch}/k)$:
\[
\xymatrix{1\ar[r]&\mr{Gr}^{R}_{\lbe 2\lle r}(F^{\le\circ}\lle)\ar[r]\ar[d]^{\varrho_{\le r,\le F^{\lle\circ}}^{\lle r}}&\mr{Gr}^{ R}_{\lbe 2\lle r}(F )\ar[r]\ar[d]^{\varrho_{\le r,\le F}^{\e r}}& \mr{Gr}^{ R}_{\lbe 2\lle r}(F^{\e\et})\ar[d]_{\simeq}^{\varrho_{\lbe r,\le F^{\le\et}}^{r}}\\
1\ar[r]&\mr{Gr}^{\lbe R}_{\lbe r}(F^{\e\circ})\ar[r]&\mr{Gr}^{\lbe R}_{\lbe r}(F )\ar[r]& \mr{Gr}^{ R}_{\lbe r}(F^{\e \et}),
}
\]
where the right-hand vertical morphism is an isomorphism of finite $k$-group schemes by Remark \ref{gr-3}(b). Since $\mr{Gr}^{ R}_{\lbe r}(F^{\e \et})\lbe\big(\e\kbar\e\big)$ is a finite group, we conclude from the diagram and the equality $\delta(F^{\e\circ}\lle)=\delta(F\le)$ (recall the proof of Proposition \ref{fdim} and see Remark \ref{rm.df}(c)) that it suffices to prove the lemma when $F=F^{\e \circ}$. Thus we may assume that $F=F^{\e \circ}$. We will show that, in this case, $c=1$, $d=\delta(F\le)$ and $M=\delta(F\le)+2$ are valid choices in Lemma \ref{gr-appr}.
Since $M=\delta(F\le)+2\geq d=\delta(F\le)$, it is then possible to choose $i=r\geq M=\delta(F\le)+2$ in Proposition \ref{fdim}, which will complete the proof.

Choose an isomorphism $F\simeq\spec\be(R\le[X_{1},\dots X_{n}]/(\Phi_{\lbe 1},\dots,\Phi_{\lbe n}))$ as in Remark \ref{rm.df}(d), let $J(X_{1},\dots X_{n})=(\partial\e \Phi_{\lbe j}\lbe/\partial\le X_{i})$ be the corresponding Jacobian matrix and set $J=J(0,\dots,0)\in M_{n\times n}(R\le)$. Further, let $\widetilde{J}$ denote the adjoint matrix of $J$ and recall the uniformizing element $\pi$ of $R$ fixed previously. Since $\textrm{det}\e J=u\e\pi^{\le\delta(F\le)}$ for some unit $u\in R^{\times}$, we have
\begin{equation}\label{detc}
u^{-1}\be J\le\widetilde{J}=\pi^{\delta(F)} I_{n},
\end{equation}
where $I_{n}\in M_{n\times n}(R\le)$ is the identity matrix. We will now adapt the proof of \cite[Lemma 2, p.~567]{gre3} to show that, if  $\mu\geq \delta(F)+1$  and $x=(x_{1}, \dots, x_{n})\in R^{n}$ satisfies $\Phi_{\lbe j}(x)\equiv 0$ $\text{mod }\pi^{\delta(F)+\mu+1}$ if $1\leq j\leq n$, then there exists a common zero $y\in R^{\le n}$ of the polynomials $\Phi_{\lbe j}$ such that $x\equiv y$ (mod $\pi^{\mu+1}$). Taking $\mu=r-1$, we conclude that we may, in fact, choose $c=1$, $d=\delta(F)$ and $M=\delta(F)+2$ in Lemma \ref{gr-appr}.

Now, since the coefficients of $J\in M_{n\times n}(R\le)$ are those of the linear terms of the polynomials $\Phi_{\lbe j}$, which are not affected by the substitutions $X_{i}\mapsto X_{i}-x_{i}$, we may assume that $x=(0,\dots,0)$.
Henceforth we will use  multi-index notation, i.e., $\Phi= (\Phi_{1},\dots,\Phi_{n})$ and $X=(X_{1},\dots,X_{n})$. By hypothesis $\Phi(x)=\Phi(0)=\pi^{\le \mu+\delta(F)+1}a$ for some $a\in R^{\le n}$. Now \eqref{detc} yields
$\Phi(0)=\pi^{\le \mu}J(u^{-1}\pi\le\widetilde{J}\le a)$ and $\pi^{\le 2\mu}I_{n}=\pi^{\le\mu}J(u^{-1}\pi^{\mu-\delta(F\le)}\widetilde{J})$, where $\mu-\delta(F)\geq 1$. Consequently, by Taylor's formulas for the polynomials $\Phi_{\lbe j}$, we have
\[
\Phi(\pi^{\le \mu}\be X)=\Phi(0) +\pi^{\le \mu}\be J\le X+\pi^{\le 2\mu}(\dots)=\pi^{\le \mu }\be J\!\left(\pi u^{-1}\widetilde{J}a+X+u^{-1}\pi^{\le\mu-\delta(F)}\widetilde{J}(\dots)\right).
\]
Set $\Psi(X)=\pi u^{-1}\widetilde{J}a+X+u^{-1}\pi^{\le\mu-\delta(F)}\widetilde{J}(\dots)$, so that $\Phi(\pi^{\le \mu}\be X)=\pi^{\le \mu }\be J\le\Psi(X)$. Since $\mu-\delta(F)\geq 1$, we have $\Psi(0)\equiv 0\pmod{\pi}$ and $\textrm{det} (\partial\le \Psi_{\lbe j}/\partial\le X_{i})(0)\neq 0$. Thus, by \cite[Lemma 1, p.~567]{gre3}, there exists $z\in R^{\le n}$ such that $z\equiv 0\pmod \pi$ and $\Psi(z)=0$. We conclude that $y=\pi^{\le\mu}z$ satisfies $y\equiv 0 \pmod{\pi^{\le\mu+1}}$
and $\Phi(\e y)=\Phi(\pi^{\le\mu}z)=\pi^{\le\mu}\be J\le\Psi(z)=0$. The proof is now complete.
\end{proof}

Now consider the projective limit of affine $k$-group schemes of finite type
\begin{equation}\label{pg}
\gr(F\le)=\varprojlim\grn(F\le),
\end{equation}
where $\grn(F)$ is given by  \eqref{def} and the transition morphisms in the limit are the change of level morphisms $\varrho_{n,\le F}^{\e i}\colon\grni(F)\to\grn(F)$. By \cite[$\text{IV}_{3}$, Proposition 8.2.3]{ega}, \eqref{pg} is an affine $k$-group scheme.  
Now set 
\[
\pgr(F)=\gr\lbe(F\le)^{\pf}.
\]
		
\begin{proposition}\label{grfin}  The underlying topological space of the affine $k$-group scheme $\gr(F)$ \eqref{pg} is finite and each of its residue fields is an algebraic extension of $k$. In particular, $\dim \gr(F\le)=0$.  

\end{proposition}
\begin{proof} Let $c\geq 1,d\geq 0$ and $M\geq 0$ be as in Lemma \ref{gr-appr} and let $r\geq\max\{M,d\e\}$ and $t\geq 0$ be integers. Since $r\geq \max\{M/c^{\e t},d/c^{\e t}\}$, Remark \ref{gr-3}(a) shows that $H_{\lbe rc^{\le t}}^{rc^{\e t+1}}$ is a finite $k$-subgroup scheme of $\mr{Gr}^{ R}_{\lbe rc^{\le t}}(\lbe F)$. Now consider the following particular case of diagram \eqref{hmi2}
\[
\xymatrix{\mathrm {Gr}_{\lbe rc^{\le t}\lbe(c+1)}^{R}(\lbe F\le)\ar[d]\ar[rr]^(.55){\varrho_{\lbe rc^{\le t}}^{\e rc^{\le t+1}}}&& \mathrm {Gr}_{\lbe rc^{\le t}}^{R}(\lbe F\le)\\
H_{\lbe rc^{\le t}}^{\lle rc^{\le t+1}}\ar@{^{(}->}[urr]&&
}
\]
and set $H=\varprojlim_{\e t}H_{\lbe rc^{\le t}}^{\lle rc^{\le t+1}}$. Then,  by Lemma \ref{paux}, the projective limit over $t$ of the preceding diagram yields a factorization of the  identity map of $\gr(F\le)$:
\begin{equation}\label{tr-h}
\xymatrix{\gr\lbe(F\le) \ar[d]\ar[rr]^(.45){ 1_{\lle\gr\lbe(F\lle)} }&&\gr\lbe(F\le),\\
H^{\phantom{.}} \ar@{^{(}->}[urr]&&
}
\end{equation} 
where the oblique map is a closed immersion by \cite[Proposition 3.2]{bga}. In fact, the latter map is an isomorphism by the commutativity of the diagram, for if $\gr(F)=\spec A$ and $H=\spec(A/I)$ for a suitable ideal $I$ of $A$, then the identity map of $A$ factors through $A/I$, whence $I=0$. The proposition now follows by applying \cite[Proposition 3.6]{bga} to $H$.
\end{proof}

\begin{proposition}\label{grfin2}  Assume that $k$ is perfect. Then there exists a canonical isomorphism of finite and \'etale $k$-group schemes $\gr\be(F\le)_{\red}=\pgr\lbe(F\lle)\le$. 
\end{proposition}
\begin{proof} We will show that $\gr(F)_{\red}$ is finite and \'etale, which will prove that $\gr(F)_{\red}$ is perfect and canonically isomorphic to $\pgr\lbe(F\lle)$ by \cite[(5.15) and Proposition 5.19]{bga}. By the proof of Proposition \ref{grfin}, the closed immersion $H\to \gr(F)$ in \eqref{tr-h} is an isomorphism. Therefore, in the notation of that proof, $\gr(F)=\varprojlim H_{\lbe rc^{\le t}}^{\lle rc^{\le t+1}}$. Let $A_{\e t}$ denote the Hopf $k$-algebra of the finite $k$-group scheme $H_{\lbe rc^{\le t}}^{\lle rc^{\le t+1}}$. Then $\gr(F)=\spec A$, where $A=\varinjlim A_{\e  t}$. Further, $\gr(F)_{\red}$ inherits a $k$-group structure from $\gr(F)$ since $\gr(F)_{\red}=\spec A_{\e\red}=\varprojlim \spec A_{\e t,\le\red}$ and $\spec A_{\e t,\le\red}$ is a finite $k$-group scheme by \cite[Theorem p.~52 and Exercise 9, p.~53]{wa}. 
Further, $A_{\e\red}\otimes_{k}\kbar$ is reduced by \cite[\S6.2, Theorem p. 47]{wa} and thus $\gr(F)_{\red}\times_{k}\spec \kbar= (\gr(F)\times_{ k}\spec \kbar\le)_{\red}$. By Proposition \ref{r-unr3} and faithfully flat and quasi-compact descent \cite[$\mr{IV}_2$, 2.7.1(xv)]{ega}, we may now assume that $k=\kbar$.  
The $k$-group schemes $\spec A_{\e t,\red}$ are finite and reduced and therefore finite and constant (i.e., with trivial Galois action). Consequently, $\gr(F)_{\red}$ is a profinite $k$-group scheme. Since  $|\gr(F)_{\red}|=|\gr(F)|$, we conclude from Proposition \ref{grfin} that the $k$-group scheme $\gr(F)_{\red}$ has finitely many points. Therefore  $\gr(F)_{\red}$ is a finite and constant $k$-group scheme, which completes the proof.
\end{proof}

\begin{remarks}\label{rm.grf} \indent
\begin{enumerate}
\item[(a)]  Let $p$ be a prime and let $R$ be an equal characteristic $p$ ring. It follows from  \eqref{alp} that $\gr(\alpha_{p})\simeq \spec (k[\le x_{\le 0},\dots, x_{n},\dots\le]/(x_{i}^{\le p}, i\geq 0)$
has dimension zero but is not a finite $k$-group scheme. Further, the Hopf algebra of $\gr(\alpha_{p})$ is a non-noetherian ring.

\item[(b)]  The isomorphism \eqref{alp} also shows that the closed immersions 
\[
(\grn(X_{\red}) )_{\red} \to \grn(X)_{\red}
\]
are not isomorphisms in general. In effect, if $X=\alpha_{p}$, then the preceding morphism is the map $0\to \spec (k[\le x_{\le r},\dots, x_{n-1}])$, where $r=\lfloor (n+p-1)/p\rfloor$.

\item[(c)]  By the left exactness of the inverse limit functor on the category of groups and the left exactness of the Greenberg functor of finite level $\grm$, the connected-\'etale exact sequence \eqref{c-e-k} induces a short exact (for both the fppf and fpqc topologies) sequence of $k$-group schemes 
\begin{equation*} 
0\to \gr(F^{\e\circ})\to \gr(F)\to \gr(F^{\e \et}).
\end{equation*}
Now, by  Remark \ref{gr-3}(b), the $k$-group scheme $\gr(F^{\e\et})=F_{\!\mr s}^{\e \et}$ is  finite and \'etale. Therefore the map $\gr(F)\to \gr(F^{\e\et})$ factors through $\pi_{0}(\gr(F))$, whence the closed immersion $\gr(F)^{0}\to \gr(F)$ factors through $\gr(F^{\e\circ})$. Thus there exists a canonical morphism  $\gr(F)^{0}\to \gr(F^{\e\circ})$ which, in general, is not an isomorphism. For example, let $R=W(\mathbb F_{2})$ and consider the connected finite $R$-group scheme $F^{\e\circ}=\mu_{2,R}$ of square roots of unity (cf. \S \ref{c-et}). We have $F^{\e\circ}\be(R)=F^{\e\circ}\be(K)=\{\pm 1\}$ and $\pgr(F)$ is finite and \'etale by Proposition \ref{grfin2}(ii).  We will see in Proposition \ref{gr-pts2} below that $F^{\e\circ}\be(R)=\gr(F^{\e\circ})(k)$. Further, by \cite[(5.5)]{bga}, we have  $\gr(F^{\e\circ})(k)=\pgr(F^{\e\circ})(k)$. Consequently, the finite and \'etale $k$-group scheme $\pgr(F^{\e\circ})$ is disconnected. Now $\gr(F^{\e\circ})$ is homeomorphic to $\pgr(F^{\e\circ})$ (see \cite[Remark 5.18(b)]{bga}), whence $\gr(F^{\e\circ})$ is disconnected as well.

\item[(d)]  By Remark \ref{gr-3}(b) and the exactness of $0\to \grm(F^{\e\circ})\to \grm(F)\to \grm(F^{\e \et})$, we have $\dim \grm(F^{\e \circ})=\dim \grm(F)$ for every $m\geq 1$. 

\end{enumerate}
\end{remarks}

\section{The Greenberg realization of an adic formal scheme}\label{forgr}

We continue to assume that $R$ is {\it complete} with perfect residue field in the unequal characteristics case. Recall that $S=\spec R$, $\mm$ denotes the maximal ideal of $R$, $R_{\le n}=R/\mm^{\le n}$ and $S_{n}=\spec R_{\le n}$, where $n\in\N$. For details on $\mm$-adic formal schemes, see Subsection \ref{for-sch}. Unadorned limits/inductive systems below are indexed by $\N$. Let $\m S=\widehat{S}$ be the formal completion of $S$ along $S_{\le 1}=\spec k$. Then $\m S=\spf R=\varinjlim S_{n}$ is an adic formal scheme globally of finite ideal type. 
		
\smallskip
			
Let $Y$ be a $k$-scheme and recall the Zariski sheaves $\s R_{\le n}(\be\cO_{Y}\lbe)$ on $Y$ \eqref{zdef}, the $R_{\le n}$-schemes $\hrn(Y)=(|Y|, \s R_{\le n}(\cO_{Y}))$ and the nilpotent immersions $\delta_{\le Y}^{\e i\le,\e j-i}\colon h_{\le i}^{\lbe R}(Y)\to h_{\le j}^{\lbe R}(Y)$ \eqref{dnj}, where $1\leq i\leq j$. By  \cite[I, Proposition 10.6.3 p. 412]{ega1} 
\[
\Fhr(Y)=\varinjlim \hrn(Y)
\]
is a formal $\m S$-scheme equal to $(|Y|, \widetilde{\s R}\le (\be\s O_{Y}\be))$, where $\widetilde{\s R}(\be\s O_{Y}\be)$ is the Zariski sheaf on $Y$ defined by \eqref{zsf}.

\begin{example} If $k$ is perfect of positive characteristic $p$ and $R=W(k)$ is the ring of $p\e$-typical Witt vectors on $k$, then  $\widetilde{\s R}=\mathbb W$ is the $k$-ring scheme of  Witt vectors of infinite length. Using \eqref{zsf2}, $\Fhr(Y)=W\be(Y)$ is the formal scheme considered in \cite[\S1.5, p.~511]{ill}. Note that, as illustrated in Remark \ref{power}(b), the inclusion $(V W_{\! n}(\cO_{Y}))^{m}\subseteq V^{\le m}\be\lle(W_{\! n}(\cO_{Y}))$ can be strict, whence $W(Y)$ is not, in general, an adic formal scheme. However, the following holds. \end{example}

\begin{proposition}\label{fhr-a} Let $Y$ be a $k$-scheme. Assume that
\begin{enumerate}
\item[(i)] $R$ is an equal characteristic ring, or
\item[(ii)] $R$ is a ring of unequal characteristics and $Y$ is a perfect $k$-scheme.
\end{enumerate}
Then $\Fhr(Y)$ is an adic formal $\m S$-scheme. 
\end{proposition}
\begin{proof} By Remarks \ref{fgt} and \ref{power}(c)-(d), the projective system $(\s R_{ n}(\be\s O_{Y}\be))$ satisfies the conditions of \cite[Proposition 2.1.36, p.~125]{ab} (see also Remark \ref{term}(c)). Consequently, $\Fhr(Y)$ is, in fact, an adic formal $\m S$-scheme. 
\end{proof}

\begin{corollary}\label{fhr-aff} Let $A$ be a $k$-algebra. If $R$ is a ring of unequal characteristics, assume that $A$ is perfect. Then $\Fhr(\spec A)=\spf\widetilde{\s R}(A)$.
\end{corollary}
\begin{proof} This follows by combining the proposition, \eqref{ra} and \eqref{hrn-aff}.
\end{proof}

Let $\mathrm{Ind}(\m S)$ denote the category whose objects are the inductive systems of ${\m S}_{n}$-schemes $(\m X_{\e n})$, where every transition morphism $\m X_{\e n}\to \m X_{\e n+1}$ is a nilpotent immersion of $\m S_{n+1}$-schemes. The morphisms $(\m X_{\e n}^{\e\prime}) \to (\m X_{\e n})$ in $\mathrm{Ind}(\m S)$ are given by $\m S_{n}$-morphisms $f_{n}\colon\m X_{\e n}^{\e\prime} \to \m X_{\e n}$ that make the evident squares commute. If the latter squares are cartesian, then we recover the full subcategory $\mathrm{Ad}\e$-$\e\mr{Ind}(\m S)$ of $\mathrm{Ind}(\m S)$ of adic inductive $(\m S_{n})$-systems introduced in Subsection \ref{for-sch}. Now, for every object $(\m X_{\e n})$ of $\mathrm{Ind}(\m S)$, consider the contravariant functor 
\begin{equation}\label{for-fun0}
(\mathrm{Sch}/k)\to (\mathrm{Sets}), \quad Y\mapsto \Hom_{\e\mathrm{Ind}(\m S)}\!\left( (\hrn(Y\le) ) , (\m X_{\e n})  \right).
\end{equation}

\begin{proposition-definition}\label{for-def} For every object $\m X_{\bullet}=(\m X_{\e n})$ of $\mathrm{Ind}(\m S)$, the functor \eqref{for-fun0} is represented by a $k$-scheme which is denoted by $\gr(\m X_{\bullet})$. Thus, for every $k$-scheme $Y$, there exists a canonical bijection
\begin{equation}\label{for-adj0}
\Hom_{\e k}\big(Y, \gr(\m X_{\bullet})\big)=\Hom_{\e\mathrm{Ind}(\m S)}\!\left((\hrn(Y)),\m X_{\bullet}\right).
\end{equation}
\end{proposition-definition}
\begin{proof} 
Since the transition morphisms of the inductive system $\m X_{\bullet}$ are universal homeomorphisms, the transition morphisms of the projective system $(\grn(\m X_{\e n}))$ are affine (see the proof of Proposition \ref{aff}). Thus 
\begin{equation}\label{for-def2}
\gr(\m X_{\bullet})\overset{\rm def.}{=}\varprojlim\grn(\m X_{\e n})
\end{equation}
exists in $(\mathrm{Sch}/k)$. The adjunction formula \eqref{for-adj0} now follows from \eqref{plim} and \eqref{adj-bis}. 
\end{proof}

We now recall from Subsection \ref{for-sch} the equivalence of categories  
\begin{equation}\label{eai}
(\text{Ad-For}/\m S)\to \mathrm{Ad}\e\text{-}\e\mr{Ind}(\m S), \quad \m X=\varinjlim \m X_{\e n}\mapsto (\m X_{\e n}).
\end{equation}
It follows from Proposition \ref{for-def} and its proof that, for every object $\m X=\varinjlim \m X_{\e n}$ in $(\text{Ad-For}/\m S)$, the $k$-scheme 
\begin{equation*} 
\gr(\m X)\overset{\rm def.}{=}\gr((\m X_{\e n}))=\varprojlim \grn(\m X_{\e n}) 
\end{equation*}
exists. If we set, for $n\in\N$,
\begin{equation}\label{grnx}
\grn(\m X)\overset{\rm def.}{=}\grn(\m X_{\e n}),
\end{equation}
then we can write $\gr(\m X)=\varprojlim \grn(\m X)$. Thus we have defined a covariant functor
\begin{equation}\label{for-fun2}
\gr\colon (\text{Ad-For}/\m S)\to (\mathrm{Sch}/k), \quad \m X\mapsto\gr(\m X),
\end{equation}
which, by \eqref{grnsn}, satisfies 
\begin{equation}\label{for-grs}
\gr(\m S)=\spec k. 
\end{equation}
Recall that, by Lemma \ref{fhr-a}(i), if $R$ is an {\it equal characteristic} ring and $Y$ is any $k$-scheme, then $\Fhr(Y)$ is an object of $(\text{Ad-For}/\m S)$. Thus, by the adjunction formula \eqref{for-adj0} and the equivalence of categories \eqref{eai}, there exists a canonical bijection 
\begin{equation*} 
\Hom_{\e k}\big(Y, \gr(\m X\le)\big)=\Hom_{ (\text{Ad-For}/\m S)}(\Fhr(Y),\m X\le),
\end{equation*}
i.e., $\gr\colon (\text{Ad-For}/\m S)\to (\mathrm{Sch}/k)$ is right  adjoint to $\Fhr\colon (\mathrm{Sch}/k)\to (\text{Ad-For}/\m S)$. The corresponding statement in the unequal characteristics case is false. However, the following generalization of \cite[line 10, p.~256]{ns2} is valid.

\begin{lemma}\label{for-rat-pts} Let $\m X$ be an adic formal $\m S$-scheme and let $A$ be a $k$-algebra which is assumed to be perfect if $R$ is a ring of unequal characteristics. Then $\gr(\m X)(A)=\m X(\widetilde{\s R}(A))$. 
\end{lemma}
\begin{proof} Since $\gr(\m X)(A)=\Hom_{(\text{Ad-For}/\m S)}(\Fhr(\spec A),\m X\e)$ by \eqref{for-adj0}, the lemma follows at once from Corollary \eqref{fhr-aff}.
\end{proof}

Now let $X$ be an $S$-scheme and let $\widehat{X}$ be the formal completion of $X$ along its special fiber $X\times_{S}S_{1}$. Then $\widehat{X}$ is an object of $(\text{Ad-For}/\m S)$. Further, by \eqref{hat1},
\begin{equation}\label{hat2}
\widehat{X}\,=X\times_{S}\m S=\varinjlim\,(X\times_{S}S_{n}).
\end{equation}
In particular, if $S^{\e\prime}=\spec R^{\e\prime}$,  where $R^{\e\prime}$ is a finite extension of $R$ of ramification index $e$, then, by \eqref{ram-ten},
\begin{equation}\label{hat3}
\m S^{\prime}\overset{\rm def.}{=}\widehat{S^{\le\prime}}=S^{\e\prime}\times_{S}\m S=\varinjlim \,(S^{\e\prime}\times_{S}S_{n})=\varinjlim \,S^{\e\prime}_{ne}.
\end{equation}
More generally, if $X^{\prime}$ is any $S^{\e\prime}$-scheme,
\begin{equation}\label{hat4}
\widehat{X^{\prime}}=X^{\prime}\times_{S}\m S=X^{\prime}\times_{S^{\prime}}\m S^{\prime}=
\varinjlim \,(X^{\prime}\times_{S^{\e\prime}}S^{\e\prime}_{ne}\le).
\end{equation}
Let $k^{\e\prime}/k$ be a (possibly infinite) subextension of  $\e\kbar/k$ and let $R^{\e\prime}$ be the extension of $R$ of ramification index 1 which corresponds to $k^{\e\prime}/k$. Set $S^{\e\prime}=\spec R^{\e\prime}$. Since the maximal ideal $\mm^{\e\prime}$ of $R^{\e\prime}$ equals $\mm R^{\e\prime}$, \eqref{hat2} shows that
\begin{equation}\label{hat5}
\m S^{\prime}\overset{\rm def.}{=}\widehat{S^{\le\prime}}=S^{\e\prime}\times_{S}\m S=\spf\e \widehat{R^{\e\prime}},
\end{equation}
where $\widehat{R^{\e\prime}}$ is the $\mm^{\e\prime}$-adic completion of $R^{\e\prime}$.

\begin{proposition}\label{pro-gr} Consider, for a morphism of formal schemes, the property of being:
\begin{enumerate}
\item[(i)] quasi-compact;
\item[(ii)] quasi-separated;
\item[(iii)] separated;
\item[(iv)] a closed immersion;
\item[(v)] affine;
\item[(vi)] an open immersion;
\item[(vii)] formally \'etale.
\end{enumerate}
If $\bm{P}$ denotes one of the above properties and $f\colon\m X\to \m Y$ is a morphism of adic formal $\m S$-schemes with property $\bm{P}$, then the corresponding morphism of $k$-schemes $\gr(\le f\lle)\colon \gr(\m X)\to\gr(\m Y)$ has property $\bm{P}$ as well.
\end{proposition}
\begin{proof} Let $(f_{n})\colon (\m X_{n})\to (\m Y_{n})$ be the morphism of adic inductive $(S_{n})$-systems which corresponds to $f$. If $\bm{P}$ denotes one of properties (i)-(v), then $f_{n}\colon \m X_{n}\to \m Y_{n}$ has property $\bm{P}$ for every $n\in \N$ by Lemma \ref{for-prop}. Consequently, $\grn(f_{n})\colon\grn(\m X)\to \grn(\m Y)$ has property $\bm{P}$ as well for every  $n\in \N$ by Remark \ref{rems1}(b) and Proposition \ref{gr-prop}, (i)-(iii) and (vi). Therefore $\gr(f)$ has property $\bm{P}$ by \cite[Proposition 3.2]{bga}. In the case of properties (vi) and (vii), a different argument is needed since, as noted in \cite[Example 3.5]{bga}, a projective limit of open immersions may not be an open immersion. By \cite[$\text{IV}_{4}$, Proposition 17.1.3(i)]{ega} and Lemma \ref{for-et}, if $f$ has one of properties (vi) or (vii), then each $f_{n}$ is formally \'etale. Thus, via the identification $f_{n,\le \rm s}=f_{1}$ made above, Corollary \ref{ffet} shows that $\grn(f_{n})$ factors as
\[
\grn(\m X)\stackrel{\!\sim}{\to}\m X_{1}\times_{\m Y_{1}}\grn(\m Y)\to \grn(\m Y),
\]
where the second morphism can be identified with $f_{1}\times_{\m Y_{1}}\grn(\m Y)$. Consequently, since projective limits commute with base extension \cite[$\text{IV}_{3}$, Lemma 8.2.6]{ega}, $\gr(\le f\lle)$ factors as $\gr(\m X)\stackrel{\!\sim}{\to}\m X_{1}\times_{\m Y_{1}}\gr(\m Y)\to \gr(\m Y)$, where the second morphism can be identified with $f_{1}\times_{\m Y_{1}}\gr(\m Y)$. Thus, since $f_{1}$ is an open immersion (respectively, formally \'etale), $\gr(\le f\lle)$ is an open immersion (respectively, formally \'etale).
\end{proof}

\begin{proposition} 
Let $\m X$ and $\m Y$ be two adic formal $\m S$-schemes. Then there exists a canonical isomorphism of $k$-schemes
\[
\gr(\m X\times_{\m S}\m Y)=\gr(\m X)\times_{\gr(\m S)}\gr(\m Y).
\]
\end{proposition}
\begin{proof} Let $(\m X_{n})$ and $(\m Y_{n})$ be the adic inductive systems of $S_{n}$-schemes which correspond to $\m X$ and $\m Y$, respectively. Then $\m X\times_{\m S}\m Y=\varinjlim \,(\m X_{n}\times_{S_{n}}\m Y_{n})$ by \cite[Corollary 1.3.5, p.~267]{fk}. Further, since the functor $\grn$ respects fiber products by Remark \ref{rems1}(d), $\gr(\m X\times_{\m S}\m Y)=\varprojlim\,(\grn(\m X)\times_{\spec k}\,\grn(\m Y))$ by \eqref{grnsn} and \ref{for-def}. Now consider $\N\times\N$ as an ordered set with the product order. If $(m,n)\leq (m^{\e\prime},n^{\e\prime}\e)$, the canonical morphism
\[
\mr{Gr}^{R}_{m^{\prime}}(\m X)\times_{\spec k}\mr{Gr}^{R}_{n^{\prime}}(\m Y)\to
\mr{Gr}^{R}_{m}(\m X)\times_{\spec k}\grn(\m Y)
\]
is affine \cite[II, Proposition 1.6.2(iv)]{ega}. Thus $(\mr{Gr}^{R}_{m}(\m X)\times_{\spec k}\grn(\m Y))_{(m,n)\in\N\times\N}$ is a projective system of $k$-schemes with affine transition morphisms. On the other hand, since $\{(n,n)\colon n\in\N\}$ is cofinal in $\N\times\N$, we have, by \cite[IX, \S3, dual of Theorem 1, p.~213]{mac}, \cite[$\text{IV}_{3}$, proof of Proposition 8.2.3 and Lemma 8.2.6]{ega}, \cite[III, \S7.3, Proposition 4, p.~198]{bou3} and \eqref{for-grs},
\begin{align*}
\gr(\m X\times_{\m S}\m Y)&=\varprojlim\,(\grn(\m X)\times_{\spec k}\,\grn(\m Y))\\
&=\varprojlim_{(m,n)}\, (\mr{Gr}^{R}_{m}(\m X)\times_{\spec k}\grn(\m Y))\\
&=\varprojlim_{m} \varprojlim_{n}\, (\mr{Gr}^{R}_{m}(\m X)\times_{\spec k}\grn(\m Y))\\
&=\varprojlim_{m}\, (\mr{Gr}^{R}_{m}(\m X)\times_{\spec k}\gr(\m Y))\\
&=\gr(\m X)\times_{\gr(\m S)}\gr(\m Y).
\end{align*} 
\end{proof}

The following corollary of the proposition is immediate. 
\begin{corollary} 
If $\m X$ is an adic formal $\m S$-group scheme, then $\gr(\m X)$ is a $k$-group scheme.\qed
\end{corollary}

\begin{proposition}\label{wr-gr2}
Let $R^{\e\prime}$ be a finite extension of $R$ of ramification index $e$ and associated residue field extension $k^{\e\prime}/k\subseteq \kbar/k$. Let $\m S^{\prime}$ be given by \eqref{hat3} and let $\m X^{\e\prime}=\varinjlim \m X^{\e\prime}_{n}$ be an adic formal $\m S^{\prime}$-scheme such that $\m X^{\e\prime}_{ne}$ is admissible relative to $S^{\e\prime}_{ne}\to S_{n}$ for every $n\geq 1$ (see Definition {\rm \ref{adm}}). Then $\Re_{\e\m S^{\e\prime}\be/\m S}\big(\m X^{\e\prime}\le\big)$  and $\Re_{\e k^{\e\prime}\be/k}\big(\mr{Gr}^{R^{\e\prime}}\!\big(\m X^{\e\prime}\le\big)\big)$ exist and
\[
\mr{Gr}^{R}\be\big(\Re_{\e\m S^{\e\prime}\be/\m S}\big(\m X^{\e\prime}\e\big)\big)=\Re_{\e k^{\e\prime}\be/k}\big(\mr{Gr}^{R^{\e\prime}}\!\big(\m X^{\e\prime}\le\big)\big).
\]
\end{proposition}
\begin{proof} By Theorem \ref{wr-rep}, $\Re_{S^{\lle\prime}_{ne}/S_{\le n}}(\m X^{\e\prime}_{ne})$ exists for every $n\in\N$. Further, by \eqref{wrbc}, \eqref{ram-ten} and \eqref{cart0}, for every pair of positive integers $r,n$ such that $2\leq r\leq n$, there exists a canonical isomorphism of $S_{r-1}$-schemes
\[
\Re_{\le S^{\le\prime}_{re}/S_{\le r}}(\m X^{\e\prime}_{re})=\Re_{\le S^{\lle\prime}_{ne}/S_{\le n}}(\m X^{\e\prime}_{ne})\times_{S_{\le n}} S_{\le r}.
\]
Thus $(\Re_{S^{\prime}_{ne}/S_{\le n}}(\m X^{\e\prime}_{ne}))$ is an adic inductive $(S_{n})$-system and we write 
\begin{equation}\label{for-wr}
\Re_{\e\m S^{\e\prime}\be/\m S}\big(\m X^{\e\prime}\e\big)\overset{\rm def.}{=}\varinjlim\Re_{\le S^{\lle\prime}_{ne}/S_{\le n}}(\m X^{\e\prime}_{ne})
\end{equation}
for the corresponding adic formal $\m S$-scheme. Let $\m T=\varinjlim_{n\in \N} \m T_{n}$ be an arbitrary adic formal $\m S$-scheme. Then, by \eqref{for-hom}, \eqref{wr} and \cite[Corollary 1.3.5, p.~267]{fk}, we have
\begin{align*}
\Hom_{\le\m S}(\m T,\Re_{\e\m S^{\e\prime}\be/\m S}(\m X^{\prime}\e))&=\varprojlim\Hom_{\le S_{n}}(\m T_{n},\Re_{S^{\le\prime}_{ne}/S_{\le n}}(\m X^{\e\prime}_{ne}))\\
&=
\varprojlim
\Hom_{\le S^{\lle\prime}_{ne}}\be(\m T_{n}\times_{S_{n}}S^{\e\prime}_{ne},\m X_{ne}^{\e\prime}\e)\\
&=\Hom_{\e \m S^{\prime}}(\m T\times_{\m S}\m S^{\e\prime},\m X^{\e\prime}\e),
\end{align*}
i.e., $\Re_{\,\m S^{\e\prime}\be/\m S}\big(\m X^{\e\prime}\e\big)$ exists (see Definition \ref{wr-for}). Now, by \eqref{for-def}, Theorem \ref{wr-gr} and Proposition \ref{w-lim}, we have
\[
\mr{Gr}^{R}\be\big(\Re_{\e\m S^{\e\prime}\be/\m S}\big(\m X^{\e\prime}\e\big)\big)=
\varprojlim_{n\in\N}\Re_{\le k^{\e\prime}/k}
(\mr{Gr}^{R^{\e\prime}}_{ne}(\m X^{\e\prime}_{ne}))=\Re_{\e k^{\e\prime}\be/k}\big(\mr{Gr}^{R^{\e\prime}}\!\big(\m X^{\e\prime}\e\big)\big),
\]
as claimed.
\end{proof}
								
\begin{remark} As noted in Remark \ref{ns-cat}, $(\text{Ad-For}/\m S)$ contains the category of formal schemes considered in \cite{bert}. Thus the fact that \eqref{for-wr} represents the formal Weil restriction functor on $(\text{Ad-For}/\m S)$ generalizes \cite[Theorem 1.4]{bert}.
\end{remark}

\begin{corollary}
Let $R^{\e\prime}$ be a finite and totally ramified extension of $R$ and let $\m X^{\e\prime}$ be an arbitrary adic formal $\m S^{\prime}$-scheme. Then $\Re_{\e\m S^{\e\prime}\!/\m S}\big(\m X^{\e\prime}\le\big)$ exists and
\[
\mr{Gr}^{R^{\e\prime}}\!\big(\m X^{\e\prime}\le\big)=\mr{Gr}^{R}\be\big(\Re_{\e\m S^{\e\prime}\!/\m S}\big(\m X^{\e\prime}\e\big)\big).
\]
\end{corollary}
\begin{proof} If $\m X^{\e\prime}=\varinjlim_{n\in \N} \m X^{\e\prime}_{n}$ then, by Remark \ref{tram}, $\m X^{\e\prime}_{ne}$ is admissible relative to $S^{\e\prime}_{ne}\to S_{n}$ for every integer $n\geq 1$, where $e$ is the degree of $R^{\e\prime}$ over $R$. Further, $k^{\e\prime}=k$. The corollary is now immediate from the proposition.
\end{proof}

\begin{remark} Recall that, if $R^{\e\prime}\be/R$ is a finite and totally ramified extension of degree $e$ and $\m X^{\e\prime}=\varinjlim \m X^{\e\prime}_{n}$ is an adic formal $\m S^{\e\prime}$-scheme, then Proposition \ref{tot-gr} yields a formula
\[
\mr{Gr}_{ne}^{R^{\le\prime}}\big(\m X^{\e\prime}\le\big)=\mr{Gr}_{n}^{R}\big(\Re_{\le S_{ne}^{\le\prime}/S_{n}}\big(\m X^{\e\prime}\e\big)\big)
\]
for every integer $n\geq 1$, where $\Re_{S_{ne}^{\le\prime}/S_{n}}\big(\m X^{\e\prime}\e\big)\overset{\rm def.}{=}\Re_{S_{ne}^{\le\prime}/S_{n}}(\m X^{\e\prime}_{ne})$. In particular, if $n=1$ above, then $\mr{Gr}_{e-1}^{R^{\le\prime}}\big(\m X^{\e\prime}\le\big)=\Re_{R_{e-1}^{\le\prime}/k}\big(\m X^{\e\prime}\e\big)$, which generalizes \cite[Theorem 4.1]{ns} (see Remark \ref{ns-cat}). Note that the hypothesis ``nice" (i.e., admissible) in the statement of \cite[Theorem 4.1]{ns} is unnecessary.
\end{remark}

\begin{proposition}\label{for-unr3}
Let $k^{\e\prime}/k$ be a subextension of $\e\kbar/k$ and let $R^{\e\prime}$ be the extension of $R$ of ramification index $1$ which corresponds to $k^{\e\prime}/k$. Then, for every adic formal $\m S$-scheme $\m X$, there exists a canonical isomorphism of $k^{\e\prime}$-schemes
\[
\gr(\m X\le)\times_{\spec k}\spec
k^{\le\prime}=\mathrm{Gr}^{R^{\e\prime}}\!\big(\m X\times_{\m S}\m S^{\prime}\e\big),
\]
where $\m S^{\prime}$ is given by \eqref{hat5}.
\end{proposition}
\begin{proof} Write $\m X=\varinjlim \m X_{n}$ as above.
Since $\m X\times_{\m S}\m S^{\prime}=\varinjlim \,(\m X_{n}\times_{S_{n}}{S}^{\e\prime}_{n})$ by \cite[Corollary 1.3.5, p.~267]{fk}, \eqref{grnx} yields $\mathrm{Gr}_{\lbe n}^{R^{\e\prime}}\!\big(\m X\times_{\m S}\m S^{\prime}\big)=\mathrm{Gr}_{\lbe n}^{R^{\e\prime}}\!\big(\m X_{n}\times_{S_{n}}{S}^{\e\prime}_{n}\big)$. Thus, since $\grn(\m X\le)=\grn(\m X_{n}\le)$, Proposition \ref{unr3} yields, for every $n\in\N$, a canonical isomorphism of $k^{\e\prime}$-schemes
\[
\grn(\m X\le)\times_{\spec k}\spec
k^{\e\prime}=\mathrm{Gr}_{\lbe n}^{R^{\e\prime}}\!\big(\m X\times_{\m S}\m S^{\prime}\e\big).
\]
The proposition now follows from \eqref{for-def2} noting that projective limits of schemes commute with base extension.
\end{proof}

The following proposition generalizes \cite[Theorem 3.8]{ns} (see Remark \ref{ns-cat}).
							
\begin{proposition}\label{for-bc} Let $\m X$ be an adic formal $\m S$-scheme and let $R^{\e\prime}$ be a finite extension of $R$ with associated residue field extension $k^{\e\prime}/k\subseteq \kbar/k$. Then there exists a canonical closed immersion of $k^{\e\prime}$-schemes
\[
\gr(\m X\le)\times_{\spec k}\spec
k^{\e\prime}\hookrightarrow\mathrm{Gr}^{R^{\e\prime}}\!\big(\m X\times_{\m S}\m S^{\prime}\e\big).
\]
If $R^{\e\prime}/R$ has ramification index $1$, then the preceding map is an isomorphism.
\end{proposition}
\begin{proof} The second assertion is a particular case of Proposition \ref{for-unr3}. Let $e$ be the ramification index of $R^{\e\prime}$ over $R$ and write $\m X=\varinjlim_{n\in\N} \m X_{n}$. By \cite[Corollary 1.3.5, p.~267]{fk} and \eqref{cart0}, we have
\[
(\m X\times_{\m S}\m S^{\prime}\e)_{ne}=\m X_{ne}\times_{S_{ne}} S^{\e\prime}_{ne}
=\m X_{n}\times_{S_{n}} S^{\e\prime}_{ne}
\]
for every $n\in\N$. Thus, by \eqref{grnx}, Proposition \ref{b-c} yields, for every $n\in\N$, a canonical closed immersion of $k^{\e\prime}$-schemes
\begin{equation}\label{cl-imm}
\mathrm{Gr}_{\lbe n}^{R}(\m X)\times_{\spec k}\spec
k^{\e\prime}\hookrightarrow \mathrm{Gr}_{\lbe ne}^{R^{\le\prime}}\be\big(\m X\times_{\m S}\m S^{\prime}\e\big).
\end{equation}
Now observe that $\big(\mathrm{Gr}_{\lbe n}^{R}(\m X)\times_{\spec k}\spec k^{\e\prime}\big)$ and $\big(\mathrm{Gr}_{\lbe ne-1}^{R^{\le\prime}}\be\big(\m X\times_{\m S}\m S^{\prime}\big)\big)$ are projective systems of $k^{\e\prime}$-schemes with affine transition morphisms, as follows from \cite[II, Proposition 1.6.2(iii)]{ega} and the proof of Proposition-Definition \ref{for-def}. Thus, by \cite[Proposition 3.2(v)]{bga}, we may take projective limits in \eqref{cl-imm}, which yields the proposition.
\end{proof}

If $k$ is perfect of positive characteristic, the composition of \eqref{for-fun2} with the perfection functor \eqref{rapf} yields a functor
\begin{equation}\label{for-pfgr}
\pgr\colon(\text{Ad-For}/\m S)\to (\mathrm{Perf}/k), \quad \m X\mapsto\gr(\m X)^{\pf}.
\end{equation}
Note that, by \eqref{for-def2}, \cite[Proposition 5.21]{bga} and \eqref{grnx}, we have
\begin{equation}\label{pf-proj}
\pgr\big(\m X\e\big)=\varprojlim\pgrn(\m X)
\end{equation}
where $\pgrn$ is the perfect Greenberg functor of level $n$ \eqref{pf-grf} and $\pgrn(\m X)\overset{\rm def.}{=}\pgrn(\m X_{n})$ for every $n\in\N$.

\begin{remark} 
Statements \ref{wr-gr2} to \ref{for-bc} remain valid when $\gr$ is replaced by $\pgr$, provided $\Re_{\e k^{\le\prime}\be/k}$ is replaced by $\Re_{\e k^{\le\prime}\be/k}^{\pf}$ in Proposition \ref{wr-gr2}. The corresponding proofs use \eqref{pf-proj} in place of \eqref{for-def} as well as \cite[Lemma 5.24, Remark 5.18(d) and Proposition 5.17(iv)]{bga}, as in the proofs of Propositions \ref{p-wrgr} to \ref{pf-bc}.
\end{remark}

\section{The Greenberg realization of an $R$-scheme}\label{grsh} 
Let $X$ be an $R$-scheme and let $\widehat{X}=\varinjlim\,(X\!\times_{S}\!S_{n})$ be the formal completion of $X$ along $X\!\times_{S}\!\spec k$. Recall that $\widehat{X}$ is an object of $(\text{Ad-For}/\m S)$. The {\it Greenberg realization of $X$} is the $k$-scheme
\begin{equation}\label{grx2}
\gr(X)\overset{\rm def.}{=}\gr\lbe\big(\widehat X\e\big)=\varprojlim\grn(X),
\end{equation}
where $\grn(X)=\grn(X\!\times_{S}\!S_{n})$ and the transition morphisms of the limit are the change of level morphisms $\varrho_{n,\e X}^{\e i}\colon\grni(X)\to\grn(X)$.

The resulting functor 
\begin{equation*} 
\gr\colon(\mathrm{Sch}/R)\to (\mathrm{Sch}/k), \quad X\mapsto \gr(X),
\end{equation*}
satisfies, by \eqref{for-grs},
\begin{equation*} 
\gr(S)=\gr\lbe\big(\e\widehat{S}\,\big)=\gr\lbe\big(\m S)=\spec k.
\end{equation*}
Note that, in general, $\gr(X)$ is not locally of finite type over $k$, even if $X$ is of finite type over $R$. For example, by \eqref{grx2}, $\gr(\A_{\lbe R}^{\be 1})=\varprojlim\grn(\A_{\lbe R_{n}}^{\be 1})=\varprojlim\s R_{n}=\s R\simeq\A^{\!(\N)}_{\le k}$, which is not locally of finite type.

\smallskip

The following lemma is an analog of Lemma \ref{rat-pts}(i).

\begin{proposition}\label{gr-pts2} Let $X$ be an $R$-scheme and let $A$ be a $k$-algebra which is assumed to be perfect if $R$ is a ring of unequal characteristics. Then $\gr(X)(A)=X(\widetilde{\s R}(A))$.
\end{proposition}
\begin{proof}
This is an instance of the Bhatt-Gabber Algebraization Theorem \cite[Theorem 4.1 and Remarks 4.3 and 4.6]{bha}. \end{proof}

\begin{remark} In the previous (preprint) version of this paper, the proof (but not the statement) of the above proposition contains an error. This error also appears in the proof of the corresponding result of the published version (where it carries the label Proposition 14.2). We claim, at the beginning of the proof, that one can reduce to the affine case using Proposition 3.16 (=Proposition 2.4 of the published version). However, Proposition 3.16 applies to an artinian local ring $R$ and fails in general for discrete valuation rings, e.g, $W(A_f)\neq W(A)_{[f]}$ (indeed,
$(1,1/f,1/f^2,1/f^3,...)$ is in $W(A_f)$ but not in $W(A)_{[f]}$). We thank Takashi Suzuki for pointing out this error.	Fortunately, the statement of the above proposition is correct and is, in fact, a particular case of the Bhatt-Gabber Algebraization Theorem, as pointed out in the new proof above.
\end{remark}

\begin{corollary}\label{r-kbar} Let $X$ be an $R$-scheme which is separated and  locally of finite type. Then $\gr(X)(\e\kbar\e)=X\be(\widehat{R}^{\le{\rm nr}})$.
\end{corollary}
\begin{proof} This follows from \eqref{rk} and the proposition. 
\end{proof}

\begin{proposition} \label{r-prop} Consider, for a morphism of schemes, the property of being:
\begin{enumerate}
\item[(i)] quasi-compact;
\item[(ii)] quasi-separated;
\item[(iii)] separated;
\item[(iv)]  affine; 
\item[(v)] a closed immersion;
\item[(vi)]  an open immersion;
\item[(vii)] formally \'etale.
\end{enumerate}
If $f\colon X\to Y$ is a morphism of $R$-schemes with property $\bm{P}$, then the morphism of $k$-schemes $\gr(\le f\le)\colon\gr(X)\to \gr(Y)$ has property $\bm{P}$ as well.
\end{proposition}
\begin{proof} Each of the properties listed above is stable under base extension. It follows that the morphism of $S_{n}$-schemes $f\times_{S}S_{n}\colon X\times_{S}S_{n}\to Y\times_{S}S_{n}$ has property $\bm{P}$ for every $n\in\N$. 
Now, if $\bm{P}$ denotes one of properties (i)-(v) then, by Remark \ref{rems1}(b) and Proposition \ref{gr-prop}, $\grn(\e f\times_{S}S_{n})\colon
\grn(X)\to\grn(Y)$ has property $\bm{P}$ for every $n\in\N$ and the proposition follows from \eqref{grx2} and \cite[Proposition 3.2]{bga}. If $\bm{P}$ denotes one of properties (vi) or (vii), then $\widehat{f}=\varinjlim\,(f\times_{S}S_{n})\colon\widehat X\to\widehat Y$ has property $\bm{P}$ by Lemmas \ref{for-prop}(iv) and \ref{for-et}. In this case the proposition follows from Proposition \ref{pro-gr}.
\end{proof}

\begin{lemma}\label{wr-fcomp} Let $R^{\e\prime}$ be a finite extension of $R$ and let $X^{\prime}$ be an $R^{\e\prime}$-scheme which is admissible relative to $R^{\e\prime}/R$ (see Definition {\rm \ref{adm}}). Then $\Re_{\e\m S^{\e\prime}/\m S}\big(\e\widehat{X^{\prime}}\e\big)$ and $\Re_{\le R^{\e\prime}\be/R}\big(\lbe X^{\prime}\big)$ exist and
\[
\Re_{\e\m S^{\e\prime}/\m S}\big(\e\widehat{X^{\prime}}\e\big)=\widehat{\Re_{\le R^{\e\prime}\be/R}\big(\lbe X^{\prime}\big)}.
\]
\end{lemma}
\begin{proof} The $R$-scheme $\Re_{\le R^{\e\prime}\be/R}\big(\lbe X^{\prime}\big)$ exists by Theorem \ref{wr-rep}. Now let $e$ be the ramification index of $R^{\e\prime}/R$. Since
$\widehat{X^{\prime}}=\varinjlim\,(X^{\prime}\times_{S^{\e\prime}}S^{\e\prime}_{ne})$ by \eqref{hat4} and $X^{\prime}\times_{S^{\e\prime}}S^{\e\prime}_{ne}$ is admissible relative to $S^{\e\prime}_{ne}\to S_{n}$ for every $n\in\N$ by Lemma \ref{adm2}, $\Re_{\e\m S^{\e\prime}/\m S}\big(\e\widehat{X^{\prime}}\e\big)$ exists by Proposition \ref{wr-gr2}. Further, \eqref{for-wr}, \eqref{wrbc} and \eqref{ram-ten} yield
\begin{align*}
\Re_{\e\m S^{\e\prime}/\m S}\big(\e\widehat{X^{\prime}}\e\big)&=\varinjlim\,\Re_{S^{\e\prime}_{ne}/S_{\le n}}(X^{\prime}\times_{S^{\e\prime}}S^{\e\prime}_{ne})=\varinjlim\,
\Re_{(S^{\e\prime}\times_{S}\e S_{n}\le)/S_{\le n}}(X^{\prime}\times_{S}S_{n})\\
&=\varinjlim\,(\Re_{\le S^{\e\prime}\be/S}\big(\lbe X^{\prime}\big)\times_{S}S_{n})=\widehat{\Re_{\le R^{\e\prime}\be/R}\big(\lbe X^{\prime}\big)},
\end{align*}
as claimed.
\end{proof}

\begin{proposition}\label{r-wr}
Let $R^{\e\prime}$ be a finite extension of $R$ with associated residue field extension $k^{\e\prime}\be/k\subseteq \kbar/k$ and let $X^{\prime}$ be an $R^{\e\prime}$-scheme which is admissible relative to $R^{\e\prime}\be/\lbe R$. Then $\Re_{\le R^{\e\prime}\be/R}\big(X^{\prime}\le\big)$  and $\Re_{\e k^{\e\prime}\be/k}\big(\mr{Gr}^{R^{\e\prime}}\!\big(X^{\prime}\le\big)\big)$ exist and
\[
\mr{Gr}^{R}\be\big(\Re_{\le R^{\e\prime}\be/R}\big( X^{\prime}\e\big)\big)=\Re_{\e k^{\e\prime}\be/k}\big(\mr{Gr}^{R^{\e\prime}}\!\big( X^{\prime}\e\big)\big).
\]
\end{proposition}
\begin{proof} The $R$-scheme $\Re_{\le R^{\e\prime}\be/R}\big(X^{\prime}\le\big)$ exists by Theorem \ref{wr-rep}. Now, as noted in the proof of Lemma \ref{wr-fcomp}, 
$\widehat{X^{\prime}}=\varinjlim\,(X^{\prime}\times_{S^{\e\prime}}S^{\e\prime}_{ne})$ and each $X^{\prime}\times_{S^{\e\prime}}S^{\e\prime}_{ne}$ is admissible relative to $S^{\e\prime}_{ne}\to S_{n}$. Thus
$\Re_{\e k^{\e\prime}\be/k}\big(\mr{Gr}^{R^{\e\prime}}\!\big(X^{\prime}\le\big)\big)=\Re_{\e k^{\e\prime}\be/k}\big(\mr{Gr}^{R^{\e\prime}}\!\big(\e\widehat{X^{\prime}}\e\big)\big)$ exists and 
\[
\Re_{\e k^{\e\prime}\be/k}\big(\mr{Gr}^{R^{\e\prime}}\!\big(X^{\prime}\le\big)\big)=\mr{Gr}^{R}\be\big(\Re_{\e\m S^{\e\prime}\be/\m S}\big(\e\widehat{X^{\prime}}\e\big)\big)
\]
by Proposition \ref{wr-gr2}. The result now follows from \eqref{grx2} and Lemma \ref{wr-fcomp}.
\end{proof}

\begin{proposition}\label{r-unr3}
Let $k^{\e\prime}/k$ be a subextension of $\e\kbar/k$ and let
$R^{\e\prime}$ be the extension of $R$ of ramification index $1$ which corresponds to
$k^{\e\prime}/k$. For every $R$-scheme $X$, there exists a canonical isomorphism of $k^{\e\prime}$-schemes
\[
\gr(X\le)\times_{\spec k}\spec
k^{\e\prime}=\mathrm{Gr}^{R^{\e\prime}}\!\big(X\times_{S}S^{\e\prime}\le\big).
\]
\end{proposition}
\begin{proof} Since $\widehat{X}\times_{\m S}\m S^{\e\prime}=\widehat{X\!\times_{S}\! S^{\e\prime}}$ by
\cite[Corollary 10.9.9, p.~426]{ega1}, this is immediate from Proposition \ref{for-unr3}.
\end{proof}

\begin{proposition}\label{r-bc}
Let $X$ be an $R$-scheme and let $R^{\e\prime}$ be a finite extension of $R$ with associated residue field extension $k^{\e\prime}/k\subseteq \kbar/k$. Then there exists a canonical closed immersion of $k^{\e\prime}$-schemes
\[
\gr(X\le)\times_{\spec k}\spec
k^{\e\prime}\hookrightarrow\mathrm{Gr}^{R^{\e\prime}}\!\big(X\times_{S} S^{\e\prime}\le\big).
\]
If $R^{\e\prime}/R$ has ramification index $1$, then the preceding map is an isomorphism.
\end{proposition}
\begin{proof} The second assertion is a particular case of Proposition \ref{r-unr3}. The first assertion is immediate from Proposition \ref{for-bc} since $\widehat{X}\times_{\m S}\m S^{\e\prime}=\widehat{X\!\times_{S}\! S^{\e\prime}}$ by \cite[Corollary 10.9.9, p.~426]{ega1}.
\end{proof}

The next result applies to commutative $R$-group schemes.

\begin{proposition}\label{ex3} Let  $0\to F\to G\stackrel{\!q}{\to} H \to 0$ be a sequence of commutative $R$-group schemes locally of finite type, where $q$ is smooth and quasi-compact. Assume that the preceding sequence is exact for the fpqc topology on $(\mr{Sch}/R)$. Then the induced sequence of commutative $k$-group schemes
\[
0\to \gr(F)\longrightarrow\gr(G)\to\gr(H)\to 0
\]
is exact for the fpqc topology on $(\mr{Sch}/k)$.
\end{proposition}
\begin{proof} Let $n\in\N$. By \cite[Lemma 2.2]{bga}, $F\simeq\krn q$ and $q$ is surjective. In particular, $q$ is faithfully flat and quasi-compact. Now, by \cite[Corollary 1.3.5, p.~33]{ega1}, $F\times_{S}S_{n}\simeq\krn (q\times_{\be S}\lbe S_{n})$. Thus, since $q\times_{\be S}\lbe S_{n}$ is faithfully flat and quasi-compact, \cite[Lemma 2.3]{bga} shows that the sequence 
\[
0\longrightarrow F\times_{S}S_{n}\longrightarrow G\times_{S}S_{n}\stackrel{\!q\times_{\be S}\lbe S_{n}}{\lra} H\times_{S}S_{n}\longrightarrow 0
\]
is exact for the fpqc topology on $(\mr{Sch}/R_{\le n})$. Note that $F\times_{S}S_{n},G\times_{S}S_{n}$ and $H\times_{S}S_{n}$ are locally of finite type over $S_{n}$ and $q\times_{S}S_{n}$ is smooth. Consequently, by Proposition \ref{ex-green}, the induced sequence of commutative $k$-group schemes locally of finite type
\[
0\longrightarrow \grn(F)\longrightarrow\grn(G)\stackrel{\!\grn\be(q)}\longrightarrow\grn(H)\longrightarrow 0,
\]
where $\grn(q)\overset{\rm def.}{=}\grn(q\times_{S}S_{n})$, is exact for the fpqc topology on $(\mr{Sch}/k)$. Now observe that, since $\grn(q)$ is smooth, quasi-compact and surjective by Propositions \ref{gr-prop}, \ref{gr-sm} and \cite[Lemma 2.2]{bga}, $\grn(q)$ is faithfully flat and quasi-compact. On the other hand, since $F=G\times_{H}S$ is smooth over $S$, the transition morphisms of the system $(\grn(F))$ are surjective by Proposition \ref{sm-surj}. We may now apply \cite[Proposition 3.8]{bga} to complete the proof.
\end{proof}

\begin{lemma}\label{sm-red} If $X$ is a smooth $R$-scheme, then $\gr(X)$ is a reduced $k$-scheme.
\end{lemma}
\begin{proof} Since $X\!\times_{S}\!S_{n}$ is smooth over $S_{n}$ for every $n$, $\grn(X)$ is smooth over $k$ for every $n$ by Corollary \ref{gr-sm2}. Consequently, each 
$\grn(X)$ is reduced and therefore $\gr(X)=\varprojlim \grn(X)$ is reduced as well by \cite[$\text{IV}_{3}$, Proposition 8.7.1]{ega}.
\end{proof}
\smallskip

If $k$ is perfect of positive characteristic and $X$ is an $R$-scheme, the {\it perfect Greenberg realization of $X$} is the perfect $k$-scheme 
\begin{equation}\label{fpr3}
\pgr(X)\overset{\rm def.}{=}\pgr\lbe\big(\widehat X\e\big),
\end{equation}
where $\pgr$ is the functor \eqref{for-pfgr}. Thus, by definitions \eqref{for-pfgr} and \eqref{grx2},
\begin{equation}\label{pgr-pf}
\pgr(X)=\gr(X)^{\pf}.
\end{equation}

\begin{remark} 
Statements \ref{r-wr} to \ref{r-bc} remain valid if $\gr$ is replaced by $\pgr$, provided $\Re_{\e k^{\le\prime}\be/k}$ is replaced by $\Re_{\e k^{\le\prime}\be/k}^{\pf}$ in Proposition \ref{r-wr}. The corresponding proofs make use of \eqref{pgr-pf} and \cite[Lemma 5.24, Proposition 5.17 and Remark 5.18(d)]{bga}.
\end{remark}

For any $R$-scheme $X$, let $X_{\red}$ be the reduced scheme associated to $X$. Thus $X_{\red}$ is a closed subscheme of $X$ and the canonical injection $\iota_{X}\colon X_{\red}\to X$ is a nilimmersion (i.e., a surjective closed immersion \cite[(4.5.16), p.~273]{ega1}). 

\begin{proposition}\label{gr-red} Let $X$ be a separated $R$-scheme locally of finite type. Then the canonical morphism $\gr(\iota_{X})\colon \gr(X_{\red})\to\gr(X)$ is a nilimmersion of $k$-schemes.
\end{proposition}
\begin{proof} By Proposition \ref{r-prop}(v), $\gr(\iota_{X})\colon\gr(X_{\red})\to\gr(X)$ is a closed immersion and we are reduced to checking that $\gr(\iota_{X})$ is surjective. By \cite[$\text{IV}_{1}$, Proposition 1.3.7]{ega}, it suffices to check that $\gr(\iota_{X})(L)\colon\gr(X_{\red})(L)\to\gr(X)(L)$ is surjective for every algebraically closed field $L$ containing $k$. By Lemma \ref{gr-pts2},  the latter map corresponds to the canonical map $X_{\red}(\widetilde{\s R}(L))\to X(\widetilde{\s R}(L))$, which is indeed surjective since $\widetilde{\s R}(L)$ is reduced by Remark \ref{rm-ns}(c).
\end{proof}

\begin{corollary} 
If $k$ is perfect of positive characteristic and $X$ is a separated $R$-scheme locally of finite type, then the canonical morphism $\pgr(\iota_{X})\colon\pgr(X_{\red})\to\pgr(X)$ is an isomorphism of perfect $k$-schemes.
\end{corollary}
\begin{proof} By the proposition,  $\gr(\iota_{X})_{\red}\colon\gr(X_{\red})_{\red}\to\gr(X)_{\red}$ is an isomorphism of $k$-schemes. Consequently, the induced morphism of perfect $k$-schemes $(\gr(X_{\red})_{\red})^{\pf}\to(\gr(X)_{\red})^{\pf}$ is an isomorphism. Since the latter morphism can be identified with $\pgr(\iota_{X})$ by \cite[(5.7)]{bga} and \eqref{pgr-pf}, the corollary follows.
\end{proof}

\begin{corollary} 
If $f\colon X\to Y$ is a nilimmersion of $R$-schemes, where $Y$ is separated and locally of finite type, then the induced morphism $\gr(\le f\le)\colon\gr(X)\to\gr(Y)$ is an nilimmersion of $k$-schemes.
\end{corollary}
\begin{proof} By Proposition \ref{r-prop}(v), it suffices to check that $\gr(\le f\le)$ is surjective. Since $f_{\red}$ is an isomorphism, the composite morphism $X_{\red}\overset{\!\!\iota_{X}}{\to} X\overset{f}{\to} Y$ can be identified with $\iota_{Y}\colon Y_{\red}\to Y$. Thus, by Proposition \ref{gr-red}, the composite $k$-morphism
\[
\gr(X_{\red})\overset{\!\gr(\iota_{X})}{\lra} \gr(X)\overset{\!\gr(\lle f\lle)}{\lra} \gr(Y)
\]
is surjective and therefore so also is $\gr(\le f\le)$.
\end{proof}

\begin{remark} 
Assume that $R$ is a ring of unequal characteristics and recall the ramification index $\ari$ of $R/W(k)$. Let $X$ be an $R$-scheme such that $\Re_{R\le/W(k)}(X)$ exists and let $n\in\N$. In \cite[\S4.1, p.~36]{beg} the author defined the Greenberg realization of level $n$ of $X$ to be
\[
\grbe{}_{n}(X)=\mr{Gr}^{W(k)}_{n}(\Re_{R/\le W(k)}(X)\times_{W(k)} \spec {W_{n}\lbe(k)}).
\]
See also \cite[p.~259, line 5]{adt} (where $\grbe{}_{n}(X)$ is denoted by $\mathcal G_{n}(X)$). By \eqref{wrbc} and \eqref{ram-ten}, we have 
\[
\Re_{R/W(k)}(X)\times_{ W(k)} \spec {W_{n}\lbe(k)}=\Re_{R_{\e
n\lbe\ari}/\e W_{n}(k)}(X\times_{S} S_{ n\lbe\ari}),
\]
whence
\begin{equation}\label{new}
\grbe{}_{\le n}(X)=\mr{Gr}^{W(k)}_{n}(\Re_{R_{\e n\lbe\ari}/\e W_{n}(k)}(X\times_S S_{ n\lbe\ari   } )).
\end{equation}
Note that, since $R/W(k)$ is totally ramified, $\Re_{R_{\e n\lbe\ari   }/W_{n}(k)}(X_{\e n\lbe\ari})$ exists for {\it any} $R$-scheme $X$ by Remark \ref{tram}. Thus \eqref{new} may be taken to be the {\it definition} of $\grbe{}_{ n}(X)$ when $\Re_{R/W(k)}(X)$ fails to exist. Now observe that, if $A$ is an {\it arbitray} $k$-algebra, then 
\[
\grbe{}_{\le n}(X)(A)=X(R\otimes_{\e W(k)}\be W_{\be n}(A)).
\]
Indeed, since $R_{\le n\ari}=R\otimes_{\e W(k)}W_{\be n}(k)$, \eqref{new} and Lemma \ref{rat-pts}(i) show that
\[
\begin{array}{rcl}
\grbe{}_{\le n}(X)(A)=\Re_{R_{\e n\lbe\ari}/\e W_{\lbe n}(k)}(X\times_{S} S_{ n\lbe\ari   })(W_{\be n}(A))&=& X(R_{\le n\ari }\otimes_{\e W_{\lbe n}(k)} W_{\be n}(A))\\
&=& X(R\otimes_{\e W(k)}W_{\be n}(A)),
\end{array}
\]
as claimed. Next, by Proposition \ref{tot-gr}, \eqref{new} may be written as
$\grbe{}_{\le n}(X)=\mr{Gr}^{R}_{\lbe n\lbe\ari }(X)$. It follows that, if 
$\grbe(X)=\varprojlim \grbe{}_{\,n}(X)$ is the object introduced in \cite[\S4.1, p.~36]{beg}, then $\grbe(X)=\gr(X)$, where $\gr(X)$ is the $k$-scheme \eqref{grx2}. Further, if $\uuline{\mr{G}}(X)\overset{\rm def.}{=}\grbe(X)^{\pf}$ is the perfect $k$-scheme considered in [loc.cit.] and $\pgr(X)$ is the object \eqref{fpr3}, then
$\uuline{\mr{G}}(X)=\pgr\lbe(X)$. Regarding the latter functor, [loc.cit., p.~36, line $-11$] contains the  (unproven) claim that, for every perfect $k$-algebra $A$,
\begin{equation*} 
\pgr(X)(A)=X(\e R\be\otimes_{\e W(k)}\! W(A)).
\end{equation*}
The latter is indeed valid and follows from \eqref{pgr-pf}, Proposition \ref{gr-pts2} and \eqref{pf}.  
\end{remark}

\section{Commutative group schemes}\label{comgr}

In this Section $R$ is a complete discrete valuation ring with maximal ideal $\mm$ and residue field $k$ which is assumed to be perfect in the unequal characteristics case. Recall $S=\spec R$ and, for each integer $n\geq 1$, $S_{n}=\spec R_{\le n}$, where $R_{\le n}=R/\mm^{\lbe n}$. 
\medskip

If $G$ is an $R$-group scheme and $n\in \N$, we will write
\[
G_{\lbe(n)}=G\!\times_{S}\!S_{n}.
\]
Further, if $f\colon G\to H$ is a morphism of $R$-group schemes, we will write $f_{(n)}=f\!\times_{S}\be S_{n}\colon G_{(n)}\to H_{(n)}$. If $G$ is commutative and quasi-compact (respectively, of finite type), then by part (i) (respectively, (v)) of Proposition \ref{gr-prop}, $\grn(G\le)=\grn(G_{\lbe(n)})$ is an object of the abelian category $\s C_{\qc}$ (respectively, $\s C_{\alg}$) whose objects are quasi-compact and commutative $k$-group schemes (respectively, commutative $k$-group schemes of finite type). Consequently, by Proposition \ref{r-prop}(i), $\gr(G\le)=\varprojlim \grn(G\le)$ is an object of $\s C_{\qc}$ (in both cases). Recall that the transition morphims in the preceding projective limit are the change of level morphisms  \eqref{clm0}
\begin{equation}\label{rni}
\varrho_{n,\le F}^{\e i}\colon \grni(G\e)\to\grn(G\e) \qquad(n,i\in \N\e).
\end{equation}
Recall also the canonical morphism $\Phi_{\be n,\e G}^{\le\lle i}\colon \mr{Gr}_{i}^{R}\lbe(\e\mathbb V\lbe(\omega_{\le G/R}^{1}))\!\to\! \krn\varrho_{n,\le G}^{\e i}$ \eqref{wj-1}.

Let $F$ be a flat, commutative and separated $R$-group scheme of finite type which has a {\it smooth resolution}, i.e., there exists a sequence of flat, commutative and separated $R$-group schemes of finite type
\begin{equation}\label{sm-res}
0\to F\overset{\!j}{\to} G\overset{\!q}{\to}H\to 0,
\end{equation}
where $G$ and $H$ are smooth, $q$ is faithfully flat and $j$ is a closed immersion which identifies $F$ with the scheme-theoretic kernel of $q$. Note that $q$ is an fppf morphism and, by \cite[Lemma 2.3]{bga}, the sequence \eqref{sm-res} is an exact sequence of sheaves for the fppf topology on $(\mr{Sch}/S\e)$. If $F$ is {\it finite} over $S$, then $F$ has a smooth resolution \eqref{sm-res} by \cite[Proposition 5.1(i) and its proof, pp.~217-218]{mr}. See also \cite[\S 2.2, pp.~25-27]{beg}, where a standard smooth resolution of such an $F$ is constructed. 

\smallskip

\begin{proposition}\label{vker2} Let $n\geq 1$ and $i\geq 1$ be integers. Then
\begin{enumerate}
\item[(i)] $\krn\varrho_{n,\e F }^{\e i}$ is a unipotent $k$-group scheme of finite type.
\item[(ii)] If $1\leq i\leq n$, $\Phi_{\be n,\e F}^{\le\lle i}\colon \mr{Gr}_{i}^{R}\lbe(\le\mathbb V\lbe(\omega_{\le F/R}^{1}))\!\to\! \krn\varrho_{n,\le F}^{\le i}$ \eqref{wj-1} is a morphism of unipotent $k$-group schemes of finite type whose kernel and cokernel are unipotent and infinitesimal.
\item[(iii)] The morphism $\Phi_{\be n,\e F}^{\le\lle i}$ \eqref{wj-1} is an isomorphism if $R$ is an equal characteristic ring or if $R$ is a ring of unequal characteristics and $n+i\leq \ari=v(p)$.
\end{enumerate}
\end{proposition}

\begin{proof}  Since $\mr{Gr}_{(-)}^{R}$ is a left-exact functor, \eqref{sm-res} induces an exact and commutative diagram in $\s C_{\alg}$
\[
\xymatrix{
0\ar[r]&\mr{Gr}_{n+i}^{R}(F\le)\ar[d]^{\varrho_{n\lbe,\le F}^{\le \le i}}\ar[rr]^{\mr{Gr}_{n+i}^{R}(\e j)}&&\mr{Gr}_{n+i}^{R}(G\le)\ar[d]^{\varrho_{n\lbe,\le G}^{\le \le i}}\ar[rr]^{\mr{Gr}_{n+i}^{R}(q)}&&\mr{Gr}_{n+i}^{R}(H\le)\ar[d]^{\varrho_{n\lbe,\le H}^{\le \le i}}\\
0\ar[r]&\mr{Gr}_{\lbe n}^{R}(F\le)\ar[rr]^{\mr{Gr}_{n}^{R}(\e j)}&&\mr{Gr}_{\lbe n}^{R}(G\le)\ar[rr]^{\mr{Gr}_{n}^{R}(q)}&&\mr{Gr}_{\lbe n}^{R}(H\le).
}
\]
The above diagram yields an exact sequence in $\s C_{\alg}$
\begin{equation}\label{vs}
0\to \krn\varrho_{n,\e F }^{\e i}\overset{\!\be\alpha}{\to} \krn\varrho_{n,\e G }^{\e i}\overset{\!\beta}{\to} \krn\varrho_{n,\e H }^{\e i},
\end{equation}
where we have written $\alpha$ and $\beta$ for the restrictions of $\mr{Gr}_{n+i}^{R}(\e j)$ and $\mr{Gr}_{n+i}^{R}(q)$ to $\krn\varrho_{n,\e F }^{\e i}$ and $\krn\varrho_{n,\e G }^{\e i}$, respectively. Since $\krn\varrho_{n,\e G }^{\e i}$ and $\krn\varrho_{n,\e H }^{\e i}$
are unipotent and of finite type by Proposition \ref{sm-vker}, assertion (i) is clear. Recall now from \S \ref{vb} that, for every $S$-group scheme $T$, the $S$-scheme $\mathbb V(\omega_{T\lbe/\lbe S}^{1})$  represents the functor $\underline{\rm{Lie}}(T/S)$. Thus, by \cite[Proposition 1.1(a), p.~459]{llr} and the left exactness of the functor $\mr{Gr}_{i}^{R}$, the sequence \eqref{sm-res} induces a sequence  
\begin{equation}\label{vs2}
0\to \mr{Gr}_{i}^{R}(\le\mathbb V(\omega_{\le F /R }^{1})) \to \mr{Gr}_{i}^{R}(\le\mathbb V(\omega_{\le G /R }^{1}))\to \mr{Gr}_{i}^{R}(\le\mathbb V(\omega_{\le H /R }^{1}))
\end{equation}
which is exact in $\s C_{\alg}$. 
Now observe that, since $G$ and $H$ are smooth, \eqref{obv} yields isomorphisms of $k$-group schemes $\mr{Gr}_{i}^{R}(\le\mathbb V(\omega_{\le G/R }^{1}))\simeq \G_{\lbe a,\le k}^{i\le d}$ and $\mr{Gr}_{i}^{R}(\le\mathbb V(\omega_{\le H/R }^{1}))\simeq \G_{\lbe a,\le k}^{i\le f}$, where $d=\dim G_{\mr s}$ and $f=\dim H_{\mr s}$. In particular, \eqref{vs2} implies that $\mr{Gr}_{i}^{R}(\le\mathbb V(\omega_{\le F /R }^{1}))$ is a unipotent $k$-group scheme. We now assume that $1\leq i\leq n$ and consider the exact and commutative diagram in $\s C_{\alg}$
\[
\xymatrix{0\ar[r]& \mr{Gr}_{i}^{R}\lbe(\le\mathbb V(\omega_{\le F /R }^{1}))\ar[r]\ar[d]^{\Phi_{\!n,\e F}^{\le i}} & \mr{Gr}_{i}^{R}(\le\mathbb V(\omega_{\le G /R }^{1}))\ar[r]\ar@{->>}[d]^{\Phi_{\!n,\e G}^{\le i}}& \mr{Gr}_{i}^{R}(\le\mathbb V(\omega_{\le H /R }^{1}))\ar@{->>}[d]^{\Phi_{\!n,\e H}^{\le i}}\\
0\ar[r]& \krn\varrho_{n,\e F }^{\e i}\ar[r] & \krn\varrho_{n,\e G }^{\e i} \ar[r]& \krn\varrho_{n,\e H }^{\e i},
}
\]
whose top (respectively, bottom) row is the sequence \eqref{vs2} (respectively, \eqref{vs}).
An application of the snake lemma to the preceding diagram yields an exact sequence in $\s C_{\alg}$
\begin{equation*} 
0\to \krn\e\Phi_{\!n,\e F}^{\le i}\to \krn\e\Phi_{\!n,\e G}^{\le i}\to\left(\krn\e\Phi_{\!n,\lle H}^{\le i}\lbe\right)^{\prime}\to \cok \Phi_{\!n,\e F}^{\le i}\to 0
\end{equation*}
for some $k$-subgroup scheme $\left(\krn\e\Phi_{\!n,\lle H}^{\le i}\lbe\right)^{\prime}$ of $\krn\e\Phi_{\!n,\e H}^{\le i}$. 
Since $\krn\e\Phi_{\!n,\e G}^{\le i}$ and $\krn\e\Phi_{\!n,\e H}^{\le i}$ are unipotent and infinitesimal $k$-group schemes by Propositions \ref{vker1} and \ref{vker}, the preceding sequence and Remark \ref{inf}(a) show that $\krn\e\Phi_{\!n,\e F}^{\le i}$ and $\cok \Phi_{\!n,\e F}^{\le i}$ are unipotent and infinitesimal as well, and trivial when the hypotheses of (iii) hold.
\end{proof}

Now, if $n\in\N$, the sequence induced by \eqref{sm-res}
\begin{equation}\label{sm-resn}
0\lra F_{(n)}\overset{\!j_{(n)}}{\lra} G_{(n)}\overset{\!q_{(n)}}{\lra} H_{(n)}\lra 0
\end{equation}
is a smooth resolution of the $S_{n}$-group scheme $F_{(n)}$. In particular,
\begin{equation}\label{sm-res0}
0\to F_{\be\rm s}\to G_{\lbe\rm s}\to H_{\lbe\rm s}\to 0
\end{equation}
is a smooth resolution of the special fiber $F_{\be\rm s}$ of $F$. Note that, since $G_{(n)}$ is smooth over $S_{n}$, $\HH^{1}_{\fppf}(\Rnnr,G\le)\overset{{\rm def.}}{=}\HH^{1}_{\fppf}(\Rnnr\be,G_{(n)}\!\times_{S_{n}}\!\Snnr)=0$ by Lemma \ref{coh=0}. Thus \eqref{sm-resn} induces an exact sequence of abelian groups
\begin{equation}\label{c-seq}
0\lra F(\Rnnr)\to G(\Rnnr)\to H(\Rnnr)\to \HH^{1}_{\fppf}(\Rnnr,F\le)\to 0.
\end{equation}

Let
\begin{equation}\label{sm-res2}
0\to F\overset{\be j^{\prime}}{\to} G^{\e\prime}\overset{\!q^{\e\prime}}{\to}H^{\e\prime}\to 0
\end{equation}
be a second smooth resolution of $F$ and let
\begin{equation}\label{sm-resn-p}
0\lra F_{(n)}\overset{\! j^{\e\prime}_{(n)}}{\lra} G^{\e\prime}_{(n)}\overset{\!q^{\e\prime}_{(n)}}{\lra} H^{\e\prime}_{(n)}\lra 0
\end{equation}
be the induced smooth resolution of $F_{(n)}$, where $n\in\N$. Since $j_{(n)}$ \eqref{sm-resn} and $j_{(n)}^{\e\prime}$ are closed immersions,
\[
j_{(n)}\be\times_{\be S_{n}}\!\!\big(\!-j_{(n)}^{\e\prime}\big)\colon F_{(n)}\times_{S_{n}}F_{(n)}\to G_{(n)}\times_{S_{n}}G_{(n)}^{\e\prime}
\]
is a closed immersion as well. Now, since $F_{(n)}$ is separated over $S_{(n)}$, the diagonal morphism $\Delta\colon F_{(n)}\to F_{(n)}\times_{S_{n}}F_{(n)}$ is also a closed immersion. We conclude that the composite morphism
\[
F_{(n)}\overset{\Delta}{\lra} F_{(n)}\times_{S_{n}}F_{(n)}\to G_{(n)}\times_{S_{n}}G_{\lbe(n)}^{\e\prime}
\]
is a closed immersion. We will regard $F_{(n)}$ as a closed $S_{n}$-subgroup scheme of $G_{(n)}\times_{S_{n}}G_{(n)}^{\e\prime}$ via the above morphism. Since $F_{(n)}$ is flat and $G_{(n)}\times_{S_{n}}G_{(n)}^{\e\prime}$ is of finite type over $R_{\le n}$, \cite[${\rm{VI}}_{\rm A}$, Theorem 3.3.2]{sga3} shows that the pushout of $j_{(n)}$ and $j^{\e\prime}_{(n)}$ in the abelian category $\mr{Ab}(\mr{Sch}/\be R_{\le n}\le)_{\fppf}$, i.e., 
\[
P\overset{\rm def.}{=}\big(G_{(n)}\times_{S_{n}}G_{(n)}^{\e\prime}\e\big)/F_{(n)},
\]
is (represented by) a separated, smooth and commutative $R_{\le n}$-group scheme of finite type. By the universal property of the pushout \cite[pp.~65-66]{mac} and the fact that the functor \eqref{fufa} for $\mathcal C=(\mr{Sch}/\be R_{\le n}\le)$ and $\tau=\fppf$ is fully faithful, there exist morphisms of $R_{\le n}$-group schemes $v\colon P\to H_{(n)}^{\le\prime}$ and $u\colon P\to H_{(n)}$ such that the following diagram in $\mr{Ab}(\mr{Sch}/R_{\le n}\e)_{\fppf}$ (which consists of flat, separated and commutative $R_{\le n}$-group schemes of finite type) is exact and commutative
\[
\xymatrix{&0\ar[d]&0\ar[d]&&\\
0\ar[r]& F_{(n)}\ar[d]^(.48){j_{(n)}^{\lle\prime}}\ar[r]^(.48){j_{(n)}}& G_{\lbe (n)}\ar[r]^{q_{(n)}}\ar[d]&
H_{(n)}\ar[r]\ar@{=}[d]& 0\\
0\ar[r]& G_{\lbe(n)}^{\e\prime}\ar[d]^(.45){q_{(n)}^{\e\prime}}\ar[r]& P\ar[r]^(.43){u}\ar[d]^(.45){v}& H_{(n)}\ar[r]& 0\\
& H_{(n)}^{\e\prime}\ar[d]\ar@{=}[r]&H_{(n)}^{\le\prime}\ar[d]&&\\
&0&0.&&
}
\]
Note that, since $G_{\lbe(n)}^{\e\prime}$ and $G_{\lbe(n)}$ are smooth over $S_{n}$, $u$ and $v$ are smooth morphisms. Now, by Proposition \ref{ex-green}, the middle column of the preceding diagram induces an exact sequence in $\mr{Ab}(\mr{Sch}/k\e)_{\fppf}$
\begin{equation*} 
0\lra\grn(G)\lra\grn(P)\overset{\!\!\grn\lbe\lbe(v\le)}{\lra}\grn(H^{\le\prime}\le)\lra 0.
\end{equation*}
Thus the bottom half of the above diagram induces the following exact and commutative diagram in $\mr{Ab}(\mr{Sch}/k\e)_{\fppf}\e$:
\begin{equation}\label{sdiag}
\xymatrix{
0\ar[r]&\grn(G^{\e\prime})\ar[d]^{\grn\lbe\lbe(q^{\le\prime}\le)}\ar[r]&\grn(P)\ar[r]^(.45){\grn\lbe\lbe(u\le)}
\ar@{->>}[d]^(.45){\grn\lbe\lbe(v\le)}&\grn(H)\ar[d]\ar[r]& 0\\
0\ar[r]&\grn( H^{\e\prime})\ar@{=}[r]&\grn( H^{\e\prime})\ar[r]&0\ar[r]&0\e,
}
\end{equation}
where the top row is exact by Proposition \ref{ex-green}, $\grn\lbe\lbe(q^{\e\prime}\le)=\grn\be\big(q_{(n)}^{\e\prime}\le\big)$, $\krn\e\grn\lbe\lbe(q^{\e\prime}\le)=\grn(F)$ and $\krn\e\grn\lbe\lbe(v\le)=\grn(G)$. Now an application of the snake lemma to \eqref{sdiag} yields the following exact sequence in $\mr{Ab}(\mr{Sch}/k\e)_{\fppf}\e$:
\begin{equation}\label{fgh}
0\lra\grn(F)\lra\grn(G)\overset{\!\!\grn\lbe\lbe(q\le)}{\lra}\grn(H\le)\lra \cok \grn\lbe\lbe(q^{\e\prime}\le)\lra 0.
\end{equation}
We conclude that there exists an isomorphism in $\s C_{\alg}$:
\[
\cok \grn\lbe\lbe(q\le)\overset{\!\sim}{\to}\cok \grn\lbe\lbe(q^{\e\prime}\le).
\]
Thus the commutative $k$-group scheme of finite type 
\begin{equation}\label{h1n}
\mathcal H^{1}\be(R_{\le n},F\le)\overset{\rm def.}{=}\cok\grn\lbe\lbe(q\le)
\end{equation}
is independent, up to isomorphism, of the choice of smooth resolution \eqref{sm-res}.  We will show in Lemma \ref{ucalg}(i) below that $\mathcal H^{1}\be(R_{\le n},F\le)(\e\kbar\e)=\HH^{\le 1}_{\fppf}(\Rnnr,F\le)$, which explains our choice of notation in \eqref{h1n}. Note however that, in general, $\mathcal H^{\le 1}\be(R_{\le n},F\le)(k)\neq \HH^{\le 1}_{\fppf}(R_{\le n},F\le)$, as Remark \ref{count-h} below shows.

\smallskip

\begin{remark} 
Using, respectively, \eqref{sm-res}, \eqref{sm-res2}, Proposition \ref{ex3} and \cite[Theorem 4.C, p.~53]{an} in place of \eqref{sm-resn}, \eqref{sm-resn-p}, Proposition \ref{ex-green} and \cite[${\rm{VI}}_{\rm A}$, Theorem 3.3.2]{sga3}, we derive the existence of an isomorphism in $\s C_{\qc}$:
\[
\cok\gr\lbe\lbe(q\le)\overset{\!\sim}{\to}\cok \gr\lbe\lbe(q^{\e\prime}\le).
\]
Consequently, the commutative and quasi-compact $k$-group scheme $\cok\gr\lbe\lbe(q\le)$ is independent, up to isomorphism, of the choice of smooth resolution \eqref{sm-res}.
\end{remark}

\smallskip
								
Now observe that, since the morphism $\grn(H\le)\to \cok \grn\lbe\lbe(q^{\e\prime}\le)$ in \eqref{fgh} is faithfully flat \cite[${\rm{VI}}_{\rm A}$, Proposition 5.4.1]{sga3} and $\grn(H\le)$ is smooth, $\mathcal H^{1}\be(R_{\le n},F\e)\simeq\cok \grn\lbe\lbe(q^{\e\prime}\le)$ is smooth as well by Lemma \ref{con-sm}. Thus Proposition \ref{ex-ex} shows that \eqref{fgh} is also exact in $\s C_{\alg}$. Consequently, \eqref{fgh} induces an exact sequence in $\s C_{\alg}$
\begin{equation}\label{fghh}
0\to\grn(F)\to\grn(G)\to\grn(H\le)\to \mathcal H^{1}\be(R_{\le n},F\le)\to 0.
\end{equation}
In particular, the morphism $\grn(H\le)\to \mathcal H^{1}\be(R_{\le n},F\le)$ is faithfully flat. Further, the preceding sequence, together with \eqref{sm-res0}, yields
\begin{equation}\label{h1=0}
\mathcal H^{\le 1}\be(R_{\e 1},F\le)=\mathcal H^{1}\be(k,F\le)=0.
\end{equation}
Now, by the exactness of \eqref{rho-sm} and Proposition \ref{ex-ex}, the following sequences are exact in $\s C_{\alg}$ for every pair of integers $r,i\geq 1$: 
\begin{equation}\label{rho-sm-g}
0\lra\krn\varrho_{r,\le G}^{\e i}\lra\mr{Gr}_{r+i}^{R}(G\le)\overset{\!\varrho_{r,\le G}^{\le i}}{\lra} \mr{Gr}_{r}^{R}(G\le)\lra 0
\end{equation}
and
\begin{equation}\label{rho-sm-h}
0\lra\krn\varrho_{r,\le H}^{\e i}\lra\mr{Gr}_{r+i}^{R}(H\le)\overset{\!\varrho_{r,\le H}^{\le i}}{\lra} \mr{Gr}_{r}^{R}(H\le)\lra 0.
\end{equation}
Let
\begin{equation*} 
\overline{\varrho}_{r,\le G}^{\, i}\colon \mr{Gr}_{r+i}^{R}(G\le)/\mr{Gr}_{r+i}^{R}(F\le)\to \mr{Gr}_{r}^{R}(G\le)/\mr{Gr}_{r}^{R}(F\le)
\end{equation*}
be the morphism induced by $\varrho_{r,\le G}^{\e i}$ and consider the following exact and commutative diagrams in $\s C_{\alg}$:
\[
\xymatrix{
0\ar[r]&\mr{Gr}_{r+i}^{R}(F\le)\ar[d]^{\varrho_{r,F}^{\le \le i}}\ar[r]&\mr{Gr}_{r+i}^{R}(G\le)\ar[r]\ar@{->>}[d]^{\varrho_{r,G}^{i}}&\mr{Gr}_{r+i}^{R}(G\le)/\mr{Gr}_{r+i}^{R}(F\le)\ar@{->>}[d]^(.48){\overline{\varrho}_{r,\le G}^{\, i}}\ar[r]& 0\\
0\ar[r]&\mr{Gr}_{r}^{R}(F\le)\ar[r]&\mr{Gr}_{r}^{R}(G\le)\ar[r]&\mr{Gr}_{r}^{R}(G\le)/\mr{Gr}_{r}^{R}(F\le)\ar[r]&0
}
\]
and
\[
\xymatrix{
0\ar[r]&\mr{Gr}_{r+i}^{R}(G\le)/\mr{Gr}_{r+i}^{R}(F\le)\ar@{->>}[d]^(.48){\overline{\varrho}_{r,\le G}^{\, i}}\ar[r]&\mr{Gr}_{r+i}^{R}(H\le)\ar[r]\ar@{->>}[d]^{\varrho_{r,H}^{\le i}}&\mathcal H^{\le 1}\be(R_{\e r+i},F\e)\ar[d]\ar[r]& 0\\
0\ar[r]&\mr{Gr}_{r}^{R}(G\le)/\mr{Gr}_{r}^{R}(F\le)\ar[r]&\mr{Gr}_{r}^{R}(H\le)\ar[r]&\mathcal H^{1}\be(R_{\e r},F\e)\ar[r]& 0\e,
}
\]
where the rows of the second diagram come from \eqref{fghh} and the vertical morphisms $\varrho_{r,G}^{\le i}$ and $\varrho_{r,H}^{\le i}$ have trivial cokernel by the exactness of \eqref{rho-sm-g} and \eqref{rho-sm-h}. From the first diagram, $\overline{\varrho}_{r,\le G}^{\, i}$ has trivial cokernel as well. Now an application of the snake lemma to the preceding diagrams yields the following exact sequences in $\s C_{\alg}$:
\begin{equation}\label{kseqm}
0\to\krn\varrho_{r,\le F}^{\e i}\to \krn\varrho_{r,\le G}^{\e i}\to \krn\e\overline{\varrho}_{r,\le G}^{\e i}\to\cok\lbe\varrho_{r,\le F}^{\e i}\to 0
\end{equation}
and
\begin{equation}\label{hmi}
0\to\krn\e\overline{\varrho}_{r,\le G}^{\e i}\to \krn\varrho_{r,\le H}^{\e i}\to \mathcal H^{1}\be(R_{\e r+i},F\le)\to\mathcal H^{\le 1}\be(R_{\e r},F\le)\to 0.
\end{equation}
Note that the last nontrivial morphisms in the preceding sequences are faithfully flat.

\begin{lemma}\label{u-cok} For every $n\in\N$, $\mathcal H^{\le 1}\be(R_{\le n},F\le)$ is a smooth, commutative, connected and unipotent $k$-group schemes.
\end{lemma}
\begin{proof} Smoothness was shown above and commutativity is obvious. Now set $i=n$ and $r=1$ in \eqref{hmi} and use \eqref{h1=0} to obtain the following exact sequence in $\s C_{\alg}$:
\[
0\to\krn\e\overline{\varrho}_{\le 1,\le G}^{\e n}\to \krn\varrho_{\le 1,\le H}^{\e n}\to \mathcal H^{1}\be(R_{\le n},F\le)\to 0.
\]
Since the right-hand nontrivial morphism above is faithfully flat and $\krn\varrho_{\le 1,\le H}^{\e n}$ is connected and unipotent by Proposition \ref{sm-vker}, $\mathcal H^{\le 1}\be(R_{\le n},F\le)$ is connected and unipotent as well by Lemma \ref{con-sm} and \cite[${\rm{XVII}}$, Proposition 2.2(iii)]{sga3}.
\end{proof}

\begin{lemma}\label{ucalg} Let $n\geq 1$ be an integer.
\begin{enumerate}
\item[(i)] $\mathcal H^{\le 1}\be(R_{\le n},F\le)(\e\kbar\e)=\HH^{1}_{\fppf}(\Rnnr,F\le)$.
\item[(ii)] Let $k^{\e\prime}\be/k$ be a subextension of $\e\kbar/k$ and let $R^{\e\prime}$ be the extension of $R$ of ramification index $1$ which corresponds to $k^{\e\prime}\be/k$. Then there exists a canonical isomorphism of $k^{\e\prime}$-group schemes
\[
\mathcal H^{\le 1}\be(R_{\le n},F\le)\times_{\spec k}\spec k^{\e\prime}= 
\mathcal H^{\le 1}\be(R_{\le n}^{\e\prime},F\be\times_{S}\be S^{\e\prime}\e).
\]
\end{enumerate} 
\end{lemma}
\begin{proof} Since the morphism $\grn(H\le)\to \mathcal H^{\le 1}\be(R_{\le n},F\le)$ appearing in \eqref{fghh} is surjective and of finite type, the induced morphism $\grn(G)\lbe\big(\e\kbar\e\big)\to \mathcal H^{\le 1}\be(R_{\le n},F\le)(\e\kbar\e)$ is surjective by \cite[$\text{IV}_{1}$, Proposition 1.3.7]{ega}. Thus \eqref{fghh} and Lemma \ref{rat-pts}(ii) yield an exact sequence of abelian groups
\[
0\to F(\Rnnr)\to G(\Rnnr)\to H(\Rnnr)\to \mathcal H^{\le 1}\be(R_{\le n},F\le)(\e\kbar\e)\to 0.
\]
Assertion (i) now follows from \eqref{c-seq}. The smooth resolution
\eqref{sm-resn}$\be\times_{S_{n}}\be S_{n}^{\e\prime}$ induces the following exact sequence of commutative $k^{\e\prime}$-group schemes of finite type (which is similar to \eqref{fghh}):
\[
0\to\mr{Gr}_{n}^{R^{\le\prime}}\!(F\!\times_{S_{n}}\be\!S_{n}^{\e\prime})\to\mr{Gr}_{n}^{R^{\le\prime}}\!(G\!\times_{S_{n}}\!\be S_{n}^{\e\prime})\to\mr{Gr}_{n}^{R^{\le\prime}}\!(H\!\times_{S_{n}}\!\be S_{n}^{\e\prime})\to \mathcal H^{\le 1}\lbe\lbe(R_{\le n}^{\e\prime},F\be\times_{S}\be S^{\e\prime}\e)\to 0.
\]
Now, by Proposition \ref{unr3}, $\mr{Gr}_{n}^{R^{\le\prime}}\!(F\!\times_{S_{n}}\!S_{n}^{\e\prime})=\grn(F\le)\times_{\spec k}\spec k^{\e\prime}=\grn(F\le)_{k^{\e\prime}}$ for every $S_{n}$-scheme $F$, whence the preceding sequence may be written as
\[
0\to\grn(F\le)_{k^{\e\prime}}\to\grn(G\le)_{k^{\e\prime}}\to\grn(H\le)_{k^{\e\prime}}\to \mathcal H^{1}\lbe\lbe(R_{\le n}^{\e\prime},F\be\times_{S}\be S^{\e\prime}\e)\to 0.
\]
Assertion (ii) of the lemma now follows by comparing the above sequence with the sequence \eqref{fghh}$\times_{\spec k}\,\spec k^{\e\prime}$.
\end{proof}

\begin{remark}\label{count-h} The formula in part (i) of the lemma fails if $\kbar$ and $\Rnnr$ are replaced with $k$ and $R_{\le n}$ (respectively) and $k$ is not algebraically closed. Indeed, if $n=1$, then $\mathcal H^{\le 1}\be(R_{\le 1},F\le)(k)=0$ by \eqref{h1=0}, whereas $\HH^{1}_{\fppf}(R_{\le 1},F\le)=\HH^{\le 1}_{\fppf}(k,F\le)$ is not zero in general if $k$ is not algebraically closed.
\end{remark}

Recall that every unipotent $k$-group scheme of finite type is affine over $k$ \cite[${\rm{XVII}}$, Proposition 2.1]{sga3}. As noted above, the transition morphisms of the projective system of affine $k$-schemes $(\mathcal H^{1}\be(R_{\le n},F\e))$ are faithfully flat. Consequently, by \cite[$\text{IV}_{3}$, Propositions 8.2.3 and 8.3.8(ii)]{ega},
\begin{equation*} 
\mathcal H^{\le 1}\lbe\lbe(R,F\e)\overset{\rm def.}{=}\varprojlim \mathcal H^{\le 1}\lbe\lbe(R_{\le n},F\e)
\end{equation*}
is an object of $\s C_{\qc}$ and every projection morphism $\mathcal H^{1}\be(R,F\le)\to \mathcal H^{\le 1}\lbe\lbe(R_{\le n},F\e)$ is faithfully flat. Since projective limits commute with base extension, Lemma \ref{ucalg}(ii) shows that, if $k^{\e\prime}\be/k$ is a subextension of $\e\kbar/k$ and $R^{\e\prime}/R$ is the extension of ramification index 1 which corresponds to $k^{\e\prime}/k$, then there exists a canonical isomorphism of $k^{\e\prime}$-group schemes
\begin{equation*} 
\mathcal H^{1}\be(R,F\e)\times_{\spec k}\spec k^{\e\prime}= 
\mathcal H^{1}\be\big(\widehat{R}^{\e\prime},F\be\times_{S}\be \widehat{S}^{\e\prime}\e\big).
\end{equation*}

\begin{lemma}\label{ucok-i} The $k$-group scheme $\mathcal H^{\le 1}\lbe\lbe(R,F\le)$ is affine, commutative, reduced and connected.
\end{lemma}
\begin{proof} Since each $k$-group scheme $\mathcal H^{\le 1}\lbe\lbe(R_{\le n},F\e)$ is reduced and connected by Lemma \ref{u-cok}, $\mathcal H^{\le 1}\lbe\lbe(R,F\le)$ is reduced and connected as well by \cite[$\text{IV}_{3}$, Propositions 8.4.1(ii) and 8.7.1]{ega}. Now, since $\mathcal H^{\le 1}\lbe\lbe(R_{\le n},F\e)$ is an affine scheme for every $n$ and the projection morphism $\mathcal H^{\le 1}\lbe\lbe(R,F\le)\to\mathcal H^{\le 1}\lbe\lbe(R_{\le n},F\e)$ is affine by \cite[$\text{IV}_{3}$, (8.2.2)]{ega}, $\mathcal H^{\le 1}\lbe\lbe(R,F\le)$ is also an affine scheme by \cite[II, Corollary 1.3.4]{ega}.
\end{proof}

For every $n\in\N$, \eqref{fghh} induces the following exact sequences in $\s C_{\alg}$:
\begin{equation}\label{sh1}
0\lra\grn(F)\lra\grn(G\le)\overset{\! f_{n}}{\lra}\grn(G\le)/\grn(F\le)\lra 0
\end{equation}
and
\begin{equation}\label{sh2}
0\lra\grn(G\le)/\grn(F\le)\overset{\! j_{\le n}}{\lra}\grn(H)\lra \mathcal H^{\le 1}\lbe\lbe(R_{\le n},F\e)\lra 0,
\end{equation}
where the canonical projection morphism $f_{n}$ is faithfully flat and $j_{n}$ is a closed immersion \cite[${\rm{VI}}_{\rm A}$, Proposition 5.4.1]{sga3}. Note that $j_{n}\be\circ\! f_{n}=\grn(q)$.

\begin{lemma}\label{aff-im} The transition morphisms of the projective system $(\grn(G\le)/\grn(F\le))$ are affine and surjective.
\end{lemma}
\begin{proof} Note that each morphism $\grnl(G\le)/\grnl(F\le)\to \grn(G\le)/\grn(F\le)$ is
induced by the change of level morphism $\varrho_{n,\le G}^{\e 1}$ \eqref{rni}. The sequence \eqref{sh1} induces a commutative diagram in $\s C_{\alg}$
\[
\xymatrix{\grnl(G\le)\ar[d]^{\varrho_{n,\le G}^{\e 1}}\ar[r]^(.37){f_{n+1}}& \grnl(G\le)/\grnl(F\le)\ar[d]\\
\grn(G\le)\ar[r]^(.45){f_{n}}&\grn(G\le)/\grn(F\le),}
\]
where the horizontal morphisms are faithfully flat and the left-hand vertical morphism is surjective by Proposition \ref{sm-surj}. It is now clear that the right-hand vertical morphism in the above diagram is surjective. On the other hand, \eqref{sh2} induces the following commutative diagram in $\s C_{\alg}$:
\[
\xymatrix{\grnl(G\le)/\grnl(F\le)\ar[d]\ar[r]^(.63){j_{\le n+1}}& \grnl(H\le)\ar[d]^{\varrho_{n,\le H}^{\e 1}}\\
\grn(G\le)/\grn(F\le)\ar[r]^(.6){j_{n}}& \grn(H\le),}
\]
where the horizontal morphisms are closed immersions and the right-hand vertical morphism is affine by Proposition \ref{aff}. Since closed immersions are affine \cite[II, Proposition 1.6.2(i)]{ega}, the left-hand vertical morphism in the above diagram is affine as well, by a combination of \cite[II, Proposition 1.6.2, (ii) and (v)]{ega} and \cite[Proposition 9.1.3, p.~354]{ega1}. This completes the proof.
\end{proof}

The lemma shows that

\begin{equation}\label{grp}
\gr(G)^{\prime}\overset{\rm def.}{=}\varprojlim \grn(G\le)/\grn(F\le)
\end{equation}
is an object of $\s C_{\qc}$. By \eqref{sh1}, there exists an exact sequence in $\s C_{\qc}$
\begin{equation}\label{lex}
0\to\gr\lbe(F\le)\to\gr(G\le)\overset{\! f}{\to}\gr(G)^{\prime},
\end{equation}
where $f=\varprojlim f_{n}$. 

\begin{proposition} 
There exists an exact sequence in $\s C_{\qc}$
\[
0\to\gr(G)^{\prime}\to\gr(H\le)\to \mathcal H^{\le 1}\lbe\lbe(R,F\e)\to 0,
\]
where $\gr(G)^{\e\prime}$ is given by \eqref{grp}.
\end{proposition}
\begin{proof} By Lemmas \ref{aff-im} and \ref{qc-alg}, the sequence of
projective systems
\[
0\to(\grn(G\le)/\grn(F\le))\to(\grn(H\le))\to (\mathcal H^{\le 1}\lbe\lbe(R_{\le n},F\e))\to 0
\]
satifies the hypotheses of \cite[Proposition 3.8]{bga}. Thus the sequence of the proposition is exact for the fpqc topology $\s C_{\qc}$. Since $\gr\lbe(H\le)$ and $\mathcal H^{\le 1}\lbe\lbe(R,F\e)$ are reduced by Lemmas \ref{sm-red} and \ref{ucok-i}, respectively,  Proposition \ref{ex-ex} shows that the given sequence is also exact in $\s C_{\qc}$, and this completes the proof.
\end{proof}

\begin{remark} 
It should be noted that, if $R$ is a ring of unequal characteristics and $k$ is algebraically closed, then the results of \cite[\S 4]{beg} on the cokernel of $\gr(q)$ differ from the results discussed above. Indeed, the statements in [op.cit.] alluded to above are valid in the category of {\it quasi-algebraic $k$-groups}. In particular, the $k$-group $\uwave{\mathrm H}^{\le 1}\lbe\lbe(R,F\e)$ considered in \cite[p.~41]{beg} should not be confused with the group $\mathcal H^{\le 1}\lbe\lbe(R,F\e)$ discussed above. In the context of this Section, the following is true for any $R$. Since $\grn(q)=j_{\le n}\be\circ\! f_{n}$ for every $n$, $\gr(q)$ factors as 
\[
\gr(G\le)\overset{\! f}{\to}\gr(G)^{\prime}\overset{ j}{\hookrightarrow}\gr(H\le),
\]
where $j\overset{\rm def.}{=}\varprojlim j_{n}$ has trivial kernel by the proposition. An application of Lemma \ref{ker-cok} to the complex $\gr(G\le)\to \gr(H\le)\to \mathcal H^{1}\be(R,F\e)$, together with \eqref{lex}, produces the 5-term sequence 
\begin{equation}\label{5-term}
0\to\gr\lbe(F\le)\to\gr(G\le)\overset{\! f}{\to} \gr(G)^{\prime}\to\cok \gr(q)\to \mathcal H^{1}\be(R,F\e)\to 0
\end{equation}
which is exact for the fpqc topology on $k$. It is now clear that the map $\cok \gr(q)\to \mathcal H^{1}\be(R,F\e)$ appearing above is an isomorphism if, and only if, $\cok f=0\,$\, \footnote{We do not know examples where $\cok f$ is not zero.}. Regarding the group $\mathcal H^{\le 1}\lbe\lbe(R,F\e)(\e\kbar\e)$, the following holds. Since $\widehat{R}^{\le\nr}$ is local and henselian with residue field $\kbar$ and $G$ is smooth over $\widehat{R}^{\le\nr}$, we have $\HH^{1}_{\fppf}(\widehat{R}^{\le\nr}\be, G\!\times_{S}\!\widehat{S}^{\nr})=0$ (see the proof of Lemma \ref{coh=0}). Thus \eqref{sm-res} induces an exact sequence
\[
0\to F(\widehat{R}^{\le\nr})\to G(\widehat{R}^{\le\nr})\to H(\widehat{R}^{\le\nr})\to \HH^{1}_{\fppf}(\widehat{R}^{\le\nr}\be, F\!\times_{S}\!\widehat{S}^{\le\nr})\to 0.
\]
By Corollary \ref{r-kbar}, the preceding sequence must agree with the sequence
\[
0\to \gr(F\le)(\e\kbar\e)\to \gr(G\le)(\e\kbar\e)\to \gr(H\le)(\e\kbar\e)\to (\cok \gr(q))(\e\kbar\e)\to 0,
\]
i.e., $(\cok \gr(q))(\e\kbar\e)=\HH^{1}_{\fppf}(\widehat{R}^{\le\nr}\be, F\lbe\times_{S}\lbe\widehat{S}^{\le\nr})$. Now \eqref{5-term} yields the exact sequence of abelian groups 
\[
0\to F(\widehat{R}^{\le\nr})\to G(\widehat{R}^{\le\nr})\to \gr(G)^{\prime}(\e\kbar\e)\to
\HH^{1}_{\fppf}(\widehat{R}^{\le\nr}\be, F\!\times_{S}\!\widehat{S}^{\le\nr})\to\mathcal H^{1}\be(R,F\e)\be(\e\kbar\e)\to 0,
\]
whence the canonical map $\HH^{1}_{\fppf}(\widehat{R}^{\le\nr}, F\!\times_{S}\!\widehat{S}^{\le\nr})\to\mathcal H^{1}\be(R,F\e)\be(\e\kbar\e)$ is an isomorphism if, and only if, $(\cok f)(\e\kbar\e)=0$, i.e., if $\cok f$ is infinitesimal. See Lemma \ref{red=1} and Remark \ref{inf}(b). 
\end{remark}

\begin{theorem}\label{thm} Assume that $F$ is generically smooth. Then there exists an integer $i_{\le 0}\in\N$ such that, for every integer $n\geq i_{\le 0}$, the transition morphism $\mathcal H^{\le 1}\lbe\lbe(R_{\le n+1},F\le)\to\mathcal H^{\le 1}\lbe\lbe(R_{\le n},F\le)$ is an isomorphism of $k$-group schemes. 
\end{theorem}
\begin{proof} By Lemma \ref{ucalg}(ii) and faithfully flat and quasi-compact descent \cite[$\mr{IV}_{2}$, Proposition 2.7.1(viii)]{ega}, we may assume that $k=\kbar$.
It is shown in \cite[p.~465]{llr} (with $G^{\e\prime}=F$, $G^{\e\prime\prime}=H$, $u=q$ and $g^{\e\prime\prime}=h$ in the notation of that paper) that there exists a commutative diagram of flat and commutative $R$-group schemes of finite type
\begin{equation*} 
\xymatrix{0\ar[r]&\widetilde{F}\ar[d]\ar[r]& \widetilde{G}\ar[d]^{g}\ar[r]^(.45){\widetilde{q}}&
\widetilde{H}\ar[d]^{h}\ar[r]& 0\\
0\ar[r]& F\ar[r]&G\ar[r]&H\ar[r]& 0,}
\end{equation*}
where $\widetilde{q}$ is smooth, faithfully flat and of finite presentation, and the bottom row is the sequence \eqref{sm-res}. 
For every integer $n\geq 1$, the preceding diagram induces an exact and commutative diagram in $\mr{Ab}(\mr{Sch}/k)_{\fppf}$
\[
\xymatrix{0\ar[r]&\grn(\widetilde{F})\ar[d]\ar[r]& \grn(\widetilde{G})\ar[d]^{\grn(g)}\ar[r]&
\grn(\widetilde{H})\ar[d]^{\grn(h)}\ar[r]&0\\
0\ar[r]&\grn(F)\ar[r]&\grn(G)\ar[r]&\grn(H)\ar[r]&\mathcal H^{\le 1}\lbe\lbe(R_{\le n},F\le)\ar[r]&0,}
\]
where the top row is exact by Proposition \eqref{ex-green} and the bottom row is the sequence \eqref{fghh}. We conclude that there exists an exact and commutative diagram in $\mr{Ab}(\mr{Sch}/k)_{\fppf}$
\begin{equation*} 
\xymatrix{\cok\mr{Gr}_{n+1}^{R}(g)\ar[d]^{\alpha_{n}}\ar[r]& \cok\mr{Gr}_{n+1}^{R}(h)\ar[d]^{\beta_{n}}\ar[r]&
\mathcal H^{\le 1}\lbe\lbe(R_{\le n+1},F\le)\ar[d]\ar[r]&0\\
\cok\grn(g)\ar[r]& \cok\grn(h)\ar[r]&
\mathcal H^{\le 1}\lbe\lbe(R_{\le n},F\le)\ar[r]&0.}
\end{equation*}
Now it is shown in \cite[p.~471]{llr}  (set $g_{i}=\alpha_{n+1}$ and $g_{i}^{\le\prime\prime}=\beta_{n+1}$ in [loc.cit.]) that there exists an integer $i_{\le 0}\in\N$  such that the maps $\alpha_{n}$ and $\beta_{n}$ appearing above are isomorphisms of smooth $k$-group schemes for every integer $n\geq i_{\le 0}$. The theorem is now clear.
\end{proof}

\begin{corollary}\label{cor1} Assume that $F$ is generically smooth and let $i_{\le 0}\in\N$ be as in the theorem. Then, for every integer $n\geq i_{\le 0}$, we have:
\begin{enumerate}
\item[(i)] The canonical projection 
$\mathcal H^{\le 1}\lbe\lbe(R,F\e)\to\mathcal H^{\le 1}\lbe\lbe(R_{\le n},F\e)$ is an isomorphism of $k$-group schemes.
\item[(ii)] There exists an isomorphism in $\s C_{\alg}$
\[
\cok\lbe\varrho_{n,\le F}^{\le 1}\simeq \mathbb G_{a,\e k}^{r},
\]
where $\varrho_{n,\le F}^{\le 1}$ is the change of level morphism \eqref{rni} and
\[
r=\dimn{\rm Lie}(F_{\mr s})-\dim F_{\mr s}.
\]
\item[(iii)] $\dim\grn(F)=(n-i_{\le 0})\dim F_{\mr s}+\dim\mr{Gr}_{i_{\le 0}}^{R}(F)$.
\item[(iv)] $\dim \mathcal H^{\le 1}\lbe\lbe(R_{\le n},F\e)=\dim\mr{Gr}_{i_{\le 0}}^{R}(F\le)-i_{\le 0}\dim F_{\mr s}$.
\end{enumerate}
\end{corollary}
\begin{proof} Assertion (i) is immediate from the theorem. Now, by \eqref{kseqm}, \eqref{hmi} and the theorem, the smooth resolution \eqref{sm-res} induces an exact sequence in $\s C_{\alg}$
\[
0\to\krn\varrho_{n,\le F}^{\e i}\to \krn\varrho_{n,\le G}^{\e i}\to \krn\e\varrho_{n,\le H}^{\e i}\to\cok\lbe\varrho_{n,\le F}^{\e i}\to 0,
\]
where $n\geq i_{\le 0}$ and $i\geq 1$. If $n\geq 2$, then 
\begin{equation}\label{kdim}
\dim \krn\varrho_{n,\le F}^{\lle 1}=\dimn\e\mr{Lie} (F_{\mr s})
\end{equation} 
by \eqref{pc}, \eqref{d} and Proposition \ref{vker2}(ii). Further, by Propositions \ref{vker1} and \ref{vker}, $\krn\varrho_{n,\le G}^{\e 1}$ is a smooth and unipotent $k$-group scheme which is isomorphic to $\G_{a,k}^{d}$, where $d=\dim G_{\mr s}$, and similarly with $H$ in place of $G$.  
In particular, since $\dim G_{\mr s}=\dim F_{\mr s}+\dim H_{\mr s}$, we conclude that $\cok\lbe\varrho_{n,\le F}^{\le 1}$ has dimension $r=\dimn\e{\rm Lie}(F_{\mr s})-\dim F_{\mr s}$. Further, by \cite[IV, \S 3, Corollary 6.8, p.~ 523]{dg}, there exists an isomorphism of $k$-group schemes $\cok\lbe\varrho_{n,\le F}^{\le 1}\simeq\mathbb G_{a,\e k}^{r}$. This completes the proof of (ii). Now, by (ii), there exists an exact sequence in $\s C_{\alg}$ 
\[
0\lra\krn\varrho_{n,\le F}^{\e 1} \lra\mr{Gr}_{n+1}^{R}(F)\overset{\varrho_{n,\le F}^{\le 1}}{\lra}\grn(F)\lra \mathbb G_{a,\e k}^{r}\lra 0.
\]  
Thus, by the definition of $r$ and \eqref{kdim}, we have 
$\dim \mr{Gr}_{n+1}^{R}(F)=\dim\grn(F)+\dim F_{\mr s}$. Assertion (iii) now follows by induction. Assertion (iv) follows by combining (iii), \eqref{sm-res0}, \eqref{fghh} and Corollary \ref{id-comp}(i). 
\end{proof}

\begin{remark} 
\indent
Corollary \ref{cor1} (and therefore Theorem \ref{thm}) fails if $F$ is not generically smooth. Indeed, let $F=\alpha_{\le p}$ be the kernel of the Frobenius endomorphism $\G_{a,k}\to\G_{a,k}$ in the equal positive characteristic $p$ case. It follows from Example \ref{ex.alpha} that
\begin{equation*}
\grn(\alpha_{\le p})\simeq\spec (k[\le x_{\le 0},\dots, x_{n-1}\le]/(x_{i}^{\e p}, i\leq (n-1)/p)),
\end{equation*}  
whence $\dim\grn(\alpha_{\le p})\geq (p-1) \lfloor
(n-1)/p \rfloor$, which is unbounded as $n\to\infty$. Thus the conclusion of Corollary \ref{cor1}(iii) fails if $F=\alpha_{\le p}$. 
\end{remark}

\begin{lemma} 
Let $n$ and $r$ be integers such that $1\leq r<n$. Then
\[
\dim\grn(\e\mathbb V( R_{\e r}))=r.
\]   
\end{lemma}
\begin{proof}
We begin by constructing a morphism of $k$-group schemes $\gamma\colon \s M_{\lbe n}^{\le n-r}\to \grn(\e\mathbb V( R_{\e r}))$. Let $A$ be any $k$-algebra. By \eqref{vb2} and Lemma \ref{rat-pts}(i),  
\[
\grn(\e\mathbb V( R_{\e r}))(A)=\mathbb V(R_{\e r})(\s R_{\le n}(A))=\mr{Hom}_{R_{\le n}\mr{\text{-}mod}}(R_{\e r}, \s R_{\le n}(A))=\s R_{\e n}(A)_{\pi^{\le r}_{n}\text{-\e tors}} .
\]
Further, by \eqref{rnms1}, the inclusion $\overbarr{\s M_{\lbe n}^{\le n-r}}(A)\subseteq \s R_{\e n}(A)$ factors through $\s R_{\e n}(A)_{\pi^{\le r}_{n}\text{-\e tors}}$. Let $\gamma(A)$ be the composition of the canonical map $\Theta_{n,n-r}(A)\colon \s M_{\lbe n}^{\le n-r}(A)\to \overbarr{\s M_{\lbe n}^{\le n-r}}(A)$ \eqref{ttr} and the inclusion $\overbarr{\s M_{\lbe n}^{\e n-r}}(A)\subseteq \s R_{\e n}(A)_{\pi^{\le r}_{n}\text{-\e tors}}$. The preceding construction is functorial in $A$ and defines the required morphism $\gamma\colon \s M_{\lbe n}^{\le n-r}\to \grn(\e\mathbb V( R_{\e r}))$. If $R$ is an equal characteristic ring, then $\gamma$ is, in fact, an isomorphism. In effect
\begin{equation}\label{vtor}
\s R_{\le n}(A)_{\pi^{\le r}\text{-\e tors}}=\s M_{\lbe n}^{\le n-r}\be(A)=\overbarr{\s M_{\lbe n}^{n-r}}(A) 
\end{equation}
by Remark \ref{power}(d), \eqref{wem} and the flatness of $A$ over $k$.
Therefore
\[
\dim \grn(\e\mathbb V( R_{\e r}))=\dim\e\s M_{\lbe n}^{\le n-r}=\dimn\e M_{\lbe n}^{\le n-r}=\dimn\e R_{\e n-r}=r.
\]
See \eqref{vid} and the beginning of Subsection \ref{sec-k}.
	
Now let $R$ be a ring of unequal characteristics. Then, by Remark \ref{power}(c), the equality \eqref{vtor} holds if $A$ is perfect. Consequently $\gamma^{\le\pf}\colon  (\s M_{\lbe n}^{\le n-r})^{\pf}\simeq \grn(\e\mathbb V( R_{\e r}))^{\pf}$ by \cite[Remark 5.18(a)]{bga}. On the other hand, by \cite[Remark 5.18(b)]{bga}, the perfection functor preserves dimensions. It now follows from \eqref{vid}, \eqref{lgth} and the description of Greenberg modules in Subsection \ref{sec-w} that $\dim \grn(\e\mathbb V( R_{\e r}))= \dim\e \s M_{\lbe n}^{\le n-r}=\mr{length}_{\e W(k)}\e M_{\lbe n}^{\le n-r}=\mr{length}_{\e W(k)}\e R_{\e r}=r$, as claimed.
\end{proof}

\begin{proposition} 
 Assume that $F$ is finite and generically \'etale. Then
\[
\dim\mathcal H^{\le 1}\lbe\lbe(R,F\e)=\delta(F),
\]
where $\delta(F)$ is the defect of smoothness of $F$ \eqref{df}.
\end{proposition}
\begin{proof} By Corollary \ref{cor1}, (i), (iii) and (iv), we have
\[
\dim\mathcal H^{\le 1}\lbe\lbe(R,F\e)=\dim\mr{Gr}^{\lbe R}_{\lbe r}(F\le)
\]
for every integer $r\geq i_{0}$. On the other hand, by Lemma \ref{gr-3c}, 
$\varrho_{\le n,F}^{\e n}$ factors through a finite $k$-subgroup scheme of $\mr{Gr}^{\lbe R}_{n}(F)$ if $n\geq\delta(F)+2$. Thus, by  Proposition \ref{vker2}(ii) and \cite[${\rm{VI}}_{\rm A}$, Proposition 2.5.2(b)]{sga3}, we have $\dim\mr{Gr}^{\lbe R}_{2n}(F\le)=\dim\mr{Gr}_{n}^{R}(\e\mathbb V(\omega_{\le F/R}^{1}))$ if $n\geq r=\max\{i_{0},\delta(F)+2\}$. Therefore
$\dim\mathcal H^{\le 1}\lbe\lbe(R,F\e)=\dim\mr{Gr}_{n}^{R}(\e\mathbb V(\omega_{\le F/R}^{1}))$ if $n\geq r$. Now, by the structure theorem for torsion $R$-modules, there exists an isomorphism of $R$-modules $\omega_{F/R}^{1}\simeq\oplus_{i=1}^{t} R/(\pi^{n_{i}})$, where $\sum n_{i}=\mr{length}_{\lbe R}(\omega_{F/R}^{1}\le)=\delta(F)$ (see Remark \ref{rm.df}).  Thus, by \cite[Exercise 1.22 and Lemma 1.23, p.~258]{liu}, Remark \ref{rems1}(d) and \cite[II, Proposition 1.7.11(iii)]{ega}, we are reduced to checking that
$\dim\mr{Gr}_{n}^{R}(\e\mathbb V(R/(\pi^{n_{i}})))=\dim\mr{Gr}_{n}^{R}(\e\mathbb V(R_{\le n_{i}})))=n_{i}$. This follows from the previous lemma.
\end{proof}

\section{A generalization of the equal characteristic case}\label{eqch}
Let $k$ be any field and let $B$ be a noetherian local $k$-algebra with maximal ideal $\mm$ and residue field $k$. For every integer $n\geq 1$, set $B_{\le n} =B/\mm^{n}$. Then $B_{\le n}$ is a finite local $k$-algebra with residue field $k$ \cite[Proposition 8.6(ii), p.~90, and Exercise 3, p.~92]{am}. Note that, if $B$ is artinian, then $B_{\le n}=B$ for all sufficiently large values of $n$ and the results in Sections \ref{gr-art}, \ref{s-cr} and \ref{bas} are valid with $\mathfrak R=B$. Further, by Lemma \ref{uh0} (with $k^{\e\prime}=k$), $\spec B_{\le n} \to\spec k$ is a finite and locally free universal homeomorphism. Consequently, by Corollary \ref{wr-uh}, $\Re_{B_{\le n}\lbe/\le k}(Z)$ exists for every $B_{\le n}$-scheme $Z$ and $\mr{Gr}^{\lbe B_{\le n}}$ agrees with $\Re_{B_{\le n}\lbe/\le k}$ by Remark \ref{rems1}(c). The following statement is analogous to the equal characteristic case of Corollary \ref{wr-gr1} (see Remark \ref{wgeq}).

\begin{proposition}\label{wr-gr-b} Let $B$ be as above and let $B\to B^{\e\prime}$ be a finite and flat homomorphism of local rings, where $B^{\e\prime}$ is a noetherian local $k^{\e\prime}$-algebra with residue field $k^{\e\prime}$. Let $n\geq 1$ be an
integer and set $C_{\le n}=B_{\le n}\!\otimes_{\lbe B}\be B^{\e\prime}$. If $Z$ is a quasi-projective $C_{\le n}$-scheme, then
$\Re_{\e k^{\prime}\be/k}\big(\Re_{\e C_{\le n}\lbe/k^{\prime}}\lbe(Z)\big)$
and $\Re_{\e C_{\le n}\lbe/\lbe B_{\le n}}\be(Z)$
exist and
\[
\Re_{\e k^{\prime}\be/k}(\Re_{\e C_{\le n}\lbe/k^{\prime}}(Z))=
\Re_{\e B_{\le n}\lbe/k}(\Re_{\le C_{\le n}\lbe/B_{\le n} }\be(Z)).
\]
\end{proposition}
\begin{proof} Note that $C_{\le n}$ is a $B_{\le n}$-algebra as well as a $k^{\e\prime}$-algebra via its $B^{\le\prime}$-algebra structure. By Theorem \ref{wr-rep} and Remark \ref{rems-adm}(a), $\Re_{\e C_{\le n}\lbe/k^{\prime}}\lbe(Z)$ and $\Re_{\e C_{\le n}\lbe/\lbe B_{\le n} }\lbe(Z)$ exist. On the other hand, 
\cite[Proposition A.5.8]{cgp} shows that $\Re_{\e C_{\le n}\lbe/k^{\prime}}\lbe(Z)$ is quasi-projective over $k^{\e\prime}$. Thus $\Re_{\e k^{\prime}\be/k}(\Re_{\e C_{\le n}\lbe/k^{\prime}}(Z))$ also exists. The formula of the theorem is now immediate from \eqref{wrcomp}.
\end{proof}

Regarding the contents of Section \ref{gp-sch}, Lemma \ref{coh=0} (with $\Rnnr$ replaced by $B_{\le n} \otimes_{k}\kbar\e$) and Proposition \ref{ex-green} extend to the present context without difficulty. The proofs of Propositions \ref{vker1} and \ref{sm-vker}, however, rely on Proposition \ref{rnm-2},  which does not extend to the setting of this Section. Nevertheless, the following analog of Proposition \ref{sm-vker} holds. See also Proposition \ref{sth}.

\begin{proposition} 
Let $G$ be a smooth $B$-group scheme. Then, for every pair of positive integers $r,i$, the change of level morphism
\[
\varrho_{r,\le G}^{\le i}\colon \Re_{\e B_{\le r+i}\lle/\lle k}(G_{\be B_{\le r+i} }\lbe)\to\Re_{\e B_{\le r}\lbe/k}(G_{\be B_{\le r}}\lbe)
\]
is smooth and surjective and its kernel is a smooth, connected and unipotent $k$-group scheme.
\end{proposition}
\begin{proof} We fix an integer $j\geq 1$ and apply \cite[Proposition A.3.5]{oes} (see also \cite[Proposition A.5.12]{cgp}) to the finite and local $k$-algebra $B_{j}$. We conclude that, if $1\leq r<j$, then $\varrho_{r,\le G}^{\le 1}$ is smooth and surjective and $\krn\varrho_{r,\le G}^{\e 1}\simeq \mathbb G_{a,\e k}^{\le d(r)}$, where
\[
d(r)=\dimn\lbe(\le\mr{Lie}(G_{\lbe \rm s}\lbe) \be\otimes_{k}\be\mm^{r}\!/\mm^{r+1}\le)\e
\]
(Note that, if $\mm$ is principal, then $\mm^{r}/\mm^{r+1}\simeq k$ and $d(r)=d$ is the integer \eqref{d}.) Since $j$ is arbitrary, the preceding conclusions hold for every integer $r\geq 1$. The rest of the proof is by induction, using an obvious analog of the exact sequence \eqref{3u}.
\end{proof}

The above proposition can be used to define a filtration similar to the equal characteristic case of \eqref{fil}. In effect, let $n\geq 1$ be an integer,  let $G$ be a smooth and commutative $B$-group scheme and set $H=\Re_{B_{\le n}\lbe/k}(G_{\be B_{\le n}}\lbe)$. Then $F^{\e i}\lbe H=\krn\varrho_{i,\le G}^{\le n-i}$, where $1\leq i\leq n$,  defines a filtration of length $n$ on $H$:
\begin{equation}\label{fil2}
H\supseteq F^{\e 1}\lbe H\supseteq\cdots \supseteq F^{\e n}\lbe H=0.
\end{equation}
Note that $H/F^{\e 1}\be H=G_{\mr s}$. Further, if $1\leq i\leq n-1$, then
\[
F^{\e i}\lbe H/F^{\e i+1}\lbe H\simeq\krn\varrho_{i,\le G}^{\le 1}\simeq \mathbb G_{a,\e k}^{\le d(i)},
\]
where $d(i)=\dimn\lbe(\le\mr{Lie}(G_{\lbe \rm s}\lbe)\be\otimes_{k}\be\mm^{i}\be/\mm^{i+1}\le)\e$.

\begin{example} A particular case of \eqref{fil2} appeared in \cite[proof of Theorem 1]{ell}, as we now explain. Let $D$ be a henselian discrete valuation ring with residue field $k$ and field of fractions $K$. Let $K^{\le\prime}\be/K$ be a finite and separable extension, let $D^{\e\prime}$ be the integral closure of $D$ in $K^{\prime}$ and let $k^{\e\prime}$ be the residue field of $D^{\e\prime}$. Assume that $k^{\e\prime}/k$ is purely inseparable. Let $A^{\prime}$ be an abelian variety over $K^{\le \prime}$, $\mathcal A^{\prime}$ its N\'eron model over $D^{\e\prime}$ and $\mathcal B=\Re_{D^{\e\prime}\be/\lbe D}(\mathcal  A^{\prime})$. Set $B=D^{\e\prime}\otimes_{D} k^{\e \prime}\e$ (this is denoted by $R$ in \cite[proof of Theorem 1]{ell}), which is a finite and local $k^{\e\prime}$-algebra with residue field $k^{\e\prime}$, and set $n=\mr{dim}_{\e k^{\le\prime}}\lbe B \geq 1$. Then $\mm^{n}=0$, where $\mm$ is the maximal ideal of $B$, whence $B_{\le n}=B$ for every $m\geq n$. The filtration of length $n$ of $H=\mathcal B_{\e k^{\le\prime}}$ considered in \cite[proof of Theorem 1]{ell} is the filtration \eqref{fil2} for $G=\mathcal A^{\e\prime}_{B}$.
\end{example}

\medskip

Finally, if $k$ is a perfect field of positive characteristic, several of the results in Section \ref{pfgf} remain valid for the functor  
$(\e\mathrm{Sch}/B_{\le n})\to
(\mathrm{Perf}/k), Z\mapsto \Re_{\e B_{\le n}\lbe/k}(Z)^{\pf}$. For example, Proposition \ref{wr-gr-b} above yields a formula similar to that of Proposition \ref{p-wrgr}(i) under an appropriate quasi-projectivity hypothesis.

\bigskip

\end{document}